\documentclass[10pt,reqno]{amsart}

\usepackage{ulem}

\usepackage{mathtools}                                                      
\mathtoolsset{showonlyrefs=true}                                    

\def\bbp{{\boldsymbol{p}}}
\def\bba{{\boldsymbol{a}}}

\def\cS{{\mathcal S}}
\def\cR{{\mathcal R}}

\def\mL{{\mathbb L}}

\usepackage[utf8]{inputenc}
\usepackage[T1]{fontenc}
\usepackage{lmodern}
\usepackage{xspace}
\usepackage{amsfonts,amsmath,amssymb,amsthm}
\usepackage{enumitem}
\usepackage[small]{titlesec}
\usepackage{comment}
\usepackage{mathrsfs}

\usepackage{fancyhdr}

\usepackage{hyperref}

\usepackage{color}
\usepackage[a4paper,body={160mm,235mm},centering]{geometry}
\newcommand{\Leb}{L}

\newcommand{\Linf}[1]{ \Leb^\infty_T\mathcal{C}_{\bf d}^{#1}}
\newcommand{\Linfa}[2]{ \Leb^{\infty}_{{#2}}\mathcal{C}_{\bf d}^{#1}}
\newcommand{\Linfb}[3]{ \Leb^{\infty}_{{#2},{#3}}\mathcal{C}_{\bf d}^{#1}}

\newcommand{\Linfo}[1]{ \Leb^\infty_T\mathcal{C}_{\bf o}^{#1}}

\newcommand{\pn}{p^{(n)}}

\newcommand{\Cforward}{C_{\text{\bf f}}}
\newcommand{\etaforward}{\eta_{\text{\bf f}}}
\newcommand{\Cbackward}{C_{\text{\bf b}}}
\newcommand{\etabackward}{\eta_{\text{\bf b}}}

\newcommand{\initlaw}{\mu_0}

\def\eps{\epsilon}
\def\wt{\widetilde}
\def\bA{\mathbf{A}}
\def\gR{\mathbf{R}}

\def\B{\mathbb{B}}

\def\d{\mathrm{d}}

\def\bB{{\mathbb{B}}}

\def\E{{\mathbb{E}}}
\def\N{{\mathbb{N}}}
\def\T{{\mathbb{T}}}
\def\K{{\mathbf K}}

\def\R{{\mathbb{R}}}
\def\Rd{{\mathbb{R}^d}}
\def\Rdd{{\mathbb{R}^{2d}}}
\def\gF{{\mathbf{F}}}
\def\bGamma{{\boldsymbol{\Gamma}}}

\def\1{\mbox{1\hspace{-0.25em}l}}

\def\mI{{{\mathbb I}}}
\def\cA{{{\mathcal A}}}
\def\cL{{{\mathcal L}}}

\def \t {\tau}
\def \fy {\mathfrak y}

\def \dif {{\rm d}}
\def\tr{\mathrm {tr}}

\def\bd{{\mathbf{d}}}
\def\be{{\mathbf{e}}}

\def\btheta{{\boldsymbol{\theta}}}
\def\bTheta{{\boldsymbol{\Sigma}}}

\def\bvtheta{{\boldsymbol{\vartheta}}}
\def\bxi{{\boldsymbol{\xi}}}
\def\bsigma{{\boldsymbol{\sigma}}}

\def\pFsig{{p_{\gF,\sigma}}}

\def \cA{\mathcal A}
\def\X{{\bf{X}}}

\def\x{{\boldsymbol{x}}}

\def\y{{\boldsymbol{y}}}
\def\z{{\boldsymbol{z}}}
\def\w{{\boldsymbol{w}}}

\def\<{\langle}
\def\>{\rangle}

\newcommand\Ac{\mathscr{A}}

\newcommand\Fc{\mathscr{F}}

\newcommand\Lc{\mathscr{L}}

\newcommand\sC{\mathscr{C}}

\newcommand\Kc{\mathscr{K}}

\newtheorem{propo}{Proposition}
\newtheorem{definition}{Definition}

\newcommand{\dotafter}[1]{#1.}
\titleformat{\section}[hang]
{\normalfont\large\bfseries}{\thesection.}{.5em}{\dotafter}[]
\titleformat{\subsection}[runin]
{\normalfont\bfseries}{\thesubsection.}{.4em}{}[.]
\titlespacing*{\subsection}{0pt}{3ex plus 1ex minus .2ex}{1em}
\titleformat{\paragraph}[runin]{\normalfont\bfseries}{\theparagraph.}{.4em}{}[.]
\theoremstyle{plain}
\newtheorem{THM}{Theorem}
\newtheorem{lemma}[THM]{Lemma}
\newtheorem{PROP}[THM]{Proposition}
\newtheorem{cor}[THM]{Corollary}

\theoremstyle{definition}
\newtheorem{df}{Definition}

\theoremstyle{remark}
\newtheorem{REM}{Remark}

\usepackage{amsthm}          
\numberwithin{equation}{section}

\allowdisplaybreaks

\begin{document}
\title[Heat kernels for degenerate Brownian SDEs with Besov drift]{On heat kernel estimates for Brownian SDEs with distributional drift}

\author[S. Menozzi]{St\'ephane Menozzi}
\address{Universit\'e d'Evry Val d'Essonne, Paris Saclay, 
Laboratoire de Math\'ematiques et Mod\'elisation d'Evry, UMR CNRS 8071, 23 Boulevard de France 91037 Evry, France}
\email{stephane.menozzi@univ-evry.fr}

\author[S. Pagliarani]{Stefano Pagliarani}
\address{Universit\`a di Bologna, 
Dipartimento di Matematica, Piazza di Porta S. Donato 5, Italy}
\email{stefano.pagliarani9@unibo.it}

\begin{abstract}
We establish heat-kernel bounds and regularity estimates for the {transition densities of the diffusion} associated with the martingale problem corresponding to the generator of a \textit{formal} multi-dimensional Brownian SDE with singular drift. {As a by-product, we also derive Schauder estimates for the associated Kolmogorov (kinetic) Cauchy problem.} We consider both {the cases of non-degenerate and degenerate noise (e.g. kinetic-type models)}, in the so-called Young regime.
Namely, we consider a time inhomogeneous drift in {$L^{\infty}_{[0,T]}\mathcal C_{\bf o}^{\beta}$} for some fixed time horizon $T$, where $ \mathcal C_{\bf o}^{\beta}=B_{\bf o,\infty,\infty}^\beta$ with $\beta\in (-1/2,0) $, with $\bf o$ standing for an underlying distance, namely the usual Euclidean one in the non degenerate setting, and the scale-homogeneous one in the kinetic case. Importantly, the estimates are obtained by employing as \textit{parametrix} the transition density of the SDE (with variable coefficients) without singular perturbation, {as opposed to the standard Levi parametrix obtained by freezing the noise}. 
{Finally, since the noise is multiplicative, the weak well-posedness of the singular SDE is a novel result in itself, and the density estimates directly imply irreducibility and strong Feller property of its solutions.}
\end{abstract}

\maketitle

\noindent \textbf{Keywords}: kinetic PDEs; heat-kernel estimates; martingale problem; singular drift; anisotropic Besov-H\"older spaces; Schauder estimates.

\vspace{3pt}

\noindent \textbf{MSC 2020:} 60H10; 60H30; 60H50; 35C99; 35D99; 35K10.

\vspace{6pt}

\noindent \textbf{Acknowledgments:} The research of S.P was partially supported by the INdAM - GNAMPA, Italy project CUP\_J53D23003800006. The research of S.P was partially supported by the PRIN22 project CUP\_E53C23001670001. The research of S.M. and S.P was partially supported by the INdAM - GNAMPA, Italy project CUP\_E53C23001670001.

\thispagestyle{fancy}

\vspace{12pt}

\section{Introduction}
\subsection{Statement of the problem and related literature}
We are interested in 
studying the following \textit{formal} degenerate $2d$-dimensional Brownian driven SDE:
\begin{equation}
\label{SDE}
\begin{cases}
\d X^1_t = \big( F_1(t,X_t) + b(t,X_t) \big) \d t+ \sigma(t,X_t) \d W_t,\\
\d X^2_t = F_2(t,X_t) \d t,
\end{cases}\qquad t\in [0,T],
\end{equation}
for a fixed $T> 0 $. In the above equation, $W_t$ formally represents a $q$-dimensional Brownian motion, 
 and the functions 
\begin{equation}\label{eq:coeff_F_sigma}
\sigma:[0,T]\times \Rdd \to 
\mathcal{M}^{d\times q},\qquad
\gF=(F_1,F_2):[0,T]\times \Rdd \to \R^{2d},
\end{equation}
satisfy suitable regularity and weak H\"ormander-type conditions which will be later specified. Critically, we will assume, throughout the paper, that the drift term 
\begin{equation}
b(t,\cdot)\in  \mathcal{C}^{\beta}_{\bd}(\Rdd,\R^d):=\B_{\bd,\infty,\infty}^{\beta} (\Rdd,\R^d),\qquad \beta\in (-1/2,0),
\end{equation}
the latter being the anisotropic Besov space induced by the norm
\begin{equation}\label{HOMO_METRIC}
|{\bf x}|_{\bd} = |{\bf x}_1| + |{\bf x}_2|^{\frac{1}{3}}, \qquad {\bf x}= ({\bf x}_1, {\bf x}_2) \in \Rdd, 
\end{equation}
which reflects the different scaling properties of the components $X^1_t$ and $X^2_t$.
Hereafter, we denote by $|\cdot|$ the Euclidean norm on $\Rd$. We refer to Section \ref{SEC_DEF_BESOV_ANIS} for the precise definition of the anisotropic Besov spaces $\B_{\bd,p,q}^{\beta} (\Rdd,\R^d)$.

We stress that, at a formal level, we can recover a non-degenerate SDE by letting the drift and diffusion coefficients $F_1, b$ and $\sigma$ be independent of the variable $X^2$. In this case the first equation of system \eqref{SDE} is autonomous and reads explicitly as
\begin{equation}\label{eq:sde_kinetic_systems_non_deg}
\d X^1_t = \big( F_1(t,X^1_t) + b(t,X^1_t) \big) \d t+ \sigma(t,X^1_t) \d W_t,\\
\end{equation}
with $F_1:[0,T]\times \Rd \to \Rd$, $\sigma:[0,T]\times \Rd \to \mathcal{M}^{d\times q}
$ satisfying a standard uniform ellipticity condition, and with 
\begin{equation}
b(t,\cdot)\in  \mathcal{C}_{\bf e}^{\beta}(\Rd,\Rd):=\B_{\bf e,\infty,\infty}^{\beta} (\Rd,\R^d),\qquad \beta\in (-1/2,0),
\end{equation}
the latter being the standard isotropic Besov space induced by the Euclidean norm, i.e.
\begin{equation}
|x |_{\bf e} := |x |, \qquad x\in\Rd.
\end{equation}
Once more, we refer to Section \ref{SEC_DEF_BESOV_ANIS} for the precise definition of $\B_{{\bf e},p,q}^{\beta} (\Rd,\R^d)$. Hereafter, we may write for simplicity $\B_{\infty,\infty}^{\beta} $ for $\B_{\bf e,\infty,\infty}^{\beta} (\Rd,\R^d)$, i.e. when the distance is not explicitly specified it is the usual euclidean one and the dimensions are possibly omitted.

The study of the solutions to \eqref{SDE} is deeply linked to the study of the operator 
\begin{equation}\label{eq:op_L}
\Lc = \Kc + \langle b(s,\cdot) ,  \nabla_1   \rangle,
\end{equation}
where we denote by $\nabla_1$ the gradient with respect to the first (non-degenerate) $d$ components, and 
$\Kc$ is the Kolmogorov operator of the non-singular version of \eqref{SDE}, 
namely when $b\equiv 0$. Precisely, 
\begin{equation}\label{gen}
\Kc := \partial_s + \frac 12{\rm Tr} \Big (\sigma\sigma^*(s,\x) \nabla_1^2  \Big) + \langle  \gF(s,\x) , \nabla \rangle=:\partial_s +\Ac_s,
\end{equation} 
where $\nabla=(\nabla_1,\nabla_2) $ stands for the full gradient on $\R^{2d}$ and $\nabla_1^2 $ for the Hessian operator with respect to the first $d$ variables.

A prototypical example of \eqref{SDE} is the kinetic SDE
\begin{equation}\label{eq:singular_SDE_kinetic}
\begin{cases}
\d X^1_t =  b(t,X_t) \d t+ \sigma(t,X_t) \d W_t,\\
\d X^2_t = X^1_t \d t,
\end{cases}
\end{equation}
where the components $X^1$ and $X^2$ represent velocity and position of a particle in the phase space. In the case of additive noise, namely when $\sigma\equiv I_d$, 
\eqref{eq:singular_SDE_kinetic} is a particular case of the system considered in \cite{issoglio2024degenerate}, where existence and uniqueness of martingale solutions were proved for a multi-dimensional linear drift that respects the H\"ormander structure. We also mention the work by Hao \textit{et al.} \cite{hao:zhan:zhu:zhu:24} concerning the extension of \cite{dela:diel:16}, i.e. beyond the current Young regime, to the degenerate kinetic case for a drift in the corresponding anisotropic Besov space with the usual critical threshold $\beta>-2/3$. 

In the non-degenerate case with additive noise, i.e. restricting to $X_t^1$ ($F\equiv0$) with  
$\sigma=I_d$, 
it is known that the formal SDE
\begin{equation}\label{NON_DEG_MARG}
\d X_t^1=b(t,X_t^1) \d t +\sigma \d W_t 
\end{equation}
is well posed (see \cite{flandoli_multidimensional_2017}), for $\beta\in (-\frac 12,0)$, in terms of what the authors therein call virtual solutions and that its dynamics 
can actually be rigorously described in terms of a Dirichlet process. We also refer to \cite{dela:diel:16} for even rougher drifts, i.e. $ \beta>-2/3$, provided the drift has some appropriate structure, and to \cite{chau:meno:22} for extensions to the general symmetric stable setting in the Young regime (i.e. when no specific additional structure is imposed on the drift). We can as well mention the work \cite{krem:perk:22}, which extends to the general symmetric $\alpha $ stable setting,  $\alpha\in (1,2)$, under the condition $\beta>(2-2\alpha)/3 $, the approach of \cite{dela:diel:16}. 

Concerning heat-kernel and regularity bounds for the transition density of diffusions with singular drift, it is known from \cite{perk:vanz:22} that when $b\in \B_{\infty,1}^\beta $, $\beta \in (-1/2,0)$, the heat kernel associated with the formal generator of \eqref{NON_DEG_MARG} admits Gaussian Aronson-type bounds as well as a bound for the gradient w.r.t. the backward variable. Namely, denoting by $P$ the 
semi-group formally associated with 
\begin{equation}\label{GEN}
L= \langle b, \nabla_1 \rangle  +\frac 12\Delta_1  ,
\end{equation}
acting on a bounded measurable function $f$ as
\begin{equation}
P_tf(x)=\E_x[f(X_t^1)]=\int_{\R^{d}}p(0,x,t,y) f(y) \d y, \qquad x\in \R^d, \ t\in (0,T],
\end{equation}
there exist a positive $C$ depending (at most) 
on $T$, $\beta  $ and on some dyadic Littlewood-Paley block decomposition, and $\lambda\ge 1$ depending on $\beta $, 
such that 
\begin{align}\label{RES}
C^{-1} g_{\lambda^{-1}}(t,y-x)\le   p(0,x,t,y)\le C g_\lambda(t,y-x),\\
|\nabla_x p(t,x,y)|\le C
t^{-\frac{1}{2}}
g_\lambda(t,y-x),\label{RES_bis}
\end{align}
where $g_\lambda(t,\cdot)$ is the Gaussian density with covariance matrix $\lambda tI_d$ (see \eqref{THE_DEF_GAUSS_NON_DEG} below). 

In the non-degenerate setting, we prove the result \eqref{RES}-\eqref{RES_bis} in the case of multiplicative noise, namely when
the diffusion coefficient in \eqref{NON_DEG_MARG} is a funtion $\sigma(t,X_t) $  enjoying suitable spatial regularity, and also prove some sharp regularity bounds with respect to both the backward and forward variables, which are novel even in the case of additive noise. {We chose to focus on the case $b\in \B_{\infty,\infty}^\beta$, which is slightly more general than  $\B_{\infty,1}^\beta$ considered in \cite{perk:vanz:22} (recall indeed that $\B_{\infty,1}^\beta \hookrightarrow \B_{\infty,\infty}^\beta $), because a drift $b\in \B_{\infty,\infty}^\beta$ is somehow \textit{easier} to apprehend concretely. Indeed, the elements of $\B_{\infty,\infty}^\beta $ can be viewed as general derivatives of functions belonging to Hölder spaces of regularity $1+\beta $. The analysis below could also be adapted to consider even more general Besov spaces (see Remark \ref{RQ_MAIN_THMS} below)}. \\

Critically, we derive {as well heat kernel and gradient bounds} for the general equation \eqref{eq:sde_kinetic_systems_non_deg}, which allows for an additional, unbounded, drift coefficient $F_1$. This drift term has an impact on the Gaussian bounds through the associated deterministic flow, which reflects the transport of the initial condition (see Theorem \ref{thm-main-non-deg} below), and through the constant $\lambda$, 
which will be dependent on $T$. These two features already appear in the nonsingular controls derived in \cite{meno:pesc:zhan:21} and \cite{chau:meno:pesc:zhan:23} for the dynamics \eqref{eq:sde_kinetic_systems_non_deg} and \eqref{SDE}, respectively, with $b=0$.


Remarkably, we prove our Aronson-type and regularity bounds in the degenerate kinetic-type setting \eqref{SDE}, the controls for the non-degenerate equation \eqref{eq:sde_kinetic_systems_non_deg} being obtained as a direct corollary by letting $F_1, b$ and $\sigma$ be independent of the variable $X^2$.

\subsection{Assumptions and main results}\label{sec:sub_mainresults}

By employing a suitable notation, we will accomodate both the degenerate and non-degenerate settings. In particular, in the set of assumptions below, we let 
\begin{equation}\label{eq:assump_consts}
T>0,\qquad \beta\in (0,-1/2),\qquad \nu\in(-2 \beta, 1)
\end{equation}
 be fixed and set:
\begin{itemize}
\item[-] $(\mathcal{O}, {\bf o}): = (\Rdd, {\bd})$, in the degenerate case;
\item[-] $(\mathcal{O}, {\bf o}): = (\Rd, {\be})$, in the non-degenerate case.
\end{itemize}

The first assumption is a non-degeneracy  H\"ormander-type condition, which is assumed to be satisfied throughout the paper:
\begin{itemize}
\item[{\bf [H-H\"or]}] The function $\sigma$ satisfies the following uniform ellipticity 
condition on $\Rd$:
\begin{equation}
\kappa^{-1} |\xi|^2 \leq \langle \sigma \sigma^\top (t,\x) \xi  , \xi \rangle \leq \kappa |\xi|^2 , \qquad (\x,\xi)\in \mathcal{O}\times\Rd, \quad \text{for a.e. } t\in[0,T],
\end{equation}
for a positive constant $\kappa$. 

Moreover, if $\mathcal{O}=\Rdd$, the function $F_2=F_2(t,\x)$ is differentiable with respect to $x_1$ and there exists a closed convex subset $\mathcal{E} \in GL_d$ (the set of all invertible $d\times d$ matrices with real entries) such that 
\begin{equation}
\nabla_1  F_2 (t,\x) \in \mathcal{E}, \qquad (t,\x)\in(0,T)\times \Rdd,\quad \text{a.e. in time.}
\end{equation}
\end{itemize}
In order to state our assumptions we need to introduce some function spaces which were considered in \cite{chau:meno:pesc:zhan:23} and that we will use in order to state the assumptions concerning the non-singular coefficients $F,\sigma $ in \eqref{SDE} and \eqref{eq:sde_kinetic_systems_non_deg}.

Namely, for $d,l\in\N$, $j\in\{0\}\cup\N$ and $\gamma\in \textcolor{black}{[0,1)}$, given a function $f$ from $\Rd$ to $\R^l$, we say that $f\in \sC_{\mathbf{e}}^{j+\gamma}(\Rd;\R^l) = \sC^{j+\gamma}(\Rd;\R^{l})$ if
\begin{equation}
[f]_{\sC_{\mathbf{e}}^{j+\gamma}(\R^d;\R^{l})}=[f]_{\sC^{j+\gamma}(\R^d;\R^{l})}:=\sum_{k=1}^j \|\nabla^k f\|_{L^\infty(\R^d;\R^l)}+\sup_{x\not=y,|x-y|\leq 1}\frac{|\nabla^j f(x)-\nabla^jf(y)|}{|x-y|^\gamma}<\infty,
\end{equation}
where $\nabla^k$ stands for the $k$-order gradient. \textcolor{black}{Note that the functions in $\sC^{j+\gamma}(\R^d;\R^{l})$ can be unbounded and have sublinear growth}. 

Importantly the functions in  $\sC^0(\R^d;\R^{l})$ can  be possibly discontinuous and also satisfy (see \cite{wang:zhan:16}, Lemma 2.3)
\begin{equation}\label{eq:sub_linear_growth}
\sup_{x\neq y} \frac{|f(x)-f(y)|}{1+|x-y|}<+\infty.
\end{equation}
We now recall the definition of a \emph{anisotropic H\"older spaces} associated with \eqref{HOMO_METRIC} (see e.g. \cite{lun:97}).
Given a function $f$ from $\Rdd$ to $\R^l$, we say that $f\in \sC_{\mathbf{d}}^{j+\gamma}(\R^{2d};\R^l)$ if
\begin{equation}
[f]_{\sC_{\mathbf{d}}^{j+\gamma}(\R^{2d};\R^l)}:=\sup_{x_2\in\R^{d}} [f(\cdot,x_2)]_{\sC^{j+\gamma}(\R^d;\R^l)}
+\sup_{x_1\in\R^{d}} [f(x_1,\cdot)]_{\sC^{(j+\gamma)/3}(\R^d;\R^l)}<\infty.
\end{equation}
In particular, for $f\in\sC_{\mathbf{d}}^{1+\gamma}(\R^{2d};\R)$ the following Taylor's expansion holds:
\begin{align}\label{Taylor}
|\mathcal T_f(\x,\y):=f(\x)-f(\y)-\langle \nabla_{x_1}f(\y) , (\x-\y)_1 \rangle|\leq C_\gamma\|f\|_{\sC^{1+\gamma}_{\bf d}}|\x-\y|^{1+\gamma}_{\bf d}, \qquad \x ,\y \in\Rdd.
\end{align}

Finally, for a measurable function $g:[0,T]\times \mathcal{O} \to \R^l$, we set
\begin{equation}
[g]_{{\bf o},T,j+\gamma} := \sup_{t\in[0,T]} [g{(t,\cdot)}]_{\sC_{\mathbf{o}}^{j+\gamma}(\mathcal{O};\R^l)}.
\end{equation}
Note that, in light of \eqref{eq:sub_linear_growth}, {if $\sup_{t\in [0,T]}|g(t,0)|$ is finite}, then $[g]_{{\bf o},T,0} <\infty$ implies that $g$ is bounded with respect to the time variable, for any fixed spatial point
.

The following standing regularity assumptions will be recalled in each statement, when needed, with $T, \beta$ and $\nu$ as in \eqref{eq:assump_consts}:
\begin{itemize}
\item[{\bf [H-${\bf F}$]}] $F_1:[0,T]\times \mathcal{O} \to \Rd$ is a measurable function such that {$\sup_{t\in [0,T]}|g(t,0)|$ is finite}
and $[F_1]_{{\bf o},T,0} < \infty$. 
Moreover, if $(\mathcal{O}, {\bf o}) = (\Rdd , {\bf d})$, then $F_2:[0,T]\times \mathcal{O} \to \Rd$ is a measurable function such that $[F_2]_{{\bf o},T,1+\beta+\nu}<\infty$.
\item[{\bf [H-$\sigma$]}] $\sigma \in L^{\infty}_T {\mathcal C}_{\bf o}^{\beta+\nu}(\mathcal{O},\Rd)$.
\item[{\bf [H-$b$]}] $b\in \Linfo{\beta}(\mathcal{O},\Rd)$.
\end{itemize}
In {\bf [H-$\sigma$]} and {\bf [H-$b$]} above we set $\Linfo{\vartheta}(\mathcal{O},\Rd):=L_{[0,T]}^\infty(\mathbb B_{\mathbf o,\infty,\infty}^\vartheta(\mathcal{O},\Rd))$, the precise definition of (possibly anisotropic) Besov spaces $\mathbb B_{\mathbf o,p,q}^\vartheta$ being provided in Section \ref{SEC_DEF_BESOV_ANIS}. We also emphasize that no regularity is assumed with respect to the time variable, apart from boundedness.

%

Hereafter, we say that Assumption {\bf [H]} is in force whenever {\bf [H-H\"or]}, {\bf [H-$\gF$]}, {\bf [H-$\sigma$]} and {\bf [H-$b$]} hold true.

We introduce now four sets of parameters (which encode the dependency on the underlying constants):
\begin{equation}\label{DEF_THETAS}
\Theta:=\{d,\beta,\nu+\beta,\kappa,  \|\sigma\|_{\Linfo{\beta+\nu}},[F_1]_{{\bf o},T,0}, [F_2]_{{\bf o},T,1+\beta+\nu}   ,\mathcal E \},  \quad \Theta_{T}:=\Theta \cup \{ T \},
\end{equation}
and
\begin{equation}
\Theta_{b}:=\Theta \cup \{\|b\|_{\Linfo{\beta}},\beta\}, \quad \Theta_{T,b}:=\Theta_T \cup (\|b\|_{\Linfo{\beta}},\beta\}.
\end{equation}
Our heat-kernel and gradient bounds will be stated in terms of the Gaussian density $g_\lambda$ defined, for $\lambda>0$, as follows: 
\begin{itemize}
\item in the degenerate case, i.e. $(\mathcal{O}, {\bf o}): = (\Rdd, {\bd})$, 
\begin{equation}\label{THE_DEF_GAUSS_DEG}
g^{ {\bf d}}_\lambda(u,\z):=\frac{1}{(2\pi \lambda)^du^{2d}}\exp\left(-\left\{\frac{|\z_1|^2}{\lambda u}+\frac{|\z_2|^2}{\lambda u^3}\right\}\right), \qquad u>0,\ \z\in \Rdd,
\end{equation}
\item in the non-degenerate case, i.e. $(\mathcal{O}, {\bf o}): = (\Rd, {\bf e})$, 
\begin{equation}\label{THE_DEF_GAUSS_NON_DEG}
g^{\bf e}_\lambda(u,z):=\frac{1}{(2\pi \lambda u)^{\frac d2}}\exp\left(-\frac{|z|^2}{\lambda u}\right), \qquad u>0,\ z\in \Rd.
\end{equation}

\end{itemize}
We also need an appropriate notion of \textit{flow} associated with the drift $\gF$ in \eqref{SDE}, which is \textit{rough} under {\bf [H-$\gF$]}. Namely, in the degenerate case $(\mathcal{O}, {\bf o}): = (\Rdd, {\bd})$, we introduce $\wt\btheta_{t,s}$ such that 
\begin{equation}\label{FLOW_MACRO_MACRO_KIN}
{\dot{\widetilde \btheta}}_{t,s}(\x)=\wt\gF\big(t,\wt\btheta_{t,s}(\x)\big), \quad \wt\btheta_{s,s}(\x)=\x,\qquad \x\in\Rdd, \ 0\leq s \leq t \leq T
\end{equation}
where 
$$
\wt\gF(t,\cdot):=\big(F_1(t,\cdot)*\rho_1, F_2(t,\cdot)*\rho_{|t-s|^{3/2}}\big).
$$
Here,
\begin{equation}
\rho_\eps(\x):=\eps^{-2d}\rho(\eps^{-1}\x), \qquad \eps>0, \ \xi\in\Rdd, 
\end{equation}
with $\rho$ being a smooth density function with compact support, and $*$ stands for the usual spatial convolution. The previous regularizations allow to have a flow univocally defined in the classical sense. 

Similarly, in the non degenerate setting $(\mathcal{O}, {\bf o}): = (\Rd, {\be})$, we {introduce $\wt\theta_{t,s}^{(1)}$ such that}: 
\begin{equation}\label{FLOW_MACRO_NON_DEG}
{\dot{\widetilde \theta}}_{t,s}^{(1)}(x)=\wt F_1\big(t, \wt\theta_{t,s}^{(1)}(x)\big), \quad \wt\theta_{s,s}^{(1)}(x)=x, \qquad x\in\Rd, \ 0\leq s \leq t \leq T,
\end{equation}
where
\begin{equation}
\wt F_1(t,\cdot):= F_1(t,\cdot)*\rho_1.
\end{equation}


We now state the main result of this article.
%
\begin{THM}[The degenerate case]\label{thm-main-deg} Under Assumption {\bf [H]}, with $(\mathcal{O}, {\bf o}): = (\Rdd, {\bd})$,  we have: 
\begin{itemize}
\item[(i)] \emph{Well-posedness:} For any $t_0\in [0,T)$ and any probability distribution $\initlaw$ on $\R^{2d}$, there exists a unique solution, $P_{t_0,\initlaw}$, to the singular \textit{generalized} martingale problem $\text{MP}(b,\gF,\sigma;t_0,\initlaw)$, in the sense of Ethier and Kurtz \cite{EK:86} (see Definition \ref{def_singular_mp} below).
\item[(ii)] \emph{Markov property:} Denoting by $(\x_t)_{t\in [ t_0,T]} $ the canonical process defined on the space of the continuous functions from $[t_0,T]$ to $\Rdd$, equipped with the Borel $\sigma$-algebra of the topology induced by the uniform convergence, and by $(\Fc^{t_0}_s)_{s\in[t_0,T]}$ its natural filtration, we have 
\begin{equation}
P_{t_0,\initlaw}(\x_t \in H | \Fc^{t_0}_s) 
= P(s,\x_s; t,  H) , \quad P_{t_0,\initlaw}\text{-a.s.}, 
\end{equation} 
for any $t_0\leq s\leq t \leq T$ and any Borel set $H\subset \Rdd$, 
where 
\begin{equation}
P(s,\x; t,  H) := P_{t_0,\delta_{\x}} (\x_t \in  H) , \quad \quad 
\x\in\Rdd.
\end{equation}

\item[(iii)] \emph{Transition density estimates:}  
For any $0\leq s< t \leq T$ and $\x\in\Rdd$, the transition probability $P(s,\x; t, \d y)$ admits a density $p(s,\x,t,\cdot)$. Furthermore, there exist two positive constants $(\lambda,C)=(\lambda,C)(\Theta_{T,b})$
such that
{\mathtoolsset{showonlyrefs=false}\begin{align}\label{eq:THM-deg_HK_1} 
C^{-1} {g^{ {\bf d}}_{\lambda^{-1}}}(t-s, \tilde \btheta_{t,s}(\x) - \y)   \leq p (s,\x, t, \y  ) &\leq C {g^{ {\bf d}}_\lambda}(t-s, \tilde \btheta_{t,s}(\x) - \y), \\
| \nabla_{\x_1} p(s, \x, t, \y  ) | & \leq \frac{C}{(t-s)^{\frac{1}{2}}} {g^{ {\bf d}}_\lambda}(t-s, \tilde \btheta_{t,s}(\x) - \y),\label{eq:THM-deg_HK_2} 
\end{align}}
for any $0\le s<t\le T$ and $ \x,\y\in \R^{2d}$, with $g^{ {\bf d}}_\lambda$ as defined in \eqref{THE_DEF_GAUSS_DEG}.

Finally, for all $\etabackward\in (0,{1+\beta}),\ \etaforward\in  (0,\nu+\beta)$ and $\delta=0,1$, there exist two positive constants $\Cbackward:=\Cbackward(\Theta_{T,b},\etabackward)$ and $\Cforward:=\Cforward(\Theta_{T,b},\etaforward) $ such that 
{\mathtoolsset{showonlyrefs=false}\begin{align}%
\hspace{40pt}\left| p(s,\x,t,\y)-  p(s,\x_1, \x'_2,t,\y) \right| & \le \Cbackward \frac{|\x_2-\x_2'|^{\frac{1 + \etabackward}{3}}}{(t-s)^{\frac{1 + \etabackward}{2}}} \notag \\ \label{eq:THM-deg_HK_3}
&\qquad\times \Big({g^{ {\bf d}}_\lambda}(t-s, \tilde \btheta_{t,s}(\x) - \y)+{g^{ {\bf d}}_\lambda}(t-s, \tilde \btheta_{t,s}(\x_1 , \x_2') - \y)\Big),\\ \notag
\left|\nabla_{\x_1}  p(s,\x,t,\y)-\nabla_{\x_1}  p(s,\x',t,\y) \right| & \le  \frac{\Cbackward}{(t-s)^{\frac{1}{2}}}\left(\frac{|\x-\x'|_{\bf d}}{(t-s)^{\frac{1}{2}}}\right)^{\etabackward}\\ \label{eq:THM-deg_HK_4}
&\qquad\times \Big({g^{ {\bf d}}_\lambda}(t-s, \tilde \btheta_{t,s}(\x) - \y)+{g^{ {\bf d}}_\lambda}(t-s, \tilde \btheta_{t,s}(\x ') - \y)\Big),\\ \notag
\left|\nabla_{\x_1}^\delta p(s,\x,t,\y)-\nabla_{\x_1}^\delta p(s,\x,t,\y') \right| & \le\frac{\Cforward}{(t-s)^{\frac{\delta}{2}}}\left(\frac{|\y-\y'|_{\bf d}}{(t-s)^{\frac{1}{2}}}\right)^{\etaforward}\\ \label{eq:THM-deg_HK_5}
&\qquad\times \Big({g^{ {\bf d}}_\lambda}(t-s, \tilde \btheta_{t,s}(\x) - \y)+{g^{ {\bf d}}_\lambda}(t-s, \tilde \btheta_{t,s}(\x) - \y ')\Big),
\end{align}}
for any $0\le s<t\le T$ and $ \x,\x',\y,\y'\in \R^{2d}$.
\end{itemize}
\end{THM}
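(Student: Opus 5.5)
The plan is a perturbative (Duhamel) expansion around the non-singular dynamics, in the spirit announced in the abstract. We take as parametrix not a frozen Gaussian but the full transition density $\Gamma(s,\x,t,\y)$ of the SDE with $b\equiv 0$, i.e. of the operator $\Kc$ in \eqref{gen}. From \cite{chau:meno:pesc:zhan:23} we import, under {\bf [H-H\"or]}, {\bf [H-$\gF$]} and {\bf [H-$\sigma$]}, the following facts about $\Gamma$:
\begin{itemize}
\item two-sided Gaussian bounds with respect to $g^{\bf d}_\lambda(t-s,\tilde\btheta_{t,s}(\x)-\y)$;
\item bounds on $\nabla_{\x_1}\Gamma$ and $\nabla^2_{\x_1}\Gamma$ of order $(t-s)^{-1/2}$ and $(t-s)^{-1}$;
\item H\"older regularity in the backward variable (in the anisotropic metric) and in the forward variable $\y$, the latter of order $\eta_{\bf f}<\nu+\beta$, limited by the regularity of $\sigma$.
\end{itemize}

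We first mollify $b$ into $b_m$, with $\|b_m\|_{\Linfo{\beta}}\le C\|b\|_{\Linfo{\beta}}$. For the mollified problem the density $p_m$ exists and satisfies
\begin{equation}
p_m=\Gamma+\Gamma\otimes \langle b_m,\nabla_1 p_m\rangle ,\qquad (f\otimes g)(s,\x,t,\y):=\int_s^t\!\!\int f(s,\x,u,\z)\,g(u,\z,t,\y)\,\d\z\,\d u .
\end{equation}
We iterate this identity as a series $p_m=\sum_k \Gamma\otimes H_m^{\otimes k}$ and work at the level of the gradient.

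The key estimate is how to pair $b_m(u,\cdot)\in\B^\beta_{\infty,\infty}$ with $\nabla_{\z_1}$ of a kernel. We use the duality $\B^\beta_{\infty,\infty}\times\B^{-\beta}_{1,1}$, and we need
\begin{equation}
\Big\| \nabla_{\z_1}\Gamma(u,\cdot,t,\y)\Big\|_{\B^{-\beta+\epsilon}_{1,1}}\lesssim (t-u)^{-\frac12+\frac{\beta}{2}-\frac\epsilon 2},
\end{equation}
with a Gaussian-weighted version in $\y$. This follows from the backward H\"older regularity of $\nabla_1\Gamma$ of order $\eta_{\bf b}>-\beta$, through a thermic characterisation of anisotropic Besov norms. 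Since the product structure also involves $\nabla_{\x_1}\Gamma(s,\x,u,\z)$, which must be controlled in $\B^{-\beta}$ in $\z$, we need forward regularity of $\nabla_1\Gamma$ of order $>-\beta$. This is exactly where $\nu>-2\beta$ enters, as $\eta_{\bf f}<\nu+\beta$ must exceed $-\beta$. Each time convolution then produces a factor $(t-s)^{(1+\beta)/2}\times$ a Beta function. The series converges with bounds uniform in $m$, and the Gaussian upper bounds together with \eqref{eq:THM-deg_HK_2} follow, by Chapman--Kolmogorov-type convolution of Gaussians along the flow $\tilde\btheta$ (whose approximate flow property from \cite{chau:meno:pesc:zhan:23} we take for granted). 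The H\"older bounds \eqref{eq:THM-deg_HK_3}--\eqref{eq:THM-deg_HK_5} follow by the same scheme: we split each convolution into the diagonal regime $|\x-\x'|_{\bf d}\ge (t-s)^{1/2}$, where the estimate is trivial, and the off-diagonal regime, where we use the corresponding regularity of $\Gamma$. The lower bound holds first on short times and in the diagonal regime, since the perturbation is $O((t-s)^{(1+\beta)/2})$. It extends globally by a chaining argument along the flow combined with Chapman--Kolmogorov.

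Uniform-in-$m$ estimates, including the H\"older moduli, give compactness of $p_m$ and of the laws $P^m_{t_0,\initlaw}$ (tightness via moments from the Gaussian bounds). Limits solve the generalized martingale problem of Definition \ref{def_singular_mp}, which yields existence in (i). For uniqueness, we use the Schauder estimates for the Cauchy problem $\Lc u=f$ obtained from the same kernel bounds: $u=\int p\,f$ belongs to a space with $\nabla_1 u$ H\"older of order $>-\beta$. Any solution of the martingale problem then has one-dimensional marginals determined by duality with these $u$. The Markov property (ii) follows by the standard Ethier--Kurtz argument from well-posedness for all initial laws, and the density in (iii) is the limit of $p_m$. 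The main obstacle is the anisotropic Besov duality bound on the degenerate kernel $\Gamma$ with rough $\sigma$ (multiplicative noise). We must obtain the forward-variable regularity of $\nabla_{\x_1}\Gamma$ of order strictly above $-\beta$ in the metric $\bf d$, in a Gaussian-integrable form, and we would first attempt this by re-running the parametrix of \cite{chau:meno:pesc:zhan:23} tracking the $\y$-dependence.
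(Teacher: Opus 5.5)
Your plan shares the paper's backbone. It uses the full $b\equiv 0$ density $p_{\gF,\sigma}$ as proxy rather than a frozen Gaussian, mollifies $b$ with bounds uniform in the mollification, and relies on the $\B^{\beta}_{\infty,\infty,\bf d}$--$\B^{-\beta}_{1,1,\bf d}$ duality. It needs forward regularity of the proxy of order in $(-\beta,\nu+\beta)$, which is exactly where $\nu>-2\beta$ enters. It then uses Arzel\`a--Ascoli for the densities and uniqueness by duality with the singular Cauchy problem.

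The organisation differs in three places.
\begin{itemize}
\item You use the backward Duhamel form and bound a Neumann series term by term. The paper uses the forward form \eqref{duhamel-mollified-sde} once and closes a Gronwall--Volterra inequality on the normalised quantity $h^{\etaforward,n}_{\x}$ of \eqref{THE_NOR}, which bundles the pointwise bound with the forward H\"older modulus. The paper therefore needs $h^{\etaforward,n}_{\x}<\infty$ a priori (its Step~1: an $n$-dependent series with an $n$-independent concentration constant), which your induction avoids. In exchange, your induction must carry, for every iterate, a H\"older modulus of order $>-\beta$ in the variable paired with $b_m$, plus mixed moduli for the gradient bounds.
\item Your tightness argument is simpler than the paper's and correct. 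The Markov property of the mollified laws and uniform Gaussian bounds give $\E|\x_t-\x_s|^4\lesssim (t-s)^2(1+|\x_0|^4)$, so Kolmogorov's criterion applies. The Zvonkin-type transform $\phi^{(n)}=\x_1+u^{(n)}$ with the Aldous criterion (Lemma \ref{lemm:kol}, Proposition \ref{prop:tightness}) is then unnecessary. Likewise, writing $u_m$ through $p_m$ gives the martingale property under the mollified laws directly from the Markov property, instead of the coefficient-approximation argument of Lemma \ref{lemm:equivalence}. This works once the Duhamel identity tested against $(\ell,g)$ identifies $u_m$ with the mild solution.
\item Deriving (ii) from Ethier--Kurtz rather than from Chapman--Kolmogorov plus the Stroock--Varadhan construction is equally standard.
\end{itemize}

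One step fails as written: summing the series ``by Chapman--Kolmogorov-type convolution of Gaussians along $\tilde\btheta$'' using only its approximate flow property. Such a convolution gives $\int g^{\bf d}_\lambda(u-s,\tilde\btheta_{u,s}(\x)-\z)\,g^{\bf d}_\lambda(t-u,\tilde\btheta_{t,u}(\z)-\y)\,\d\z\lesssim g^{\bf d}_{\kappa\lambda}(t-s,\tilde\btheta_{t,s}(\x)-\y)$ only with some $\kappa>1$. The $k$-th term is then controlled only by $C^k g^{\bf d}_{\kappa^k\lambda}$, and no Gaussian bound survives the summation.

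This is why the paper states Proposition \ref{prop-HK-proxy-sde} against $p_{\gF,\lambda}$, which is a genuine transition density. Chapman--Kolmogorov for $p_{\gF,\bar\lambda}$ is then exact. The loss $p_{\gF,\lambda}\lesssim p_{\gF,\bar\lambda}$, including the diagonal shift $\z\to\w$ inside the thermic norm, is paid only on the proxy factor and does not compound. Proposition \ref{prop-HK-proxy-sde_new} converts to $g^{\bf d}$ along $\tilde\btheta$ once, at the end.

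Relatedly, what must be bounded is the $\B^{-\beta}_{1,1,\bf d}$ norm of the product kernel. It has to be computed directly in the thermic norm, with the cancellation and the split of $|\z-\w|_{\bf d}$ against $(t-u)^{1/2}$, as in Lemma \ref{lemma-besov-convo}. Separate Besov norms of the two factors lose the localisation that lets the $\z$-integral reproduce Chapman--Kolmogorov.

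Two ingredients are also left unsupplied.
\begin{itemize}
\item The forward and mixed regularity of the proxy is not in \cite{chau:meno:pesc:zhan:23}. Your list imports it, though your last sentence rightly says it must be proved. The paper gets it in Appendix \ref{AUX_EST} from \eqref{D1} with freezing point $(t,\y)$, splitting time at $t-|\y-\y'|_{\bf d}^2$. It treats the change of freezing point through Chapman--Kolmogorov and \eqref{D1} again, which reduces it to the sensitivity of the frozen Gaussian in its freezing point (Lemma \ref{Lem_sensitivity}). That lemma is the source of the cap $\etaforward<\nu+\beta$.
\item Definition \ref{def_singular_mp} tests with the unique mild solution, so your $u=\int p f$ must be identified with it, and mild solutions must be unique. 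This requires Schauder estimates for $P^{{\bf F},\sigma}_{s,t}$ acting on $\mathcal C^{\beta}_{\bf d}$ (Lemma \ref{lem:limit_semigroup_F_sigma}, Proposition \ref{prop:schauder}). These in turn rest on the mixed estimates \eqref{eq:ste_new} and \eqref{eq:ste_M_DEG_new}.
\end{itemize}
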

As a direct corollary of the previous theorem, the same results hold in the non-degenerate case. For the sake of clarity, we state them below in a separate theorem.
\begin{THM}[The non-degenerate case]\label{thm-main-non-deg} 
Under Assumption {\bf [H]}, with $(\mathcal{O}, {\bf o}): = (\Rd, {\be})$, 
we have: 
\begin{itemize}
\item[(i)] \emph{Well-posedness:} For any $t_0\in [0,T)$ and any distribution $\initlaw$ on $\R^{d}$, there exists a unique solution, $P_{t_0,\initlaw}$, to the singular generalized martingale problem $\text{MP}(b,F^1,\sigma,t_0,\initlaw)$, in the sense of \cite{EK:86} (see Definition \ref{def_singular_mp} below, up to the corresponding modification of the generator). 
\item[(ii)] \emph{Markov property:} Denoting by $(x_t)_{t\in [ t_0,T]} $ the canonical process defined on the space of the continuous functions from $[t_0,T]$ to $\Rd$, equipped with the Borel $\sigma$-algebra of the topology induced by the uniform convergence, and by $(\Fc^{t_0}_s)_{s\in[t_0,T]}$ its natural filtration, we have 
\begin{equation}
P_{t_0,\initlaw}(x_t \in H | \Fc^{t_0}_s) 
= P(s,x_s; t,  H) , \quad P_{t_0,\initlaw}\text{-a.s.}, 
\end{equation} 
for any $t_0\leq s\leq t \leq T$ and any Borel set $H\subset \Rd$, 
where 
\begin{equation}
P(s,x; t,  H) := P_{t_0,\delta_{x}} (x_t \in  H) , \quad \quad 
x\in\Rdd.
\end{equation}

\item[(iii)] \emph{Transition density estimates:}  
For any $0\leq s < t \leq T$ and $x\in\Rd$, the transition probability $P(s,x; t, \d y)$ admits a density $p(s,x,t,\cdot)$. Furthermore, there exist two positive constants $(\lambda,C)=(\lambda,C)(\Theta_{T,b})$
such that
\begin{align}\label{eq:THM-deg_HK} 
C^{-1} {g^{\bf e}_{\lambda^{-1}}}\big(t-s,  \tilde \theta_{t,s}^{(1)}(x) - y\big)   \leq p (s,x, t, y  ) &\leq C {g^{\bf e}_\lambda}\big(t-s,  \tilde \theta_{t,s}^{(1)}(x) - y\big), \\
| \nabla_{x} p(s, x, t, y  ) | & \leq \frac{C}{(t-s)^{\frac{1}{2}}} {g^{\bf e}_\lambda}\big(t-s,  \tilde \theta_{t,s}^{(1)}(x) - y\big),
\end{align}
for any $0\le s<t\le T$ and $x,y\in \Rd$, with $g^{\bf e}_\lambda$ as defined in \eqref{THE_DEF_GAUSS_NON_DEG}.

Finally, for all $\etabackward\in (0,1+\beta),\ \etaforward\in  (0,\nu+\beta)$ and $\delta=0,1$, there exist two positive constants $\Cbackward:=\Cbackward(\Theta_{T,b},\etabackward)$ and $\Cforward:=\Cforward(\Theta_{T,b},\etaforward) $ such that 
\begin{align}\label{eq:THM-deg_SENSI}
\hspace{40pt}\left|\nabla_{x}  p(s,x,t,y)-\nabla_{x}  p(s,x',t,y) \right| & \le  {\frac{\Cbackward}{(t-s)^{\frac{1}{2}}}}\left(\frac{|x-x'|}{(t-s)^{\frac{1}{2}}}\right)^{\etabackward}\\
&\qquad\times \Big({g^{\bf e}_\lambda}\big(t-s,\tilde \theta_{t,s}^{(1)}(x) - y\big)+{g^{\bf e}_\lambda}\big(t-s,\tilde \theta_{t,s}^{(1)}(x') - y\big)\Big),\\
\left|\nabla_{x}^\delta p(s,x,t,y)-\nabla_{x}^\delta p(s,x,t,y') \right| & \le\frac{\Cforward}{(t-s)^{\frac{\delta}{2}}}\left(\frac{|y-y'|}{(t-s)^{\frac{1}{2}}}\right)^{\etaforward}\\
&\qquad\times \Big({g^{\bf e}_\lambda}\big(t-s,\tilde \theta_{t,s}^{(1)}(x) - y\big)+{g^{\bf e}_\lambda}\big(t-s,\tilde \theta_{t,s}^{(1)}(x) - y'\big)\Big),
\end{align}
for any $0\le s<t\le T$ and $x,x',y,y'\in \Rd$.
\end{itemize}
\end{THM}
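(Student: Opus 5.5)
The plan is to derive Theorem \ref{thm-main-non-deg} directly from Theorem \ref{thm-main-deg}, as the text announces, by embedding the $d$-dimensional dynamics into a $2d$-dimensional kinetic one. Given $F_1,\sigma,b$ on $[0,T]\times\Rd$, I will define the degenerate coefficients on $\Rdd$ by
\begin{equation}
\bar F_1(t,\x):=F_1(t,\x_1),\quad \bar\sigma(t,\x):=\sigma(t,\x_1),\quad \bar b(t,\x):=b(t,\x_1),\quad F_2(t,\x):=\x_1 .
\end{equation}
Then $\nabla_1F_2=I_d$, so $\mathcal E=\{I_d\}$ is closed and convex, and $[F_2]_{{\bf d},T,1+\beta+\nu}<\infty$ holds trivially. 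Ellipticity and $[\bar F_1]_{{\bf d},T,0}<\infty$ are inherited. The only non-trivial check is that functions constant in $\x_2$ preserve anisotropic Besov norms, namely
\begin{equation}
\|\bar b(t,\cdot)\|_{\B^{\beta}_{{\bf d},\infty,\infty}}\le C\|b(t,\cdot)\|_{\B^{\beta}_{\infty,\infty}},
\end{equation}
and similarly for $\sigma$. I plan to prove this with the thermic or Littlewood–Paley characterization of Section \ref{SEC_DEF_BESOV_ANIS}. The anisotropic heat kernel acting on a function of $\x_1$ alone reduces, after integrating out the $\x_2$-variable, to the Euclidean heat kernel in $\x_1$. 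This gives the equivalence of norms, so all constants in $\Theta_{T,b}$ are controlled by their non-degenerate counterparts.

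Next, I would identify the martingale problems. If $P$ solves the non-degenerate MP starting from $\initlaw$, the pair $(x_t, \x_2+\int_{t_0}^t x_r\d r)$ solves $\text{MP}(\bar b,\bar\gF,\bar\sigma;t_0,\initlaw\otimes\delta_{\x_2})$. Conversely, the first marginal of a degenerate solution solves the non-degenerate one, because test functions depending only on $\x_1$ are admissible and the generator acting on them reduces to the $d$-dimensional one. Existence, uniqueness and the Markov property then transfer from items (i)–(ii) of Theorem \ref{thm-main-deg}.

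The delicate step is item (iii). Writing $\bar p$ for the degenerate density, the non-degenerate density is the marginal
\begin{equation}
p(s,x,t,y)=\int_{\Rd}\bar p(s,(x,\x_2),t,(y,\y_2))\d\y_2 .
\end{equation}
Integrating \eqref{eq:THM-deg_HK_1} and \eqref{eq:THM-deg_HK_2} in $\y_2$ removes the $u^{-3d/2}$ factor and the second Gaussian component. Since $\bar F_1$ does not depend on $\x_2$, the first component of $\wt\btheta_{t,s}(\x)$ solves the regularized ODE driven by $F_1*\rho_1$ on $\R^{2d}$, restricted to functions of $\x_1$, which matches $\wt\theta^{(1)}_{t,s}(x)$ after adjusting the mollifier. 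A mismatch between the two mollifications is harmless: it changes the flow by $O(t-s)$ uniformly under [H-F], and this can be absorbed into $\lambda$ and $C$ on the bounded horizon $T$. The Hölder bounds follow the same way. For \eqref{eq:THM-deg_HK_4}, I take $\x,\x'$ with the same second component, so that $|\x-\x'|_{\bf d}=|x-x'|$. For \eqref{eq:THM-deg_HK_5}, I take $\y,\y'$ with the same second component, so that $|\y-\y'|_{\bf d}=|y-y'|$, and then integrate in $\y_2$.

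The main obstacle I expect is the Besov-norm transfer together with the compatibility of the flows. If the anisotropic Besov definition in Section \ref{SEC_DEF_BESOV_ANIS} is not directly tractable on $\x_2$-independent functions, a fallback is to rerun the proof of Theorem \ref{thm-main-deg} verbatim in the Euclidean setting, since every estimate there depends only on the homogeneous scaling.
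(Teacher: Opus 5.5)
Your overall route (lift to the kinetic system with $F_2(t,\x)=\x_1$, apply Theorem \ref{thm-main-deg}, then integrate out $\y_2$) is the one the paper intends when it calls the non-degenerate theorem a direct corollary. The analytic parts are sound:
\begin{itemize}
\item Since $g^{\bf d}_1(v,\x)$ factorizes into a Gaussian in $\x_1$ of variance of order $v$ and one in $\x_2$ of variance of order $v^3$, the thermic norms of $\bar b(t,\cdot)$ and $\bar\sigma(t,\cdot)$ coincide, up to a constant, with the Euclidean ones.
\item The mismatch between the two mollified flows is $O(t-s)$, so it can be absorbed into $\lambda$ and $C$.
\item Taking $\x,\x'$ (resp.\ $\y,\y'$) with equal second components and integrating in $\y_2$ gives the bounds in (iii).
\end{itemize}

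The gap is in the transfer of the martingale problem. Your uniqueness transfer rests on the claim that if $P$ solves the non-degenerate problem, then $(x_t,\x_2+\int_{t_0}^t x_r\,\d r)$ solves $\text{MP}(\bar b,\bar\gF,\bar\sigma;\dots)$. Nothing in the generalized framework supports this. A solution only gives the martingale property of $u(r,x_r)-u(t_0,x_{t_0})-\int g$ for mild solutions $u$ on $\R^d$. The kinetic test functions $\bar u(r,x_r,\x_2+\int_{t_0}^r x_v\,\d v)$ involve the additive functional, and functions of $x_r$ alone do not reach them. In the classical case you would bridge this with Itô's formula, but here $\bar u$ is not $C^{1,2}$. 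Moreover, no SDE representation or Itô calculus for the canonical process is available; this is exactly the reconstruction problem the paper leaves open.

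The converse (projection) direction is also not literal as you state it. Definition \ref{def_singular_mp} only admits $\ell\in\mathcal S(\R^{2d})$ and $g\in L^\infty_T\mathcal S(\R^{2d})$, which excludes every non-trivial datum independent of $\x_2$. So ``test functions depending only on $\x_1$ are admissible'' needs an argument.

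The repair is to lift the PDE rather than the process.
\begin{enumerate}
\item Lifted data $\bar\ell(\x)=\ell(\x_1)$, $\bar g(r,\x)=g(r,\x_1)$ are allowed in Definition \ref{def:sol_mild_backward_CP}, which only asks for H\"older--Besov regularity. So Theorem \ref{th:backward_CP_wellposedenss} gives a unique mild solution.
\item The coefficients are invariant under translations in $\x_2$, so uniqueness forces this solution to be $\x_2$-independent.
\item The marginal identity $P^{\bar\gF,\bar\sigma}_{s,t}\bar\varphi(\x)=P^{F_1,\sigma}_{s,t}\varphi(\x_1)$ for the proxy identifies its restriction with the non-degenerate mild solution. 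The Schauder and stability estimates (Theorem \ref{thm:regul_kolm}) come along with it.
\item With the non-degenerate Cauchy problem in hand, uniqueness follows from the one-dimensional-marginal argument of Section \ref{SEC_PB_MART}, which is dimension-free.
\item Existence follows by rerunning the construction of Section \ref{SEC_PB_MART}: the mollified kinetic Stroock--Varadhan solutions project onto the mollified non-degenerate ones, and the limit passage only uses stability of the Cauchy problem.
\end{enumerate}
Only after this identification is the non-degenerate transition kernel the $\y_2$-marginal of the kinetic one, so that your integration argument for (iii) applies. The Markov property then also transfers, since $\bar P(s,\x;t,H\times\R^d)$ does not depend on $\x_2$.
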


\begin{REM}[About the results]\label{RQ_MAIN_THMS}
Let us emphasize that the heat kernel and gradient estimates 
 in Theorems \ref{thm-main-deg} and \ref{thm-main-non-deg}  
 are actually similar to the ones obtained in \cite{meno:pesc:zhan:21} and \cite{chau:meno:pesc:zhan:23}, respectively, for the corresponding dynamics without the singular perturbation (i.e. with $b=0$). Hence, as far as the marginal laws are concerned, the singular drift does not impact the structure of the estimates but only impacts the regularity thresholds, which are clearly dependent on $\beta$. 

We also mention that the flows defined in \eqref{FLOW_MACRO_MACRO_KIN} and \eqref{FLOW_MACRO_NON_DEG}, which appear in the statement of the theorem, are chosen for coherence with the statement of the main results in \cite{meno:pesc:zhan:21} and \cite{chau:meno:pesc:zhan:23}, respectively. They actually illustrate that the deviations have to be taken into account with respect to the transport of the initial condition by the underlying mollified flow. We anyhow mention that the statement could have been stated with any Peano flow associated with the drifts $\gF $ or $F_1$, respectively, if $F_1$ is continuous in space.
\end{REM}

Let us point out that the results of Theorem \ref{thm-main-non-deg} can be seen as a Brownian extension, in the multiplicative case 
of the results obtained by Fitoussi in \cite{fito:23} for a non-degenerate SDE driven by a symmetric $\alpha $ stable additive noise, with $\alpha\in (1,2)$, and a Besov drift in  $b\in L^r\big([0,T], \mathbb B_{p,q}^\beta\big)$. The parameters $ r,p,q,\beta$ were chosen in order to guarantee well-posedness of the associated martingale problem as established in \cite{chau:meno:22}, namely, $ \beta>-\frac{\alpha-1-\frac dp-\frac \alpha r}{2}$. When $p,q=\infty $, the constraints match the ones of the current work. We restricted to this case for notational simplicity and believe the approach developed hereafter could be adapted to consider, in both the non-degenerate and degenerate setting, drifts in $L^r((0,T),\mathbb B_{p,q,\mathbf o}^\beta)$, under the same previous constraints with $\alpha=2 $. While this would lead to additional technicalities, the main ideas would remain unchanged.
On the technical side, we also point out that one of the differences with the approach in \cite{fito:23} is that, in our case, control of the concentration constants (variance) must be handled with additional care due the (Gaussian) exponential tails, as opposed to the polynomial ones arising in the pure-jump case.

One could as well wonder if Theorem \ref{thm-main-deg} also extends somehow to the pure jump case. Let us mention that, in the degenerate kinetic setting, density estimates are rather delicate to obtain.  The sharpest results obtained in that direction are due to Hou and Zhang, \cite{hou:zhan:24}, who derive rather precise estimates for the density of the couple  $(Z_t,\int_0^t Z_s ds) $ where $Z_s$ is an isotropic $\alpha $-stable process which constitutes the prototypical example of kinetic dynamics. It would be interesting and challenging as well, to check whether the estimates therein remain stable under perturbation by a singular drift.

From the probabilistic point of view, the first two items of Theorems \ref{thm-main-deg} and \ref{thm-main-non-deg} extend the literature concerned with weak regularization by noise, in that the weak well-posedness of an It\^o SDE with singular drift and multiplicative diffusion had not been established before, to the best of our knowledge, even in the non-degenerate case.
One could as well wonder about the link with the \textit{formal} dynamics written in \eqref{SDE}. We insist here that the estimates obtained in Theorems \ref{thm-main-deg} and \ref{thm-main-non-deg} are related to the transition densities of the canonical processes associated with the corresponding martingale problems. Under the former assumptions, a specific meaning to the dynamics was given in \cite{chau:meno:22}, in the non-degenerate case, in the particular case when $F_1\equiv 0 $ and $\sigma \equiv I_d$. 
One of the difficulties in that setting is that one must somehow reconstruct the drift, which is to be understood as a Dirichlet process, from the paths of the canonical process (which in particular requires keeping track of the noise). The reconstruction of the dynamics in the current setting will be the subject of further research.


Eventually, the following corollary is a straightforward consequence of our main result.
\begin{cor}
Under the assumptions of Theorem \ref{thm-main-deg} (or Theorem \ref{thm-main-non-deg}), for any $t_0\in [0,T)$ and any distribution $\initlaw$ on $\Rdd$ (or on $\Rd$), the Markov process $(\x_t)_{t\in{[t_0,T]}}$ (or $(x_t)_{t\in{[t_0,T]}}$), under $P_{t_0,\initlaw}$, is irreducible. Furthermore, the semigroup defined by $P(s,\cdot; t , \d y)$, $0\leq s <t \leq T$, is strong Feller.

In particular, (see \cite{peszat1995strong}) there exists at most one invariant distribution for $P(s,\cdot; t , \d y)$. If the latter exists, then the Markov process $(\x_t)_{t\in{[t_0,T]}}$ is ergodic.
\end{cor}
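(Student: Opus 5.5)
The corollary follows from the transition density estimates in item (iii) of Theorem \ref{thm-main-deg} (resp. Theorem \ref{thm-main-non-deg}). Combined with the Markov property of item (ii), no further analytic input is needed. I will argue in the degenerate case; the non-degenerate one is identical, with $g^{\bf e}_\lambda$ and $\tilde\theta^{(1)}_{t,s}$ in place of $g^{\bf d}_\lambda$ and $\tilde\btheta_{t,s}$.

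\emph{Irreducibility.} By the lower bound in \eqref{eq:THM-deg_HK_1}, for any $0\le s<t\le T$, any $\x\in\Rdd$ and any nonempty open set $H\subset\Rdd$, we have
\begin{equation}
P(s,\x;t,H)=\int_H p(s,\x,t,\y)\,\d\y\ \ge\ C^{-1}\int_H g^{\bf d}_{\lambda^{-1}}\big(t-s,\tilde\btheta_{t,s}(\x)-\y\big)\,\d\y>0,
\end{equation}
since the Gaussian kernel is strictly positive everywhere and $H$ has positive Lebesgue measure. In fact this holds for every Borel $H$ of positive Lebesgue measure, and the transition probabilities are mutually equivalent to Lebesgue measure. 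Using item (ii), the same positivity transfers to $P_{t_0,\initlaw}(\x_t\in H\,|\,\Fc^{t_0}_s)$, which is irreducibility of the process under $P_{t_0,\initlaw}$.

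\emph{Strong Feller.} Let $f$ be bounded and measurable, and set $P_{s,t}f(\x)=\int f(\y)p(s,\x,t,\y)\,\d\y$. To show that $\x\mapsto P_{s,t}f(\x)$ is continuous, write
\begin{equation}
|P_{s,t}f(\x)-P_{s,t}f(\x')|\le\|f\|_\infty\int_{\Rdd}|p(s,\x,t,\y)-p(s,\x',t,\y)|\,\d\y .
\end{equation}
Split $\x\to\x'$ into a change of the first component at fixed $\x_2$ and a change of the second component at fixed $\x_1'$. For the first component, the mean value theorem together with the gradient bound \eqref{eq:THM-deg_HK_2} gives an integrand dominated by
\begin{equation}
C(t-s)^{-1/2}|\x_1-\x_1'|\sup_{\z_1\in[\x_1,\x_1']} g^{\bf d}_\lambda\big(t-s,\tilde\btheta_{t,s}(\z_1,\x_2)-\y\big).
\end{equation}
For the second component, \eqref{eq:THM-deg_HK_3} gives a factor $|\x_2-\x_2'|^{(1+\etabackward)/3}$ times a sum of two Gaussians.

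The only point requiring care is that the sup over $\z_1$ still integrates to a bounded quantity in $\y$. I would handle this in one of two ways. One option is to use the Lipschitz-type control of the mollified flow $\tilde\btheta_{t,s}$ in $\x$ at fixed $s,t$ (it is a classical smooth flow), so that the shifted centers stay in a bounded set and the Gaussians are dominated by a single Gaussian with a slightly larger $\lambda$. The simpler option is to use Scheffé's lemma: $p(s,\cdot,t,\y)$ is continuous in $\x$, and the densities all have total mass one, so the $L^1(\d\y)$ convergence follows directly. Either way the right-hand side tends to $0$ as $\x'\to\x$, which gives the strong Feller property.

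\emph{Invariant measure.} The final claim is then the classical result of \cite{peszat1995strong}: a strong Feller and irreducible Markov semigroup admits at most one invariant probability measure, and if one exists it is ergodic. I expect no real obstacle in any of this. The only mildly delicate step is the dominated/$L^1$ convergence in the strong Feller argument, and Scheffé's lemma disposes of it.
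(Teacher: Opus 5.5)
Your proposal is correct and follows the route the paper intends when it calls the corollary a straightforward consequence of Theorem \ref{thm-main-deg}: the Gaussian lower bound \eqref{eq:THM-deg_HK_1} gives irreducibility, continuity of $p(s,\cdot,t,\y)$ from \eqref{eq:THM-deg_HK_2}--\eqref{eq:THM-deg_HK_3} combined with Scheff\'e's lemma gives the strong Feller property, and the last claim is the Doob--Khasminskii result from \cite{peszat1995strong}. Like the paper, you apply a result stated for time-homogeneous semigroups to a time-inhomogeneous family on $[0,T]$; your observation that every $P(s,\x;t,\cdot)$ is equivalent to Lebesgue measure is what makes uniqueness of an invariant measure for each fixed kernel go through directly.
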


The paper is organized as follows: we recall in Section \ref{SEC_CONTROLS_WITHOUT_SING} the useful estimates obtained in \cite{chau:meno:pesc:zhan:23} for the transition density of SDE \eqref{SDE} when $b=0$. This density will be here the proxy in the one-step-parametrix type/Duhamel expansion leading to our main results. This approach is somehow, to the best of our knowledge, new and seems promising for further investigations, i.e. it emphasizes it is not necessary to set up a full parametrix expansion starting form the perturbation by the noise. A more sophisticated proxy for which appropriate controls are available can be considered as well.
In Section \ref{SEC_CONTROLS_WITHOUT_SING} we also recall the specific definitions and properties of (possibly anisotropic) Besov spaces that we need for the analysis. Section \ref{SEC_DENS_ESTIMATES_MOLL} is specifically dedicated to the density estimates associated with mollified coefficients. Importantly, we show in Theorem \ref{thm-main-mollified} that estimates similar to those stated in Theorem \ref{thm-main-deg} hold, with constants that only depend on parameters appearing under \textbf{[H]}, i.e. all the estimates obtained therein are uniform with respect to the mollification parameter.
Section \ref{CAUCHY_PB} is dedicated to the  Cauchy problem associated with \eqref{eq:op_L}. 
In particular, well-posedness and related Schauder-type controls are given in Theorem \ref{th:backward_CP_wellposedenss}.
All these results are then used in Section \ref{SEC_PB_MART} to establish well-posedeness of the generalized martingale problem and the transition density estimates corresponding to the canonical process as stated in Theorem \ref{thm-main-deg}.
Eventually, some technical results are gathered in Appendices {\ref{app:proof_lemma_normal}, \ref{AUX_EST} and \ref{EQUIV_CAR_ANIS_BESOV_SPACES}. In particular, in Appendix \ref{AUX_EST} we prove some regularity estimates in the forward variables for the transition density of the model considered in \cite{chau:meno:pesc:zhan:23}, i.e. equation \eqref{SDE}  with $b=0$. These are crucial for the proof of our main result, which is based on duality arguments that require such controls.


\section{Preliminaries}\label{SEC_CONTROLS_WITHOUT_SING}
For the sake of simplicity, throughout the rest of the paper we will concentrate on the degenerate, kinetic, case. When the equation for $X^1$ is autonomous, namely the coefficients in the equation only depend on $X^1$, the non-degenerate setting can be derived as a particular case, 
by considering the corresponding kinetic equation and by integrating along the degenerate variable. Alternatively, the non-degenerate case could also be addressed directly by replacing, in the arguments below, the intrinsic distance  $|\cdot|_{\bf d}$ with the Eclidean one $|\cdot|_{\bf e}$, and working with the density estimates provided in \cite{meno:pesc:zhan:21} instead of those of Proposition \ref{prop-HK-proxy-sde}, from \cite{chau:meno:pesc:zhan:23}. We thus focus on the most delicate case. 
\subsection{Estimates for the non-singular density}\label{sec:prel_nonsingular}

Let us consider the following SDE, obtained from \eqref{SDE} by letting $b\equiv 0$, which we will be using as a proxy:
\begin{equation}\label{sde-proxy}
\begin{cases}
		\d X_t^{1} =F_1 (t,X_t) \d t+\sigma (t,X_t) \d W_t\\
		\d X_t^{2} =F_2 (t,X_t) \d t.
\end{cases}
\end{equation}
Here, the coefficients ${\bf F}=(F_1,F_2),\sigma$ are as in \eqref{eq:coeff_F_sigma} and will be assumed to satisfy {\bf [H-H\"or]}, {\bf [H-${\bf F}$]} and {\bf [H-$\sigma$]} {with $(\mathcal{O}, {\bf o}): = (\Rdd, {\bd})$}. This SDE has been thoroughly studied in \cite{chau:meno:pesc:zhan:23}, where its weak well-posedness, together with some uppper/lower bounds and regularity estimates for its transition density, hereafter denoted by $p_{\gF,\sigma}(s,\x,t,\y)$, were established. These results are reported below in Propositions \ref{prop-HK-proxy-sde_new} and \ref{prop-HK-proxy-sde}.

To shorten notation, for any given $\lambda>0$ we also denote by $p_{\gF,\lambda}= p_{\gF,\lambda}(s,\x,t,\y)$ the transition density of the solutions to \eqref{sde-proxy} with $\sigma \equiv \lambda \mathbb I_{d\times d}$.

\begin{propo}\label{prop-HK-proxy-sde} 
Let assumptions {\bf [H-H\"or]}, {\bf [H-${\bf F}$]} and {\bf [H-$\sigma$]}, {with $(\mathcal{O}, {\bf o}): = (\Rdd, {\bd})$}, be in force. 
For any initial probability measure $\initlaw$ on $\Rdd$, and initial time $t_0\in [0,T]$, there exists a unique solution, $P^{\gF,\sigma}_{t_0,\initlaw}$, to the martingale problem 
associated to \eqref{sde-proxy}, in the classical sense of Stroock-Varadhan, which is a Markov process. Furthermore, the corresponding transition probability function $P^{\gF,\sigma}(s, \x , t, \d \y)$ has a density $p_{\gF,\sigma}(s,\x,t,\y)$, which is a measurable function satisfying the following estimates.

There exist three positive constants $(C,C',\lambda):=(C,C',\lambda)(\Theta_T)$
such that
	\begin{align}
	C^{-1}p_{\gF,\lambda^{-1}}(s,\x,t,\y)	\leq p_{\gF,\sigma}(s,\x,t,\y) &\leq C\, p_{\gF,\lambda}(s,\x,t,\y) \leq C' p_{\gF,2 \lambda}(s,\x,t,\y)
	,\label{HK-density_upper} \\
		|\nabla_{x_1}p_{\gF,\sigma}(s,\x,t,\y)|&\leq \frac{C}{(t-s)^{\frac{1}{2}}} p_{\gF,\lambda}(s,\x,t,\y)\label{HK-gradient-back},
		\end{align}
for any $0\leq s \leq t \leq T$ and $\x,\y\in \R^{2d}$.

		Furthermore, for any $\etabackward\in (0,1)$ and $\etaforward\in (0,\nu+\beta) $, 
		there exist two positive constants $\Cbackward:=\Cbackward(\Theta_{T},\etabackward)$ and $\Cforward:=\Cforward(\Theta_{T},\etaforward) $ 
such that
\begin{align}
\left| p_{\gF,\sigma}(s,\x,t,\y)-  p_{\gF,\sigma}(s, \x_1 , \x_2',t,\y) \right| & \le \Cbackward \frac{|\x_2-\x_2'|^{\frac{1 + \etabackward}{3}}}{(t-s)^{\frac{1 + \etabackward}{2}}}  \Big(p_{\gF,\lambda}(s,\x,t,\y)+ p_{\gF,\lambda}(s,\x_1 , \x_2',t,\y) \Big), \label{HK-holder-back} \\
		|\nabla_{x_1} p_{\gF,\sigma}(s,\x,t,\y)-\nabla_{x_1} p_{\gF,\sigma}(s,\x',t,\y)| &\leq  \frac{\Cbackward}{(t-s)^{\frac{1}{2}}} \left(\frac{|\x-\x'|_{\bf d}}{(t-s)^{\frac{1}{2}}}\right)^{\etabackward}   \Big( p_{\gF,\lambda}(s,\x,t,\y)+p_{\gF,\lambda}(s,\x',t,\y)\Big),\label{HK-holder-gradient-back}\notag
		\\
		&\\
		|\nabla_{x_1}^\delta p_{\gF,\sigma}(s,\x,t,\y)-\nabla_{x_1}^\delta p_{\gF,\sigma}(s,\x,t,\y')| &\leq  \frac{\Cforward}{(t-s)^{\frac{\delta}{2}}} \left(\frac{|\y-\y'|_{\bf d}}{(t-s)^{\frac{1}{2}}}\right)^{\etaforward}   \Big( p_{\gF,\lambda}(s,\x,t,\y)+p_{\gF,\lambda}(s,\x,t,\y')\Big),\notag\\
		\label{HK-holder-gradient-forward}
	\end{align}
\begin{align} 
|\nabla_{x_1}^{\delta} &p_{\gF,\sigma}(s,\x,t,\y) -\nabla_{x_1}^{\delta} p_{\gF,\sigma}(s,\x,t,\y') - \nabla_{x_1}^{\delta} p_{\gF,\sigma}(s,\x',t,\y) +\nabla_{x_1}^{\delta} p_{\gF,\sigma}(s,\x',t,\y')| \\
& \leq \frac{\Cbackward + \Cforward}{(t-s)^{\frac{\delta}{2}}} \left(\frac{|\y-\y'|_{\bf d}}{(t-s)^{\frac{1}{2}}}\right)^{^{\etaforward}} 
\left(\frac{|\x-\x'|_{\bf d}}{(t-s)^{\frac{1}{2}}}\right)^{\etabackward}\\
&\qquad\quad \times \Big( p_{\gF,\lambda}(s,\x,t,\y)+p_{\gF,\lambda}(s,\x,t,\y')+p_{\gF,\lambda}(s,\x',t,\y)+p_{\gF,\lambda}(s,\x',t,\y')\Big) ,\label{eq:ste_new}
\end{align}
\begin{align} | &p_{\gF,\sigma}(s,\x,t,\y) - p_{\gF,\sigma}(s,\x,t,\y') -  p_{\gF,\sigma}(s,\x_1,\x_2',t,\y) + p_{\gF,\sigma}(s,\x_1,\x_2',t,\y')| \\
& \leq \frac{\Cbackward + \Cforward}{(t-s)^{\frac{\delta}{2}}} \left(\frac{|\y-\y'|_{\bf d}}{(t-s)^{\frac{1}{2}}}\right)^{^{\etaforward}} 
\left(\frac{|\x_2-\x_2'|^{\frac{1+\etabackward}3}}{(t-s)^{\frac{1+\etabackward}{2}}}\right)\\
&\qquad\quad \times \Big( p_{\gF,\lambda}(s,\x,t,\y)+p_{\gF,\lambda}(s,\x,t,\y')+p_{\gF,\lambda}(s,\x_1,\x_2',t,\y)+p_{\gF,\lambda}(s,\x_1,\x_2',t,\y')\Big) ,\label{eq:ste_M_DEG_new}
\end{align}

for any $0\leq s \leq t \leq T$, $\x,\x',\y,\y'\in \R^{2d}$ and $\delta\in \{0,1\} $.

\end{propo}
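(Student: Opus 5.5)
Most of Proposition \ref{prop-HK-proxy-sde} is a restatement of results proved in \cite{chau:meno:pesc:zhan:23}, so the plan is to import what is already there and prove only the genuinely new bounds. Well-posedness of the Stroock--Varadhan martingale problem, the Markov property, existence of the density, the two-sided Gaussian bound \eqref{HK-density_upper}, the gradient bound \eqref{HK-gradient-back} and the backward H\"older controls \eqref{HK-holder-back}--\eqref{HK-holder-gradient-back} are established there under {\bf [H-H\"or]}, {\bf [H-$\gF$]}, {\bf [H-$\sigma$]}. The only hypothesis to check is that $\sigma\in L^\infty_T\mathcal C^{\beta+\nu}_{\bf d}$ with $\beta+\nu\in(0,1)$ is the H\"older regularity they require. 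The comparison $p_{\gF,\lambda}\le C p_{\gF,2\lambda}$ follows from the two-sided bounds with respect to $g^{\bf d}_\lambda(t-s,\tilde\btheta_{t,s}(\x)-\y)$ that hold for constant diffusion. It also uses the fact that the equivalence of $p_{\gF,\lambda}$ with these Gaussians is uniform in the variance parameter.

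The new estimates are the forward ones, \eqref{HK-holder-gradient-forward}, and the mixed ones, \eqref{eq:ste_new}--\eqref{eq:ste_M_DEG_new}. These are deferred to Appendix \ref{AUX_EST}. I would derive them from the parametrix representation of \cite{chau:meno:pesc:zhan:23}:
\[
p_{\gF,\sigma}=\tilde p+\tilde p\otimes\Phi ,
\]
where $\tilde p$ is the Gaussian proxy obtained by freezing coefficients along the backward flow at the forward point $\y$ and $\Phi$ is the resolvent kernel. Split the forward increment according to whether $|\y-\y'|_{\bf d}\ge (t-s)^{1/2}$ or not. In the off-diagonal regime the bound follows directly from \eqref{HK-density_upper}--\eqref{HK-gradient-back}. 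In the diagonal regime:
\begin{itemize}
\item for the main term $\tilde p$, the dependence on $\y$ enters through the frozen coefficients, which are $(\nu+\beta)$-H\"older, and through the Gaussian itself, which is smooth. This yields the rate $(|\y-\y'|_{\bf d}/(t-s)^{1/2})^{\nu+\beta}$.
\item for the convolution term, split the time integral at the midpoint $(s+t)/2$. Near $t$, put the forward H\"older regularity on $\tilde p(\tau,\cdot,t,\y)$. Near $s$, use H\"older regularity of $\Phi(\cdot,\cdot,t,\y)$ in $\y$, with the standard estimate $|\Phi(\tau,\z,t,\y)-\Phi(\tau,\z,t,\y')|\lesssim |\y-\y'|^{\eta}_{\bf d}(t-\tau)^{-1+(\nu+\beta-\eta)/2}\times$Gaussian, proved by iterating the series for $\Phi$.
\end{itemize}
The loss from $\nu+\beta$ to any $\etaforward<\nu+\beta$ is what makes these time singularities integrable.

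For the mixed bounds \eqref{eq:ste_new}--\eqref{eq:ste_M_DEG_new}, I would interpolate. Each quantity is bounded both by the sum of two backward differences and by the sum of two forward differences. When $|\x-\x'|_{\bf d}$ or $|\y-\y'|_{\bf d}$ exceeds $(t-s)^{1/2}$, the product bound follows from one of the single-variable bounds, since the missing factor is at least $1$. In the diagonal regime, I would redo the parametrix argument for $\nabla^\delta_{x_1}$ of the forward increment. The backward regularity of $\tilde p$ and of $\tilde p\otimes\Phi$ in $\x$ is applied to the already forward-differenced kernels, using the same midpoint splitting.

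The main obstacle I expect is the degenerate direction together with the rough drift $F_2$. The backward flow $\tilde\btheta$ is only defined through the mollified $\wt\gF$, and $\y\mapsto\tilde\btheta_{\tau,t}(\y)$ is merely $(1+\beta+\nu)$-regular in the anisotropic sense. Differences of frozen Gaussians with respect to $\y$ must therefore be controlled via the flow-comparison lemmas of \cite{chau:meno:pesc:zhan:23}, which give $|\tilde\btheta_{\tau,t}(\y)-\tilde\btheta_{\tau,t}(\y')|_{\bf d}\lesssim |\y-\y'|_{\bf d}+(t-\tau)^{1/2}$. The delicate point is keeping the $\x_2$-increment rate $(1+\etabackward)/3$ in \eqref{eq:ste_M_DEG_new} while differencing in $\y$. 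For this I would exploit the Kolmogorov structure, where $\x_2$-derivatives cost $(t-s)^{-3/2}$, and interpolate as above.
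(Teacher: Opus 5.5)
Importing well-posedness, the two-sided bounds, \eqref{HK-gradient-back} and the backward controls from \cite{chau:meno:pesc:zhan:23}, and settling the regime $|\y-\y'|_{\bd}>(t-s)^{1/2}$ via \eqref{HK-density_upper}--\eqref{HK-gradient-back}, matches the paper. The gap is in your diagonal-regime argument for \eqref{HK-holder-gradient-forward}.

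You freeze $p_{\gF,\sigma}(s,\x,t,\y)$ and $p_{\gF,\sigma}(s,\x,t,\y')$ at their own forward points. You then claim two things: that the main term $\wt p^{(t,\y)}(s,\x,t,\y)-\wt p^{(t,\y')}(s,\x,t,\y')$ is H\"older at rate $(|\y-\y'|_{\bd}/(t-s)^{1/2})^{\nu+\beta}$, and that $\Phi$ is H\"older in its forward variable. But the frozen coefficients are $\sigma(v,\btheta_{v,t}(\y))$ and $\nabla_{x_1}F_2(v,\btheta_{v,t}(\y))$ for all $v\in[s,t]$. The flow comparison you quote yourself, $|\btheta_{v,t}(\y)-\btheta_{v,t}(\y')|_{\bd}\lesssim|\y-\y'|_{\bd}+(t-v)^{1/2}$, only gives $|\sigma(v,\btheta_{v,t}(\y))-\sigma(v,\btheta_{v,t}(\y'))|\lesssim(|\y-\y'|_{\bd}+(t-v)^{1/2})^{\nu+\beta}$. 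This has no decay in $|\y-\y'|_{\bd}$ once $t-v\gg|\y-\y'|_{\bd}^2$.

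The problem is structural, not only due to the rough $F_1$. Already for $\gF(\x)=(0,\x_1)$, the backward flows from $\y,\y'$ with $\y_2=\y_2'$ are separated by $(\y_1-\y_1',-(t-v)(\y_1-\y_1'))$, i.e.\ by about $((t-v)|\y_1-\y_1'|)^{1/3}$ in $|\cdot|_{\bd}$. A $\sigma$ that is only $(\nu+\beta)/3$-H\"older in $x_2$ can then make the two scaled frozen covariances differ by a quantity of order $((t-s)|\y_1-\y_1'|)^{(\nu+\beta)/3}$. So your main term is in general at most $(\nu+\beta)/3$-H\"older in $\y$, not $\etaforward$-H\"older for every $\etaforward<\nu+\beta$.

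For the same reason, $H(\tau,\z,t,\y)-H(\tau,\z,t,\y')$ contains $(\wt\Ac^{(t,\y')}-\wt\Ac^{(t,\y)})\wt p^{(t,\y)}(\tau,\z,t,\y)$. When $t-\tau\gg|\y-\y'|_{\bd}^2$, this term admits no better bound than $(t-\tau)^{-1+(\nu+\beta)/2}$ times a Gaussian, with no factor of $|\y-\y'|_{\bd}$. Hence the forward H\"older bound on $\Phi$ cannot be obtained by iterating the series. Your treatment of \eqref{eq:ste_new}--\eqref{eq:ste_M_DEG_new}, built on the same decomposition, inherits the problem.

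The missing idea is never to change the freezing point over a time window longer than $|\y-\y'|_{\bd}^2$. The paper uses the one-step backward Duhamel identity \eqref{D1}, which has the true density on the left of the time convolution and holds for an arbitrary freezing point $(\tau,\bxi)$. It expands both $p_{\gF,\sigma}(s,\x,t,\y)$ and $p_{\gF,\sigma}(s,\x,t,\y')$ with the same freezing $(t,\y)$, and splits the time integral at $r=t-|\y-\y'|_{\bd}^2$ rather than at the midpoint. The pieces are then handled as follows.
\begin{itemize}
\item The main term and the contribution of $[s,r]$ only move the forward argument of one and the same frozen Gaussian, which is smooth and in the local diagonal regime.
\item On $[r,t]$, the two contributions, each with its own freezing point, are bounded separately and give $|\y-\y'|_{\bd}^{\nu+\beta}$ by time integration.
\item The switch of freezing point on $[r,t]$ is absorbed by Chapman--Kolmogorov at time $r$ followed by \eqref{D1} again. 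This collapses it into $\int p_{\gF,\sigma}(s,\x,r,\z)\big(\wt p^{(t,\y)}-\wt p^{(t,\y')}\big)(r,\z,t,\y')\,\mathrm{d}\z$, which Lemma \ref{Lem_sensitivity} controls precisely because $t-r=|\y-\y'|_{\bd}^2$ forces the flow separation to be $\lesssim|\y-\y'|_{\bd}$.
\end{itemize}
The mixed bounds are then obtained by running this same decomposition from two starting points $\x,\x'$, using the backward controls on the perturbative terms.
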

\begin{propo}\label{prop-HK-proxy-sde_new}
Let assumptions {\bf [H-H\"or]} and {\bf [H-${\bf F}$]}, {with $(\mathcal{O}, {\bf o}): = (\Rdd, {\bd})$}, be in force. For any $\lambda>0$, there exist two positive constants $(C,\kappa):=(C,\kappa)(\Theta_T,\lambda)$ such that
\begin{equation}\label{eq:HK-density_upper_lower}
C^{-1} {g^{ {\bf d}}_{\kappa^{-1} \lambda}}\big(t-s, \tilde \btheta_{t,s}(\x) - \y\big) \leq p_{\gF,\lambda}(s,\x,t,\y) \leq  C {g^{ {\bf d}}_{\kappa \lambda}}\big(t-s, \tilde \btheta_{t,s}(\x) - \y\big),
\end{equation}
for any $0\leq s \leq t \leq T$ and $\x,\y\in \R^{2d}$.
\end{propo}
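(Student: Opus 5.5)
The plan is to deduce the statement directly from the two-sided heat kernel bounds of \cite{chau:meno:pesc:zhan:23} (and \cite{meno:pesc:zhan:21} in the non-degenerate case), applied to \eqref{sde-proxy} with the specific diffusion coefficient $\sigma\equiv\lambda\mathbb I_{d\times d}$. No new parametrix argument should be needed, because the constants are allowed to depend on $\lambda$.

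First, I check that this choice of $\sigma$ fits the framework of \cite{chau:meno:pesc:zhan:23}. The coefficient $\sigma\equiv \lambda \mathbb I$ is constant, so it lies in $L^\infty_T\mathcal C^{\beta+\nu}_{\bf d}$ with norm depending only on $\lambda$ and $d$. Moreover $\sigma\sigma^\top=\lambda^2\mathbb I$, so the ellipticity part of {\bf [H-H\"or]} holds with constant $\max(\lambda^2,\lambda^{-2})$. The weak H\"ormander condition on $\nabla_1F_2\in\mathcal E$ and the regularity {\bf [H-$\gF$]} are assumed, and they do not involve $\sigma$. Hence the main theorem of \cite{chau:meno:pesc:zhan:23} applies. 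It gives a unique weak solution with transition density $p_{\gF,\lambda}$, and constants $C_0,\mu\ge1$, depending only on $\Theta_T$ and $\lambda$, such that
\begin{equation}
C_0^{-1} g^{\bf d}_{\mu^{-1}}\big(t-s,\tilde\btheta_{t,s}(\x)-\y\big)\le p_{\gF,\lambda}(s,\x,t,\y)\le C_0\, g^{\bf d}_{\mu}\big(t-s,\tilde\btheta_{t,s}(\x)-\y\big).
\end{equation}
Here I rely on the fact that the flow in \cite{chau:meno:pesc:zhan:23} is exactly the mollified flow \eqref{FLOW_MACRO_MACRO_KIN}, which is why that choice was made (Remark \ref{RQ_MAIN_THMS}).

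Second, I rewrite these bounds in the stated form $g^{\bf d}_{\kappa^{\pm1}\lambda}$. I set $\kappa:=\max(\mu/\lambda,\lambda/\mu^{-1})=\max(\mu/\lambda,\lambda\mu)$, so that $\kappa^{-1}\lambda\le\mu^{-1}$ and $\mu\le\kappa\lambda$. Then I compare Gaussians with different variance parameters using the elementary inequality, valid for $0<a\le b$,
\begin{equation}
g^{\bf d}_a(u,\z)\le (b/a)^{d}\, g^{\bf d}_b(u,\z).
\end{equation}
This follows from \eqref{THE_DEF_GAUSS_DEG}, since the exponent is monotone in the variance parameter and the prefactor ratio is $(b/a)^d$. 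Applying it, I get $g^{\bf d}_{\mu}\le(\kappa\lambda/\mu)^d g^{\bf d}_{\kappa\lambda}$ and $g^{\bf d}_{\kappa^{-1}\lambda}\le(\mu^{-1}\kappa/\lambda)^d g^{\bf d}_{\mu^{-1}}$. Absorbing these factors into $C$ gives \eqref{eq:HK-density_upper_lower}, with $(C,\kappa)$ depending only on $(\Theta_T,\lambda)$. The non-degenerate analogue is obtained the same way from \cite{meno:pesc:zhan:21}.

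I expect the only real point requiring care to be the first step: checking that the hypotheses and the flow in \cite{chau:meno:pesc:zhan:23} match ours exactly. In particular, the time-dependent mollification scale $|t-s|^{3/2}$ in $\wt\gF$ must be the one used there; otherwise one would have to show that two admissible flows stay at bounded $|\cdot|_{\bf d}$-distance relative to $(t-s)^{1/2}$, which would change the Gaussian only by constants. If the statement there is only given for $\sigma$ with unit ellipticity, I would first reduce to that case by a time rescaling $t\mapsto\lambda^2 t$ of the process. This rescaling modifies $\gF$ by a factor $\lambda^{-2}$ and keeps the assumptions with constants depending on $\lambda$.
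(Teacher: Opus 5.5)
Your proposal is correct and follows essentially the paper's route: the paper takes this proposition directly from \cite[Theorem 1]{chau:meno:pesc:zhan:23}, applied to \eqref{sde-proxy} with $\sigma\equiv\lambda\mathbb I_{d\times d}$, whose flow is precisely the mollified flow \eqref{FLOW_MACRO_MACRO_KIN}. Your comparison $g^{\bf d}_a\le (b/a)^d g^{\bf d}_b$ for $a\le b$ is just the bookkeeping needed to write the bounds in the stated form $g^{\bf d}_{\kappa^{\pm1}\lambda}$, and it is harmless since $(C,\kappa)$ may depend on $\lambda$.
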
 

Propositions \ref{prop-HK-proxy-sde} and \ref{prop-HK-proxy-sde_new} stem almost entirely from \cite[Theorem 1]{chau:meno:pesc:zhan:23}, except for the regularity estimate with respect to the forward variables, namely \eqref{HK-holder-gradient-forward}, and the mixed, w.r.t. the backward and forward variables, regularity estimate \eqref{eq:ste_new}. The proof of the latter two, which were not explicitly proven in \cite{chau:meno:pesc:zhan:23}, is postponed until Appendix \ref{AUX_EST} for the sake of clarity.

\subsection{Besov spaces: the thermic characterization} 
\label{SEC_DEF_BESOV_ANIS}
\paragraph{A brief recall about the thermic characterization for isotropic Besov spaces}\phantom{SPAZIO!}\\
We first recall here the so-called thermic characterization of Besov spaces in the isotropic case, see e.g. Chapter 5, Section 3 of \cite{lema:02} or Section 2.6.4 in Triebel \cite{trie:83}. Namely, in this setting, we can characterize Besov spaces using the following:

\begin{definition}[Isotropic Besov spaces]\label{DEF_BESOV_THERMIC_ISO}
For $\vartheta \in \R$ and $p,q\in (0,+\infty]$, set
$$\B_{p,q,\bf e}^\vartheta (\R^d):= \left\{ f \in \mathcal{S}'(\R^d) : \Vert f\Vert_{\mathcal{H}_{p,q{,\bf e}}^\vartheta} < \infty\right\},$$
where: 
	\begin{align}\label{thermic-char_ND}
		\Vert f\Vert_{\mathcal{H}_{p,q,\bf e}^{\vartheta}} &:= \Vert g(1,\cdot)* f \Vert_{L^p} + \left\{ \begin{aligned} &\left( \int_0^1 \frac{\d v}{v} v^{(n-\frac{\vartheta}{2})q} \Vert \partial_v^n {g (v,\cdot)} * f \Vert_{L^p}^q\right)^{\frac{1}{q}}, \qquad &q<\infty,\\
			&\sup_{v\in (0,1]} v^{n-\frac{\vartheta}{2}} \Vert \partial_v^n {g (v,\cdot)} * f \Vert_{L^p}, &q=\infty,
		\end{aligned}  \right. \nonumber \\
		&=: \Vert g(1,\cdot)* f \Vert_{L^p} + \mathcal{T}_{p,q,\bf e}^{\vartheta} [f],
	\end{align}
with	$n$ being any non-negative integer (strictly) greater than $\vartheta/2$, 
and with
\begin{equation}
g(v,x) := g^{ {\bf e}}_{1}(v,x), \qquad v>0,\ x\in \Rd ,
\end{equation}
where $g^{ {\bf e}}_{1}$ is the Gaussian density defined in \eqref{THE_DEF_GAUSS_NON_DEG}.
\end{definition}
It is known that the norm above is equivalent with the Besov norm associated with the Littlewood-Paley decomposition (see again Chapter 5, Section 3 of \cite{lema:02} or Appendix \ref{EQUIV_CAR_ANIS_BESOV_SPACES}). Also, for $\vartheta\in \R\setminus\N $, $\B_{p,q,\bf e}^\vartheta (\R^d) $ coincides with the usual Hölder space $\mathcal C^\vartheta(\R^d)$
, see e.g. Theorem 2.7 in \cite{sawa:18}.

\paragraph{A thermic characterization for anisotropic Besov spaces}\phantom{SPAZIO!}\\
To handle the kinetic degenerate setting we will need to introduce anisotropic Besov spaces whose regularity reflects the scaling properties of the underlying homogeneous distance introduced in \eqref{HOMO_METRIC}. These spaces are typically characterized through an appropriate anisotropic Littlewood-Paley decomposition. The interested reader can refer to \cite[Chapter 5]{triebel06} for the general definition, and to the references therein for an account on the historical developments in this subject. Recently, such characterization was considered also in \cite{issoglio2024degenerate}, \cite{hao:rock:zhan:26} and \cite{hao:jabi:meno:rock:zhan:26}, in settings related to the one of this paper. 
We here extend the previous thermic approach.

To this end we will consider, for $v>0$, the anisotropic heat kernel $g(v,\cdot):=g^{ {\bf d}}_{1}(v,\cdot)$ with $g^{ {\bf d}}_{1}$ being the anisotropic Gaussian density in \eqref{THE_DEF_GAUSS_DEG}, as opposed to isotropic one in \eqref{THE_DEF_GAUSS_NON_DEG} that was utilized in Definition \ref{DEF_BESOV_THERMIC_ISO}. 
%
Note that the density $g(v,\cdot) = g^{ {\bf d}}_{1}(v,\cdot)$ has the same scaling behavior as the density of the couple $(W_v,\int_0^v W_r dr) $ formed by the standard $d$-dimensional Brownian motion and its integral first investigated, from the operator viewpoint, by Kolmogorov in \cite{kolm:34}. In particular, the main diagonals of their covariance matrices coincide with vector given by 
\begin{equation}
(\underbrace{v,\dots, v}_{d\text{ times}}, \underbrace{v^3, \dots, v^3}_{d\text{ times}}).
\end{equation}

In this setting, we then readily have the following moments estimate:
\begin{equation}\label{spatial-moments-kolm-kern}
	\forall  \delta>0,\, \exists C_\delta \ge 1 \text{ such that}
	\quad \int_{\R^{2d}} {g^{ {\bf d}}_{1}}(v,\x)|\x|^\delta_{\mathbf{d}} \d \x \le C_\delta v^{\frac{\delta}{2}}, \quad v>0,
\end{equation}
which specifically reflects the same regularizing behavior as in the non-degenerate case once the distance $|\cdot|_{{\mathbf d}} $, introduced in \eqref{HOMO_METRIC}, reflecting the scales of the underlying system is taken into account.

{More generally, for any multi-indicex $(\delta, i , \mathbf j)\in \N_0 \times \N_0\times\N_0^{2d} $, 
there exist two positive constants $\kappa$ and $C = C(\delta, i , \mathbf j)$ such that
\begin{equation}\label{BD_THERM_DEG_DER_TEMP}
|\x|^\delta_{\mathbf{d}}\, |\partial_v^i D_\x^{\mathbf j} {g^{ {\bf d}}_{\lambda}}(v,\x)|\le C\, v^{\frac{\delta}{2}-(i+\frac{1}{2}|{j}_1|+\frac 32|{j}_2|)}\,{g^{ {\bf d}}_{\kappa \lambda}}(v,\x), \qquad \lambda, v>0, \ \x\in \R^{2d} ,
\end{equation}
where $  D_\x^{\mathbf{j}}=D_{x_1}^{j_1}D_{x_2}^{j_2}$ is the multi-derivative operator that differentiates the variable $x_1$ according to the multi-index $j_1$ and $x_2$ according to $j_2$, while $|j_1|$, $|j_2| $ denote here the sum of the components of the corresponding multi-index, with a slight abuse of notation.
} 
Note that the concentration constant is slightly deteriorated in order to absorb the monomials in the spatial variable arising from the differentiation. With these tools at hand we extend the previous definition. 
\begin{definition}[Anisotropic Besov spaces]\label{DEF_BESOV_THERMIC_ANISO}
For $\vartheta \in \R$ and $p,q\in (0,+\infty]$, set
$$\B_{p,q,\bf d}^\vartheta (\R^{2d},\R^d):= \left\{ f \in \mathcal{S}'(\R^{2d},\R^d) : \Vert f\Vert_{\mathcal{H}_{p,q,{\bf d}}^\vartheta} < \infty\right\},$$
where: 
	\begin{align}\label{thermic-char}
		\Vert f\Vert_{\mathcal{H}_{p,q,\bf d}^{\vartheta}} &:= \Vert g(1,\cdot) * f \Vert_{L^p} + \left\{ \begin{aligned} &\left( \int_0^1 \frac{\d v}{v} v^{(n-\frac{\vartheta}{2})q} \Vert \partial_v^n g(v,\cdot) * f \Vert_{L^p}^q\right)^{\frac{1}{q}}, \qquad &q<\infty,\\
			&\sup_{v\in (0,1]} v^{n-\frac{\vartheta}{2}} \Vert \partial_v^n  g(v,\cdot) *  f \Vert_{L^p}, &q=\infty,
		\end{aligned}  \right. \nonumber \\
		&=: \Vert g(1,\cdot) * f \Vert_{L^p} + \mathcal{T}_{p,q,\bf d}^{\vartheta} [f],
	\end{align}
with	$n$ being any non-negative integer (strictly) greater than $\vartheta/2$, 
and with
\begin{equation}
g(v,\x) := g^{ {\bf d}}_{1}(v,\x), \qquad v>0,\ \x\in \Rdd ,
\end{equation}
where $g^{ {\bf d}}_{1}$ is the Gaussian density defined in \eqref{THE_DEF_GAUSS_DEG}.
\end{definition}

Following e.g. \cite{trie:83} or \cite{baho:chem:danc:11}, which provide equivalence of the two characterizations in the isotropic case, it can be shown that the thermic characterization of Definition \ref{DEF_BESOV_THERMIC_ANISO} is indeed  equivalent to the one in \cite[Chapter 5]{triebel06} based on Littlewood-Paley decomposition, the latter coinciding in turn with the one in \cite{hao:rock:zhan:26} provided that the integrability index therein be the same for the degenerate and non-degenerate variables. For the sake of completeness, a proof of this equivalence is provided in Appendix \ref{EQUIV_CAR_ANIS_BESOV_SPACES}. Again, when $\vartheta \in \R\setminus\N$ it can be checked that $\B_{p,q,\bf d}^\vartheta (\R^{2d},\R^d)) $ can be identified with the anisotropic Hölder space $C_{\mathbf d}^\vartheta(\R^{2d},\R) $ associated with $\mathbf d$.

As a consequence of Lemma 2.6, \textit{ii)} in  \cite{hao:rock:zhan:26} (generalized Young convolution inequality in anisotropic Besov spaces), and the related embeddings (point \textit{i)} of the same Lemma) we have the following proposition
\begin{propo}[Duality in anisotropic Besov spaces]\label{PROP_DUALITY}
For any $\vartheta \in \R$ and $p,q\in (0,+\infty]$,  we have
\begin{align*}
\|f * g\|_{L^\infty}\le \|f\|_{\B_{p,q,\bf d}^\vartheta}\|g\|_{\B_{p',q',\bf d}^{-\vartheta}},\qquad \text{with}\quad \frac1p+\frac 1{p'}=1,\quad \frac 1q+\frac 1{q'} = 1.
\end{align*}
In particular, provided the left-hand-side term below makes sense, one obtains from the above control
$$\bigg|\int_{\R^{2d}} f(\y)  g(\y) \d \y\bigg| \le \|f\|_{\B_{p,q,\bf d}^\vartheta}\|g\|_{\B_{p',q',\bf d}^{-\vartheta}}.$$
\end{propo}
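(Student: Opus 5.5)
The natural route is to derive the statement from the generalized Young convolution inequality and the embeddings of Lemma 2.6 in \cite{hao:rock:zhan:26}, once the two Besov norms are matched through the equivalence between the thermic and Littlewood--Paley characterizations (Appendix \ref{EQUIV_CAR_ANIS_BESOV_SPACES}). For $f*g$ I would use an anisotropic dyadic partition of unity $(\Delta_j)_{j\ge -1}$ adapted to $|\cdot|_{\bf d}$, meaning frequency annuli of the form $\{|\xi_1|+|\xi_2|^{1/3}\sim 2^j\}$. Since each $\Delta_j$ is a Fourier multiplier, we have $\Delta_j(f*g)=\Delta_j f*\tilde\Delta_j g$ up to finitely many neighbouring blocks, where $\tilde \Delta_j=\Delta_{j-1}+\Delta_j+\Delta_{j+1}$. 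Hence
\[
f*g=\sum_{j}\ \sum_{|i-j|\le 1}\Delta_j f*\Delta_i g ,
\]
and this series converges in $\mathcal S'$, and in fact in $L^\infty$ under the finiteness of the right-hand side.

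The key steps, in order, are as follows. First, apply the ordinary Young inequality to each block, $\|\Delta_j f*\Delta_i g\|_{L^\infty}\le \|\Delta_j f\|_{L^p}\|\Delta_i g\|_{L^{p'}}$. Second, insert the weights $2^{j\vartheta}\cdot 2^{-i\vartheta}$, which is harmless because $|i-j|\le1$ costs only a constant $2^{|\vartheta|}$. Third, apply the discrete Hölder inequality in $\ell^q\times\ell^{q'}$ to obtain
\[
\|f*g\|_{L^\infty}\le C\,\|f\|_{\B^{\vartheta}_{p,q,\bf d}}\|g\|_{\B^{-\vartheta}_{p',q',\bf d}}.
\]
Fourth, pass to the thermic norms of Definition \ref{DEF_BESOV_THERMIC_ANISO} using the norm equivalence, which only modifies the constant. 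The constant can be dropped, as in the displayed statement, by renormalizing the Besov norm, which is what citing Lemma 2.6 of \cite{hao:rock:zhan:26} amounts to. For $p<1$ or $q<1$, where Hölder duality with $p',q'$ is degenerate, I would reduce to $p,q\ge1$ via the embeddings $\B^{\vartheta}_{p,q,\bf d}\hookrightarrow\B^{\vartheta-\frac{Q}{p}+Q}_{1,\infty\wedge\cdot,\bf d}$, where $Q=4d$ is the homogeneous dimension. This is point \textit{i)} of the same lemma.

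The second assertion follows by evaluation at the origin: $\int f(\y)g(\y)\,\d\y=(f*\check g)(0)$ with $\check g(\y)=g(-\y)$. The reflection $\y\mapsto-\y$ preserves $|\cdot|_{\bf d}$ and commutes with the anisotropic heat kernel, which is even. Therefore $\|\check g\|_{\B^{-\vartheta}_{p',q',\bf d}}=\|g\|_{\B^{-\vartheta}_{p',q',\bf d}}$, and the first bound gives the claim whenever the pairing is defined.

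The main obstacle is the equivalence of the thermic and Littlewood--Paley norms in the anisotropic setting, which is only deferred to the appendix. It requires checking that the anisotropic heat semigroup, through its derivatives $\partial_v^n g(v,\cdot)$, localizes in frequency at scale $v^{-1/2}$ in the homogeneous norm. The tool for this is \eqref{BD_THERM_DEG_DER_TEMP} together with the scaling $v\mapsto(v,v^3)$ of the covariance. Everything else is a routine transcription of the isotropic argument.
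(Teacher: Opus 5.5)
Your argument is essentially the paper's. The paper simply invokes the generalized Young inequality and the embeddings of Lemma 2.6 in \cite{hao:rock:zhan:26}, whose content is exactly your blockwise Young estimate plus discrete H\"older summation, i.e.\ $\|f*g\|_{\B^{0}_{\infty,1,\bf d}}\lesssim\|f\|_{\B^{\vartheta}_{p,q,\bf d}}\|g\|_{\B^{-\vartheta}_{p',q',\bf d}}$ together with $\B^{0}_{\infty,1,\bf d}\hookrightarrow L^\infty$, and then passes to the thermic norms through Appendix \ref{EQUIV_CAR_ANIS_BESOV_SPACES}.

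Two minor corrections. First, the argument only gives the bound up to a constant $C(d,\vartheta,p,q)$: ``renormalizing'' would change the norm of Definition \ref{DEF_BESOV_THERMIC_ANISO}, and the paper only ever uses the estimate with $\lesssim$ anyway. Second, for $p<1$ or $q<1$ the conjugate exponents are negative, so the statement is void there and your embedding reduction does not produce it. The meaningful range is $p,q\in[1,\infty]$, and only $(p,q)=(\infty,\infty)$ paired with $(p',q')=(1,1)$ is used.
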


{To make sense of the product between distributions we will consider the so-called {Bony's product} (see \cite{bony1981interaction}) between Besov distributions, which extends the product between functions (which are also tempered distributions). The following result can be proved by proceeding exactly as in \cite[Lemma 2.1]{gubinelli2015paracontrolled} (see also \cite{hao:zhan:zhu:zhu:24} for the anisotropic case).
\begin{PROP}\label{prop:bony_prod}
Let $\alpha,\gamma \in \R$  
with  
$\alpha+\gamma >0$.  
 Then Bony's product restricted to   $\mathcal{C}_{\bf d}^{\alpha} \times \mathcal{C}_{\bf d}^{\gamma}$ is a bilinear form from $\mathcal{C}_{\bf d}^{\alpha} \times \mathcal{C}_{\bf d}^{\gamma}$ to $\mathcal{C}_{\bf d}^{\alpha \wedge \gamma}$ and 
there exists a positive constant $C=C(\alpha,\gamma)$ such that
\begin{equation}
\label{eq: prod estim}
\| fg\|_{\alpha \wedge \gamma}\leq C\, \|f\|_\alpha \|g\|_\gamma, \qquad f\in \mathcal{C}_{\bf d}^{\alpha},\ g\in \mathcal{C}_{\bf d}^{\gamma}.
\end{equation}
\end{PROP}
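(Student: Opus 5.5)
We follow the Littlewood--Paley route of \cite[Lemma 2.1]{gubinelli2015paracontrolled}, adapted to the anisotropic blocks associated with $|\cdot|_{\bf d}$. By Appendix \ref{EQUIV_CAR_ANIS_BESOV_SPACES}, the thermic norm of Definition \ref{DEF_BESOV_THERMIC_ANISO} is equivalent to a norm built from an anisotropic dyadic decomposition $(\Delta_j)_{j\ge -1}$. There the frequency annuli are taken with respect to the dual quasi-norm $|\xi|_{\bf d}^{*}=|\xi_1|+|\xi_2|^{1/3}$, so that $\|f\|_{\mathcal C^\vartheta_{\bf d}}\simeq\sup_j 2^{j\vartheta}\|\Delta_j f\|_{L^\infty}$. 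We then write Bony's decomposition
\[
fg=T_fg+T_gf+R(f,g),\qquad T_fg=\sum_{j}S_{j-1}f\,\Delta_j g,\qquad R(f,g)=\sum_{|i-j|\le1}\Delta_i f\,\Delta_j g,
\]
with $S_{j-1}=\sum_{i\le j-2}\Delta_i$, and we define the product on $\mathcal C^\alpha_{\bf d}\times\mathcal C^\gamma_{\bf d}$ as this sum once each piece has been shown to converge.

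\emph{Paraproducts.} Each term $S_{j-1}f\,\Delta_jg$ has Fourier support in an anisotropic annulus of size $2^j$. This is where the quasi-triangle inequality for $|\cdot|^{*}_{\bf d}$ is used: the sum of a frequency of size $\ll 2^j$ and one of size $\sim 2^j$ stays in an annulus of size $\sim2^j$, up to fixed constants. Hence $\Delta_k(T_fg)$ only involves $|j-k|\le N$ for a fixed $N$. If $\alpha<0$, then $\|S_{j-1}f\|_{L^\infty}\lesssim\sum_{i<j}2^{-i\alpha}\|f\|_\alpha\lesssim 2^{-j\alpha}\|f\|_\alpha$, so that $\|T_fg\|_{\alpha+\gamma}\lesssim\|f\|_\alpha\|g\|_\gamma$. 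If $\alpha>0$, then $\|S_{j-1}f\|_\infty\lesssim\|f\|_\alpha$ and $\|T_fg\|_\gamma\lesssim\|f\|_\alpha\|g\|_\gamma$. (If $\alpha=0$ one loses an $\varepsilon$ in the first case; since here $\alpha+\gamma>0$, that case reduces to the second.) By symmetry, in every case $T_fg$ and $T_gf$ lie in $\mathcal C^{\alpha\wedge\gamma}_{\bf d}$, using $\alpha+\gamma\ge\alpha\wedge\gamma$ when one index is negative and the continuous embedding $\mathcal C^{s}_{\bf d}\hookrightarrow\mathcal C^{s'}_{\bf d}$ for $s\ge s'$.

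\emph{Resonant term.} This is the only place where $\alpha+\gamma>0$ is needed. Each term $\Delta_if\,\Delta_jg$ with $|i-j|\le1$ has spectrum in an anisotropic ball of radius $\sim2^j$. So $\Delta_kR(f,g)=\sum_{j\ge k-N}\Delta_k(\Delta_if\Delta_jg)$, and
\[
\|\Delta_kR\|_\infty\lesssim\sum_{j\ge k-N}2^{-j(\alpha+\gamma)}\|f\|_\alpha\|g\|_\gamma\lesssim2^{-k(\alpha+\gamma)}\|f\|_\alpha\|g\|_\gamma .
\]
The geometric series converges precisely because $\alpha+\gamma>0$. Thus $R\in\mathcal C^{\alpha+\gamma}_{\bf d}\hookrightarrow\mathcal C^{\alpha\wedge\gamma}_{\bf d}$. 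Bilinearity is immediate, and the construction agrees with the pointwise product for smooth $f,g$, so it extends the usual product.

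\emph{Main obstacle.} The anisotropic localisation facts are the only genuinely new ingredient relative to the isotropic case. They are: (a) the support statements for products of blocks, which need the quasi-triangle inequality for $|\cdot|^*_{\bf d}$ with constant $c>1$, so that $N$ depends on $c$; and (b) the uniform bound $\sup_k\|\Delta_k\|_{L^\infty\to L^\infty}<\infty$. For (b) we would take $\Delta_k=\mathcal F^{-1}\varphi(\delta_{2^{-k}}\cdot)\mathcal F$, with the anisotropic dilation $\delta_\lambda(\xi_1,\xi_2)=(\lambda\xi_1,\lambda^3\xi_2)$. Its kernel is an $L^1$-normalised rescaling of a fixed Schwartz function, so the $L^1$ norm is independent of $k$. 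With (a) and (b) established, the isotropic argument carries over verbatim, as in \cite{hao:zhan:zhu:zhu:24}.
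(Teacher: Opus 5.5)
Your proposal is correct and takes the same route as the paper, which simply defers to the Littlewood--Paley argument of \cite[Lemma 2.1]{gubinelli2015paracontrolled} (paraproducts plus resonant term, with $\alpha+\gamma>0$ used only for the resonant sum) and to its anisotropic adaptation in \cite{hao:zhan:zhu:zhu:24}, i.e. exactly the splitting and localisation facts you isolate. Two small points, neither of which affects validity: $|\xi_1|+|\xi_2|^{1/3}$ satisfies a genuine triangle inequality (by subadditivity of $t\mapsto t^{1/3}$), and with the partition of Appendix \ref{EQUIV_CAR_ANIS_BESOV_SPACES} (annuli $B^{\bba}_{2^{j+1}}\setminus B^{\bba}_{2^{j-1}}$) the lag in $S_{j-1}$ must be enlarged (e.g. to $\sum_{i\le j-3}\Delta_i$) for $S_{j-1}f\,\Delta_j g$ to have annular spectrum.
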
}

\section{Density estimates for the mollified equation via one-step parametrix}\label{SEC_DENS_ESTIMATES_MOLL}

The goal of this section is to prove some global upper/lower bounds, together with regularity estimates, analogous to those in Proposition \ref{prop-HK-proxy-sde}, for the transition density of the solution to the regularized SDE
\begin{equation}
	\begin{cases}
		\d X_t^{(n),1}=\left(F_1 \big(t,X^{(n)}_s\big)+b^{(n)}\big(t,X_t^{(n)}\big)\right) \d t+\sigma \big(t,X_t^{(n)}\big) \d W_t , \\
		\d X_t^{(n),2}=F_2 \big(t,X_t^{(n)}\big) \d t,
	\end{cases}\label{sde-mollified}
\end{equation}
where $b^{(n)}(t,\cdot)$ is a suitable smooth mollification of the distribution $b(t,\cdot)$. Critically, these estimates will be uniform w.r.t. the regularization parameter $n$.

We start by defining the sequence $(b^{(n)})_n$. Let $\Phi$ be a standard mollifier on $\Rdd$ with compact support. 
For any $g \in  \mathcal{C}^{\alpha}_{\bd}$, with $\alpha\in \R$, it is well defined, by duality, the family
\begin{equation}
\label{eq:b_n bis}
g^{(n)}:=\Phi_n * g, \quad  \Phi_n: = n^{2d} \Phi(n\, \cdot), \qquad n\in \N.
\end{equation}  

Therefore, under the assumption $b\in \Linf{\beta}$, we can set
\begin{equation}\label{eq:def_mollif_b}
b^{(n)}(t,\cdot) : = (b(t,\cdot))^{(n)}, \qquad t\in[0,T], \qquad  n\in \N.
\end{equation}

We recall the following result (\cite[Lemma 2.16]{issoglio2024degenerate}).
\begin{lemma}\label{lemm:moll_b}
Under assumption {\bf [H-$b$]}, we have the following properties:
\begin{itemize}
\item[(i)] $b^{(n)}\in \Linf{\gamma}$ for any $n\in\N$ and $\gamma>0$. In particular, $b^{(n)}\in L^{\infty}_T {\mathscr C}_{\bf o}^{0}$. 
\item[(ii)] For any $\eta < \beta$, there exists $C=C(
\beta,\eta)>0$ such that 
\begin{equation}\label{eq:bound_gn_g}
\|b^{(n)}-b\|_{\Linf{\eta}}
\leq C \|b\|_{\Linf{\beta}} 
, \qquad n\in\N,
\end{equation}
and in particular,
\begin{equation}
 \label{eq:conv sequence}
 \|b^{(n)} - b \|_{{\Linf{\eta}}} \longrightarrow 0, \qquad \text{as } n\to \infty.
\end{equation}
\end{itemize}
\end{lemma}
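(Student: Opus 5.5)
The plan is to work with the thermic characterization of Definition \ref{DEF_BESOV_THERMIC_ANISO} and use that mollification commutes with convolution by the heat kernel. Write $g=g^{\bf d}_1$. Since $\Phi_n * (g(v,\cdot)*b(t,\cdot)) = g(v,\cdot)*b^{(n)}(t,\cdot)$ and $\|\Phi_n\|_{L^1}=1$, Young's inequality gives $\|b^{(n)}(t)\|_{\mathcal H^{\beta}_{\infty,\infty,\bf d}}\le \|b(t)\|_{\mathcal H^{\beta}_{\infty,\infty,\bf d}}$, uniformly in $n$ and $t$. All the estimates below are pointwise in $t$ with constants independent of $t$, so taking the essential supremum over $[0,T]$ is harmless.

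\emph{Item (i).} Fix $\gamma>0$ and an integer $m>\gamma/2$. The low-frequency term $\|g(1,\cdot)*b^{(n)}\|_{L^\infty}$ is bounded by $\|g(1,\cdot)*b\|_{L^\infty}$. For the high-frequency part, fix $k>\beta/2$ and write $\partial_v^m g(v,\cdot)*b^{(n)} = \Phi_n*\partial_v^m g(v,\cdot)*b$. Transfer derivatives to $\Phi_n$ using the anisotropic semigroup structure: $\partial_v g=\mathcal A g$ with $\mathcal A$ the Kolmogorov-type operator, or more simply via the semigroup identity $g(v)=g(v/2)*g(v/2)$ and the bound \eqref{BD_THERM_DEG_DER_TEMP}. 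This gives
$$\|\partial_v^m g(v)*b^{(n)}\|_{L^\infty}\le C\min\big(v^{-m+\beta/2}, C_n v^{-k+\beta/2}\big)\|b\|_{\mathcal C^\beta_{\bf d}},$$
where the second bound puts $m-k$ derivatives on $\Phi_n$ (a smooth compactly supported function), costing a factor $C_n=n^{c(m-k)}$. Since we may take $k$ close to $\beta/2$, the quantity $v^{m-\gamma/2}\cdot v^{-k+\beta/2}$ stays bounded on $(0,1]$, so $b^{(n)}\in \Linf{\gamma}$. The statement about $L^\infty_T\mathscr C^0_{\bf o}$ follows from the identification of $\mathcal C^\gamma_{\bf d}$ with anisotropic Hölder spaces for non-integer $\gamma>1$, say, which yields bounded gradients.

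\emph{Item (ii).} The uniform bound \eqref{eq:bound_gn_g} is immediate. Since $\eta<\beta$, the embedding $\mathcal C^\beta_{\bf d}\hookrightarrow\mathcal C^\eta_{\bf d}$ (obvious from the thermic norm, since $v^{n-\eta/2}\le v^{n-\beta/2}$ on $(0,1]$) and the triangle inequality give $\|b^{(n)}-b\|_\eta\le 2\|b\|_\beta$.

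For convergence \eqref{eq:conv sequence}, fix $\epsilon>0$ and split $(0,1]$ at a level $v_0$.
\begin{itemize}
\item For $v\le v_0$: $v^{n-\eta/2}\|\partial_v^n g(v)*(b^{(n)}-b)\|_\infty\le 2v_0^{(\beta-\eta)/2}\|b\|_\beta<\epsilon$ for $v_0$ small. This step uses $\eta<\beta$ strictly.
\item For $v\ge v_0$, and for the low-frequency term: $h_v:=\partial_v^n g(v)*b(t)$ is smooth, with $\|\nabla_1 h_v\|_\infty+\|\nabla_2 h_v\|_\infty\le C(v_0)\|b\|_\beta$, again via semigroup splitting and \eqref{BD_THERM_DEG_DER_TEMP}. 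Hence $\|\Phi_n*h_v-h_v\|_\infty\le C(v_0)\|b\|_\beta\int\Phi_n(z)|z|\,\d z\le C(v_0)\|b\|_\beta/n\to0$, uniformly in $t$ and $v\in[v_0,1]$.
\end{itemize}

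The main technical point is the transfer of derivatives between the heat kernel and the mollifier in the anisotropic setting. Here I would rely on the semigroup property of $g^{\bf d}_1$ (a Gaussian with diagonal covariance scaling $(v,v^3)$, hence the convolution identity $g(v)=g(v/2)*g(v/2)$ up to a harmless rescaling of the constant) together with \eqref{BD_THERM_DEG_DER_TEMP}. Time derivatives become spatial derivatives of order $2$ in $x_1$ or $2/3$-weighted in $x_2$, which is standard but bookkeeping-heavy.
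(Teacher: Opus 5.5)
\paragraph{Comparison with the paper.}
The paper gives no argument of its own for this lemma. It imports it from \cite[Lemma 2.16]{issoglio2024degenerate}, where the anisotropic spaces are built from Littlewood--Paley blocks. Your proof is a self-contained alternative carried out entirely within the thermic framework of Definition~\ref{DEF_BESOV_THERMIC_ANISO}, and it is essentially correct.

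\begin{itemize}
\item Commuting $\Phi_n$ with $\partial_v^m g(v,\cdot)$ and using $\|\Phi_n\|_{L^1}=1$ gives $\|b^{(n)}(t)\|_{\mathcal C^\beta_{\mathbf d}}\le\|b(t)\|_{\mathcal C^\beta_{\mathbf d}}$.
\item The monotonicity $v^{m-\eta/2}\le v^{m-\beta/2}$ on $(0,1]$ then yields \eqref{eq:bound_gn_g} with $C=2$.
\item Your split at $v_0$ proves \eqref{eq:conv sequence}, uniformly in $t$.
\end{itemize}

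The last point matters. As written, the uniform bound \eqref{eq:bound_gn_g} does not imply \eqref{eq:conv sequence}, so a separate argument like yours is genuinely needed. Your split can also be made quantitative. Use $\|\nabla_{x_2} h_v\|_{L^\infty}\lesssim v^{-m-\frac32+\frac\beta2}\|b\|_{\Linf{\beta}}$ together with the choice $v_0=n^{-2/3}$. This gives
$\|b^{(n)}-b\|_{\Linf{\eta}}\lesssim n^{-\min\{1,(\beta-\eta)/3\}}\|b\|_{\Linf{\beta}}$.
The exponent $1/3$ reflects that $\Phi_n$ is isotropically scaled while $x_2$ has homogeneity $3$. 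The citation buys brevity. Your route stays within the heat-kernel tools the paper uses everywhere else and gives an explicit rate.

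\paragraph{Points to tighten.}
\textbf{The kernel.} $g=g^{\mathbf d}_1$ is neither the kinetic Kolmogorov kernel nor a semigroup. In particular, $g(v/2)*g(v/2)$ is the centred Gaussian with covariance proportional to $(v,v^3/4)$, i.e.\ $g^{\mathbf d}_2(v/2)$ up to normalization, and not $g(v)$. What you actually need is the following.
\begin{itemize}
\item The identity $\partial_v g=c\,(\Delta_{x_1}+3v^2\Delta_{x_2})g$ for some $c>0$ (cf.\ Appendix~\ref{EQUIV_CAR_ANIS_BESOV_SPACES}). This operator has constant spatial coefficients, so it commutes with convolution by $\Phi_n$. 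With the kinetic operator $\tfrac12\Delta_{x_1}+x_1\cdot\nabla_{x_2}$ the transfer of derivatives would fail.
\item The factorization $g(v)=g(w)*\bar g_{v,w}$, where $\bar g_{v,w}$ is a centred Gaussian with covariance proportional to $(v-w,\,v^3-w^3)$. Take $w=v_0/2$ and use $\|g(w)*b\|_{L^\infty}\lesssim w^{\beta/2}\|b\|_{\mathcal C^\beta_{\mathbf d}}$. This gives your gradient bound on $h_v$ for $v\ge v_0$. Proposition~\ref{PROP_DUALITY} applied to $\nabla\partial_v^m g(v,\cdot)$ does the same job.
\end{itemize}

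\textbf{The choice of $k$ in (i).} Here $k$ counts time derivatives, so ``$k$ close to $\beta/2$'' means $k=0$. Boundedness of $v^{m-\gamma/2-k+\beta/2}$ then requires $m\ge(\gamma-\beta)/2$, not merely $m>\gamma/2$. You can fix this in either of two ways.
\begin{itemize}
\item Enlarge $m$, using that the thermic norms for different admissible $m$ are equivalent.
\item More simply, expand $\partial_v^m g(v)=\sum_\alpha c_\alpha(v)D^\alpha g(v)$ with $c_\alpha$ polynomial in $v$. Move every $D^\alpha$ onto $\Phi_n$ and use $\|D^\alpha\Phi_n*b(t)\|_{L^\infty}\le\|D^\alpha\Phi_n\|_{\mathbb B^{-\beta}_{1,1,\mathbf d}}\|b(t)\|_{\mathcal C^\beta_{\mathbf d}}$. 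This removes the factor $v^{\beta/2}$ altogether.
\end{itemize}

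\textbf{Membership in $\mathscr C^0_{\bf o}$.} This only needs boundedness, since the $\mathscr C^0_{\bf o}$ seminorm is bounded by a multiple of $\|f\|_{L^\infty}$. Boundedness follows from $\mathcal C^\gamma_{\mathbf d}\subset L^\infty$ for every $\gamma>0$, so no gradient bounds are required.
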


As a consequence of the previous lemma, we have the following
\begin{REM}\label{rem:solution_regularized_sde}
Under assumption {\bf [H]}, Lemma \ref{lemm:moll_b} implies that the coefficients of \eqref{sde-mollified} satisfy the assumptions in \cite{chau:meno:pesc:zhan:23}. Therefore, in light of Section \ref{sec:prel_nonsingular}, for any initial probability measure $\initlaw$ on $\Rdd$, and initial time $t_0\in [0,T]$, there exists a unique solution, $P^{(n)}_{t_0,\initlaw}$, to the martingale problem 
associated to \eqref{sde-mollified}, in the sense of Stroock-Varadhan, which is a Markov process. Furthermore, the transition probability function $P^{(n)}(s, \x , t, \d \y)$ has a density $\pn(s,\x,t,\y)$, for which the estimates in Proposition \ref{prop-HK-proxy-sde} are satisfied, with a constant $\lambda$ that is independent of $n$ and with constants $C$, $C_{\etabackward}$, $C_{\etaforward}$ that depend, a priori, on $n$. In the next theorem, we show that these constants are actually independent of $n$.
\end{REM}

\begin{THM}\label{thm-main-mollified}
Let assumption {\bf [H]} be in force. Let $n\in\N$ and denote by $p^{(n)}=p^{(n)}(s,\x,t,\y)$ the transition density of the solutions to \eqref{sde-mollified} (cf. Remark \ref{rem:solution_regularized_sde}).
For all $\etabackward\in (0,{1+\beta}),\ \etaforward\in  (0,\nu+\beta)$ and $\delta=0,1$, there exist four positive constants $\Cbackward:=\Cbackward(\Theta_{T,b},\etabackward)$, $\Cforward:=\Cforward(\Theta_{T,b},\etaforward) $, $C(\Theta_{T,b})$ and $\lambda:= \lambda(\Theta_{T,b})$, such that
%
{\mathtoolsset{showonlyrefs=false}\begin{align}%
\label{eq:THM-deg_HK_1_reg} 
C^{-1} p_{\gF,\lambda^{-1}}(s,\x,t,\y)   \leq \pn (s,\x, t, \y  ) &\leq C p_{\gF,\lambda}(s,\x,t,\y), \\
| \nabla_{\x_1} \pn(s, \x, t, \y  ) | & \leq \frac{C}{(t-s)^{\frac{1}{2}}} p_{\gF,\lambda}(s,\x,t,\y),\label{eq:THM-deg_HK_2_reg}\\
\left| \pn(s,\x,t,\y)-  \pn(s, \x_1 , \x_2',t,\y) \right| & \le \Cbackward \frac{|\x_2-\x_2'|^{\frac{1 + \etabackward}{3}}}{(t-s)^{\frac{1 + \etabackward}{2}}}  \Big(p_{\gF,\lambda}(s,\x,t,\y)+ p_{\gF,\lambda}(s,\x_1 , \x_2',t,\y) \Big),  \label{eq:THM-deg_HK_3_reg} \\
\left|\nabla_{\x_1}  \pn(s,\x,t,\y)-\nabla_{\x_1}  \pn(s,\x',t,\y) \right| & \le  \frac{\Cbackward}{(t-s)^{\frac{1}{2}}}\left(\frac{|\x-\x'|_{\bf d}}{(t-s)^{\frac{1}{2}}}\right)^{\etabackward} \Big(p_{\gF,\lambda}(s,\x,t,\y)+ p_{\gF,\lambda}(s, \x',t,\y) \Big), \label{eq:THM-deg_HK_4_reg} \\ 
\left|\nabla_{\x_1}^\delta \pn(s,\x,t,\y)-\nabla_{\x_1}^\delta \pn(s,\x,t,\y') \right| & \le\frac{\Cforward}{(t-s)^{\frac{\delta}{2}}}\left(\frac{|\y-\y'|_{\bf d}}{(t-s)^{\frac{1}{2}}}\right)^{\etaforward} \Big(p_{\gF,\lambda}(s,\x,t,\y)+ p_{\gF,\lambda}(s, \x,t,\y') \Big), \label{eq:THM-deg_HK_5_reg}
\end{align}}
for any $0\le s<t\le T$ and $ \x,\x',\y,\y'\in \R^{2d}$. Recall that $p_{\gF,\lambda}=p_{\gF,\lambda}(s,\x,t,\y)$ denotes transition density of the solutions to the proxy SDE \eqref{sde-proxy} with $\sigma \equiv \lambda \mathbb I_{d\times d}$. Note that all the constants $\Cbackward$, $\Cforward$, $C$ and $\lambda$ are independent of $n$.
\end{THM}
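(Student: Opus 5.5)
The plan is a one-step parametrix (Duhamel) expansion around the non-singular density $p_{\gF,\sigma}$ of Proposition \ref{prop-HK-proxy-sde}. For fixed $n$, $\pn$ and $p_{\gF,\sigma}$ are both regular enough (Remark \ref{rem:solution_regularized_sde}). The generators differ only by $\langle b^{(n)},\nabla_1\rangle$. Differentiating $r\mapsto \int \pn(s,\x,r,\z)p_{\gF,\sigma}(r,\z,t,\y)\d\z$ therefore gives
\begin{equation}
\pn(s,\x,t,\y)=p_{\gF,\sigma}(s,\x,t,\y)+\int_s^t\int_{\Rdd}\pn(s,\x,r,\z)\,\langle b^{(n)}(r,\z),\nabla_{z_1}p_{\gF,\sigma}(r,\z,t,\y)\rangle\,\d\z\,\d r .
\end{equation}
The key point is that, for fixed $(s,\x)$, the inner spatial integral is a pairing between $b^{(n)}(r,\cdot)$ and the function $\z\mapsto \pn(s,\x,r,\z)\nabla_{z_1}p_{\gF,\sigma}(r,\z,t,\y)$. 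I will bound it with Proposition \ref{PROP_DUALITY}, using $\|b^{(n)}\|_{\B^{\beta}_{\infty,\infty,\bf d}}\lesssim\|b\|_{\Linf{\beta}}$ uniformly in $n$ by Lemma \ref{lemm:moll_b}, against a $\B^{-\beta}_{1,1,\bf d}$ norm of the product. That product norm is controlled through the thermic characterization of Definition \ref{DEF_BESOV_THERMIC_ANISO}, by interpolating between an $L^1$ bound and a Hölder modulus in $\z$ of order $\gamma>-\beta$.

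The Hölder modulus requires both factors to be regular in $\z$. For $\nabla_{z_1}p_{\gF,\sigma}(r,\z,t,\y)$ this is backward regularity, given by \eqref{HK-holder-gradient-back} with exponent up to $1>-\beta$. For $\pn(s,\x,r,\z)$ it is \emph{forward} regularity, given by \eqref{eq:THM-deg_HK_5_reg} with $\delta=0$ and $\etaforward<\nu+\beta$; this is exactly where $\nu>-2\beta$ enters, since it gives $\nu+\beta>-\beta$. The argument is thus a fixed-point/bootstrap. I define a weighted quantity $\Psi_n(T)$ equal to the best constants in \eqref{eq:THM-deg_HK_1_reg} (upper bound), \eqref{eq:THM-deg_HK_2_reg} and \eqref{eq:THM-deg_HK_5_reg} for $\delta=0$, which is finite for each $n$ by the Remark. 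With $\gamma\in(-\beta,\nu+\beta)$, the Duhamel term is bounded by
\begin{equation}
C\|b\|_{\Linf{\beta}}\,(1+\Psi_n)\int_s^t (r-s)^{-\frac{\gamma}{2}}(t-r)^{-\frac12-\frac{\gamma}{2}}\,\d r\;\cdot\; p_{\gF,\lambda}(s,\x,t,\y).
\end{equation}
The Gaussian parts convolve via Chapman–Kolmogorov for $p_{\gF,\lambda}$, at the price of enlarging $\lambda$, using the inequality $p_{\gF,\lambda}\le C'p_{\gF,2\lambda}$ in \eqref{HK-density_upper}. The time integral is of order $(t-s)^{\frac{1}{2}-\gamma}$, which is integrable and small for short times because $\gamma<1/2$.

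On a small time horizon $t-s\le h$, with $h$ depending only on $\Theta_{T,b}$, the estimate reads $\Psi_n(h)\le C+\tfrac12\Psi_n(h)$, hence $\Psi_n(h)\le 2C$ uniformly in $n$. The same expansion, after applying $\nabla_{x_1}$ or taking differences in $\x$ or $\y$, yields \eqref{eq:THM-deg_HK_2_reg}–\eqref{eq:THM-deg_HK_5_reg}. For the $\x$-increments, I split the time integral into $r-s\le c|\x-\x'|_{\bf d}^2$ and its complement: on the first part I use the pointwise bounds separately for each point, and on the second part I use the increment estimate for $\pn$, which is itself part of the bootstrap. This caps $\etabackward<1+\beta$. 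Forward increments in $\y$ are handled symmetrically, using \eqref{HK-holder-gradient-forward} and the mixed bounds \eqref{eq:ste_new}, \eqref{eq:ste_M_DEG_new}, which control the Hölder norm in $\z$ of $\nabla_{z_1}p_{\gF,\sigma}(r,\z,t,\y)-\nabla_{z_1}p_{\gF,\sigma}(r,\z,t,\y')$. The estimates are then extended to $[0,T]$ by Chapman–Kolmogorov over finitely many steps of length $h$.

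For the lower bound, the expansion shows that $|\pn-p_{\gF,\sigma}|\le C(t-s)^{\frac12-\gamma}p_{\gF,\lambda}$. This gives a diagonal lower bound for small $t-s$, and the off-diagonal lower bound follows by a standard chaining argument along $\tilde\btheta$ using Proposition \ref{prop-HK-proxy-sde_new}.

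I expect the main obstacle to be the duality step: showing that $\|\pn(s,\x,r,\cdot)\nabla_{z_1}p_{\gF,\sigma}(r,\cdot,t,\y)\|_{\B^{-\beta}_{1,1,\bf d}}$ is bounded by the claimed time singularities times a \emph{Gaussian in $(\x,\y)$}, not merely an $L^1$ constant. To get this, I would apply the thermic characterization with $\partial_v g(v,\cdot)$ tested against the product. I would split $v\le (r-s)\wedge(t-r)$, where the Hölder moduli apply, from $v$ larger, where the $L^1$ bounds suffice. Throughout, the Gaussian factors must be kept with concentration constants degrading only finitely many times, for example by using \eqref{BD_THERM_DEG_DER_TEMP} and absorbing $|\z-\z'|_{\bf d}$ factors into a slightly larger $\lambda$.
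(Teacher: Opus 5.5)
Your proposal is correct and follows essentially the same route as the paper. It uses the Duhamel expansion of $\pn$ around $p_{\gF,\sigma}$ and duality between $b^{(n)}\in\mathcal C^{\beta}_{\bf d}$ (uniformly in $n$) and the $\B^{-\beta}_{1,1,\bf d}$ norm of $\pn(s,\x,r,\cdot)\nabla_1 p_{\gF,\sigma}(r,\cdot,t,\y)$, estimated thermically through the forward H\"older modulus of $\pn$ with exponent in $(-\beta,\nu+\beta)$ (the paper's Lemma~\ref{lemma-besov-convo}), and then closes a loop on a normalized quantity combining the Gaussian upper bound with the forward H\"older constant. The differences are minor: you close by short-time absorption plus Chapman--Kolmogorov instead of a Gronwall--Volterra lemma; you take the a priori finiteness of $\Psi_n$ with an $n$-independent concentration constant from Remark~\ref{rem:solution_regularized_sde}, which the paper actually justifies in its Step 1 via a full parametrix series around $p_{\gF,\sigma}$ (applying Proposition~\ref{prop-HK-proxy-sde} directly to the drift $F_1+b^{(n)}$ would a priori let $\lambda$ depend on $n$ through $\|b^{(n)}\|_\infty$); and you spell out the lower bound (a small-time diagonal bound plus chaining), which the paper leaves implicit.
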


The proof of Theorem \ref{thm-main-mollified} is based on the following lemma, whose proof is deferred to Appendix \ref{app:proof_lemma_normal}.
In the lemma we assume that $\etaforward \in (-\beta,\nu+\beta) $. This restriction is due to the fact that we strongly rely on duality arguments which involve the drift $b^{(n)} $, which is regular, but which must be evaluated, in order to get uniform controls, in $\B_{\infty,\infty,\bd}^{-\beta} $. The assertion of Theorem \ref{thm-main-mollified} for the range $(0,-\beta] $ can then be easily derived from the previous controls and usual interpolation arguments.


	\begin{lemma}
	\label{lemma-besov-convo}
	Under the assumptions of Theorem \ref{thm-main-mollified}, 
for all  $\etaforward\in  (-\beta,\nu+\beta)$ and $\delta=0,1$, there exist two positive constants $C(\Theta_{T,b})$ and $\Cforward:=\Cforward(\Theta_{T,b},\etaforward) $, such that

	\begin{equation}\label{maj-convo-besov}
 \big\Vert \nabla_{1}^{\delta} \pn(0,\x,r,\cdot)\nabla_{1} p_{\gF,\sigma}(r,\cdot,t,\y)\big\Vert_{\B^{- \beta}_{1,1,\bf d}}  \leq C \, h_{\x}^{\etaforward,n}(r) \frac{ p_{\gF,2 \lambda}(0,\x,t,\y)}{r^{\frac{\delta}{2}} (t-r)^{\frac{1-\beta-\etaforward}{2}}}\bigg( \frac{1}{(t-r)^{\frac{\etaforward}{2}}}+\frac{1}{r^{\frac{\etaforward}{2}}}\bigg) 
\end{equation}
and
	\begin{align}
 &\big\Vert \nabla_{1}^{\delta} \pn(0,\x,r,\cdot)\big(\nabla_{1} p_{\gF,\sigma}(r,\cdot,t,\y)-\nabla_{1} p_{\gF,\sigma}(r,\cdot,t,\y')\big)\big\Vert_{\B^{- \beta}_{1,1,\bf d}}  \\
				&\qquad \leq \Cforward\, h_{\x}^{\etaforward,n}(r)\big(p_{\gF,2 \lambda}(0,\x,t,\y)+p_{\gF,2 \lambda}(0,\x,t,\y')\big)\frac{1}{r^{\frac{\delta}{2}}}\frac{|\y-\y'|_{\bf d}^{\etaforward}}{(t-r)^{\frac{1-\beta}{2}}}\bigg( \frac{1}{(t-r)^{\frac{\etaforward}{2}}}+\frac{1}{r^{\frac{\etaforward}{2}}}\bigg) 
				, \label{maj-convo-besov-2}
	\end{align}
for any $0< r <t\leq T$, $\x , \y\in\Rdd$ and $\delta=0,1$, where
	\begin{equation}
	h_{\x}^{\etaforward,n}(r) := \sup_{\y,\y'\in \Rdd} \bigg\{ \frac{\pn(0,\x,r,\y)}{p_{\gF, 2 \lambda}(0,\x,r,\y)} +\frac{|\pn(0,\x,r,\y)-\pn(0,\x,r,\y')|}{p_{\gF, 2 \lambda}(0,\x,r,\y)+p_{\gF, 2 \lambda}(0,\x,r,\y')} \frac{r^{\frac{\etaforward}{2}}}{|\y-\y'|^{\etaforward}_{\bf d}}\bigg\}\label{THE_NOR},
	\end{equation}
and where $\lambda=\lambda(\Theta_T)$ is the positive constant in Proposition \ref{prop-HK-proxy-sde}.
	\end{lemma}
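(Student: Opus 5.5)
The plan is to bound the $\B^{-\beta}_{1,1,\bf d}$ norm through the thermic characterization of Definition \ref{DEF_BESOV_THERMIC_ANISO}. Since $-\beta\in(0,1/2)$, we may take $n=1$ there. Set $f(\z):=\nabla_1^\delta \pn(0,\x,r,\z)\,\nabla_1 p_{\gF,\sigma}(r,\z,t,\y)$. We must control $\|g(1,\cdot)*f\|_{L^1}\le\|f\|_{L^1}$ together with
\[
\int_0^1 \frac{\d v}{v}\, v^{1+\frac{\beta}{2}}\,\|\partial_v g(v,\cdot)*f\|_{L^1}.
\]
The $L^1$ term follows from the Gaussian bounds. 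We have $|\nabla_1^\delta\pn|\le h\, r^{-\delta/2}p_{\gF,2\lambda}$ with $h=h^{\etaforward,n}_\x(r)$; for $\delta=1$ this uses the definition of $h$ only partially, so in that case we replace $h$ by the analogous gradient-weighted quantity, or use \eqref{eq:THM-deg_HK_2_reg} a priori for each $n$. We also have $|\nabla_1 p_{\gF,\sigma}|\le C(t-r)^{-1/2}p_{\gF,\lambda}$ from \eqref{HK-gradient-back}. The Chapman--Kolmogorov identity for $p_{\gF,2\lambda}$, after upgrading $p_{\gF,\lambda}\le C p_{\gF,2\lambda}$ via \eqref{HK-density_upper}, gives $\|f\|_{L^1}\lesssim h\,r^{-\delta/2}(t-r)^{-1/2}p_{\gF,2\lambda}(0,\x,t,\y)$. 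This is dominated by the right-hand side of \eqref{maj-convo-besov}.

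For the integral term, I use the cancellation $\int\partial_v g(v,\z-\w)\,\d\w=0$ to write
\[
\partial_v g(v,\cdot)*f(\z)=\int \partial_v g(v,\z-\w)\,\big(f(\w)-f(\z)\big)\,\d\w.
\]
I then split $f(\w)-f(\z)$ as follows:
\[
\big[\nabla_1^\delta\pn(\w)-\nabla_1^\delta\pn(\z)\big]\,\nabla_1p_{\gF,\sigma}(\w)+\nabla_1^\delta\pn(\z)\,\big[\nabla_1 p_{\gF,\sigma}(r,\w,t,\y)-\nabla_1 p_{\gF,\sigma}(r,\z,t,\y)\big].
\]
The first bracket is controlled by the forward Hölder regularity of $\pn$ encoded in $h$, namely $h\, r^{-\delta/2}(|\w-\z|_{\bf d}/r^{1/2})^{\etaforward}$ times a sum of Gaussians. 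The second bracket is controlled by the backward Hölder estimate \eqref{HK-holder-gradient-back} with some exponent $\eta\in(-\beta,1)$, which gives $(t-r)^{-1/2}(|\w-\z|_{\bf d}/(t-r)^{1/2})^{\eta}$. Using \eqref{BD_THERM_DEG_DER_TEMP}, $|\partial_v g(v,\z-\w)|\,|\z-\w|^{\gamma}_{\bf d}\lesssim v^{\gamma/2-1}g_\kappa(v,\z-\w)$. Integrating in $\w$ and $\z$ with Chapman--Kolmogorov, and using that Gaussian convolution with $g_\kappa(v,\cdot)$ for $v\le 1$ can be absorbed into $p_{\gF,2\lambda}$ up to constants, yields the bound
\[
\|\partial_v g*f\|_{L^1}\lesssim h\,r^{-\delta/2}p_{\gF,2\lambda}(0,\x,t,\y)\Big(\frac{v^{\etaforward/2-1}}{r^{\etaforward/2}(t-r)^{1/2}}+\frac{v^{\eta/2-1}}{(t-r)^{(1+\eta)/2}}\Big).
\]
Multiplying by $v^{\beta/2}$ and integrating over $(0,1)$ converges because $\etaforward,\eta>-\beta$. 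Cutting the $v$-integral at $v=t-r$ (respectively $v=r$) and using the crude $L^1$ bound above that scale produces exactly the factors $(t-r)^{-(1-\beta-\etaforward)/2}\big((t-r)^{-\etaforward/2}+r^{-\etaforward/2}\big)$ of \eqref{maj-convo-besov}, once $\eta=\etaforward$ is chosen.

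For \eqref{maj-convo-besov-2} I repeat the argument with $\nabla_1 p_{\gF,\sigma}(r,\cdot,t,\y)-\nabla_1 p_{\gF,\sigma}(r,\cdot,t,\y')$. The plain term uses \eqref{HK-holder-gradient-forward} with $\delta=1$. The difference-of-differences term uses the mixed estimate \eqref{eq:ste_new}, which supplies both $|\y-\y'|^{\etaforward}_{\bf d}$ and the needed $|\w-\z|^{\eta}_{\bf d}$. The main obstacle I expect is the bookkeeping of Gaussian concentration constants. Each use of \eqref{BD_THERM_DEG_DER_TEMP} and each Chapman--Kolmogorov step degrades $\lambda$. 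The products $p_{\gF,\lambda}(0,\x,r,\w)\,g_\kappa(v,\z-\w)\,p_{\gF,\lambda}(r,\z,t,\y)$ must still be merged into $p_{\gF,2\lambda}(0,\x,t,\y)$ with a constant independent of $n$. I would handle this by using Proposition \ref{prop-HK-proxy-sde_new} to pass to explicit Gaussians around the flow $\tilde\btheta$, and by exploiting $v\lesssim t-r$ on the relevant range so the extra variance is absorbed.
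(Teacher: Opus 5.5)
Your skeleton is the paper's proof almost step by step:
\begin{itemize}
\item the thermic characterization with $n=1$;
\item the crude bound $\|\partial_v g(v,\cdot)*f\|_{L^1}\lesssim v^{-1}\|f\|_{L^1}$ for $v\ge t-r$;
\item the cancellation plus a product-rule split for $v<t-r$;
\item the H\"older moduli from $h^{\etaforward,n}_{\x}(r)$ and from \eqref{HK-holder-gradient-back} with exponent $\etaforward$;
\item the moment bound \eqref{BD_THERM_DEG_DER_TEMP}, and finally $\int_0^{t-r}v^{\frac{\beta+\etaforward}{2}-1}\d v$.
\end{itemize}
Your use of \eqref{eq:ste_new} for \eqref{maj-convo-besov-2} is correct; the paper's sketch leaves this implicit. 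Two small points. First, cut the $v$-integral at $v=t-r$ for every term, as the paper does. If $r>t-r$, integrating the $(t-r)$-scale term up to $v=r$ gives $r^{\frac{\beta+\etaforward}{2}}(t-r)^{-\frac{1+\etaforward}{2}}$, which is not dominated by the right-hand side of \eqref{maj-convo-besov}. Second, for $\delta=1$ only the gradient-weighted normalization \eqref{THE_NOR_bis} is admissible. Invoking \eqref{eq:THM-deg_HK_2_reg} at fixed $n$ would make the constant depend on $n$.

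The real gap is where you yourself place the main obstacle, and the remedy you propose there does not give the statement. Because you apply the global H\"older bounds for all $(\z,\w)$, you produce mixed products. Their left factor is $p_{\gF,2\lambda}(0,\x,r,\z)$, not $p_{\gF,\lambda}(0,\x,r,\z)$, since it comes from the normalization in $h^{\etaforward,n}_{\x}(r)$; it therefore has no room at all. Passing to explicit Gaussians through Proposition \ref{prop-HK-proxy-sde_new} inflates the concentration at every round trip through \eqref{eq:HK-density_upper_lower}. You would end with $p_{\gF,\Lambda}(0,\x,t,\y)$ for some $\Lambda>2\lambda$. This is fatal for the lemma's only use, the Gronwall--Volterra loop of Theorem \ref{thm-main-mollified}. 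There $h^{\etaforward,n}_{\x}(t)$ is normalized by $p_{\gF,2\lambda}(0,\x,t,\cdot)$, so the output must carry exactly the same constant; otherwise the ratio is unbounded in $\y$ and the loop does not close. Separately, absorbing the convolution with $g_\kappa(v,\cdot)$ ``for $v\le 1$'' fails when $t-r$ is small. Only $v\lesssim t-r$ can be absorbed, and only into the $(r,t)$-factor.

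The paper never merges the thermic kernel with a density. It splits at $|\z-\w|_{\bf d}=(t-r)^{\frac12}$:
\begin{itemize}
\item Off the diagonal, it uses $|f(\w)-f(\z)|\le|f(\w)|+|f(\z)|$ together with $1\le(|\z-\w|_{\bf d}/(t-r)^{\frac12})^{\etaforward}$. Only same-point products $p_{\gF,2\lambda}(0,\x,r,\w)\,p_{\gF,2\lambda}(r,\w,t,\y)$ then appear.
\item On the diagonal, the mixed products involve a shift of size at most $(t-r)^{\frac12}$. This shift is absorbed solely in the $(r,t)$-factor, through the room $p_{\gF,\lambda}\le C'p_{\gF,2\lambda}$ in \eqref{HK-density_upper}.
\end{itemize}
Then $|\partial_v g(v,\z-\w)|\,|\z-\w|_{\bf d}^{\etaforward}$ is integrated out on its own, giving $v^{\frac{\etaforward}{2}-1}$. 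Whatever concentration loss \eqref{BD_THERM_DEG_DER_TEMP} causes is harmless there. The exact Chapman--Kolmogorov identity for $p_{\gF,2\lambda}$ then returns $p_{\gF,2\lambda}(0,\x,t,\y)$. Inserting this split into your argument is what keeps the constant $2\lambda$; your product-rule decomposition works equally well inside the diagonal region.
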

	
We are now in the position to prove Theorem \ref{thm-main-mollified}.

\begin{proof}[Proof of Theorem \ref{thm-main-mollified}]
We prove the statement for $s=0$. This is not restrictive, up to shifting in time all the coefficients. The strategy we implement is the following:
\begin{itemize}
\item \underline{Step 1}: prove estimates \eqref{eq:THM-deg_HK_1_reg}-\eqref{eq:THM-deg_HK_5_reg} with positive constants $C$, $C_{\etabackward}$, $C_{\etaforward}$ that also depend on $n$, and with $\lambda=\lambda(\Theta_T)$ (independent of $n$) being the same concentration constant appearing in Proposition \ref{prop-HK-proxy-sde}. Notice that the independence of the concentration constant on the mollification parameter is crucial in order for the quantity $h_{\x}^{\etaforward,n}(r)$ defined in \eqref{THE_NOR} to be finite (because of the Gaussian tails).
\item \underline{Step 2}: prove that $C$, $C_{\etabackward}$, $C_{\etaforward}$ can be made independent of $n$, up to raising the concentration constant $\lambda$.
\end{itemize}

\underline{Step 1:} for this step we only provide a sketch of the proof, as the technique we rely on is quite standard. We employ a full parametrix expansion around the kernel (parametrix) $p_{\gF,\sigma}(0,\x,t,\y)$. Recalling Remark \ref{rem:solution_regularized_sde}, Duhamel representation yields:
\begin{equation}\label{eq:rep_par_full}
\pn(0,\x,t,\y)=p_{\gF,\sigma}(0,\x,t,\y) + \int_0^t \int_{\Rdd} p_{\gF,\sigma} (0,\x,r,\z) \Phi^{(n)}(r,\z,t,\y) \d \z \d r,
\end{equation}
for any $\x,\y\in\Rdd$ and $0<t\leq T$, with
\begin{equation}
\Phi^{(n)}(r,\z,t,\y) = \sum_{k\in\N_0} \varphi_k^{(n)}(r,\z,t,\y) ,
\end{equation}
and where
\begin{align}
\varphi_0^{(n)}(r,\z,t,\y) &= b^{(n)}(r,\z)\nabla_1 p_{\gF,\sigma}(r,\z,t,\y), \\
\varphi_k^{(n)}(r,\z,t,\y) &= \int_r^{t} \int_{\Rdd}  b^{(n)}(r,\z)\nabla_1 p_{\gF,\sigma}(r,\z,\tau,\bxi) \varphi_{k-1}^{(n)}(\tau,\bxi,t,\y) \d \bxi \d \tau, \qquad k\in\N.
\end{align}
By \eqref{HK-gradient-back} we directly obtain
\begin{equation}
|\varphi_0^{(n)}(r,\z,t,\y)| \leq \underbrace{C \| b^{(n)} \|_{\infty}}_{=: K_n} (t-r)^{-\frac{1}{2}} p_{\gF,\lambda}(r,\z,t,\y),
\end{equation}
and, again by \eqref{HK-gradient-back} together with Chapman-Kolmogorov identity, we have
\begin{equation}
|\varphi_1^{(n)}(r,\z,t,\y)| \leq \frac{K_n^{2}}{\Gamma_{\text{Euler}}(1)} p_{\gF,\lambda}(r,\z,t,\y).
\end{equation}
Iterating in $k$ yields
\begin{equation}\label{eq:bound_Psi}
|\varphi_k^{(n)}(r,\z,t,\y)| \leq \frac{K_n^{k+1}}{\Gamma_{\text{Euler}}((1+k)/2)} (t-r)^{\frac{k-1}{2}} p_{\gF,\lambda}(r,\z,t,\y), \qquad k\in\N,
\end{equation}
and thus
\begin{equation}\label{eq:bound_Psi_sens}
|\Phi^{(n)}(r,\z,t,\y)| \leq \tilde K_n (t-r)^{-\frac{1}{2}}p_{\gF,\lambda}(r,\z,t,\y).
\end{equation}
Nearly identical arguments, employing also \eqref{HK-holder-gradient-forward} with $\delta=1$, yields
\begin{equation}
|\Phi^{(n)}(r,\z,t,\y) - \Phi^{(n)}(r,\z,t,\y') | \leq \tilde K_n (t-r)^{-\frac{1}{2}} \left(\frac{|\y-\y'|_{\bf d}}{(t-r)^{\frac{1}{2}}}\right)^{\etaforward} \big( p_{\gF,\lambda}(r,\z,t,\y)+ p_{\gF,\lambda}(r,\z,t,\y') \big).
\end{equation}
Plugging estimates \eqref{eq:bound_Psi}-\eqref{eq:bound_Psi_sens} into \eqref{eq:rep_par_full}, and applying \eqref{HK-density_upper}-\eqref{HK-holder-gradient-forward} and Chapman Kolmogorov identity, yield estimates \eqref{eq:THM-deg_HK_1_reg}-\eqref{eq:THM-deg_HK_5_reg}, with $C$, $C_{\etabackward}$, $C_{\etaforward}$ that depend on $n$, and with $\lambda=\lambda(\Theta_T)$ (independent of $n$) being the same concentration constant appearing in Proposition \ref{prop-HK-proxy-sde}.

\underline{Step 2:} Recalling Remark \ref{rem:solution_regularized_sde}, Duhamel formula applies to the transition density $\pn$. Namely, we can write:
	\begin{equation}
		\pn(0,\x,t,\y)=p_{\gF,\sigma}(0,\x,t,\y) + \int_0^t \int_{\Rdd} \pn (0,\x,r,\z)b^{(n)}(r,\z)\nabla_1 p_{\gF,\sigma}(r,\z,t,\y) \d \z \d r.\label{duhamel-mollified-sde}
	\end{equation}
for any $\x,\y \in  \Rdd$ and $0 <t \leq T$. By \eqref{duhamel-mollified-sde} and by the duality bound in Proposition \ref{PROP_DUALITY}, 
	we have:
	\begin{align}
		\pn(0,\x,t,\y)&\leq p_{\gF,\sigma}(0,\x,t,\y)+\int_0^t \Vert b^{(n)} (r,\cdot) \big\Vert_{\B^{\beta}_{\infty,\infty,\bf d}} \Vert \pn (0,\x,r,\cdot)\nabla_1 p_{\gF,\sigma}(r,\cdot,t,\y) \big\Vert_{\B^{-\beta}_{1,1,\bf d}} \d r.\label{dual-ineq-on-duhamel}
	\end{align}
	
	Applying \eqref{HK-density_upper} and \eqref{maj-convo-besov}, with $\delta=0$, to \eqref{dual-ineq-on-duhamel} yields
	\begin{equation}\label{eq:bound_p_pn}
\frac{\pn(0,\x,t,\y)}{p_{\gF,2 \lambda}(0,\x,t,\y)} \lesssim 1+
\int_0^t 
h_{\x}^{\etaforward,n}(r)\frac{1}{(t-r)^{\frac{1-\beta - \etaforward}{2}}}\bigg( \frac{1}{(t-r)^{\frac{\etaforward}{2}}}+\frac{1}{r^{\frac{\etaforward}{2}}}\bigg)  \d r.
	\end{equation}
Now, by applying  once more the Duhamel representation \eqref{duhamel-mollified-sde} and the duality bound in Proposition \ref{PROP_DUALITY}, we obtain
	\begin{align}
		\nonumber &|\pn(0,\x,t,\y)-\pn(0,\x,t,\y')|\leq |p_{\gF,\sigma}(0,\x,t,\y)-p_{\gF,\sigma}(0,\x,t,\y')|\\
		&\qquad \qquad +\int_0^t \big\Vert b^{(n)} (r,\cdot) \big\Vert_{\B^{\beta}_{\infty,\infty,\bf o}} \big\Vert \pn (0,\x,r,\cdot) \big(\nabla_{1} p_{\gF,\sigma}(r,\cdot,t,\y)-\nabla_{1} p_{\gF,\sigma}(r,\cdot,t,\y')\big) \big\Vert_{\B^{-\beta}_{1,1,\bf d}} \d r.
		\label{dual-ineq-on-duhamel-holder-modulus}
	\end{align}
	Applying \eqref{HK-holder-gradient-forward} and \eqref{maj-convo-besov-2} with $\delta=0$, and once more \eqref{HK-density_upper}, yields
\begin{equation}
\frac{|\pn(0,\x,t,\y)-\pn(0,\x,t,\y')|}{p_{\gF, 2 \lambda}(0,\x,t,\y)+p_{\gF,2 \lambda}(0,\x,t,\y')} \frac{t^{\frac{\etaforward}{2}}}{|\y-\y'|^{\etaforward}_{\bf d}}  \lesssim 1 +t^{\frac{\etaforward }{2}}\int_0^t  h_{\x}^{\etaforward,n}(r)\frac{1}{(t-r)^{\frac{1-\beta}{2}}}\bigg( \frac{1}{(t-r)^{\frac{\etaforward}{2}}}+\frac{1}{r^{\frac{\etaforward}{2}}}\bigg)  \d r,
\end{equation}	
which in turn, together with \eqref{eq:bound_p_pn}, yields
	\begin{equation}
		h_{\x}^{\etaforward,n}(t)\lesssim1 +t^{\frac{\etaforward }{2}}\int_0^t  h_{\x}^{\etaforward,n}(r)\frac{1}{(t-r)^{\frac{1-\beta}{2}}}\bigg( \frac{1}{(t-r)^{\frac{\etaforward}{2}}}+\frac{1}{r^{\frac{\etaforward}{2}}}\bigg)  \d r.
	\end{equation}	
By what we proved in Step 1, the quantity $h_{\x}^{\etaforward,n}(t)$ is finite for any $n\in\N$. 
Furthermore, owing to $\etaforward\in (-\beta, \nu+\beta)$ and to \eqref{eq:assump_consts}, we have	
	\begin{equation}
 \etaforward -\beta < (\nu + \beta) -\beta < 1  .
\end{equation}
Thus we can employ a Gronwall-Volterra lemma, see e.g. see e.g. Lemma 2.6 in \cite{jour:meno:24}, and finally obtain
	\begin{equation}
		h_{\x}^{\etaforward,n}(t)\lesssim 1.
	\end{equation}
This proves \eqref{eq:THM-deg_HK_1_reg} and \eqref{eq:THM-deg_HK_5_reg} for $\delta=0$. To obtain \eqref{eq:THM-deg_HK_2_reg} and \eqref{eq:THM-deg_HK_5_reg} for $\delta=1$, one simply needs to apply the same proof to bound the term
	\begin{equation}
	\sup_{\y,\y'\in \Rdd} \bigg\{ \frac{\nabla_{\x_1}\pn(0,\x,r,\y)}{p_{\gF, 2 \lambda}(0,\x,r,\y)}r^{\frac 12} +\frac{|\nabla_{\x_1} \pn(0,\x,r,\y)-\nabla_{\x_1} \pn(0,\x,r,\y')|}{p_{\gF, 2 \lambda}(0,\x,r,\y)+p_{\gF, 2 \lambda}(0,\x,r,\y')} \frac{r^{\frac{1+\etaforward}{2}}}{|\y-\y'|^{\etaforward}_{\bf d}}\bigg\}\label{THE_NOR_bis}.
	\end{equation}
This can be done since the additional time normalization for the gradient still gives integrable singularities in $r^{-\frac{1+\etaforward}2}$. 
A similar proof yields \eqref{eq:THM-deg_HK_3_reg} and \eqref{eq:THM-deg_HK_4_reg}. Pay anyhow attention that for these estimates the forward regularity is still needed, because of the singular drift. One can e.g. use Lemma \ref{lemma-besov-convo} with the somehow \textit{minimal regularity} $\etaforward=-\beta+\varepsilon,\ \varepsilon>0 $.  We omit the details for the sake of brevity.
\end{proof}

\section{The Kolmogorov Cauchy problem}\label{CAUCHY_PB}


Once again, we focus here on the degenerate case, the non-degenerare one being a particular case. The objective of this section is to study the (backward) Kolmogorov Cauchy problem
\begin{equation}\label{eq:kolm_eq}
	\begin{cases}
		\Lc u  = 
		g \qquad \text{on } (0,t) \times \R^{2d}, \\
		u_t = \ell,
	\end{cases}
\end{equation}
for a given $t\in (0,T]$, where the Kolmogorov operator $\Lc$ is defined as in \eqref{eq:op_L}-\eqref{gen}. 
The study of the mild solutions to \eqref{eq:kolm_eq}, together with the corresponding Schauder-type estimates, have been previously performed by several authors in several settings. We refer e.g. to \cite{prio:06}, \cite{chau:hono:meno:21}, \cite{lucertini2023optimal}, \cite{issoglio2022pde} among others. To the best of our knowledge, in the context Kolmogorov PDEs with distributional drift, the results of this section are original as the $\Kc$ component of $\Lc$, i.e. operator \eqref{gen}, has variable, non-smooth, coefficients, in particular in the diffusion terms. This appears to be novel even in the non-degenerate setting, namely when $\Lc$ is the generator of the solution to \eqref{eq:sde_kinetic_systems_non_deg}. 

\subsection{The semigroup of $\Kc$}
To define mild solutions to \eqref{eq:kolm_eq}, we first need to define the semigroup of the (non-singular) Kolmogorov operator $\Kc$,  hereafter $P^{{\bf F},\sigma}_{s,t}$, acting on a distribution in ${\mathcal{C}^{-\nu-\beta^+}_{\bd}\!}$ and to study its smoothing properties. When $\pFsig(s, \cdot; t, \cdot  )$ is smooth, we can define $P^{{\bf F},\sigma}_{s,t}$ by duality. Here, as $\pFsig(s, \cdot; t, \cdot  )$ is only H\"older continuous, we define it by approximation.

For any $0\leq s < t\leq T$, in light of Proposition \ref{prop-HK-proxy-sde}, we can first define $P^{{\bf F},\sigma}_{s,t} :  \mathcal{S} \longrightarrow \mathcal{C}^{2-\eps}_{\bd}$ 
, for any $\eps>0$ suitably small, as:
\begin{equation}
	P^{{\bf F},\sigma}_{s,t}  \varphi := \int_{\R^{2d}}  \pFsig(s, \cdot; t, \y  )  \varphi(\y) \d \y, \qquad \varphi\in\mathcal{S},
\end{equation}
and then extend it to $P^{{\bf F},\sigma}_{s,t} :  {\mathcal{C}^{-\nu-\beta^+}_{\bd}\!} \to  \mathcal{C}^{2-\eps}_{\bd}$ by means of the following

\begin{PROP}\label{prop:limit_semigroup_F_sigma}
	Let assumption {\bf [H]} be in force. For any $0\leq s < t\leq T$ and {$\varphi\in \mathcal{C}_{\bf d}^{\alpha}$ with $\alpha>-(\nu+\beta)$}, there exists $l\in 
	\bigcap_{\delta>0}\mathcal{C}^{2-\delta}_{\bd}$ such that 
	\begin{equation}
		\| P^{{\bf F},\sigma}_{s,t}  \varphi_n - l  \|_{\mathcal{C}_{\bf d}^{2-\delta}} \longrightarrow 0 \qquad \text{as}\quad n\to \infty,
	\end{equation} 
	for any $\delta>0$ and for any sequence $(\varphi_n)_{n\in\N}$ of functions in $\mathcal{S}$ such that $\varphi_n \to \varphi$ in $\mathcal{C}_{\bf d}^{\eta}$ with $\eta\in(-\nu-\beta,\alpha)$. In particular, $l$ does not depend on either $\delta$ or the choice of the sequence $\varphi_n$ {(nor on $\eta$)}.
\end{PROP}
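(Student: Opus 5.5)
The plan is to prove a single a priori bound and then get the limit from linearity and Cauchy sequences. The bound is: for every $\delta>0$ and every $\eta\in(-\nu-\beta,\alpha)$ there is $C=C(\Theta_T,\delta,\eta,t-s)$ with
\begin{equation}
\| P^{{\bf F},\sigma}_{s,t} \psi \|_{\mathcal{C}_{\bf d}^{2-\delta}} \le C \|\psi\|_{\mathcal{C}_{\bf d}^{\eta}}, \qquad \psi\in\mathcal S .
\end{equation}
Given this, let $\varphi_n\to\varphi$ in $\mathcal{C}^\eta_{\bf d}$. Linearity gives $\|P^{{\bf F},\sigma}_{s,t}(\varphi_n-\varphi_m)\|_{2-\delta}\le C\|\varphi_n-\varphi_m\|_\eta\to0$. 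So $(P^{{\bf F},\sigma}_{s,t}\varphi_n)_n$ is Cauchy in the Banach space $\mathcal{C}^{2-\delta}_{\bf d}$ and converges to some $l_\delta$. All these limits coincide, since convergence in $\mathcal{C}^{2-\delta}_{\bf d}$ implies uniform convergence. Hence $l\in\bigcap_\delta \mathcal{C}^{2-\delta}_{\bf d}$.

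Independence of the approximating sequence follows by interlacing two sequences $\varphi_n,\varphi_n'$ into one sequence converging to $\varphi$. Independence of $\eta$ follows because, for $\eta'<\eta$, convergence in $\mathcal{C}^{\eta}_{\bf d}$ implies convergence in $\mathcal{C}^{\eta'}_{\bf d}$.

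\textbf{Proof of the a priori bound.} Write $P^{{\bf F},\sigma}_{s,t}\psi(\x)=\int \pFsig(s,\x,t,\y)\psi(\y)\d\y$ and use the duality of Proposition \ref{PROP_DUALITY}:
\begin{equation}
|D P^{{\bf F},\sigma}_{s,t}\psi(\x)|\le \|\psi\|_{\B^{\eta}_{\infty,\infty,\bf d}}\,\| D\pFsig(s,\x,t,\cdot)\|_{\B^{-\eta}_{1,1,\bf d}} ,
\end{equation}
where $D$ is an increment in $\x$ or a derivative $\nabla_{x_1}$. Since $-\eta<\nu+\beta$, pick $\etaforward\in(-\eta,\nu+\beta)$. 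The thermic characterization then bounds the $\B^{-\eta}_{1,1,\bf d}$ norm of a function $f$ through its $L^1$ norm and its $\etaforward$-Hölder modulus measured in $L^1$ against translations. Concretely, $\|f\|_{\B^{-\eta}_{1,1}}\lesssim\|f\|_{\B^{\etaforward}_{1,\infty}}$ for $\etaforward>-\eta$. The latter norm is controlled by $\int|f(\y)-f(\y')|$-type bounds together with the Gaussian integrability of $p_{\gF,\lambda}$.

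Feeding in the forward estimates of Proposition \ref{prop-HK-proxy-sde} then gives the following bounds, all finite since $t-s>0$ is fixed:
\begin{itemize}
\item \eqref{HK-holder-gradient-forward} with $\delta=0,1$ gives uniform bounds on $\|\pFsig(s,\x,t,\cdot)\|_{\B^{-\eta}_{1,1}}$ and $\|\nabla_{x_1}\pFsig(s,\x,t,\cdot)\|_{\B^{-\eta}_{1,1}}$.
\item The mixed estimates \eqref{eq:ste_new} and \eqref{eq:ste_M_DEG_new} control the $\B^{-\eta}_{1,1}$ norm of $\nabla_{x_1}\pFsig(s,\x,t,\cdot)-\nabla_{x_1}\pFsig(s,\x',t,\cdot)$ by $|\x-\x'|_{\bf d}^{\etabackward}$, and of the degenerate-direction increments by $|\x_2-\x_2'|^{(1+\etabackward)/3}$.
\item Taking $\etabackward\in(1-\delta,1)$, these give exactly the $\mathcal{C}^{2-\delta}_{\bf d}$ seminorm: $\nabla_1$ is $(1-\delta)$-Hölder, and the $x_2$-regularity is of order $(2-\delta)/3$.
\end{itemize}
Integrating the Gaussians $p_{\gF,\lambda}$ in $\y$ produces constants independent of $\x$.

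\textbf{Main obstacle.} The delicate step is the passage from the pointwise forward and mixed Hölder estimates to $\B^{-\eta}_{1,1,\bf d}$ norms. One must check that $\partial_v^n g(v,\cdot)*f$ in $L^1$ decays like $v^{\etaforward/2-n}$, using the cancellation $\int\partial_v^n g=0$ and the moment bound \eqref{spatial-moments-kolm-kern}. It must also be checked that the sum of Gaussians at translated points $\y'$ remains integrable uniformly, which is where the deterioration of the concentration constant in \eqref{BD_THERM_DEG_DER_TEMP} enters. This is the same mechanism as in Lemma \ref{lemma-besov-convo}, so I would mimic that proof (Appendix \ref{app:proof_lemma_normal}) with $\pn$ replaced by $1$.
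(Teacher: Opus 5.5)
Your proposal is correct, and its skeleton is the paper's: a smoothing bound from $\mathcal{C}^\eta_{\bf d}$ to $\mathcal{C}^{2-\delta}_{\bf d}$ on $\mathcal S$, followed by a Cauchy-sequence argument. Your interlacing step is equivalent to the paper's triangle inequality.

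The difference lies in how the bound, i.e.\ Lemma \ref{lem:limit_semigroup_F_sigma}, is obtained.
\begin{itemize}
\item The paper computes the thermic seminorm $\mathcal{T}^{2-\delta}_{\infty,\infty}$ of $P^{{\bf F},\sigma}_{s,t}\varphi$ directly. It applies duality to the $\y$-function $\int\partial_v g(v,\z-\x)\pFsig(s,\x,t,\cdot)\,\d\x$, and performs a double cancellation (the Taylor remainder $J$ in $\x$, a subtraction at $\tilde\z$ in $\y$) before invoking \eqref{eq:ste_new} and \eqref{eq:ste_M_DEG_new}.
\item You apply duality pointwise in $\x$ to increments and $\nabla_{x_1}$ of the kernel, and rely on the Hölder description of $\mathcal{C}^{2-\delta}_{\bf d}$. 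Its passage to the thermic norm is just the Taylor bound \eqref{Taylor}. The inputs are the same, but your argument is more modular.
\end{itemize}

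The step you flag as the main obstacle is in fact mild. With no factor $\pn$, the cancellation $\int\partial_v g(v,\cdot)=0$ gives
$\|\partial_v g(v,\cdot)*f\|_{L^1}\le\int|\partial_v g(v,h)|\,\|f(\cdot-h)-f\|_{L^1}\,\d h\lesssim v^{\etaforward/2-1}[f]_{\etaforward}$,
where $[f]_{\etaforward}:=\sup_h |h|_{\bf d}^{-\etaforward}\|f(\cdot-h)-f\|_{L^1}$. Moreover $\int p_{\gF,\lambda}(s,\x,t,\y+h)\,\d\y=1$. So neither the diagonal/off-diagonal split of Lemma \ref{lemma-besov-convo} nor any loss in the concentration constant is needed.

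What the paper's version buys is the explicit factor $(t-s)^{-(\gamma-\eta)/2}$, which Proposition \ref{prop:schauder} and the fixed point in Theorem \ref{th:backward_CP_wellposedenss} require. Your $t-s$-dependent constant suffices for the present statement. For $\eta<0$ the same rate follows in your setting by splitting the thermic integral for $\B^{-\eta}_{1,1,\bf d}$ at $v=t-s$.

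Conversely, your pointwise bound $\sup_{\x}\|\pFsig(s,\x,t,\cdot)\|_{\B^{-\eta}_{1,1,\bf d}}<\infty$ lets one define $P^{{\bf F},\sigma}_{s,t}\varphi(\x)$ directly as a duality pairing for every $\varphi\in\mathcal{C}^\eta_{\bf d}$. This is worth having, since $\mathcal S$ is not dense in $\mathcal{C}^\eta_{\bf d}$ (for instance, constants cannot be approximated).
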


To prove Proposition \ref{prop:limit_semigroup_F_sigma}, we need the following
\begin{lemma}\label{lem:limit_semigroup_F_sigma}
	Let assumption {\bf [H]} be in force. For any $\eta\in (-\nu-\beta, 2)$ and $\gamma\in(\eta,2)$, there exists a positive constant $C=C(\Theta_{T}, \gamma, \nu+\beta-\eta)$, non-decreasing in $\nu+\beta-\eta$, such that
	\begin{equation}\label{eq:ste_cauc_bound}
\| P^{{\bf F},\sigma}_{s,t}  \varphi  \|_{\mathcal{C}_{\bf d}^{\gamma}} \leq C  \| \varphi  \|_{\mathcal{C}^{\eta}_{\bd}} \frac{(t-s)^{-\frac{\gamma-\eta}{2}}}{\nu+\beta+\eta}, \qquad 0\leq s < t\leq T, \quad \varphi\in \mathcal{S}.
\end{equation}
\end{lemma}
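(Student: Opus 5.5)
We plan to prove \eqref{eq:ste_cauc_bound} through the thermic characterization of Definition \ref{DEF_BESOV_THERMIC_ANISO}, writing $\varphi$ as a superposition of heat-regularized pieces and using the regularity of $\pFsig$ in both variables from Proposition \ref{prop-HK-proxy-sde}. Set $u:=t-s$ and fix $\varphi\in\mathcal S$. If $\eta>0$, the argument is a direct Hölder one: $\varphi\in\mathcal C^\eta_{\bf d}$ is a function, and the forward-regularity estimate \eqref{HK-holder-gradient-forward} is not even needed. We therefore concentrate on the delicate range $\eta\in(-\nu-\beta,0]$. There we use the reproducing (Calderón-type) identity
\begin{equation}
\varphi = g(u,\cdot)*\varphi - \int_0^{u} \partial_v g(v,\cdot)*\varphi \,\d v ,
\end{equation}
where $g=g^{\bf d}_1$. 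Taking $n$ derivatives in $v$ (with $n>|\eta|/2$) is handled by repeated integration by parts in $v$ if needed. The thermic norm then gives $\|\partial_v g(v,\cdot)*\varphi\|_{L^\infty}\lesssim v^{\eta/2-1}\|\varphi\|_{\mathcal C^\eta_{\bf d}}$. Moreover, by \eqref{BD_THERM_DEG_DER_TEMP}, the smoothed piece satisfies $\|g(u,\cdot)*\varphi\|_{\mathcal C^{\gamma'}_{\bf d}}\lesssim u^{(\eta-\gamma')/2}\|\varphi\|_{\mathcal C^\eta_{\bf d}}$ for any $\gamma'\ge0$.

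Since $\partial_v g(v,\cdot)$ has zero mean, we can write $\partial_v g(v,\cdot)*\varphi=\mathrm{div}$-type increments: for each $\x$,
\begin{equation}
P^{{\bf F},\sigma}_{s,t}(\partial_v g(v)*\varphi)(\x)=\int \Big(\int \big(\pFsig(s,\x,t,\y)-\pFsig(s,\x,t,\z)\big)\partial_v g(v,\y-\z)\,\d\y\Big)\varphi(\z)\,\d\z ,
\end{equation}
and similarly with $\nabla^\delta_{x_1}$ and with the backward increments in $\x$. We control the forward increment of $\pFsig$ (and of $\nabla_{x_1}\pFsig$) by \eqref{HK-holder-gradient-forward} with exponent $\etaforward\in(-\eta,\nu+\beta)$. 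The mixed increments, needed for the $\mathcal C^\gamma_{\bf d}$ seminorm in $\x$, are controlled by \eqref{eq:ste_new}--\eqref{eq:ste_M_DEG_new}. The pure backward part (in particular the $x_2$-direction with exponent $(1+\etabackward)/3$, and $\gamma$ close to $2$ through $\nabla_{x_1}$ Hölder with $\etabackward<1$) is controlled by \eqref{HK-holder-back}--\eqref{HK-holder-gradient-back}. Each term produces a factor $u^{-\gamma/2}(v/u)^{\etaforward/2}$ times the $L^1$ norm in $\y$ of Gaussians, which is bounded by Proposition \ref{prop-HK-proxy-sde_new}. For $\gamma\ge 1$ we split $\gamma=1+\gamma_0$ and use \eqref{HK-gradient-back}/\eqref{HK-holder-gradient-back}; the anisotropic Hölder seminorm in $x_2$ of order $\gamma/3$ is likewise obtained from \eqref{HK-holder-back} and \eqref{eq:ste_M_DEG_new}. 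We bound increments for $|\x-\x'|_{\bf d}\le u^{1/2}$ in this way, and use the sup bounds for larger increments.

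Integrating in $v$, we obtain
\begin{equation}
\int_0^u v^{\frac\eta2-1}\Big(\frac vu\Big)^{\frac{\etaforward}2}u^{-\frac\gamma2}\,\d v\lesssim \frac{u^{-\frac{\gamma-\eta}{2}}}{\etaforward+\eta},
\end{equation}
and, choosing $\etaforward$ close to $\nu+\beta$ (e.g. midpoint between $-\eta$ and $\nu+\beta$), this produces the factor $1/(\nu+\beta+\eta)$ appearing in \eqref{eq:ste_cauc_bound}. The constant $\Cforward(\etaforward)$ then depends monotonically on the gap $\nu+\beta-\eta$. The $g(u)*\varphi$ term gives the same rate directly.

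The main obstacle is the simultaneous use of forward regularity and backward regularity of order up to $\gamma<2$, i.e.\ the mixed estimates \eqref{eq:ste_new}--\eqref{eq:ste_M_DEG_new}. We also have to treat the cancellation correctly in the anisotropic setting, including the $x_2$ direction where only regularity $(1+\etabackward)/3<2/3$ is available. This last point must be checked to match the definition of $\mathcal C^\gamma_{\bf d}$ for $\gamma$ up to $2$: the $x_2$-seminorm of order $\gamma/3<2/3$ is exactly what \eqref{HK-holder-back} provides.
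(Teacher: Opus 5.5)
Your architecture can be made to work: Calderón decomposition of $\varphi$, forward cancellation via \eqref{HK-holder-gradient-forward} with $\etaforward\in(-\eta,\nu+\beta)$, the mixed estimates \eqref{eq:ste_new}--\eqref{eq:ste_M_DEG_new} for the $\x$-increments, and $\etaforward=\frac{\nu+\beta-\eta}{2}$ to produce $1/(\nu+\beta+\eta)$. However, the central step fails as you wrote it.

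In your displayed identity the whole zero-mean kernel $\partial_v g(v,\cdot)$ sits on the forward variable of $\pFsig$. What remains is $\int(\cdots)\,\varphi(\z)\,\d\z$ with the bare $\varphi$. For $\eta\le 0$ no pointwise size of $\varphi$ is controlled by $\|\varphi\|_{\mathcal C^{\eta}_{\bd}}$, so this term cannot be bounded as you claim.

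If you instead use $\|\partial_v g(v,\cdot)*\varphi\|_{L^\infty}\lesssim v^{\frac\eta2-1}\|\varphi\|_{\mathcal C^{\eta}_{\bd}}$, the kernel has been spent on $\varphi$. Nothing with zero mean is left to pair with a forward increment of $\pFsig$, so you only get $u^{-\frac\gamma2}v^{\frac\eta2-1}$, whose integral over $v\in(0,u)$ diverges when $\eta\le 0$.

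Your integrand $v^{\frac\eta2-1}(v/u)^{\frac{\etaforward}{2}}u^{-\frac\gamma2}$ needs two things at once. It needs a smoothing of $\varphi$ at scale $v$ (factor $v^{\frac\eta2}$), and a zero-mean kernel at scale $v$ acting on $\y\mapsto\pFsig(s,\x,t,\y)$ (factor $v^{-1}(v/u)^{\frac{\etaforward}{2}}$). A single $\partial_v g(v,\cdot)$ cannot play both roles.

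The missing step is a factorization of the kernel. Since $g(v,\cdot)$ is a product of centred Gaussians in $\x_1$ and $\x_2$, you have $g(v,\cdot)=c\,\tilde g(v,\cdot)*\tilde g(v,\cdot)$, where $\tilde g(v,\cdot)$ is the centred Gaussian with half the covariance of $g(v,\cdot)$. Note that $g(v/2,\cdot)*g(v/2,\cdot)\neq g(v,\cdot)$ here, because of the $v^3$ scale in $\x_2$. Then $\partial_v g(v,\cdot)=2c\,\partial_v\tilde g(v,\cdot)*\tilde g(v,\cdot)$, and
\[
P^{{\bf F},\sigma}_{s,t}\big(\partial_v g(v,\cdot)*\varphi\big)(\x)=2c\int_{\Rdd}\Big(\int_{\Rdd}\big(\pFsig(s,\x,t,\y)-\pFsig(s,\x,t,\w)\big)\,\partial_v\tilde g(v,\y-\w)\,\d\y\Big)\big(\tilde g(v,\cdot)*\varphi\big)(\w)\,\d\w .
\]
Here $\|\tilde g(v,\cdot)*\varphi\|_{L^\infty}\lesssim v^{\frac\eta2}\|\varphi\|_{\mathcal C^{\eta}_{\bd}}$ for $\eta<0$ and $v\le 1$, and $|\partial_v\tilde g(v,\cdot)|\lesssim v^{-1}g^{\bf d}_\kappa(v,\cdot)$ by \eqref{BD_THERM_DEG_DER_TEMP}. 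The forward and mixed estimates then give exactly your integrand, and the rest of your plan goes through.

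The paper avoids this issue by working on the dual side. It bounds $\mathcal T^{2-\delta}_{\infty,\infty}[P^{{\bf F},\sigma}_{s,t}\varphi]$ via Proposition \ref{PROP_DUALITY}, i.e. by $\|\varphi\|_{\mathcal C^\eta_{\bd}}$ times the $\B^{-\eta}_{1,1,\bf d}$-norm of $\y\mapsto\int\partial_v g(v,\z-\x)\pFsig(s,\x,t,\y)\,\d\x$. It computes that norm thermically, using a first-order Taylor cancellation in $\x_1$, a zeroth-order cancellation in $\y$, the same mixed estimates and the same choice of $\etaforward$.

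Your corrected route is more elementary, since it needs no duality and no Taylor cancellation in $\x$. The cost is that it relies on identifying $\mathcal C^\gamma_{\bd}$ with the anisotropic Hölder space, so $\gamma=1$ must be recovered by interpolation. The paper's argument stays within the thermic norm throughout.
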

\begin{proof} We only show the case $\eta<0$ and $\gamma = 2 - \delta$ with $\delta
\in(0,1)$, the other cases being easier.
 Throughout this proof, we will denote by $C$ and $\tilde \kappa$ any positive constant greater than $1$
 , depending at most on $\Theta_{T}$, $\delta$, $\nu+\beta-\eta$, non-decreasing in the latter quantity.
Recalling the thermic characterization of Besov spaces in Definition \ref{DEF_BESOV_THERMIC_ANISO}, we have
\begin{align}
\mathcal{T}_{\infty,\infty}^{2-\delta} [ P^{{\bf F},\sigma}_{s,t}  \varphi  ]&  = \sup_{v\in (0,1]} v^{1-\frac{2- \delta}{2}} \sup_{\z\in\Rdd} \Big|  \int_{\Rdd}  \partial_v g (v,\z-\x) \int_{\Rdd} \pFsig(s, \x, t, \y  )  \varphi (\y) \d \y \, \d \x    \Big| \\
&=  \sup_{v\in (0,1]} v^{1-\frac{2-\delta}{2}} \sup_{\z\in\Rdd} \Big|  \int_{\Rdd}  \varphi (\y)  \int_{\Rdd} \partial_v g (v,\z-\x) \pFsig(s, \x, t, \y  )  \d \x \, \d \y    \Big| \\
&\leq \| \varphi \|_{\mathcal{C}^{\eta}_{\bd}} \sup_{v\in (0,1]} v^{1-\frac{2-\delta}{2}} \sup_{\z\in\Rdd} \Big\|   \int_{\Rdd}  \partial_v g (v,\z-\x)    \pFsig(s, \x, t, \cdot  )   \d \x  \Big\|_{\B_{\bd,1,1}^{-\eta}},\label{eq:ste_bound_cauch}
\end{align}
using Proposition \ref{PROP_DUALITY} for the last inequality.
Now observe that
\begin{equation}
\mathcal{T}_{1,1}^{-\eta} \Big[  \int_{\Rdd}  \partial_v g (v,\z-\x)    \pFsig(s, \x, t, \cdot  )   \d \x \Big] =   \int_0^1   \tilde v^{\frac{\eta}{2}} \int_{\Rdd}   \big| I(v, \tilde v , \z , \tilde \z) \big| \,  \d \tilde\z  \,  \d\tilde v,
\end{equation}
with
\begin{align}
I(v, \tilde v , \z , \tilde \z) :& = \int_{\Rdd} \partial_{\tilde v} g (\tilde v,
\tilde\z-\y)  \int_{\Rdd} \partial_v g (v,\z-\x)   \pFsig(s, \x, t, \y  ) \, \d\x \, \d \y.
\label{eq:ste_I}
\end{align}
Set now
\begin{align}
J (s, \x , &\z, t, \y ):=
\pFsig(s, \x, t, \y ) - \pFsig(s, \z, t, \y ) 
- \langle \nabla_{\z_1} \pFsig(s, \z, t, \y ) , \x_1 -  \z_1 \rangle.
\end{align}
Observing that 
\begin{equation}
\int_{\Rdd} \partial_v g (v,\z-\x)\Big(\pFsig(s, \z, t, \y ) 
- \langle \nabla_{\z_1} \pFsig(s, \z, t, \y ) , \x_1 -  \z_1 \rangle \Big)\d \x=0 ,
\end{equation}
one obtains
\begin{align}
\int_{\Rdd} \partial_{ v} g ( v,
\z-\x)\pFsig(s, {\x}, t, \y )\d\x= \int_{\Rdd} \partial_v g (v,\z-\x)J (s, \x , \z, t, \y )   \, \d\x.
\end{align}
Write now:
\begin{align}
J (s, \x , \z, t, \y )&=
\pFsig(s, \x, t, \y )-\pFsig(s, \x_1,\z_2, t, \y )\\
&\quad +\pFsig(s, \x_1,\z_2, t, \y ) - \pFsig(s, \z, t, \y ) 
- \langle \nabla_{\z_1} \pFsig(s, \z, t, \y ) , \x_1 -  \z_1 \rangle\\
&=\pFsig(s, \x, t, \y )-\pFsig(s, \x_1,\z_2, t, \y )\\
&\quad +\int_0^1   \langle \nabla_1\pFsig(s, \z_1+\lambda (\x_1-\z_1),\z_2, t, \y ) - \nabla_{1} \pFsig(s, \z, t, \y ) , \x_1 -  \z_1 \rangle \d \lambda.
\end{align}
From the expression of $I(v, \tilde v , \z , \tilde \z)$, we have:
\begin{align*}
I(v, \tilde v , \z , \tilde \z) :& = \int_{\Rdd} \partial_{\tilde v} g (\tilde v,
\tilde\z-\y)  \int_{\Rdd} \partial_v g (v,\z-\x)   \tilde J(s, \x,\z, t, \y,\tilde \z  ) \, \d\x \, \d \y,
\end{align*}
where
\begin{align}
\tilde J(s, \x,\z, t, \y,\tilde \z  )&=\Big(\pFsig(s, \x, t, \y )-\pFsig(s, \x_1,\z_2, t, \y )-(\pFsig(s, \x, t, \tilde \z )-\pFsig(s, \x_1,\z_2, t, \tilde \z ))\Big)\\
&\quad+\Big(\int_0^1 \langle \nabla_1\pFsig(s, \z_1+\lambda (\x_1-\z_1),\z_2, t, \y ) - \nabla_{1} \pFsig(s, \z, t, \y ) , \x_1 -  \z_1 \rangle \d \lambda \\
&\quad-\int_0^1 \langle \nabla_1\pFsig(s, \z_1+\lambda (\x_1-\z_1),\z_2, t, \tilde \z ) - \nabla_{1} \pFsig(s, \z, t, \tilde \z ) , \x_1 -  \z_1 \rangle \d \lambda \Big) \\
&=:(\tilde J_1+\tilde J_2)(s, \x,\z, t, \y,\tilde \z  ).
\end{align}
Observe that the terms with final variables $\tilde \z$ have been added thanks to a cancellation argument with respect to the integration variable $\y$ and Fubini Theorem (double integral associated with the backward and forward variables of the density initially considered).

We can now invoke \eqref{eq:ste_M_DEG_new} and \eqref{eq:ste_new} for the  terms $\tilde J_1 $ and $\tilde J_2 $, respectively, with
$\etabackward = 1-\delta, \etaforward = \frac{\nu+\beta-\eta}{2} =: \eps$.
 One gets
\begin{align}
|\tilde J (s, \x , \z, t, \y,\tilde \z ) | & \leq C (t-s)^{-\frac{2-\delta-\eta}{2}} | \x -  \z|_{\bd}^{2-\delta}|\tilde\z-\y|_{\bd}^{\eps}\\ 
&\quad\times \int_0^1 \Big( g^{\bf d}_{ \tilde \kappa}(t-s, \tilde \btheta_{t,s}(\z_1+\lambda(\x_1-\z_1),\z_2) - \y) + g^{\bf d}_{\tilde  \kappa}(t-s, \tilde \btheta_{t,s}(\z) - \y) \\
&\qquad\qquad+ g^{\bf d}_{ \tilde \kappa}(t-s, \tilde \btheta_{t,s}(\z_1+\lambda(\x_1-\z_1),\z_2) - \tilde\z) + g^{\bf d}_{ \tilde \kappa}(t-s, \tilde \btheta_{t,s}(\z) - \tilde\z) \Big) \d \lambda .\qquad \label{eq:ste_estim_J_bis}
\end{align}

Bound \eqref{eq:ste_estim_J_bis}, together with the Gaussian bounds \eqref{BD_THERM_DEG_DER_TEMP}, now implies
\begin{align}
&|I (v, \tilde v , \z , \tilde \z)|  \leq C (t-s)^{-\frac{2-\delta}{2}} v^{\frac{2-\delta}{2}-1}\tilde v^{-1+\frac{\eps}{2}}  \int_{\Rdd} g^{\bf d}_{\tilde \kappa} (\tilde v,
\tilde\z-\y) \int_{\Rdd} g^{\bf d}_{\tilde \kappa} (v,\z-\x)  \, \d\x \, \d \y \\
&\, \times \int_0^1 \d \lambda \big( g^{\bf d}_{\tilde \kappa}(t-s, \tilde \btheta_{t,s}(\z_1+\lambda(\x_1-\z_1),\z_2) - \y) + g^{\bf d}_{ \tilde \kappa}(t-s, \tilde \btheta_{t,s}(\z) - \y) \\
&\quad + g^{\bf d}_{\tilde \kappa}(t-s, \tilde \btheta_{t,s}(\z_1+\lambda(\x_1-\z_1),\z_2) - \tilde\z) + g^{\bf d}_{\tilde \kappa}(t-s, \tilde \btheta_{t,s}(\z) - \tilde\z) \big) ,
\end{align}
which in turn, by integrating the Gaussian densities (using Fubini Theorem), yields
\begin{equation}
\mathcal{T}_{1,1}^{-\eta} \Big[  \int_{\Rdd}  \partial_v g (v,\z-\x)    \pFsig(s, \x, t, \cdot  )   \d \x \Big]   \leq C (t-s)^{-\frac{2-\delta}{2}} v^{\frac{2-\delta}{2}-1}   \int_{0}^1   \tilde v^{\frac{\eta+\eps}{2}-1} \,  \d\tilde v  \leq C \frac{(t-s)^{-\frac{2-\delta-\eta}{2}} v^{\frac{2-\delta}{2}-1}}{\nu+\beta+\eta}.
\end{equation}
The same bound can be obtained for $ \big\Vert g(1,\cdot) *  \int_{\Rdd}  \partial_v g (v,\z-\x)    \pFsig(s, \x, t, \cdot  )   \d \x \big\Vert_{L^1}$. These, together with \eqref{eq:ste_bound_cauch}, yields
\begin{equation}
\mathcal{T}_{\infty,\infty}^{2-\delta} [ P^{{\bf F},\sigma}_{s,t}  \varphi ] \leq C  \| \varphi  \|_{\mathcal{C}^{\eta}_{\bd}} \frac{(t-s)^{-\frac{2-\delta-\eta}{2}}}{\nu+\beta+\eta}.
\end{equation}
The same bound can be established for $ \big\Vert g(1,\cdot) * \big(  P^{{\bf F},\sigma}_{s,t}  \varphi\big)\big\Vert_{L^{\infty}}$. This concludes the proof of \eqref{eq:ste_cauc_bound}.
\end{proof}

\begin{proof}[Proof of Proposition \ref{prop:limit_semigroup_F_sigma}]
Fix $\eta\in(-\nu-\beta,\alpha)$ and $\delta>0$ suitably small. By Lemma \ref{lem:limit_semigroup_F_sigma}, we have
\begin{equation}\label{eq:ste_cauc_bound_2}
\| P^{{\bf F},\sigma}_{s,t}  \varphi_n - P^{{\bf F},\sigma}_{s,t}  \varphi_m \|_{\mathcal{C}_{\bf d}^{2-\delta}} \leq C  \| \varphi_n -  \varphi_m \|_{\mathcal{C}^{\eta}_{\bd}} \frac{(t-s)^{-\frac{2-\delta-\eta}{2}}}{\nu+\beta+\eta}.
\end{equation}
As $\varphi_n$ converges in $\mathcal{C}^{\eta}_{\bd}$, $P^{{\bf F},\sigma}_{s,t}  \varphi_n$ is a Cauchy sequence in $\mathcal{C}^{2-\delta}_{\bd}$. Therefore, $P^{{\bf F},\sigma}_{s,t}  \varphi_n$ converges to $l\in \mathcal{C}^{2-\delta}_{\bd}$.

As $\mathcal{C}^{2-\delta}_{\bd}$ is an increasing family in $\delta$, the limit $l$ does not depend on $\delta$ and belongs to $\bigcap_{\delta>0}\mathcal{C}^{2-\delta}_{\bd}$. 

To conclude, we show that the limit does not depend on $\eta$, nor on the choice of the sequence $(\varphi_n)_{n\in\N}$. Let $(\tilde\varphi_n)_{n\in\N}$ be a second sequence in $\mathcal{S}$ that converges to $\varphi$ in $\mathcal{C}^{\tilde\eta}_{\bd}$, with $\tilde\eta \in(\eta, \alpha)$. We have
\begin{equation}
\| l - P^{{\bf F},\sigma}_{s,t}  \tilde\varphi_n \|_{\mathcal{C}_{\bf d}^{2-\delta}} \leq \| l - P^{{\bf F},\sigma}_{s,t}  \varphi_n \|_{\mathcal{C}_{\bf d}^{2-\delta}} + \|  P^{{\bf F},\sigma}_{s,t}  \varphi_n - P^{{\bf F},\sigma}_{s,t}  \tilde\varphi_n \|_{\mathcal{C}_{\bf d}^{2-\delta}}.
\end{equation}
The first term on the right-hand side tends to zero by definition of $l$. As for the second term, Lemma \ref{lem:limit_semigroup_F_sigma} yields
\begin{equation}
\|  P^{{\bf F},\sigma}_{s,t}  \varphi_n - P^{{\bf F},\sigma}_{s,t}  \tilde\varphi_n \|_{\mathcal{C}_{\bf d}^{2-\delta}} \leq C  \| \varphi_n -  \tilde\varphi_n \|_{\mathcal{C}^{\eta}_{\bd}} \frac{(t-s)^{-\frac{2-\delta-\eta}{2}}}{\nu+\beta+\eta}.
\end{equation}
which tends to zero because both $\varphi_n$ and $\tilde\varphi_n$ tend to $\varphi$ in $\mathcal{C}^{\eta}_{\bd}$. 
\end{proof}
Hereafter, for any $0\leq s < t\leq T$ and any {$\alpha>-(\nu+\beta)$}, we shall define 
\begin{equation}
	P^{{\bf F},\sigma}_{s,t}:  {\mathcal{C}^{\alpha}_{\bd}} \to  \bigcap_{\eps>0}\mathcal{C}^{2-\eps}_{\bd}, \qquad P^{{\bf F},\sigma}_{s,t}\varphi := l,
\end{equation}
with $l$ being the limit in Proposition \ref{prop:limit_semigroup_F_sigma}. 

The following Schauder-type estimates hold for the semigroup $P^{{\bf F},\sigma}$.
\begin{PROP}[Schauder estimates for $P^{{\bf F},\sigma}$]\label{prop:schauder}
	Let assumption {\bf [H]} be in force. For any $\alpha\in ({-\nu-\beta},2)$ and $\gamma\in (\alpha,2)$, there exists a positive constant {$C=C(\Theta_T, \alpha, \gamma)$} such that
	\begin{equation}\label{eq:ste_schauder}
		\| P^{{\bf F},\sigma}_{s,t} \varphi  \|_{\mathcal{C}_{\bf d}^{\gamma}} \leq C (t-s)^{-\frac{\gamma-\alpha}{2}} \| \varphi \|_{\mathcal{C}_{\bf d}^{\alpha}},
		\qquad 0\leq s < t\leq T, \quad \varphi\in \mathcal{C}_{\bf d}^{\alpha}.
	\end{equation}
\end{PROP}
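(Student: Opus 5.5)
The plan is to reduce Proposition \ref{prop:schauder} to Lemma \ref{lem:limit_semigroup_F_sigma} by density, using the way $P^{{\bf F},\sigma}_{s,t}$ was extended to $\mathcal{C}^{\alpha}_{\bd}$ in Proposition \ref{prop:limit_semigroup_F_sigma}.

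Fix $\alpha\in(-\nu-\beta,2)$, $\gamma\in(\alpha,2)$ and $\varphi\in\mathcal{C}^{\alpha}_{\bd}$. Since $\varphi$ is in general not in $\mathcal{S}$, approximate it.
\begin{itemize}
\item Choose $\eta\in(-\nu-\beta,\alpha)$ and the mollified sequence $\varphi_n:=\Phi_n*\varphi$ as in \eqref{eq:b_n bis}, multiplied by a smooth cutoff so that $\varphi_n\in\mathcal{S}$. The cutoff has to be handled so that the norm stays controlled; alternatively use the mollified functions directly, since the kernel bounds make the integral defining $P^{{\bf F},\sigma}_{s,t}\varphi_n$ meaningful for bounded smooth functions.
\item As in Lemma \ref{lemm:moll_b}, which is stated for $b$ but whose proof only uses the Besov regularity, one has $\varphi_n\to\varphi$ in $\mathcal{C}^{\eta}_{\bd}$ and $\|\varphi_n\|_{\mathcal{C}^{\alpha}_{\bd}}\le C\|\varphi\|_{\mathcal{C}^{\alpha}_{\bd}}$ uniformly in $n$. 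The uniform bound comes from the thermic characterization: convolution by the $L^1$-normalized kernel $\Phi_n$ commutes with $g(v,\cdot)*$ and is a contraction in $L^\infty$.
\end{itemize}

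Next apply Lemma \ref{lem:limit_semigroup_F_sigma} to each $\varphi_n\in\mathcal{S}$ with regularity index $\alpha$ in place of $\eta$:
\[
\|P^{{\bf F},\sigma}_{s,t}\varphi_n\|_{\mathcal{C}^{\gamma}_{\bd}}\le C\,\frac{(t-s)^{-\frac{\gamma-\alpha}{2}}}{\nu+\beta+\alpha}\,\|\varphi_n\|_{\mathcal{C}^{\alpha}_{\bd}}\le C'(t-s)^{-\frac{\gamma-\alpha}{2}}\|\varphi\|_{\mathcal{C}^{\alpha}_{\bd}}.
\]
Here $\nu+\beta+\alpha>0$ is a fixed positive number depending only on $\alpha$, so the factor $1/(\nu+\beta+\alpha)$ is absorbed into $C=C(\Theta_T,\alpha,\gamma)$.

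Pass to the limit. By Proposition \ref{prop:limit_semigroup_F_sigma}, $P^{{\bf F},\sigma}_{s,t}\varphi_n\to P^{{\bf F},\sigma}_{s,t}\varphi$ in $\mathcal{C}^{2-\delta}_{\bd}$ for every $\delta>0$, hence in $L^\infty$ and in the sense of distributions. The $\mathcal{C}^{\gamma}_{\bd}$ norm, written through the thermic characterization as a supremum over $v$ and $\z$ of quantities continuous under uniform convergence, is lower semicontinuous under this convergence. Therefore
\[
\|P^{{\bf F},\sigma}_{s,t}\varphi\|_{\mathcal{C}^{\gamma}_{\bd}}\le\liminf_{n\to\infty}\|P^{{\bf F},\sigma}_{s,t}\varphi_n\|_{\mathcal{C}^{\gamma}_{\bd}},
\]
which gives \eqref{eq:ste_schauder}. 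If $\gamma<2-\delta$ for some $\delta>0$ one could pass to the limit directly by the embedding $\mathcal{C}^{2-\delta}_{\bd}\hookrightarrow\mathcal{C}^{\gamma}_{\bd}$. That route still needs the uniform bound above, because the rate coming from the Cauchy estimate carries the wrong exponent.

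The main obstacle is the approximation step. It requires simultaneously $\varphi_n\in\mathcal{S}$, convergence in $\mathcal{C}^{\eta}_{\bd}$, and the uniform $\mathcal{C}^{\alpha}_{\bd}$ bound. Mollification alone gives smooth bounded functions, not Schwartz functions, and a spatial cutoff can destroy the $\mathcal{C}^{\eta}_{\bd}$ convergence at infinity. I would first try to sidestep this by noting that the proof of Lemma \ref{lem:limit_semigroup_F_sigma} only uses duality (Proposition \ref{PROP_DUALITY}) and kernel bounds, so it holds verbatim for smooth $\varphi$ with bounded derivatives. One can then define $\varphi_n=\Phi_n*\varphi$ without cutoff and check that Proposition \ref{prop:limit_semigroup_F_sigma} extends to such sequences.
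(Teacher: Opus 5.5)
Your proof is correct, but you reach the endpoint exponent by a different route from the paper's.

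\textbf{The paper's route.} It never needs a bound on $\|\varphi_n\|_{\mathcal{C}^{\alpha}_{\bd}}$. It applies Lemma \ref{lem:limit_semigroup_F_sigma} at an index $\eta\in(-\nu-\beta,\alpha)$ to a sequence converging to $\varphi$ in $\mathcal{C}^{\eta}_{\bd}$, and then lets $n\to\infty$. Convergence in $\mathcal{C}^{\gamma}_{\bd}$ is automatic from Proposition \ref{prop:limit_semigroup_F_sigma}: since $\gamma<2$, there is always some $\delta$ with $\gamma<2-\delta$. So your conditional remark always holds, and the lower-semicontinuity argument is not needed. The paper then uses $\|\varphi\|_{\mathcal{C}^{\eta}_{\bd}}\le\|\varphi\|_{\mathcal{C}^{\alpha}_{\bd}}$ and sends $\eta\uparrow\alpha$. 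It is here that the monotonicity of the lemma's constant in $\nu+\beta-\eta$ cures the ``wrong exponent'' $(\gamma-\eta)/2$ that you mention.

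\textbf{Your route.} You fix the mollified sequence and get $\|\Phi_n*\varphi\|_{\mathcal{C}^{\alpha}_{\bd}}\le\|\Phi\|_{L^1}\|\varphi\|_{\mathcal{C}^{\alpha}_{\bd}}$ from the thermic characterization. You then apply the lemma directly at the endpoint index $\alpha$. This avoids both the $\eta$-limit and the monotonicity statement, at the price of committing to one particular approximating sequence.

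\textbf{The approximation issue.} Your closing concern is well founded, and the paper passes over it. Take $\varphi\equiv1$: for every $\psi\in\mathcal{S}$ one has
$$\|\varphi-\psi\|_{\mathcal{C}^{\eta}_{\bd}}\ge\|g(1,\cdot)*(1-\psi)\|_{L^\infty}\ge1 .$$
So an approximating sequence in $\mathcal{S}$ need not exist, and both Proposition \ref{prop:limit_semigroup_F_sigma} and the paper's proof implicitly assume one. The right repair is the one you suggest: run Lemma \ref{lem:limit_semigroup_F_sigma} and the Cauchy argument of Proposition \ref{prop:limit_semigroup_F_sigma} for smooth functions with bounded derivatives. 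For such functions $\int p_{\gF,\sigma}(s,\cdot,t,\y)\varphi(\y)\,\d\y$ is defined and the duality pairing is legitimate. This repair serves the paper's route just as well as yours.
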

\begin{proof}
By Proposition \ref{prop:limit_semigroup_F_sigma}, we have
\begin{equation}
\| P^{{\bf F},\sigma}_{s,t} \varphi  \|_{\mathcal{C}_{\bf d}^{\gamma}} = \lim_{n\to+\infty}\| P^{{\bf F},\sigma}_{s,t} \varphi_n  \|_{\mathcal{C}_{\bf d}^{\gamma}},
\end{equation} 
for any sequence $(\varphi_n)_{n\in\N}$ converging to $\varphi$ in $\mathcal{C}_{\bf d}^{\eta}$, with $\eta \in ({-\nu-\beta},\alpha)$.
The proof is already contained in the proof of Proposition \ref{prop:limit_semigroup_F_sigma}, up to replacing $2-\delta$ 
with $\gamma$. However, by Lemma \ref{lem:limit_semigroup_F_sigma}, we also have
\begin{equation}
\| P^{{\bf F},\sigma}_{s,t} \varphi_n  \|_{\mathcal{C}_{\bf d}^{\gamma}} \leq C  \| \varphi_n  \|_{\mathcal{C}^{\eta}_{\bd}} \frac{(t-s)^{-\frac{\gamma-\eta}{2}}}{\nu+\beta+\eta} ,
\end{equation}
and taking the limite as $n\to \infty$ yields
\begin{equation}
\| P^{{\bf F},\sigma}_{s,t} \varphi  \|_{\mathcal{C}_{\bf d}^{\gamma}} \leq  C  \| \varphi  \|_{\mathcal{C}^{\eta}_{\bd}} \frac{(t-s)^{-\frac{\gamma-\eta}{2}}}{\nu+\beta+\eta}.
\end{equation}
Finally, \eqref{eq:ste_schauder} follows from taking the limit as $\eta\to \alpha$, and recalling that the constant $C=C(\Theta_{T}, \gamma, \nu+\beta-\eta)$ is non-decreasing in $\nu+\beta-\eta$.
\end{proof}

\subsection{Well-posedness, regularity estimates and stability properties of the Cauchy problem} With the semigroup $P^{{\bf F},\sigma}_{s,t}$ at hand, we can now give a definition of mild solution to \eqref{eq:kolm_eq}, and then prove the well-posedness of the latter together with some global Schauder etimates. We introduce the following notation, for any $t>s\geq 0$ and $\gamma\in\R$:
\begin{itemize}[leftmargin=10pt]
\item[-] $\mathcal{C}_b$: set of bounded and continuous scalar-valued functions on $\Rdd$, equipped with the topology of the uniform convergence.
\item[-] $C_t\mathcal{C}_b$: continuous functions on $[0,t]$ with values on $\mathcal{C}_b$.
\item[-] $\Linfb{\gamma}{s}{t}$: measurable and (essentially) bounded functions on $[s,t]$ with values on $\mathcal{C}^{\gamma}_{\bf d}$. In particular, $\Linfa{\gamma}{t}:=\Linfb{\gamma}{0}{t}$, in accordance with the notation introduced in Section \ref{sec:sub_mainresults}.
\end{itemize}
\begin{df}[Mild solution to the Kolmogorov Cauchy problem]\label{def:sol_mild_backward_CP}
	Let assumption {\bf [H]} be in force. Let also $t\in(0,T]$, $g\in \Linfa{\beta}{t}$ 
	and $\ell \in {\mathcal{C}_{\bf d}^{2+\beta}}$.
	A 
	function $u: [0,t]\times \R^{2d} \to \R$ is said a mild solution to \eqref{eq:kolm_eq} if:
	\begin{itemize}
		\item[(a)] $u\in C_t\mathcal{C}_b \cap L_t^\infty\mathcal{C}^\gamma_{\bf d}$ for any 
		{$\gamma<2+\beta$};
		\item[(b)] the following Duhamel-type representation holds:
		\begin{equation}
			u(s,\x) =   P^{{\bf F},\sigma}_{{s,t}} \ell (\x)+ \int_{s}^t P^{{\bf F},\sigma}_{s,r} \big[ \langle b(r,\cdot) , \nabla_{1} u(r, \cdot ) \rangle 
			 - g(r,\cdot)\big](\x) \d r, \qquad (s,\x) \in [0,t]\times \R^{2d}.
		\end{equation}
	\end{itemize}
\end{df}
\begin{THM}[Well-posedness and Schauder estimates for the Kolmogorov Cauchy problem]\label{th:backward_CP_wellposedenss}
	\label{MTHM_cauchy} Under the assumptions of Definition \ref{def:sol_mild_backward_CP}, 
	there exists a unique mild solution to \eqref{eq:kolm_eq}, in the sense of Definition \ref{def:sol_mild_backward_CP}. 
	
	Furthermore, for any $\gamma\in (1-\beta,2+\beta)$, 
	there exists a positive constant $C=C(\Theta_{T,b},\gamma,\beta)$ such that the following Schauder estimate holds:
	\begin{equation}\label{eq:schauder_est_sing}
		\| u \|_{\Linfb{\gamma}{s}{t}} \leq C \big( 
		\| \ell \|_{ \mathcal{C}_{\bf d}^{\gamma}} + (t-s)^{\frac{2+\beta-\gamma}{2}} \| g \|_{ \Linfb{\beta}{s}{t}} \big).
	\end{equation}
\end{THM}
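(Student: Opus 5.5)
The plan is a Banach fixed point in $\Linfb{\gamma}{s_0}{t}$ on short time intervals, using the Schauder estimates of Proposition \ref{prop:schauder} together with Bony's product (Proposition \ref{prop:bony_prod}), and then iteration backward in time to cover $[0,t]$.

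Fix $\gamma\in(1-\beta,2+\beta)$; this interval is nonempty since $\beta>-1/2$. For $w\in \Linfb{\gamma}{s_0}{t}$ define
\begin{equation}
\Gamma w(s,\cdot):=P^{{\bf F},\sigma}_{s,t}\ell+\int_s^t P^{{\bf F},\sigma}_{s,r}\big[\langle b(r,\cdot),\nabla_1 w(r,\cdot)\rangle-g(r,\cdot)\big]\d r .
\end{equation}
The source term is handled as follows. Since $\nabla_1 w(r,\cdot)\in\mathcal C^{\gamma-1}_{\bf d}$ and $\gamma-1+\beta>0$, Proposition \ref{prop:bony_prod} shows that the product $\langle b,\nabla_1 w\rangle$ is well defined in $\mathcal C^{\beta}_{\bf d}$, with norm at most $C\|b\|_{\Linfo{\beta}}\|w(r)\|_{\mathcal C^\gamma_{\bf d}}$. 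Here one uses that $\nabla_1$ lowers anisotropic regularity by exactly one, which follows from the thermic characterization and \eqref{BD_THERM_DEG_DER_TEMP}.

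We have $\beta>-\nu-\beta$ because $\nu>-2\beta$, so Proposition \ref{prop:schauder} applies with $\alpha=\beta$. It gives
\begin{equation}
\|\Gamma w(s)\|_{\mathcal C^\gamma_{\bf d}}\le C\|\ell\|_{\mathcal C^\gamma_{\bf d}}+C\int_s^t (r-s)^{-\frac{\gamma-\beta}{2}}\big(\|w\|_{\Linfb{\gamma}{s}{t}}+\|g\|_{\Linfb{\beta}{s}{t}}\big)\d r .
\end{equation}
The singularity is integrable because $\gamma-\beta<2$. The $\ell$ term is controlled in the $\gamma$ norm since $\ell\in\mathcal C^{2+\beta}_{\bf d}\subset\mathcal C^\gamma_{\bf d}$. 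Boundedness of $P^{{\bf F},\sigma}$ on $\mathcal C^\gamma_{\bf d}$ follows from Proposition \ref{prop:schauder} with $\alpha=\gamma$, after approximating $\gamma$ from below.

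The map $\Gamma$ is affine with Lipschitz constant $C(t-s_0)^{\frac{2+\beta-\gamma}{2}}$. Choosing $t-s_0\le h$ with $h$ depending only on $\Theta_{T,b},\gamma$ makes it a contraction, which gives existence and uniqueness on $[t-h,t]$. The bound obtained there is exactly \eqref{eq:schauder_est_sing}: the $g$ contribution carries the factor $(t-s)^{\frac{2+\beta-\gamma}{2}}$ and the $w$ contribution is absorbed.

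To cover all of $[0,t]$, restart from the terminal datum $u(t-h)$. This restart uses the semigroup property $P_{s,r}P_{r,t}=P_{s,t}$, inherited from Chapman–Kolmogorov for $p_{\gF,\sigma}$ together with the approximation defining $P$. One needs $u(t-h)$ to lie in $\mathcal C^{\gamma'}_{\bf d}$ with $\gamma'>\gamma$, or one simply reruns the argument with the same $\gamma$, as the estimate only needs the $\gamma$ norm of the terminal datum. After finitely many steps, about $T/h$, we reach $[0,t]$, and the constants compound only finitely many times.

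Property (a) also requires $u\in C_t\mathcal C_b$ for every $\gamma<2+\beta$. Uniqueness is independent of $\gamma$ because a solution in a higher space is also a solution in a lower one. Continuity in time should follow from strong continuity of $s\mapsto P_{s,t}\varphi$ in $\mathcal C_b$ for $\varphi\in\mathcal C^{\gamma}_{\bf d}$ with $\gamma>0$, together with dominated convergence in the Duhamel integral.

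The main obstacle is this time continuity. For the $\ell$ term it means showing $\|P_{s,t}\ell-\ell\|_\infty\to0$ as $s\uparrow t$. I would prove it via the Gaussian bounds of Proposition \ref{prop-HK-proxy-sde_new}, which give $\|P_{s,t}\ell-\ell\|_\infty\lesssim (t-s)^{\gamma\wedge1/2}$ up to the flow displacement $|\tilde\btheta_{t,s}(\x)-\x|_{\bf d}\lesssim (t-s)^{1/2}$ locally uniformly. Since $F$ has sublinear growth, the flow displacement grows in $\x$, so uniformity in $\x$ requires care. I expect to use the $\mathcal C^{\gamma}_{\bf d}$-regularity of $\ell$ and the mollified flow scaling to get a displacement bound of order $(t-s)(1+|\x|)$, which is problematic for unbounded $\x$. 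If necessary I would settle for local uniform continuity, or use the smoothing estimate directly with a small Hölder loss.
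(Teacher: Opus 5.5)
Your fixed-point argument and your derivation of \eqref{eq:schauder_est_sing} are correct. The difference from the paper is in how the contraction is obtained.

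The paper applies Banach's theorem once on all of $[0,t]$, in $L^\infty_t\mathcal C^\gamma_{\bf d}$ with the weighted norm $\sup_{r\in[0,t]}e^{-\rho(t-r)}\|f_r\|_{\mathcal C^\gamma_{\bf d}}$. The kernel $(r-s)^{-\frac{\gamma-\beta}{2}}$ then gives a Lipschitz constant of order $\rho^{-\frac{2+\beta-\gamma}{2}}$, so no gluing is needed. The estimate \eqref{eq:schauder_est_sing} is derived afterwards from the Duhamel formula, Proposition~\ref{prop:schauder} and Proposition~\ref{prop:bony_prod}, via a Gronwall--Volterra lemma. You instead contract on windows of length $h$ and read the estimate off the contraction bound, compounding finitely many times; this is equally valid.

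Three points on your version:
\begin{itemize}
\item Restarting from the datum $u(t-h)$ uses the identity $P^{{\bf F},\sigma}_{s,r}P^{{\bf F},\sigma}_{r,t}=P^{{\bf F},\sigma}_{s,t}$ on $\mathcal C^\beta_{\bf d}$, and its interchange with the $\mathrm{d}r$-integral. The paper does not state either.
\item You can avoid both, as the weighted norm does. On $[t-(k+1)h,t-kh]$, keep the original Duhamel formula and treat $\int_{t-kh}^tP^{{\bf F},\sigma}_{s,r}[\langle b(r,\cdot),\nabla_1u(r,\cdot)\rangle-g(r,\cdot)]\,\mathrm{d}r$, with $u$ already constructed, as a fixed inhomogeneity. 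The remaining $w$-dependent integral over $[s,t-kh]$ again has Lipschitz constant $Ch^{\frac{2+\beta-\gamma}{2}}$.
\item Letting $\alpha\uparrow\gamma$ in Proposition~\ref{prop:schauder} to get $\|P^{{\bf F},\sigma}_{s,t}\ell\|_{\mathcal C^\gamma_{\bf d}}\lesssim\|\ell\|_{\mathcal C^\gamma_{\bf d}}$ requires the constant to stay bounded. This follows from its monotone dependence on $\nu+\beta-\eta$ in Lemma~\ref{lem:limit_semigroup_F_sigma}; the paper instead appeals to the controls of \cite{chau:meno:pesc:zhan:23}.
\end{itemize}

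On the time continuity in (a), the paper gives no argument of its own. It refers to \cite[Lemma 3.14]{issoglio2024degenerate}, using the density upper bounds and \eqref{eq:ste_schauder}. Your worry is justified and is not a weakness of your strategy. If $\mathcal C_b$ carries the sup norm, as stated, this continuity fails for general data when $\gF$ grows linearly.

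Take $d=1$, $F_1=0$, $F_2(\x)=\x_1$, $\sigma=1$, $b=g=0$; this is the prototype \eqref{eq:singular_SDE_kinetic} and satisfies {\bf [H]}. Take $\ell(\x)=\sin(\x_2)\in\mathcal C^{2+\beta}_{\bf d}$. Then $u(s,\x)=\int_{\R^2}p_{\gF,\sigma}(s,\x,t,\y)\ell(\y)\,\mathrm{d}\y=e^{-(t-s)^3/6}\sin\big(\x_2+(t-s)\x_1\big)$. Choosing $\x_1=\pi/(t-s)$ and $\x_2=\pi/2$ gives $\sup_{\x}|u(s,\x)-\ell(\x)|\ge1$ for every $s<t$. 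With $\ell=0$, the source $g(r,\x)=\sin\big(\x_2+(t-r)\x_1\big)$ breaks the continuity of the integral term in the same way.

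What can be proved is continuity in time uniformly on compact subsets of $\R^{2d}$, and that is what your sketch gives. For the $\ell$-term this uses the Gaussian bounds plus $|\tilde\btheta_{t,s}(\x)-\x|\lesssim(t-s)(1+|\x|)$. Note that in $|\cdot|_{\bf d}$ the degenerate component of this displacement is of order $((t-s)(1+|\x|))^{1/3}$, not $(t-s)^{1/2}$; this is harmless. Alternatively, sup-norm continuity holds when the data vanish at infinity, e.g.\ $\ell\in\mathcal S$ as in Definition~\ref{def_singular_mp}. The local version suffices for the later uses in Section~\ref{SEC_PB_MART}, after localizing on $\{\sup_r|\x_r|\le R\}$, whose complement has small probability uniformly in $n$.
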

\begin{proof}
The proof of existence is basically identical to the case with additive noise (\cite[Theorem 4.7]{issoglio2022pde}), namely when $\Lc$ is the generator of the solution to \eqref{eq:sde_kinetic_systems_non_deg} with $\sigma \equiv I_d$ and $F_1\equiv 0$. 
The idea is to find a unique fixed point $u\in L_t^\infty\mathcal{C}^\gamma_{\bf d}$ for the 
map
\begin{equation}
L_t^\infty\mathcal{C}^\gamma_{\bf d}\ni w \longmapsto \ I_s(w):=P^{{\bf F},\sigma}_{{s,t}} \ell - \int_s^t P^{{\bf F},\sigma}_{{s,r}}\big( g(r,\cdot) -  \langle b(r,\cdot) , \nabla_{1} w(r, \cdot ) \rangle \big) \d r, \qquad s\in [0,t],
\end{equation}
and to show that $u\in C_t\mathcal{C}_b$.

By relying on the Schauder estimate of Proposition \ref{prop:schauder}, one can proceed exactly as in the proof of \cite[Theorem 4.7]{issoglio2022pde} and obtain that $I$ is a contraction from the complete metric space $(\Linfa{\gamma}{t}, \| \cdot \|_{\rho,t,\gamma})$ onto itself for $\rho>0$ suitably large, where $\| f \|_{\rho,t,\gamma}  : = \sup_{r\in[0,t]} e^{-\rho(t-r)} \|f_r\|_{ \mathcal{C}^\gamma_{\bf d}}$ . Banach fixed point theorem then yields that there exists a unique fixed point $u\in\Linfa{\gamma}{t}$ for $I$. Eventually, owing to the upper bounds in \eqref{HK-density_upper}-\eqref{eq:HK-density_upper_lower} and, once more, on the Schauder estimate \eqref{eq:ste_schauder}, it is possible to proceed as in the proof of \cite[Lemma 3.14]{issoglio2024degenerate} and obtain $u\in C_t\mathcal{C}_b$. This addresses the existence part of the statement.

We now prove the Schauder estimate \eqref{eq:schauder_est_sing} (which in turn readily gives uniqueness within the function space considered). For any $s \in [0,t]$, we have
\begin{align}
			\| u(s,\cdot)\|_{ \mathcal{C}^\gamma_{\bf d}} & \leq  \|  P^{{\bf F},\sigma}_{{s,t}} \ell  \|_{ \mathcal{C}^\gamma_{\bf d}} + \int_{s}^t \big\| P^{{\bf F},\sigma}_{s,r} \big[ \langle b(r,\cdot) , \nabla_{1} u(r, \cdot ) \rangle 
			 - g(r,\cdot)\big] \big\|_{ \mathcal{C}^\gamma_{\bf d}} \d r  
			 \intertext{(from the controls in \cite{chau:meno:pesc:zhan:23} and by the Schauder estimate \eqref{eq:ste_schauder}  for the second term)}
			 & \leq_C \| \ell  \|_{ \mathcal{C}^\gamma_{\bf d}} +  \int_{s}^t (r-s)^{\frac{\beta - \gamma}{2}}  \big\| \big[ \langle b(r,\cdot) , \nabla_{1} u(r, \cdot ) \rangle 
			 - g(r,\cdot)\big] \big\|_{ \mathcal{C}^\beta_{\bf d}} \d r 
			 \intertext{{(by Proposition \ref{prop:bony_prod})}}
			 & \leq_C \| \ell  \|_{ \mathcal{C}^\gamma_{\bf d}} +  \int_{s}^t (r-s)^{\frac{\beta - \gamma}{2}} \big(  \|  b(r,\cdot) \|_{ \mathcal{C}^\beta_{\bf d}} \, \|\nabla_{1} u(r, \cdot )\|_{ \mathcal{C}^{\gamma-1}_{\bf d}}  
			 + \| g(r,\cdot) \|_{ \mathcal{C}^\beta_{\bf d}} \big) \d r.
		\end{align}
Employing $\|\nabla_{1} u(r, \cdot )\|_{ \mathcal{C}^{\gamma-1}_{\bf d}} \leq \| u(r, \cdot )\|_{ \mathcal{C}^{\gamma}_{\bf d}}$ and taking the supremum then yield
\begin{equation}
\| u\|_{\Linfb{\gamma}{s}{t}} \leq_C \| \ell  \|_{ \mathcal{C}^\gamma_{\bf d}} +  (t-s)^{\frac{2+\beta-\gamma}{2}} \| g \|_{ \Linfb{\beta}{s}{t}} +  \int_{s}^t (r-s)^{\frac{\beta - \gamma}{2}}  \| b \|_{ \Linfb{\beta}{s}{r}}  \, \| u \|_{ \Linfb{\beta}{s}{r}}    \d r.
\end{equation}	
Thus \eqref{eq:schauder_est_sing} stems from the Gronwall-Volterra Lemma.	
		\end{proof}


We now establish a stability result for the singular Kolmogorov Cauchy problem. For a given $t\in(0,T]$, and for a source term $g\in \Linfa{\beta}{t}$, we consider the Cauchy problems
\begin{equation}\label{eq:kolm_eq_reg}
	\begin{cases}
		\Lc^{(n)} u  = 
		g^{(n)} \qquad \text{on } (0,t) \times \R^{2d}, \\
		u_t = \ell.
	\end{cases}
\end{equation}
Here,
\begin{equation}\label{eq:op_Ln}
\Lc^{(n)} = \Kc + \langle b^{(n)}_s ,  \nabla_1   \rangle,
\end{equation}
and $(b^{(n)}, g^{(n)})$ is a sequence 
that converges to $(b,g)$ in $L_t^\infty\mathcal{C}^\eta_{\bf d}$ for some $\eta<\beta$. 

\begin{THM}[Stability of the KC problem]\label{thm:regul_kolm}
Let Assumption {\bf [H]} be in force. Let also $t\in(0,T]$, $g\in \Linfa{\beta}{t}$ 
	and $\ell \in {\mathcal{C}_{\bf d}^{2+\beta}}$. 
Let $(b^{(n)}, g^{(n)})$ be a sequence 
such that
\begin{equation}
\| b - b^{(n)} \|_{\Linfa{\eta}{t}} + \| g - g^{(n)} \|_{\Linfa{\eta}{t}}\longrightarrow 0 \quad \text{as } n\to \infty, \qquad \eta\in (-\nu-\beta , \beta).
\end{equation}
Then, for any $\gamma \in (0, 2+\beta)$, we have:
\begin{equation} \label{eq:cont_propr}
\|u^{(n)} - u \|_{\Linfa{\gamma}{t}} \longrightarrow 0 \quad \text{as } n\to \infty,
\end{equation}
with  
$u^{(n)},u
$ being the unique mild solutions to \eqref{eq:kolm_eq_reg} and \eqref{eq:kolm_eq}, respectively, in the sense of Definition \ref{def:sol_mild_backward_CP}.
\end{THM}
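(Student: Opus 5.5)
The plan is to compare the two Duhamel representations of Definition \ref{def:sol_mild_backward_CP} and then close a Gronwall--Volterra inequality for the difference in a Hölder norm, using the Schauder estimate of Proposition \ref{prop:schauder}. Set $w^{(n)}:=u^{(n)}-u$. Since both solutions share the terminal datum $\ell$, subtracting the two mild formulations gives
\begin{equation}
w^{(n)}(s,\cdot)=\int_s^t P^{{\bf F},\sigma}_{s,r}\Big[\langle b^{(n)}(r,\cdot),\nabla_1 w^{(n)}(r,\cdot)\rangle+\langle b^{(n)}(r,\cdot)-b(r,\cdot),\nabla_1 u(r,\cdot)\rangle-\big(g^{(n)}(r,\cdot)-g(r,\cdot)\big)\Big]\,\d r .
\end{equation}
I will estimate each bracket in $\mathcal{C}^{\eta}_{\bf d}$ rather than $\mathcal{C}^{\beta}_{\bf d}$. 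This is allowed because $\eta>-\nu-\beta$, so Proposition \ref{prop:schauder} applies with $\alpha=\eta$. It yields, for $\gamma'<2+\eta$,
\begin{equation}
\|w^{(n)}(s)\|_{\mathcal{C}^{\gamma'}_{\bf d}}\le C\int_s^t (r-s)^{-\frac{\gamma'-\eta}{2}}\big\|[\cdots](r)\big\|_{\mathcal{C}^{\eta}_{\bf d}}\,\d r .
\end{equation}

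The three products are handled with Proposition \ref{prop:bony_prod}.

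\emph{First term.} We have $\|\langle b^{(n)},\nabla_1 w^{(n)}\rangle\|_{\mathcal{C}^{\eta}_{\bf d}}\lesssim \|b^{(n)}\|_{\mathcal{C}^{\eta}_{\bf d}}\|w^{(n)}\|_{\mathcal{C}^{\gamma'}_{\bf d}}$. This needs $\eta+\gamma'-1>0$, so I take $\gamma'\in(1-\eta,2+\eta)$. The norms $\|b^{(n)}\|_{\Linfa{\eta}{t}}$ are uniformly bounded because the sequence converges in that space.

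\emph{Second term.} We have $\|\langle b^{(n)}-b,\nabla_1 u\rangle\|_{\mathcal{C}^{\eta}_{\bf d}}\lesssim \|b^{(n)}-b\|_{\mathcal{C}^{\eta}_{\bf d}}\,\|u\|_{\Linfa{\gamma''}{t}}$. Here $\gamma''<2+\beta$ is chosen with $\gamma''-1+\eta>0$, which is possible since $\eta>-\nu-\beta>-1-\beta$. The factor $\|u\|_{\Linfa{\gamma''}{t}}$ is finite by Theorem \ref{th:backward_CP_wellposedenss}.

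\emph{Third term.} It is bounded directly by $\|g^{(n)}-g\|_{\Linfa{\eta}{t}}$.

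Taking suprema over $[s,t]$ and applying the Gronwall--Volterra lemma, exactly as at the end of the proof of Theorem \ref{th:backward_CP_wellposedenss}, gives
\begin{equation}
\|w^{(n)}\|_{\Linfa{\gamma'}{t}}\le C\big(\|b^{(n)}-b\|_{\Linfa{\eta}{t}}\|u\|_{\Linfa{\gamma''}{t}}+\|g^{(n)}-g\|_{\Linfa{\eta}{t}}\big)\longrightarrow0 .
\end{equation}
The constant $C$ depends only on $\Theta_T$ and on $\sup_n\|b^{(n)}\|_{\Linfa{\eta}{t}}$. For the Gronwall step I need $w^{(n)}\in\Linfa{\gamma'}{t}$ a priori. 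This holds because $\gamma'<2+\eta<2+\beta$, and both $u^{(n)}$ and $u$ lie in $L^\infty_t\mathcal{C}^{\gamma}_{\bf d}$ for every $\gamma<2+\beta$.

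To reach an arbitrary $\gamma\in(0,2+\beta)$ I proceed in two stages. First, the range $\gamma\le\gamma'$ follows from the embedding $\mathcal{C}^{\gamma'}_{\bf d}\hookrightarrow\mathcal{C}^{\gamma}_{\bf d}$. Second, for $\gamma\in(\gamma',2+\beta)$ I interpolate between the convergence in $\mathcal{C}^{\gamma'}_{\bf d}$ and a bound in $\mathcal{C}^{\gamma_1}_{\bf d}$ for some $\gamma_1\in(\gamma,2+\beta)$ that is uniform in $n$. That uniform bound would come from Theorem \ref{th:backward_CP_wellposedenss} applied to $u^{(n)}$, provided $\sup_n\|b^{(n)}\|_{\Linfa{\beta}{t}}<\infty$. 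This holds for the mollified drifts of Lemma \ref{lemm:moll_b}, which are the case used later.

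I expect the main obstacle to be the window $(1-\eta,2+\eta)$ for $\gamma'$, which is nonempty only if $\eta>-1/2$. My first attempt is to use the freedom to move $\eta$ upward toward $\beta$, since the statement only asks for convergence at some $\eta$ below $\beta$. For general sequences, convergence in $\mathcal{C}^\eta_{\bf d}$ together with a uniform bound in $\mathcal{C}^\beta_{\bf d}$ gives convergence in $\mathcal{C}^{\eta'}_{\bf d}$ for every $\eta'\in(\eta,\beta)$, by interpolation. Without that uniform bound, I would instead rerun the contraction argument of Theorem \ref{th:backward_CP_wellposedenss} in the weighted norm $\|\cdot\|_{\rho,t,\gamma'}$. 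The aim is to show that the fixed-point maps converge uniformly in $n$, with a contraction constant controlled only by the $\Linfa{\eta}{t}$-norms of $b^{(n)}$.
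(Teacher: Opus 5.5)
Your core argument is the one the paper intends. Its proof only cites Proposition \ref{prop:schauder} and defers to \cite[Lemma 4.17-(ii)]{issoglio2022pde}: subtract the two mild formulations, split $\langle b^{(n)},\nabla_1 u^{(n)}\rangle-\langle b,\nabla_1 u\rangle$ as you do, bound the products with Proposition \ref{prop:bony_prod}, and close with Gronwall--Volterra. Your tracking of the parameter windows is correct and more careful than the paper's. For $\eta>-1/2$ you obtain convergence in $\Linfa{\gamma}{t}$ for every $\gamma<2+\eta$. You are also right that the statement as written (a single $\eta$, no uniform bounds) is too strong; the paper's one-line proof passes over this.

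\textbf{The interpolation step is incomplete.} Applying \eqref{eq:schauder_est_sing} to $u^{(n)}$ brings in $\|g^{(n)}\|_{\Linfa{\beta}{t}}$. So a uniform $\mathcal C^{\gamma_1}_{\bf d}$ bound also needs $\sup_n\|g^{(n)}\|_{\Linfa{\beta}{t}}<\infty$, not only the bound on $b^{(n)}$. This hypothesis cannot be dropped. Take $F_1=0$, $F_2(t,\x)=\x_1$, $\sigma=I_d$, $b^{(n)}=b=0$ and $\ell=g=0$. Let $g^{(n)}(\x)=n^{a}\cos(n\,\x_1^{(1)})$, where $\x_1^{(1)}$ is the first coordinate of $\x_1$ and $a\in(-\beta,-\eta)$. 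Then $u=0$ and
\[
u^{(n)}(s,\x)=-2n^{a-2}\big(1-e^{-n^2(t-s)/2}\big)\cos(n\,\x_1^{(1)}).
\]
Hence $\|g^{(n)}-g\|_{\Linfa{\eta}{t}}\asymp n^{a+\eta}\to0$, while $\|u^{(n)}-u\|_{\Linfa{\gamma}{t}}\asymp n^{a-2+\gamma}\to\infty$ for $\gamma\in(2-a,2+\beta)$.

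\textbf{The fallback cannot work for $\eta\le-1/2$.} You propose rerunning the weighted contraction with constants depending only on $\|b^{(n)}\|_{\Linfa{\eta}{t}}$. The weight $e^{-\rho(t-r)}$ only shrinks the Lipschitz constant of the fixed-point map. It does nothing about the incompatibility between $\gamma'>1-\eta$ (needed for the Bony product) and $\gamma'<2+\eta$ (needed for time integrability in Proposition \ref{prop:schauder}).

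Drop the fallback and state the missing hypothesis explicitly, in one of two forms. Both hold in every use of the theorem: mollified $b^{(n)}$ from Lemma \ref{lemm:moll_b}, with $g^{(n)}=g$ in Proposition \ref{prop:exist_delta} and $g^{(n)}=-b^{(n)}$ in \eqref{eq:kolm_eq_reg_zero}.

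(a) Assume convergence in $\Linfa{\eta}{t}$ for every $\eta\in(-\nu-\beta,\beta)$. Then your main argument suffices with no interpolation. Given $\gamma$, pick $\eta\in(\max(-1/2,\gamma-2,-\nu-\beta),\beta)$ and $\gamma'\in(\max(1-\eta,\gamma),2+\eta)$.

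(b) Assume a single $\eta$ together with $\sup_n\big(\|b^{(n)}\|_{\Linfa{\beta}{t}}+\|g^{(n)}\|_{\Linfa{\beta}{t}}\big)<\infty$. Then you need not move $\eta$ at all. Bound $\langle b^{(n)},\nabla_1 w^{(n)}\rangle$ in $\mathcal C^\beta_{\bf d}$ and the two source terms in $\mathcal C^\eta_{\bf d}$. Close Gronwall in $\mathcal C^{\gamma'}_{\bf d}$ for $\gamma'\in(1-\beta,2+\eta)$, which is nonempty because $\eta>-\nu-\beta>-1-\beta$. Then interpolate.
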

\begin{proof}
Owing to the anisotropic Schauder estimates of Proposition \ref{prop:schauder}, the proof is a straightforward modification of the one of \cite[Lemma 4.17-(ii)]{issoglio2022pde}.
\end{proof}
\begin{REM}\label{rem:equi}
Under the assumptions of Theorem \ref{thm:regul_kolm}, the family $u^{(n)}$ converges uniformly to $u$ in $C_t\mathcal{C}_b$. Therefore, the family $u^{(n)}$, $n\in\mathbb{N}$,  is uniformly equicontinuous.
\end{REM}
\section{The singular martingale problem}\label{SEC_PB_MART}
Throughout this section, we let Assumption {\bf [H]} be in force. Hereafter, we denote by $(C_{[t_0,T]}(\R^{2d}), \mathcal{B})$ the canonical measurable space of the continuous functions from $[t_0,T]$ to $\R^{2d}$, with $t_0\in [0,T)$, equipped with the usual Borel $\sigma$-algebra. We also denote by $X$ the canonical process on $(C_{[t_0,T]}(\R^{2d}), \mathcal{B})$.

\begin{df}\label{def_singular_mp}
Given $t_0\in [0,T)$ and $\initlaw$ a probability distribution on $\R^{2d}$, we say that a 
probability measure $P=P_{t_0,\initlaw}$ on $(C_{[t_0,T]}(\R^{2d}), \mathcal{B})$ is a {\em solution to $\text{MP}(b,\gF,\sigma;t_0, \initlaw)$} if, denoting by $(\x_r)_{r\in[t_0,T]}$ the canonical process on $(C_{[t_0,T]}(\R^{2d}), \mathcal{B})$,
\begin{itemize}
\item[(i)] $\x_{t_0} \sim \initlaw$ under $P$;
\item[(ii)] for all $g\in L^{\infty}_T \mathcal S$  and all $\ell \in \mathcal S$, the process $(M_r^{u,t_0})_{r\in[ t_0 , T ]}$ defined through
\begin{equation} \label{eq:martingale}
M_r^{u,t_0} 
:= u(r, \x_r) - u(t_0, \x_{t_0}) - \int_{t_0}^r g(s, \x_s) \d s,
\end{equation}
with $u\in  C_T\mathcal{C}_b \cap  \Linf{\eta}$ (for any $\eta\in (0,2+\beta)$) being the unique solution to \eqref{eq:kolm_eq} with $t=T$ (cf Theorem \ref{th:backward_CP_wellposedenss}), is a (local) martingale under $P$ with respect to the natural filtration of $(\x_r)_{r\in[t_0,T]}$.
\end{itemize}
\end{df}


\begin{REM}
The previous definition differs from classical one, by Stroock-Varadhan (\cite[Ch. 6]{stroock_multidimensional_1979}), in that the domain of the test functions is obtained by inverting the Kolmogorov operator $\Kc$ as opposed to taking the smooth functions with compact support.
The next lemma, which will be employed below to prove Theorem \ref{thm-main-deg}, shows that the two notions are equivalent in the non-singular case, namely when $b=0$. In particular, it will allow us to characterize the solution to $\text{MP}(b,\gF,\sigma;t_0, \initlaw)$, in the sense of Definition \ref{def_singular_mp}, a the limit of the classical solutions, in the sense of Stroock-Varadhan, to the regularized martingale problems obtained by replacing $b$ with a regular sequence $b^{(n)}$ that approximates $b$ in $L_T^\infty\mathcal{C}^{\beta^-}_{\bf d}$. 
\end{REM}

{\begin{lemma}\label{lemm:equivalence}
Given an initial time $t_0\in [0,T)$ and an initial probability distribution $\initlaw$ on $\R^{2d}$, the unique solution $P=P_{t_0,\initlaw}$ (see Proposition \ref{prop-HK-proxy-sde}) to the martingale problem associated with $\gF,\sigma$ in the sense of Stroock-Varadhan (\cite[Ch. 6]{stroock_multidimensional_1979}) is also 
a solution to $\text{MP}(0,\gF,\sigma;t_0, \initlaw)$, in the sense of Definition \ref{def_singular_mp}.
\end{lemma}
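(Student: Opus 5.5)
The idea is to identify, when $b=0$, the mild solution $u$ of \eqref{eq:kolm_eq} with the expectation of the terminal datum and running cost along the Stroock--Varadhan diffusion. The martingale property of $M^{u,t_0}$ then follows from the Markov property of $P$ in Proposition \ref{prop-HK-proxy-sde}. No PDE regularity beyond Theorem \ref{th:backward_CP_wellposedenss} is needed, because the test data $g\in L^\infty_T\mathcal S$ and $\ell\in\mathcal S$ are smooth in space.

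\emph{Step 1 (representation).} Fix $g\in L^\infty_T\mathcal S$ and $\ell\in\mathcal S$. With $b=0$, Definition \ref{def:sol_mild_backward_CP}(b) reads $u(s,\x)=P^{\gF,\sigma}_{s,T}\ell(\x)-\int_s^T P^{\gF,\sigma}_{s,r}g(r,\cdot)(\x)\,\d r$. Since $\ell$ and $g(r,\cdot)$ belong to $\mathcal S$, the operator $P^{\gF,\sigma}_{s,r}$ acts on them by integration against $\pFsig(s,\x,r,\cdot)$; indeed, the extension in Proposition \ref{prop:limit_semigroup_F_sigma} agrees with the original definition on $\mathcal S$, taking the constant sequence. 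The function $(r,\y)\mapsto g(r,\y)$ is jointly measurable and bounded, and $\pFsig$ is a measurable transition density. Fubini therefore gives
\begin{equation}
u(s,\x)=\int_{\Rdd}\pFsig(s,\x,T,\y)\ell(\y)\,\d\y-\int_s^T\!\!\int_{\Rdd}\pFsig(s,\x,r,\y)g(r,\y)\,\d\y\,\d r .
\end{equation}
Since $P^{\gF,\sigma}(s,\x;r,\d\y)$ is the transition function of the Markov solution, this says $u(s,\x)=\E^{P_{s,\delta_\x}}\big[\ell(\x_T)-\int_s^Tg(r,\x_r)\d r\big]$. By uniqueness in Theorem \ref{th:backward_CP_wellposedenss} (applied with $b=0$), this $u$ is the function appearing in Definition \ref{def_singular_mp}.

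\emph{Step 2 (martingale).} Let $P=P_{t_0,\initlaw}$. The random variable $Z:=\ell(\x_T)-\int_{t_0}^Tg(r,\x_r)\d r$ is bounded, so $N_r:=\E^P[Z\,|\,\Fc^{t_0}_r]$ is a (true) martingale on $[t_0,T]$. Split $Z=\big(\ell(\x_T)-\int_r^Tg(q,\x_q)\d q\big)-\int_{t_0}^rg(q,\x_q)\d q$; the second piece is $\Fc^{t_0}_r$-measurable. For the first piece, apply the Markov property of $P$ (Proposition \ref{prop-HK-proxy-sde}) to $\ell(\x_T)$ and, after conditional Fubini, to each $g(q,\x_q)$ with $q\ge r$, and then use Step 1. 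This yields
\begin{equation}
N_r=u(r,\x_r)-\int_{t_0}^rg(q,\x_q)\,\d q,\qquad P\text{-a.s.}
\end{equation}
Hence $M^{u,t_0}_r=N_r-N_{t_0}$ is a bounded martingale, in particular a local martingale. Since $u\in C_T\mathcal C_b$ and $\x$ is continuous, $M^{u,t_0}$ has continuous paths, so it is the continuous version of $N-N_{t_0}$. Finally, $\x_{t_0}\sim\initlaw$ holds by construction, which gives item (i) of Definition \ref{def_singular_mp}.

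\emph{Main obstacle.} The delicate point is Step 1, namely ensuring the identity holds for every $(s,\x)$ and not just almost everywhere. This requires checking that the semigroup in Definition \ref{def:sol_mild_backward_CP}, which was defined by approximation, coincides with the probabilistic one on Schwartz data. It also requires using the conditional Fubini step with $g$ only measurable in time, which I would justify by a monotone class argument starting from $g(q,\y)=\mathbf 1_{[a,b]}(q)\varphi(\y)$ with $\varphi\in\mathcal S$.
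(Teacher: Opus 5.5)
Your proof is correct, and it takes a genuinely different route from the paper's.

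\textbf{The paper's route.} The paper argues on the PDE side. It mollifies $F_1$ and $\sigma$ and takes the classical solutions $u^{(m)}$ of the regularized Cauchy problems given by \cite[Theorem 6.11]{MR4124429}. It then applies the Stroock--Varadhan It\^o/Dynkin formula under $P$. Finally it passes to the limit using the uniform convergence $u^{(m)}\to u$, taken from \cite{chau:hono:meno:21}.

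\textbf{Your route.} You use two facts: for $b=0$, item (b) of Definition \ref{def:sol_mild_backward_CP} is an explicit formula, and on $\mathcal S$ the operator $P^{\gF,\sigma}_{s,t}$ is, by construction, integration against $\pFsig$. Together these give $u(s,\x)=\E^{P_{s,\delta_\x}}\big[\ell(\x_T)-\int_s^T g(r,\x_r)\,\d r\big]$. Consequently $M^{u,t_0}_r=N_r-N_{t_0}$ a.s.\ for each $r$, where $N$ is a Doob martingale.

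\textbf{What your approach buys.} It needs only the Markov property and the measurability of $\pFsig$ from Proposition \ref{prop-HK-proxy-sde}. It produces a bounded true martingale rather than a local one. It involves no regularization, no classical PDE theory and no stability estimate. It also never has to compare the generator of $P$ with a mollified one. Under $P$, It\^o's formula for $u^{(m)}$ produces $\partial_s+\Ac_s$, not $\partial_s+\Ac^{(m)}_s$. The approximation route therefore must also show that $\int_{t_0}^r(\Ac_s-\Ac^{(m)}_s)u^{(m)}(s,\x_s)\,\d s\to0$. This is done explicitly in Lemma \ref{lemm:kol}, but it is only implicit in the paper's proof of the present lemma.

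\textbf{What the paper's approach buys.} It is robust. The same template works when $u$ is built from $P^{\gF,\sigma}$ together with a perturbation $\langle b^{(n)},\nabla_1 u\rangle$, as in the later uses of this lemma. In that setting your representation would first require a backward Duhamel identity linking $P^{\gF,\sigma}$ to the semigroup of the perturbed process.

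\textbf{Minor points.} The ``obstacles'' you flag are lighter than you suggest.
\begin{itemize}
\item Pointwise validity of the representation is automatic from (b).
\item The conditional Fubini step reduces to ordinary Fubini after multiplying by $\mathbf 1_A$, $A\in\Fc^{t_0}_r$. This works because $(q,\omega)\mapsto\int\pFsig(r,\x_r(\omega),q,\y)\,g(q,\y)\,\d\y$ is jointly measurable.
\item No continuous-version argument is needed, since the martingale property only involves fixed times.
\end{itemize}
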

\begin{proof}

Fix $g\in L^{\infty}_T \mathcal S$, $\ell \in \mathcal S$, and show that $M^{u,t_0}$ as in \eqref{eq:martingale} is a (local) martingale, with $u\in  C_T\mathcal{C}_b \cap  \Linf{\eta}$ (for any $\eta\in (0,2+\beta)$) being the unique mild solution to \eqref{eq:kolm_eq} with $t=T$ (see Theorem \ref{th:backward_CP_wellposedenss}).

Let $(F^{(m)}_1, \sigma^{(m)})$, $m\in\N$, be the sequence defined by
\begin{equation}
F^{(m)}_1(t,\cdot):= \Phi_m \ast {F_1}{(t,\cdot)}, \quad \sigma^{(m)}{(t,\cdot)} := \Phi_m \ast \sigma{(t,\cdot)},\qquad t\in [t_0,T].
\end{equation} 
First notice that Assumptions {\bf [H-$F$]} and {\bf [H-$\sigma$]} yield 
\begin{equation}\label{eq:conv_s_F_m_bis}
[ F_1 - F^{(m)}_1 ]_{{\bf d},T,0} + \| \sigma - \sigma^{(m)} \|_{\Linf{\beta+\nu}}  \longrightarrow 0 \quad \text{as } m\to \infty.
\end{equation}
For any $m\in\N$, let $u^{(m)}$ denote the unique mild solution to the Kolmogorov Cauchy problem
\begin{equation}\label{eq:kolm_eq_reg_zero__m_bis}
	\begin{cases}
		\big(\partial_s + \Ac^{(m)}_s\big) u  = 
		g \qquad \text{on } (t_0,T) \times \R^{2d}, \\
		u_T = \ell,
	\end{cases}
\end{equation}
where
\begin{equation}\label{eq:op_Lnm_bis}
 \Ac^{(m)}_s := 
\frac 12{\rm Tr} \Big (\sigma^{(m)}\sigma^{(m)*}(s,\x) \nabla_1^2  \Big) + \langle  (F_1^{(m)}(s,\x) ,F_2(s,\x)  )  ,\nabla \rangle.
 \end{equation}

In light of Assumption {\bf [H]}, f
the coefficients, $F^{(m)}_1$, $F_2$, and $\sigma^{(m)}$ satisfy the assumptions in \cite[Theorem 6.11]{MR4124429}, up to performing and additional localization. Therefore, $u^{(m)}$ is a classical solution to \eqref{eq:kolm_eq_reg_zero__m_bis} in the sense of \cite[Definition 6.1]{MR4124429}. In particular, we have
\begin{equation}
\nabla_{\x} u^{(m)},\nabla^2_{\x_1} u^{(m)} \in C_b([t_0,T]\times\Rdd)
\end{equation}
and 
\begin{equation}\label{eq:sol_clas_zhang}
u^{(m)}(s,\x) =   \ell(\x)+ \int_s^T \big[\Ac^{(m)}_r u^{(m)} - g\big](r,\x)  \d r, \qquad (s,\x)\in[t_0,T]\times\Rdd.
\end{equation}
Therefore, a straightforward extension of Theorem 4.2.1 in \cite{stroock_multidimensional_1979} shows that the process
\begin{equation} \label{eq:martingale_SV_bis}
M_r^{u,t_0,(m)} 
:= u^{(m)}(r, \x_r) - u^{(m)}(t_0, \x_{t_0}) - \int_{t_0}^r \Big({(\partial_s+\Ac^{(m)}_s)}  u^{(m)}(s, \x_s)\Big) \d s, \qquad r\in[0,T],
\end{equation}
is a (local) martingale under $P$. Now, \eqref{eq:sol_clas_zhang} implies that the PDE in \eqref{eq:kolm_eq_reg_zero__m_bis} is satisfied almost everywhere on $(t_0,T) \times \R^{2d}$. 
Furthermore, by Proposition \ref{prop-HK-proxy-sde}, the time-marginal laws of $\x_r$, under $P$, are absolutely continuous with respect to the Lebesgue measure, and thus we obtain
\begin{equation}
M_r^{u,t_0,(m)} 
= u^{(m)}(r, \x_r) - u^{(m)}(t_0, \x_{t_0}) - \int_{t_0}^r g(s, \x_s) \d s, \qquad r\in[0,T],
\end{equation}  
up to a modification. On the other hand, existing results (see e.g. the passage to the limit from smooth to Hölder coefficients in \cite{chau:hono:meno:21}) yield
\begin{equation}
\| u - u^{(m)} \|_{L^{\infty}([t_0,T] \times \Rdd)}  \longrightarrow 0 \quad \text{as } m\to \infty.
\end{equation} 
Therefore, $M^{u,t_0,(m)}$ converges to $M^{u,t_0}$ ucp and thus $M^{u,t_0}$ is a (local) martingale. 
\end{proof}}
The rest of the section is devoted to proving Theorem \ref{thm-main-deg}.

\subsection{Uniqueness}

Proceeding with classical arguments (e.g. in \cite[Appendix A]{issoglio2024degenerate} for a kinetic setting with singular drift, {or in \cite{chau:meno:22} for a non-degenerate setting with singular drift}), one can show that it is enough to prove the uniqueness of the marginals. Namely, it is enough to show that, for any $t_0\in[0,T)$ and $\initlaw$ probability distribution on $\Rdd$, and for any pair $P^1$, $P^2$ of solutions to $\text{MP}(b,\gF,\sigma;t_0, \initlaw)$, the following holds:
\begin{equation}\label{eq:M_prop}
P^1(\x_s \in \d\x) = P^2 (\x_s \in \d\x)
\end{equation}
for any $s\in[t_0, T]$.

 Let 
 $g \in C_T \mathcal S$
 and $u\in  C_T\mathcal{C}_b \cap  \Linf{\eta}$ (for any $\eta\in (0,2+\beta)$) be the unique solution to \eqref{eq:kolm_eq} with $t=T$ and $\ell = 0$ (cf Theorem \ref{th:backward_CP_wellposedenss})
 . By the Definition \ref{def_singular_mp}, we have 
\begin{equation}
\mathbb E^{P^i} \left[ \underbrace{ u_T( \x_T)}_{=0} -  u_{t_0}(\x_{t_0}) - \int_{t_0}^T g_s(\x_s) d s\right] =0,\qquad \text{for } i=1,2,
\end{equation}
and $P^1(\x_0 \in \d\x)=P^2(\x_0 \in \d\x)=\mu_0$. As a result,
\begin{equation}
\mathbb E^{P^1} \left[\int_{t_0}^T g_s(\x_s)ds\right]=\mathbb E^{P^2} \left[\int_{t_0}^T g_s(\x_s)ds\right].
\end{equation}
As 
 $g \in C_T \mathcal S$ is arbitrary, then \eqref{eq:M_prop} holds for 
 almost all $s\in[t_0,T]$, and since  $\x$ is continuous, it holds for all $s \in[t_0,T]$.  
 This concludes the proof of uniqueness for the martingale problem.  
 \qed

\subsection{Existence
}\label{sec:existence_MP}
We prove the existence of a solution to $\text{MP}(b,\gF,\sigma;t_0, \initlaw)$. 
Hereafter, $b^{(n)}$ and $P^{(n)}_{t_0,\initlaw}$ will denote, respectively, the sequence 
of smooth functions defined by \eqref{eq:def_mollif_b}, for which we recall the estimates of Lemma \ref{lemm:moll_b}, and the unique solution to the Stroock-Varadhan martingale problem associated to \eqref{sde-mollified}.
Note that, in light of Lemma \ref{lemm:equivalence}, $P^{(n)}_{t_0,\initlaw}$ is also the unique solution to $\text{MP}(b^{(n)},\gF,\sigma;t_0,\initlaw)$ in the sense of Definition \ref{def_singular_mp}. For a given $\x_0\in\Rdd$, we also set $P^{(n)}_{t_0,\x_0} : = P^{(n)}_{t_0,\delta_{\x_0}}$.

 We start by establishing the tightness of the sequence of probability measures $P^{(n)}_{t_0,\x_0}$. For given $t_0\in [0,T)$ and $\x_0\in\Rdd$, we let $u^{(n)}$ be the $\Rd$-valued unique mild solution to 
\begin{equation}\label{eq:kolm_eq_reg_zero}
	\begin{cases}
		\Lc^{(n)} u  = 
		-b^{(n)} \qquad \text{on } (0,T) \times \R^{2d}, \\
		u_T = 0,
	\end{cases}
\end{equation}
in the sense of Definition \ref{def:sol_mild_backward_CP}, with $\Lc^{(n)}$ as in \eqref{eq:op_Ln}. This is well-defined in light of Theorem \ref{thm:regul_kolm} and because of \eqref{eq:bound_gn_g}. We also point out that the conclusion of Theorem \ref{thm:regul_kolm}, and in particular of Remark \ref{rem:equi}, applies to $u^{(n)}$ thanks to \eqref{eq:conv sequence}.

We finally define the $\Rd$-valued function
\begin{equation}
\phi^{(n)}(t,\x) : = \x_1 +  u^{(n)}(t,\x) , \qquad (t,\x)\in[0,T]\times\Rdd. 
\end{equation}

\begin{PROP}\label{prop:tightness}
Let $t_0\in [0,T)$, with $T-t_0$ sufficiently small, and 
$\x_0\in\Rdd$. 
Then, the family of $P^{(n)}_{t_0,\x_0}$, $n\in\N$, is tight.
\end{PROP}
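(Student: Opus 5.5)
We will prove tightness through the Kolmogorov–Chentsov criterion, applied to the Zvonkin-type transform $\phi^{(n)}(t,\x)=\x_1+u^{(n)}(t,\x)$, which removes the singular drift. The degenerate component $X^2$ needs no such transform. It solves $\d X^{(n),2}_t=F_2(t,X^{(n)}_t)\d t$, and $F_2$ has at most linear growth by \eqref{eq:sub_linear_growth}. So once $\sup_t|X^{(n),1}_t|$ has moments bounded uniformly in $n$, Gronwall gives uniform moments for $X^{(n),2}$ and increments $|X^{(n),2}_t-X^{(n),2}_s|\lesssim |t-s|\,(1+\sup_r|X^{(n)}_r|)$. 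This yields the moment criterion for the second component. The real work is therefore the first component.

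\emph{Step 1 (uniform bounds on $u^{(n)}$).} By Theorem \ref{th:backward_CP_wellposedenss} applied to \eqref{eq:kolm_eq_reg_zero}, with $\ell=0$ and $g=-b^{(n)}$, we have for $\gamma\in(1-\beta,2+\beta)$
\[
\|u^{(n)}\|_{\Linfb{\gamma}{t_0}{T}}\le C\,(T-t_0)^{\frac{2+\beta-\gamma}{2}}\sup_n\|b^{(n)}\|_{\Linf{\beta}}.
\]
Here the constant is uniform in $n$ because $\|b^{(n)}\|_{\Linf{\beta}}\lesssim\|b\|_{\Linf{\beta}}$. Since $\gamma>1$, this controls $\|\nabla_1u^{(n)}\|_\infty$. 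Taking $T-t_0$ small (this is where the smallness hypothesis enters) gives $\sup_n\|\nabla_1 u^{(n)}\|_\infty\le 1/2$. Then $\x_1\mapsto\phi^{(n)}(t,\x_1,\x_2)$ is bi-Lipschitz uniformly in $n,t,\x_2$, and $u^{(n)}$ is also uniformly Hölder in $\x_2$ with exponent $\gamma/3>1/3$.

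\emph{Step 2 (Itô for the transform).} Since $b^{(n)}$ is smooth, $u^{(n)}$ is regular enough that we can argue as in Lemma \ref{lemm:equivalence}, mollifying $F_1$ and $\sigma$ and passing to the limit, to apply Itô's formula under $P^{(n)}_{t_0,\x_0}$. The singular drift cancels: $\Lc^{(n)}u^{(n)}=-b^{(n)}$ kills the $b^{(n)}$ term coming from $\x_1$. This leaves
\[
\phi^{(n)}(t,X_t)=\phi^{(n)}(t_0,\x_0)+\int_{t_0}^t F_1(r,X_r)\d r+\int_{t_0}^t (I+\nabla_1u^{(n)})\sigma(r,X_r)\d W_r.
\]
Every coefficient here is bounded uniformly in $n$ or grows at most linearly ($F_1$). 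Burkholder–Davis–Gundy then gives
\[
E|\phi^{(n)}(t,X_t)-\phi^{(n)}(s,X_s)|^{p}\le C|t-s|^{p/2}\Bigl(1+E\sup_r|X_r|^{p}\Bigr),
\]
and a Gronwall argument on the whole system gives a uniform bound on $E\sup_r|X_r|^p$.

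\emph{Step 3 (back to $X^1$).} We decompose
\[
X^1_t-X^1_s=[\phi^{(n)}(t,X_t)-\phi^{(n)}(s,X_s)]-[u^{(n)}(t,X_t)-u^{(n)}(t,X_s)]-[u^{(n)}(t,X_s)-u^{(n)}(s,X_s)].
\]
The second bracket is bounded by $\tfrac12|X^1_t-X^1_s|+C|X^2_t-X^2_s|^{\gamma/3}$ and is absorbed. The first bracket is handled by Step 2. The main obstacle is the last bracket, the time regularity of $u^{(n)}$ uniformly in $n$. We obtain it from the mild formulation of Definition \ref{def:sol_mild_backward_CP}: the difference $u^{(n)}(t)-u^{(n)}(s)$ combines $(P^{{\bf F},\sigma}_{s,t}-I)$ acting on $u^{(n)}(t)\in\mathcal C^\gamma_{\bf d}$ with an integral over $[s,t]$ of Schauder-controlled terms. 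Proposition \ref{prop:schauder} and the heat kernel bounds should give $\|u^{(n)}(t)-u^{(n)}(s)\|_\infty\lesssim|t-s|^{\epsilon}$ uniformly in $n$, at least locally in space; this is the delicate point because of the unbounded transport by $\gF$. If a global bound fails, we would instead localize on $\{\sup_r|X_r|\le R\}$, using the uniform moment bounds, or replace the time-regularity argument by Aldous' criterion. Combining the three brackets gives $E|X_t-X_s|^p\lesssim|t-s|^{p\epsilon'}$ with $p\epsilon'>1$, uniformly in $n$, which proves tightness.
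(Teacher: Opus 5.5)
Your Steps 1--2 coincide with the paper's Lemma \ref{lemm:kol}. You use the same transform $\phi^{(n)}=\x_1+u^{(n)}$, the same justification of It\^o's formula by mollifying $F_1$ and $\sigma$, BDG with the uniformly bounded integrand $(I_d+\nabla_1u^{(n)})\sigma$, smallness of $T-t_0$ to get $\|\nabla_1u^{(n)}\|_\infty\le\frac12$, and the same absorption in the three-term decomposition. (The paper bounds the moments via the heat-kernel estimates, but it mentions your Gronwall alternative.)

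The genuine difference is the term $u^{(n)}(t,\x_s)-u^{(n)}(s,\x_s)$. The paper uses the modulus-of-continuity criterion (Karatzas--Shreve, Thm.~2.4.10), with Garsia--Rodemich--Rumsey for the $\phi^{(n)}$- and $\x^2$-increments. For $u^{(n)}$ it needs only the qualitative, $n$-uniform equicontinuity coming from the stability Theorem \ref{thm:regul_kolm} (Remark \ref{rem:equi}). Its footnote says this is done precisely to avoid a quantitative time estimate. Kolmogorov--Chentsov forces you to produce one, and you only sketch it. It does hold, with the weight you anticipated. Restart the mild formulation at time $t$, using the semigroup property of $P^{\gF,\sigma}$ (Chapman--Kolmogorov together with Proposition \ref{prop:limit_semigroup_F_sigma}):
\[
u^{(n)}(s,\cdot)-u^{(n)}(t,\cdot)=\big(P^{\gF,\sigma}_{s,t}-I\big)u^{(n)}(t,\cdot)+\int_s^t P^{\gF,\sigma}_{s,r}\big[\langle b^{(n)}(r,\cdot),\nabla_1u^{(n)}(r,\cdot)\rangle+b^{(n)}(r,\cdot)\big]\,\d r .
\]
The integral is $O\big((t-s)^{(2+\beta-\gamma')/2}\big)$ in sup norm, uniformly in $n$, by Proposition \ref{prop:schauder} with a small $\gamma'>0$ and Proposition \ref{prop:bony_prod}. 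For the first term, use $|f(\y)-f(\x)|\lesssim\|f\|_{\mathcal C^\gamma_{\bf d}}\min\{1,|\y-\x|_{\bf d}\}$ together with the Gaussian upper bound centred at $\tilde\btheta_{t,s}(\x)$. This gives a bound $\lesssim(t-s)^{1/2}+\min\{1,|\tilde\btheta_{t,s}(\x)-\x|_{\bf d}\}\lesssim(t-s)^{1/3}(1+|\x|)^{1/3}$. The factor $(1+|\x|)^{1/3}$ comes from the unbounded transport and is absorbed by your uniform moment bounds. So Kolmogorov--Chentsov closes for any $p>3$, without localization.

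The paper's route is softer and simply reuses the stability theorem. Yours yields the explicit $n$-uniform bound $\E|\x_t-\x_s|^p\lesssim|t-s|^{p/3}$. It also makes visible that, for unbounded $\gF$, the time increments of $u^{(n)}$ are controlled only locally in space. That local control is also all the paper's equicontinuity step really needs, since $\x_t$ stays in a compact set with high probability.
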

The proof of Proposition \ref{prop:tightness} is based on the following 
\begin{lemma}\label{lemm:kol}
Let $t_0\in [0,T)$ and 
$\x_0\in\Rdd$. 
For any $n\in \N$ we have
\begin{equation}\label{eq:bound_kolm}
\mathbb{E}_{P^{(n)}_{t_0,\x_0}}\big[ | \phi^{(n)}(t, \x_t) - \phi^{(n)}(s, \x_s)  |^4 +|  \x^{2}_t - \x^{2}_s  |^4 \big] \leq C{(1+|\x_0|^4)} |t-s|^{2}, \qquad t_0\leq s <t \leq T,
\end{equation}
for some positive $C=C(\Theta_{T,b},\gamma,\beta)$. In particular, $C$ is independent of $n$.
\end{lemma}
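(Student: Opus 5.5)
The plan is the usual Zvonkin-type transform. Under $P^{(n)}_{t_0,\x_0}$ the coefficients are smooth in the singular part, and $u^{(n)}$ solves $\partial_s u^{(n)}+\Ac_s u^{(n)}+\langle b^{(n)},\nabla_1 u^{(n)}\rangle=-b^{(n)}$. By the martingale characterisation (Lemma \ref{lemm:equivalence}, applied componentwise to $\phi^{(n)}$, up to the localisation needed for the unbounded coordinate $\x_1$), $\phi^{(n)}(r,\x_r)$ is a semimartingale. Its drift is $(\partial_r+\Ac_r+\langle b^{(n)},\nabla_1\rangle)\phi^{(n)}=F_1+b^{(n)}+\Ac_r u^{(n)}+\langle b^{(n)},\nabla_1u^{(n)}\rangle+\partial_ru^{(n)}=F_1(r,\x_r)$, so the singular drift cancels. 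Its martingale part is $\int(I+\nabla_1u^{(n)})\sigma(r,\x_r)\,\d W_r$, realised on an enlarged space. Concretely,
\begin{equation}
\phi^{(n)}(t,\x_t)-\phi^{(n)}(s,\x_s)=\int_s^t F_1(r,\x_r)\,\d r+\int_s^t (I+\nabla_1 u^{(n)})\sigma(r,\x_r)\,\d W_r .
\end{equation}

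The key steps, in order, are as follows. (1) Uniform bound on $u^{(n)}$: apply the Schauder estimate \eqref{eq:schauder_est_sing} with $\ell=0$, source $-b^{(n)}$ and some $\gamma\in(1-\beta,2+\beta)$. Since $\|b^{(n)}\|_{\Linf{\beta}}\lesssim\|b\|_{\Linf{\beta}}$ uniformly in $n$ (Lemma \ref{lemm:moll_b}, or directly from the mollifier), and the constant depends on $\Theta_{T,b}$ only through $\|b^{(n)}\|$, we get $\sup_n\|\nabla_1u^{(n)}\|_\infty\le C$. (2) Moments of the process: by \eqref{eq:sub_linear_growth}, $|F_1(r,\x)|\le C(1+|\x|)$, and $F_2$ also has at most linear growth. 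Using the representation above and $|u^{(n)}|\le C$, we have $|\x_{1,t}|\le C+|\phi^{(n)}|$, and a Gronwall argument gives $\sup_n\E\sup_{r\le T}|\x_r|^4\le C(1+|\x_0|^4)$. One could alternatively integrate the uniform Gaussian bound \eqref{eq:THM-deg_HK_1_reg} against the flow $\tilde\btheta$, which grows linearly in $\x_0$. (3) Increment bound for $\phi^{(n)}$: BDG plus boundedness of $(I+\nabla_1u^{(n)})\sigma$ give $C|t-s|^2$ for the stochastic integral. For the drift, Hölder's inequality gives $|t-s|^4\,\E\sup|F_1|^4\le C(1+|\x_0|^4)|t-s|^2$. (4) The degenerate component: $\x^2_t-\x^2_s=\int_s^tF_2(r,\x_r)\d r$, so its fourth moment is bounded by $|t-s|^4 C(1+|\x_0|^4)$ via step (2).

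The main obstacle is justifying the Itô/martingale decomposition for $\phi^{(n)}$, because $u^{(n)}$ is only a mild solution. I would handle it through Theorem \ref{thm:regul_kolm} and Lemma \ref{lemm:equivalence}. Since $b^{(n)}$ is smooth, $u^{(n)}$ is the unique solution of the problem with $\mathcal K$ and smooth drift, so the martingale property of $u^{(n)}(r,\x_r)+\int b^{(n)}$ holds by Definition \ref{def_singular_mp}, which $P^{(n)}$ satisfies. To identify the quadratic variation, I would mollify $\sigma,F_1$ as in the proof of Lemma \ref{lemm:equivalence}, use classical Itô there, and pass to the limit. The bound on $\nabla_1u^{(n)}$ is the only quantitative input and is uniform in $n$. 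A smallness of $T-t_0$ is not needed here, but it could be used to make $I+\nabla_1u^{(n)}$ invertible for Proposition \ref{prop:tightness}.
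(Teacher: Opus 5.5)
Your proposal is correct and follows the paper's proof essentially step by step. Like the paper, you apply the Zvonkin-type transform $\phi^{(n)}=\x_1+u^{(n)}$ with the singular drift cancelled, take a uniform bound on $\nabla_1 u^{(n)}$ from the Schauder estimate with $\gamma>1-\beta>1$, and combine BDG with uniform fourth moments (via Gronwall or the Gaussian upper bounds). You also justify the Itô decomposition by mollifying $F_1,\sigma$, applying classical Itô and passing to the limit.

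One correction: Definition \ref{def_singular_mp} does not by itself give the martingale property with source $b^{(n)}$, since $b^{(n)}\notin L^\infty_T\mathcal S$. It is the mollification argument that does the work, exactly as in the paper, including the a.e. convergence of $F_1^{(k)}$ combined with absolute continuity of the marginals of $X^{(n)}$.
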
 
\begin{proof}
We want to use It\^o calculus. Therefore, for any $n\in\N$, we let $(\Omega^{(n)}, \Fc^{(n)} , (\Fc^{(n)}_t)_{t\in[t_0,T]}, \mathbb{P}^{(n)})$ be a filtered probability space, $W^{(n)}$ be a Brownian motion, and $X^{(n)}$ be a continuous and adapted process such that
\begin{equation}
\label{SDE_reg}
\begin{cases}
X^{(n),1}_t = X^{(n),1}_0 + \int_{t_0}^t \big(F_1(r,X^{(n)}_r)+ b^{(n)}(r,X^{(n)}_r)\big) \d r + \int_{t_0}^t  \sigma\big(r,X^{(n)}_r\big) \d W^{(n)}_r,\\
X^{(n),2}_t = X^{(n),2}_0 + \int_{t_0}^t F_2\big(r,X^{(n)}_r\big) \d r,
\end{cases} \quad t\in[t_0,T], 
\end{equation}
with $X^{(n)}_0\sim \delta_{\x_0}$. 
Assume now that we can show
\begin{equation}\label{eq:dynamics_un}
u^{(n)} \big(t, X^{(n)}_t\big) = u^{(n)} \big(t_0, X^{(n)}_{t_0}\big) - \int_{t_0}^t b^{(n)}\big(r, X^{(n)}\big)\d r  +\int_{t_0}^t  \nabla_1 u^{(n)} \big(r, X^{(n)}_r\big) \sigma\big(r, X^{(n)}_r\big)  \d W^{(n)}_r.
\end{equation}
Combining the latter with \eqref{SDE_reg} yields
\begin{equation}\label{eq:dynamics_phi}
\phi^{(n)}\big(t, X^{(n)}_t\big) - \phi^{(n)}\big(s, X^{(n)}_s\big) = \int_{s}^t F_1\big(r,X^{(n)}_r\big)  \d r + \int_{s}^t  [(I_d +  \nabla_1 u^{(n)}  ) \sigma ]\big(r,X^{(n)}_r\big) \d W^{(n)}_r , 
\end{equation}
and thus Jensen's inequality, together with multidimensional BDG inequality 
, yields
\begin{align}
\mathbb{E}_{\mathbb{P}^{(n)}}\big[ \big| \phi^{(n)}\big(t, X^{(n)}_t\big) - \phi^{(n)}\big(s, X^{(n)}_s\big) \big|^4 \big] & \leq 8  (t -s)^3 \int_{s}^t \mathbb{E}_{\mathbb{P}^{(n)}}\big[ \big| F_1\big(r,X^{(n)}_r\big) \big|^4 \big] \d r \\
&\quad + 8 (t-s) \int_{s}^t \mathbb{E}_{\mathbb{P}^{(n)}}\big[ \big|  [(I_d +  \nabla_1 u^{(n)}  ) \sigma ]\big(r,X^{(n)}_r\big) \big|^{4} \big] \d r .
\label{eq:bound_phi}
\end{align}
Observe now that the Assumption {\bf [H-$F$]} yields
\begin{equation}
|F_1(t, \x)| + |F_2(t, \x)| \leq C (1 + |\x|) , \qquad (t,\x)\in[0,T]\times\Rdd. 
\end{equation}
Furthermore, by Theorems \ref{th:backward_CP_wellposedenss} and \ref{thm:regul_kolm} (in particular \eqref{eq:schauder_est_sing}-\eqref{eq:cont_propr}) and Assumption {\bf [H-$\sigma$]}, we obtain
\begin{equation}
\big|  [(I_d +  \nabla_1 u^{(n)}  ) \sigma ]\big(t, \x \big) \big| \leq C , \qquad (t,\x)\in[0,T]\times\Rdd. 
\end{equation}
{Applying the latter two inequalities to \eqref{SDE_reg}-\eqref{eq:bound_phi}, together with \eqref{eq:THM-deg_HK_1_reg}-\eqref{eq:HK-density_upper_lower}}, and with the fact that $P^{(n)}_{t_0,\x_0}$ is the law of $X^{(n)}$ under $\mathbb{P}^{(n)}$, yields \eqref{eq:bound_kolm}. 
{Notice that we utilized the density upper bounds \eqref{eq:THM-deg_HK_1_reg}-\eqref{eq:HK-density_upper_lower} to control the moments of $X^{(n)}$, uniformly in $n$. Alternatively, this control could also be obtained without density bounds, by employing Gronwall Lemma as in the proof of \cite[Lemma 5.7]{issoglio2024degenerate}.} 

To conclude, we need to show \eqref{eq:dynamics_un}. Let $(F^{(m)}_1, \sigma^{(m)})$, $m\in\N$, be the sequence defined by
\begin{equation}
F^{(m)}_1(t,\cdot):= \Phi_m \ast F_1(t,\cdot), \quad \sigma^{(m)}(t,\cdot) := \Phi_m \ast \sigma(t,\cdot),\qquad t\in [0,T].
\end{equation} 
First notice that Assumptions {\bf [H-$F$]} and {\bf [H-$\sigma$]} yield 
\begin{equation}\label{eq:conv_s_F_m_pre}
\| F_1 - F^{(m)}_1 \|_{L^{1}_{\text{loc}}([t_0,T] \times \Rdd)} + [ F_1 - F^{(m)}_1 ]_{{\bf d},T,0} + \| \sigma - \sigma^{(m)} \|_{\Linf{\beta+\nu}} \longrightarrow 0 \quad \text{as } m\to \infty,
\end{equation}
which in turn yields
\begin{equation}\label{eq:conv_s_F_m}
 \| \sigma - \sigma^{(m)} \|_{L^{\infty}([t_0,T] \times \Rdd)}  \longrightarrow 0 \quad \text{as } m\to \infty,
 \end{equation}
and, up to considering a subsequence,
\begin{equation}\label{eq:F1m_pointwise_conv}
F^{(m)}_1 \longrightarrow F_1,\quad \text{almost everywhere on } [t_0,T]\times\Rdd.
\end{equation}
For any $m,k\in\N\cup\{\infty\}$, let $u^{(n,m,k)}$ denote the unique mild solution to the Kolmogorov Cauchy problem
\begin{equation}\label{eq:kolm_eq_reg_zero__m}
	\begin{cases}
		\Lc^{(n,m,k)} u  = 
		- b^{(n)} \qquad \text{on } (t_0,T) \times \R^{2d}, \\
		u_T = 0,
	\end{cases}
\end{equation}
with
\begin{align}\label{eq:op_Lnm}
\Lc^{(n,m,k)} &= \Kc^{(m,k)} + \langle b^{(n)}_s ,  \nabla_1   \rangle, \\
\label{gen_m}
\Kc^{(m,k)} & = \partial_s + \frac 12{\rm Tr} \Big (\sigma^{(m)}\sigma^{(m)*}(s,\x) \nabla_1^2  \Big) + \langle  (F_1^{(k)}(s,\x) ,F_2(s,\x)  ) , \nabla \rangle,
\end{align}
and where we set $\sigma^{(\infty)}:=\sigma$, $F_1^{(\infty)}:=F_1$.

In light of Assumption {\bf [H]}, for any $m,k\in\N$, the coefficients, $(F^{(k)}_1 + b^{(n)})$, $F_2$, and $\sigma^{(m)}$ satisfy the assumptions in \cite[Theorem 6.11]{MR4124429}, up to performing and additional localization. Therefore, if $m,k\in\N$, then $u^{(n,m,k)}$ is a classical solution to \eqref{eq:kolm_eq_reg_zero__m} in the sense of \cite[Definition 6.1]{MR4124429}. In particular, $u^{(n,m,k)}$ has the sufficient regularity to apply It\^o formula, which yields
\begin{align}
 &u^{(n,m,k)}(t, X^{(n)}_t) = u^{(n,m,k)}({t_0}, X^{(n)}_{t_0}) + \int_{{t_0}}^t  \Lc^{(n)} u^{(n,m,k)}(r, X^{(n)}_r) \d r + \int_{{t_0}}^t [( \nabla_1  u^{(n,m,k)})  \sigma](r, X^{(n)}_r)  \d W^{(n)}_r 
\intertext{(as $u^{(n,m,k)}$ solves the first equation in \eqref{eq:kolm_eq_reg_zero__m} almost everywhere on $[t_0,T]\times \Rdd$, and the law of $ X^{(n)}_r$ is absolutely continuous w.r.t. Lebesgue measure for any $r\in [t_0,T]\times \Rdd$ by Proposition \ref{prop-HK-proxy-sde})}
& \qquad = u^{(n,m,k)}({t_0}, X^{(n)}_{t_0}) +  \int_{{t_0}}^t \big( \mathcal{A}^{(n,m)}u^{(n,m,k)} - b^{(n)}\big)(r, X^{(n)}_r) \d r  +  \int_{{t_0}}^t [( \nabla_1  u^{(n,m,k)})  \sigma](r, X^{(n)}_r)  \d W^{(n)}_r,\\
\label{eq:ito_u_nkm}
\end{align}
where
\begin{equation}\label{eq:def_Amk}
\mathcal{A}^{(m,k)} := \Lc^{(n)} - \Lc^{(n,m,k)} =  \frac 12{\rm Tr} \Big (\big[\sigma \sigma^* - \sigma^{(m)}\sigma^{(m)*} \big] (s,\x) \nabla_1^2  \Big) +  \big\langle F_1(s,\x) - F^{(k)}_1(s,\x)  , \nabla_1 \big\rangle.
\end{equation}
Now, existing results (see e.g. the passage to the limit from smooth to Hölder coefficients in \cite{chau:hono:meno:21}) yield
\begin{equation}\label{eq:conv_u_nm}
\| u^{(n,\infty,k)} - u^{(n,m,k)} \|_{\Linfb{2}{t_0}{T}} 
 \longrightarrow 0 \quad \text{as } m\to \infty.
\end{equation}
This 
implies the $\mathbb{P}^{(n)}$-almost sure convergence of
\begin{equation}
u^{(n,m,k)}(t, X^{(n)}_t) - u^{(n,m,k)}({t_0}, X^{(n)}_{t_0})   -  \int_{{t_0}}^t [( \nabla_1  u^{(n,m,k)})  \sigma]\big(r, X^{(n)}_r\big)  \d W^{(n)}_r
\end{equation}
to
\begin{equation}
u^{(n,\infty,k)}(t, X^{(n)}_t) - u^{(n,\infty,k)}({t_0}, X^{(n)}_{t_0})  -  \int_{{t_0}}^t [( \nabla_1  u^{(n,\infty,k)})  \sigma]\big(r, X^{(n)}_r\big)  \d W^{(n)}_r.
\end{equation}
and, owing as well to \eqref{eq:conv_s_F_m}, also
\begin{equation}
\int_{{t_0}}^t  \mathcal{A}^{(m,k)}u^{(n,m,k)}(r, X^{(n)}_r) \d r \longrightarrow \int_{{t_0}}^t  \mathcal{A}^{(k)}u^{(n,\infty,k)}(r, X^{(n)}_r) \d r  \quad \text{as } m\to \infty, \qquad  \mathbb{P}^{(n)}\text{-a.s.},
\end{equation}
with
\begin{equation}\label{eq:def_Ak}
\mathcal{A}^{(k)} : =  \big\langle F_1(s,\x) - F^{(k)}_1(s,\x)  , \nabla_1 \big\rangle.
\end{equation}
Thus, taking the limit for $m\to\infty$ in \eqref{eq:ito_u_nkm} yields
\begin{align}
u^{(n,\infty,k)}(t, X^{(n)}_t) &= u^{(n,\infty,k)}({t_0}, X^{(n)}_{t_0}) \\
&\quad +  \int_{{t_0}}^t \big( \mathcal{A}^{(k)}u^{(n,\infty,k)} - b^{(n)}\big)(r, X^{(n)}_r) \d r  +  \int_{{t_0}}^t [( \nabla_1  u^{(n,\infty,k)})  \sigma](r, X^{(n)}_r)  \d W^{(n)}_r.
\end{align}
The same results quoted above to state \eqref{eq:conv_u_nm} also give
\begin{equation}\label{eq:limit_unm_grad}
\| u^{(n)} - u^{(n,\infty,k)} \|_{L^{\infty}([t_0,T] \times \Rdd)} + \| \nabla_1 u^{(n)} - \nabla_1 u^{(n,\infty,k)} \|_{L^{\infty}([t_0,T] \times \Rdd)} 
 \longrightarrow 0 \quad \text{as } k\to \infty.
\end{equation}
Therefore, we have $\mathbb{P}^{(n)}$-almost sure convergence of
\begin{equation}
u^{(n,\infty,k)}(t, X^{(n)}_t) - u^{(n,\infty,k)}({t_0}, X^{(n)}_{t_0})   -  \int_{{t_0}}^t [( \nabla_1  u^{(n,\infty,k)})  \sigma]\big(r, X^{(n)}_r\big)  \d W^{(n)}_r
\end{equation}
to
\begin{equation}
u^{(n)}(t, X^{(n)}_t) - u^{(n)}({t_0}, X^{(n)}_{t_0})  -  \int_{{t_0}}^t [( \nabla_1  u^{(n)})  \sigma]\big(r, X^{(n)}_r\big)  \d W^{(n)}_r.
\end{equation}
To show \eqref{eq:dynamics_un}, and conclude the proof, we need to show
\begin{equation}\label{eq:conv_Anm_int}
\int_{{t_0}}^t  \mathcal{A}^{(k)}u^{(n,\infty,k)}(r, X^{(n)}_r) \d r \longrightarrow0 \quad \text{as } k\to\infty, \qquad  \mathbb{P}^{(n)}\text{-a.s.}\, .
\end{equation}
Owing to the continuity of the trajectories of $X^{(n)}$, and to the fact that $\mathcal{A}^{(k)}u^{(n,\infty,k)}$ is locally bounded on $[t_0,T]\times\Rdd$, uniformly in $m$, Bounded Converge Theorem yields \eqref{eq:conv_Anm_int}, so long as we can prove that
\begin{equation}\label{eq:limit_Anm_zero}
\mathcal{A}^{(k)}u^{(n,\infty,k)}(r, X^{(n)}_{r}) \longrightarrow 0 \quad \text{as } k\to\infty, \qquad \text{for almost every $r\in[t_0,T]$},
\end{equation}
$\mathbb{P}^{(n)}$-almost surely. 
By \eqref{eq:F1m_pointwise_conv} together with the fact that the law of $X^{(n)}_r$ is absolutely continuous w.r.t. the Lebesgue measure  for any $r\in [t_0,T]$, we have
\begin{equation}
\big(F_1 - F^{(k)}_1\big)(r, X^{(n)}_{r}) \longrightarrow 0\quad \text{as } k\to\infty, \quad \text{for almost every $r\in[t_0,T]$},
\end{equation}
$\mathbb{P}^{(n)}$-almost surely. This, together with \eqref{eq:limit_unm_grad}, proves \eqref{eq:limit_Anm_zero} $\mathbb{P}^{(n)}$-almost surely and completes the proof.
\end{proof}
We can now prove Proposition \ref{prop:tightness}.
\begin{proof}[Proof of Proposition \ref{prop:tightness}]
According to \cite[Theorem 4.10 in Chapter 2]{karatzasShreve} we need to prove the following Aldous criterion\footnote{We emphasize here that, with only equicontinuity in time at hand for $u^{(n)}$, this is the most reasonable tightness criterion to invoke. Indeed, the Kolmogorov type criteria would have required a quantitative Hölder-type time-regularity estimates.}: 
for every $\varepsilon>0$,
\begin{equation}\label{eq:KS2} 
\lim_{\delta\to0} \sup_{n\geq 1} P^{(n)}_{t_0,\x_0} \Big ( \sup_{\substack{
s,t \in [t_0,T] \\|s-t|\leq \delta }} |\x_t-\x_s|>\varepsilon   \Big ) =0.
\end{equation}
We have
\begin{align}
|\x_t - \x_s| &\leq |\phi^{(n)}(t, \x_t) - \phi^{(n)}(s, \x_s) | + |u^{(n)}(t, \x_t)- u^{(n)}(s, \x_s)| + |\x_{t}^2 -\x_{s}^2 |.
\end{align}
Furthermore,
\begin{align}
|u^{(n)}(t, \x_t)- u^{(n)}(s, \x_s)| & \leq |u^{(n)}(t, \x_t)- u^{(n)}(s, \x_t)| + |u^{(n)}(s, \x_t)- u^{(n)}(s, \x_{s}^1, \x_{t}^2)| \\
&\quad + |u^{(n)}(s, \x_{s}^1, \x_{t}^2) - u^{(n)}(s, \x_s)| 
\intertext{(by Theorems \ref{th:backward_CP_wellposedenss} and \ref{thm:regul_kolm}, in particular by \eqref{eq:schauder_est_sing}-\eqref{eq:cont_propr}, assuming w.l.o.g. that $T$ is small enough)}
& \leq |u^{(n)}(t, \x_t)- u^{(n)}(s, \x_t)| + \frac{1}{2} |\x_{t}^1 - \x_{s}^1| + \frac{1}{2}  |\x_{t}^2 - \x_{s}^2|^{\frac{1}{3}}.
\end{align}
Thus we obtain
\begin{equation}
|\x_t - \x_s| \leq  2  |\phi^{(n)}(t, \x_t) - \phi^{(n)}(s, \x_s) | + 2 |u^{(n)}(t, \x_t)- u^{(n)}(s, \x_t)| + 2 |\x_{t}^2 - \x_{s}^2|  +  |\x_{t}^2 - \x_{s}^2|^{\frac{1}{3}}.
\end{equation}
Therefore, to prove \eqref{eq:KS2} it is enough to prove
\begin{align}\label{eq:tight_1}
\lim_{\delta\to0} \sup_{n\geq 1} P^{(n)}_{t_0,\x_0} \Big ( \sup_{\substack{s,t \in [t_0,T] \\|s-t|\leq \delta }}  |\phi^{(n)}(t, \x_t) - \phi^{(n)}(s, \x_s) | >\varepsilon   \Big ) &=0, \\ \label{eq:tight_2}
\lim_{\delta\to0} \sup_{n\geq 1} P^{(n)}_{t_0,\x_0} \Big ( \sup_{\substack{s,t \in [t_0,T] \\|s-t|\leq \delta }}   |u^{(n)}(t, \x_t)- u^{(n)}(s, \x_t)|  >\varepsilon   \Big ) &=0, \\ \label{eq:tight_3}
\lim_{\delta\to0} \sup_{n\geq 1} P^{(n)}_{t_0,\x_0} \Big ( \sup_{\substack{s,t \in [t_0,T] \\|s-t|\leq \delta }} \big(  |\x_{t}^2 - \x_{s}^2| +  |\x_{t}^2 - \x_{s}^2|^{\frac{1}{3}} \big) >\varepsilon   \Big ) &=0,
\end{align}
for every $\varepsilon>0$. The limit in \eqref{eq:tight_2} stems directly from the equicontinuity of $u^{(n)}$ (see Remark \ref{rem:equi}), 
while \eqref{eq:tight_1} and \eqref{eq:tight_3} stem from Lemma \ref{lemm:kol} together with Garsia-Rodemich-Rumsey Lemma (see \cite[Section 3]{BarlowYor}).
\end{proof}
We can now prove the existence of the solution to $\text{MP}(b,\gF,\sigma;t_0, \delta_{\x_0})$. 
\begin{PROP}\label{prop:exist_delta}
Let $t_0\in [0,T)$ 
and 
$\x_0\in\Rdd$. We have
\begin{equation}\label{eq:limit_MP_delta}
P^{(n)}_{t_0,\x_0} \overset{\text{w}}{\longrightarrow} P_{t_0,\x_0}, \quad \text{as } n\to +\infty,
\end{equation}
where $P_{t_0,\x_0}$ is the (unique) solution to $\text{MP}(b,\gF,\sigma;t_0, \delta_{\x_0})$.
\end{PROP}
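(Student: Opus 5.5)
Tightness of $(P^{(n)}_{t_0,\x_0})_n$ is given by Proposition \ref{prop:tightness}, at least when $T-t_0$ is small. By Prokhorov, every subsequence therefore has a further subsequence converging weakly to some probability measure $P$ on $C_{[t_0,T]}(\R^{2d})$. The uniqueness result of the previous subsection says that $\text{MP}(b,\gF,\sigma;t_0,\delta_{\x_0})$ has at most one solution. So the plan is to show that every such limit point $P$ solves the martingale problem. It then follows that all limit points coincide and the whole sequence converges to $P_{t_0,\x_0}$. The restriction on $T-t_0$ should be removable by a chaining argument: concatenate over short time intervals using the Markov property of the $P^{(n)}$ and the uniform tightness on each piece. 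Alternatively, one can observe that the smallness was used only to make the Hölder constants of $u^{(n)}$ less than $1/2$, and work on successive subintervals.

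Item (i) of Definition \ref{def_singular_mp} is immediate, since $\x_{t_0}=\x_0$ holds $P^{(n)}$-a.s. and this is a closed condition. For item (ii), fix $g\in L^\infty_T\mathcal S$ and $\ell\in\mathcal S$. Let $u$ be the mild solution of \eqref{eq:kolm_eq} with $t=T$, and let $u^{(n)}$ be the mild solution of \eqref{eq:kolm_eq_reg} with drift $b^{(n)}$ and $g^{(n)}=g$. By Lemma \ref{lemm:equivalence}, or more precisely its extension to the smooth drift $b^{(n)}$ noted at the start of Section \ref{sec:existence_MP}, the process $M^{u^{(n)},t_0}$ is a martingale under $P^{(n)}$; it is a true martingale because it is bounded, since $u^{(n)}$ is bounded uniformly in $n$ by \eqref{eq:schauder_est_sing}. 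Hence, for $t_0\le s<r\le T$ and any bounded continuous $\mathcal F_s$-measurable functional $\Psi$ of the path,
\begin{equation}
\E_{P^{(n)}}\Big[\Big(u^{(n)}(r,\x_r)-u^{(n)}(s,\x_s)-\int_s^r g(\tau,\x_\tau)\d\tau\Big)\Psi\Big]=0 .
\end{equation}
The aim is to pass to the limit along the subsequence. By Lemma \ref{lemm:moll_b}, $b^{(n)}\to b$ in $\Linf{\eta}$ for $\eta<\beta$, so Theorem \ref{thm:regul_kolm} and Remark \ref{rem:equi} give $u^{(n)}\to u$ uniformly on $[t_0,T]\times\R^{2d}$. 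The error from replacing $u^{(n)}$ by $u$ is therefore at most $2\|\Psi\|_\infty\|u^{(n)}-u\|_\infty\to0$. The remaining functional
\begin{equation}
\omega\mapsto\Big(u(r,\omega_r)-u(s,\omega_s)-\int_s^r g(\tau,\omega_\tau)\d\tau\Big)\Psi(\omega)
\end{equation}
is bounded and continuous for the uniform topology, because $u\in C_T\mathcal C_b$ and $g(\tau,\cdot)$ is bounded and continuous. Weak convergence then gives the same identity under $P$, so $M^{u,t_0}$ is a $P$-martingale.

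I expect the main obstacle to be continuity in the integral term when $g$ is only measurable in time. The integrand is continuous in space for each $\tau$, and dominated convergence in $\tau$ still yields continuity of the path functional, so this should go through. A second, minor point is that the filtration in Definition \ref{def_singular_mp} is the natural one, which requires $\Psi$ to range over a class of bounded continuous $\mathcal F_s$-measurable functionals that determines conditional expectations; cylinder functions $\prod_i f_i(\x_{s_i})$ with $s_i\le s$ and $f_i\in\mathcal C_b$ suffice. Finally, uniqueness identifies the limit as $P_{t_0,\x_0}$, which gives \eqref{eq:limit_MP_delta}.
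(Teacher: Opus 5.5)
Your proposal is correct and follows the same route as the paper. It gets tightness from Proposition \ref{prop:tightness}, shows every subsequential limit solves $\text{MP}(b,\gF,\sigma;t_0,\delta_{\x_0})$ using the uniform convergence $u^{(n)}\to u$ from Theorem \ref{thm:regul_kolm}, and concludes by uniqueness. You additionally make explicit what the paper calls ``standard arguments'' (bounded continuous cylinder test functionals, continuity of the path functional, boundedness of $M^{u^{(n)},t_0}$), and you sketch how to remove the small-horizon restriction via subintervals, which the paper simply declares ``not restrictive''.
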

\begin{proof}
It is not restrictive to assume $T - t_0 $ is suitably small. By Proposition \ref{prop:tightness}, $P^{(n)}_{t_0,\x_0}$ converges weakly to some $P_{t_0,\x_0}$, up to a subsequence. We now show that $P_{t_0,\x_0}$ solves $\text{MP}(b,\gF,\sigma;t_0, \delta_{\x_0})$. Fix $g\in L^{\infty}_T \mathcal S$, $\ell \in \mathcal S$, and let $u$ and $u^{(n)}$ be the unique mild solutions (cf. Definition \ref{def:sol_mild_backward_CP}) to \eqref{eq:kolm_eq} and \eqref{eq:kolm_eq_reg}, respectively, with $t = T$ and $g^{(n)} = g$. 

As $P^{(n)}_{t_0,\x_0}$ is a solution to $\text{MP}(b^{(n)},\gF,\sigma;t_0, \delta_{\x_0})$, the process $M^{u^{(n)},t_0}$ ad defined in \eqref{eq:martingale} is a martingale under $P^{(n)}_{t_0,\x_0}$. Furthermore, by \eqref{eq:conv sequence} we can apply Theorem \ref{thm:regul_kolm}, in particular \eqref{eq:cont_propr}, and conclude that $u^{(n)}$ tends uniformly to $u$. Therefore, standard arguments show that $M^{u,t_0}$ is a martingale under $P_{t_0,\x_0}$. Moreover, trivially $X_{t_0} \sim \delta_{\x_0}$ under $P_{t_0,\x_0}$, and thus $P_{t_0,\x_0}$ is a solution to $\text{MP}(b,\gF,\sigma;t_0, \delta_{\x_0})$. As the solution is unique, $P_{t_0,\x_0}$ does not depend on the choice of the subsequence and thus we have \eqref{eq:limit_MP_delta}.
\end{proof}
We are now in the position to conclude the proof of existence for a general $\initlaw$. We employ a standard superposition argument. As each probability measure $P^{(n)}_{t_0,\x}$ is the (unique) solution to the Stroock-Varadhan martingale problem, it is known (\cite[Theorem 7.1.6]{stroock_multidimensional_1979}) that, for any $n\in \N$, the function
\begin{equation}
\Rdd \ni \x  \mapsto P^{(n)}_{t_0,\x} \in  \mathcal{P} (C_{[t_0,T]}(\R^{2d}), \mathcal{B})
\end{equation}
is 
Borel measurable, with $\mathcal{P} (C_{[t_0,T]}(\R^{2d}), \mathcal{B})$ being the space of probability measures on $(C_{[t_0,T]}(\R^{2d}), \mathcal{B})$, equipped with the weak topology. 
Therefore, the function $\Rdd \ni \x  \mapsto P_{t_0,\x}$ is also measurable for it is the limit of measurable functions (see Proposition \ref{prop:exist_delta}), and because the weak topology on $\mathcal{P} (C_{[t_0,T]}(\R^{2d}), \mathcal{B})$ is metrizable. We can then define a probability measure on $(C_{[t_0,T]}(\R^{2d}), \mathcal{B})$ as
\begin{equation}\label{eq:const_gen_law}
P_{t_0 , \initlaw} : = \int_{\Rdd} \initlaw(\d \x) P_{t_0,\x} .
\end{equation}
We need to show solves $\text{MP}(b,\gF,\sigma;t_0, \initlaw)$. 
Fix $g\in C_T \mathcal S$, $\ell \in \mathcal S$, and let $u$ be the unique mild solution (cf. Definition \ref{def:sol_mild_backward_CP}) to \eqref{eq:kolm_eq} with $t = T$. Let $t_0 \leq s <r \leq T$ and ${\bf f}_s$ be a continuous functional on $C_{[t_0,s]}(\R^{2d})$. We have
\begin{align}
\mathbb{E}_{P_{t_0 , \initlaw}}[ (M^{u,t_0}_r - M^{u,t_0}_s)  {\bf f}_s] & = \int_{C_{[t_0,s]}(\R^{2d})} (M^{u,t_0}_r - M^{u,t_0}_s)(\omega)  {\bf f}_s (\omega)  \int_{\Rdd} \initlaw(\d \x) P_{t_0,\x} (\d \omega) \\
& = \int_{\Rdd} \initlaw(\d \x) \int_{C_{[t_0,s]}(\R^{2d})} (M^{u,t_0}_r - M^{u,t_0}_s)(\omega)  {\bf f}_s (\omega)   P_{t_0,\x} (\d \omega) \\
& = \int_{\Rdd} \initlaw(\d \x)\, \mathbb{E}_{P_{t_0 , \x}}[ (M^{u,t_0}_r - M^{u,t_0}_s)  {\bf f}_s] = 0,
\end{align}
where we employed, in the last equality, that $P_{t_0 , \x}$ solves $\text{MP}(b,\gF,\sigma;t_0, \delta_{\x})$. This proves that $M^{u,t_0}$ is a martingale under $P_{t_0 , \initlaw}$. Furthermore, trivially $X_{t_0} \sim \initlaw$ under $P_{t_0,\initlaw}$, and thus $P_{t_0,\initlaw}$ solves $\text{MP}(b,\gF,\sigma;t_0, \initlaw)$. 

This concludes the proof of existence and thus the proof of Theorem \ref{thm-main-deg}-(i).
\qed

\subsection{Markov property and density estimates}

In this subsection we prove Theorem \ref{thm-main-deg}-(ii),(iii). We recall that $P(s , \x; t , H) : = P_{s, {\x}}(\x_t \in H)$, and also set
\begin{equation}\label{eq:trans_prob_moll}
P^{(n)}(s, \x; t , H) : = P^{(n)}_{s, {\x}}(\x_t \in H), \qquad  n\in\N, 
\end{equation}
for any $0\leq s \leq t\leq T $, $\x\in\Rdd$ and any Borel set $H \subset \Rdd$. 

We start by proving the existence of the density and its bounds. 
\begin{proof}[Proof of Theorem \ref{thm-main-deg}-(iii)]
Fix $0\leq s  < t\leq T$. By Theorem \ref{thm-main-mollified}, the families $\pn(s,\cdot,t,\cdot)$, $\nabla_{\x_1}\pn(s,\cdot,t,\cdot)$ are uniformly bounded and equicontinuous on $\Rdd\times\Rdd$. Therefore, by Arzel\`a-Ascoli Theorem, there exists a function $p=p(s,\cdot,t, \cdot)$ such that
\begin{equation}
\pn(s,\cdot,t,\cdot) \longrightarrow  p(s,\cdot,t,\cdot), \quad \nabla_{\x_1}\pn(s,\cdot,t,\cdot) \longrightarrow  \nabla_{\x_1}p(s,\cdot,t,\cdot), \quad \text{as }n\to \infty, 
\end{equation}
uniformly on compacts of $\Rdd\times\Rdd$, up to a subsequence. 
In particular, for any test function $\varphi\in C_0(\Rdd)$, we have
\begin{equation}
\int_{\Rdd} \varphi(y) P^{(n)}(s, \x, t , \d\y) = \int_{\Rdd} \varphi(y) \pn(s, \x, t , \y) \d\y \longrightarrow \int_{\Rdd} \varphi(y) p(s, \x, t , \y) \d\y, \quad \text{as } n\to \infty.
\end{equation} 
As $P^{(n)}(s, \x, t , \d\y)$ also tendes weakly to $P(s, \x, t , \d\y)$ (cf. Proposition \ref{prop:exist_delta}), we have 
\begin{equation}
P(s, \x, t , \d\y) = p(s, \x, t , \y) \d\y,
\end{equation}
namely $p(s,\x,t,\cdot)$ is the density of the probability measure $P(s , \x; t , \d\y)$. In particular, $p$ does not depend on the choice of the subsequence. 

Finally, the estimates in Theorem \ref{thm-main-mollified}, together with the Gaussian estimates \eqref{eq:HK-density_upper_lower}, yield estimates \eqref{eq:THM-deg_HK_1}-\eqref{eq:THM-deg_HK_5}. 
\end{proof}

\begin{lemma}
The transition density $p$ satisfies the Chapman-Kolmogorov equation 
\begin{equation}\label{eq:CK}
p(s,\x,t,\y) = \int_{\Rdd} p(s,\x,r,\z) p(r,\z,t,\y) \d\z ,   \qquad 0\leq s <r < t\leq T ,\quad  \x,\y\in\Rdd.
\end{equation}
In particular, $P(s,\x , t , \d\y )$ is a transition probability function in the sense of \cite[Ch. 2.2, p. 51-52]{stroock_multidimensional_1979}.
\end{lemma}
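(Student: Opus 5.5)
The approach is to pass to the limit in the Chapman--Kolmogorov identity satisfied by the mollified densities $\pn$. For every $n$, $P^{(n)}_{s,\x}$ is the unique solution of a well-posed Stroock--Varadhan martingale problem. It is therefore a Markov process with transition density $\pn$, and \cite[Ch. 6]{stroock_multidimensional_1979} gives
\begin{equation}
\pn(s,\x,t,\y)=\int_{\Rdd}\pn(s,\x,r,\z)\,\pn(r,\z,t,\y)\,\d\z,\qquad 0\le s<r<t\le T .
\end{equation}
In the proof of Theorem \ref{thm-main-deg}-(iii), for each fixed pair of times the densities $\pn$ converge locally uniformly to $p$, and the limit does not depend on the subsequence. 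I will take a single subsequence, by a diagonal argument, along which this holds simultaneously for the three pairs $(s,r)$, $(r,t)$ and $(s,t)$. Uniqueness of the limit then shows that the whole sequence converges at each of these pairs.

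To pass to the limit in the integral, I fix $\x,\y$ and use the upper bound \eqref{eq:THM-deg_HK_1_reg}, which is uniform in $n$. Combined with \eqref{eq:HK-density_upper_lower}, it gives the domination
\begin{equation}
\pn(s,\x,r,\z)\,\pn(r,\z,t,\y)\le C\, g^{\bf d}_{\lambda'}(r-s,\tilde\btheta_{r,s}(\x)-\z).
\end{equation}
Here the factor $\pn(r,\z,t,\y)$ is bounded by $C(t-r)^{-2d}$, uniformly in $\z$, since $r<t$ is fixed. The dominating function is integrable in $\z$ and independent of $n$. The integrand converges pointwise, because both factors converge locally uniformly. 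Dominated convergence then gives \eqref{eq:CK} at every $(\x,\y)$, since the left-hand side also converges pointwise.

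For the transition-probability statement, I need to check three properties in the sense of \cite{stroock_multidimensional_1979}. First, $P(s,\x;t,\cdot)$ is a probability measure: $p\ge0$, and $\int p(s,\x,t,\y)\d\y=1$ because $p(s,\x,t,\cdot)$ is the density of the law of $\x_t$ under $P_{s,\x}$. Second, $\x\mapsto P(s,\x;t,H)$ is measurable, since $p$ is continuous in $\x$ by \eqref{eq:THM-deg_HK_2}. Third, the Chapman--Kolmogorov relation for the measures follows from \eqref{eq:CK} by integrating in $\y$ over $H$ and applying Fubini. For the degenerate case $s=r$ or $r=t$, I use the convention $P(s,\x;s,\cdot)=\delta_\x$, and the identity then holds trivially.

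The only delicate point is the choice of a single subsequence and the uniform domination. Both are handled by the $n$-independent Gaussian bounds of Theorem \ref{thm-main-mollified}, so I do not expect a real obstacle beyond that.
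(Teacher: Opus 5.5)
Your proposal is correct and follows the paper's route: Chapman--Kolmogorov holds for $\pn$, and the $n$-uniform Gaussian bounds of Theorem \ref{thm-main-mollified} (your domination by $C(t-r)^{-2d}\,g^{\bf d}_{\lambda'}(r-s,\tilde\btheta_{r,s}(\x)-\z)$) justify passing to the limit, which the paper does in one line. The only difference is that you get measurability of $\x\mapsto P(s,\x;t,H)$ from continuity of $p(s,\cdot,t,\cdot)$, whereas the paper invokes the measurability of $\x\mapsto P_{t_0,\x}$ obtained at the end of Section \ref{sec:existence_MP}; note that \eqref{eq:THM-deg_HK_2} alone gives regularity only in $\x_1$, so for joint continuity you should also use \eqref{eq:THM-deg_HK_3} or the locally uniform convergence.
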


\begin{proof}
The identity is true for the transition densities $\pn$. By the estimates in Theorem \ref{thm-main-mollified} one can pass to the limit and obtain \eqref{eq:CK}.

In the end of Section \ref{sec:existence_MP}, we have already shown that the map $\x \mapsto P(s,\x , t , H)$ is Borel measurable, and thus \eqref{eq:CK} implies the Chapman-Kolmogorov identity for the transition probability function $P(s,\x , t , \d\y )$, which is then a transition probability function in the sense of \cite[Ch. 2.2, p. 51-52]{stroock_multidimensional_1979}.
\end{proof}
We can now prove the Markov property.
\begin{proof}[Proof of Theorem \ref{thm-main-deg}-(ii)] For any $t_0 \leq s < t\leq T$, the upper bound in \eqref{eq:THM-deg_HK_1} yields 
\begin{align}
\hspace{-20pt} 
 \int_{\Rdd}  |\x - \y|^{3} P(s, \x ; t , \d\y) &  \leq C  \int_{\Rdd}  |\x - \y|^{3} g_\lambda(t-s, \tilde \btheta_{t,s}(\x) - \y) \d\y
 \\
 &  \leq C \int_{\Rdd} \big(  |\tilde \btheta_{t,s}(\x) - \y|^{3} +  |\tilde \btheta_{t,s}(\x) - \x|^{3}   \big) g_\lambda(t-s, \tilde \btheta_{t,s}(\x) - \y) \d\y \intertext{(by definition of $|\cdot |_{\bd}$ and since $| \tilde \btheta_{t,s}(\x) - \x| \leq C (t-s) |\x|$)}
 & \leq C \int_{\Rdd} \big(|\tilde \btheta_{t,s}(\x) - \y|_{\bd}^{3} +  |\tilde \btheta_{t,s}(\x) - \y|_{\bd}^{9} +  (t-s)^{3}| \x|^{3}   \big) g_\lambda(t-s, \tilde \btheta_{t,s}(\x) - \y) \d\y \hspace{-20pt} 
 \intertext{(by usual Gaussian estimates)}
 & \leq C   (t-s)^{3/2}| \x|^{3},
\end{align}
for any $\x \in \Rdd$, and thus
\begin{align}
\int_{\Rdd} P( t_0 , \z ; s, \d \x ) \int_{\Rdd}  |\x - \y|^{3+\eps} P(s, \x ; t , \d\y) &   \leq  C   |t-s|^{3/2} \int_{\Rdd} | \x|^{3} P( t_0 , \z ; s, \d \x )  
\intertext{(by proceeding as above, i.e. employing \eqref{eq:THM-deg_HK_1}, $| \tilde \btheta_{t,s}(\z)| \leq C |\z|$ and usual Gaussian estimates)}
 & \leq  C   |t-s|^{3/2} | \z|^{3}, \label{eq:estim_markov}
\end{align}
for any $z \in\Rdd$. Thus, by \cite[Theorem 2.2.4, p. 54]{stroock_multidimensional_1979}, for any $ \z \in \Rdd$ there exists a unique Markov probability measure $\tilde P_{t_0,\z}$ on $(C_{[t_0,T]}(\R^{2d}), \mathcal{B})$ with transition kernel $P$ and initial law $\delta_{\z}$, namely such that
\begin{align}
\tilde P_{t_0,\z}(\x_0 = \z ) = 1 , && \tilde P_{t_0,\z}(\x_t \in H | \Fc^{t_0}_s) 
= P(s,\x_s; t,  H) , \quad P_{t_0,\z}\text{-a.s.},
\end{align}
for any $t_0\leq s\leq t \leq T$ and any Borel set $H\subset \Rdd$. {Notice that the application of \cite[Theorem 2.2.4, p. 54]{stroock_multidimensional_1979} here is not direct but requires a trivial modification, as the bound in \eqref{eq:estim_markov} depends on $\z$, unlike the one in the assumptions of in \cite[Theorem 2.2.4, p. 54]{stroock_multidimensional_1979}. However, the independence of $\z$ is essential only when the initial distribution is general, whereas we are considering, in this step, a deterministic initial condition.}

We now show that, necessarily, $\tilde P_{t_0,\z} = P_{t_0,\z}$. First note that the finite dimensional distributions of $\tilde P_{t_0,\z}$ write as 
\begin{align}
\tilde P_{t_0,\z}( \x_{s_1} \in H_1 , \dots ,  &\x_{s_n} \in H_n ) \\ 
&
= \int_{H_1 \times H_{n-1}}   \tilde P_{t_0,\z}( \x_{s_1} \in \d \y_1 , \dots ,  \x_{s_{n-1}} \in  \d \y_{n-1} )   \int_{H_n} P(t_{n-1}, \y_{n-1} ; t_n , \d \y_n),
\end{align}
for any choice of $t_0 \leq s_1 < \dots < s_n \leq T $ and $H_1, \dots, H_n$ Borel subsets of $\Rdd$. As the Markov property holds for the mollified solutions $P^{(n)}_{t_0,\z}$ (with transition probability function $P^{(n)}$ as in \eqref{eq:trans_prob_moll}), the same representation holds for the finite dimensional distributions of $P^{(n)}_{t_0,\z}$. Therefore, Proposition \ref{prop:exist_delta} implies that $P^{(n)}_{t_0,\z} \longrightarrow \tilde P_{t_0,\z}$ weakly. As $P^{(n)}_{t_0,\z} $ also converges to $P_{t_0,\z} $, then $\tilde P_{t_0,\z} = P_{t_0,\z}$. This concludes the proof for $\initlaw = \delta_{\z}$. 

For a general $\initlaw$, it is enough to recall that $P_{t_0,\initlaw}$ is represented as in \eqref{eq:const_gen_law} and to check that the Markov property carries on through the integration with respect to $\initlaw$. 
\end{proof}

\bibliographystyle{alpha}
\bibliography{bibli}

\appendix

\section{Proof of Lemma \ref{lemma-besov-convo} (Besov norm of a convolution of densities)} \label{app:proof_lemma_normal}
	Let us first prove estimate \eqref{maj-convo-besov}. 
Recalling the thermic characterization of anisotropic Besov spaces (Definition \ref{DEF_BESOV_THERMIC_ANISO}), and that $\beta\in(-1/2, 0)$, we have to bound
	\begin{align*}
		\mathcal{T}^{-\beta}_{1,1}[\pn(0,\x, r,\cdot)\nabla_{1} p_{\gF,\sigma}( r ,\cdot,t,\y)]
		&=\int_0^{t-r} v^{\frac{\beta}{2}} \Vert \partial_v  g (v,\cdot) * \pn(0,\x,r,\cdot)\nabla_{1} p_{\gF,\sigma}(r,\cdot,t,\y) \Vert_{L^1} \d v\\
		&\qquad +\int_{t-r}^1 v^{\frac{\beta}{2}} \Vert \partial_v  g (v,\cdot) * \pn(0,\x,r,\cdot)\nabla_{1} p_{\gF,\sigma}(r,\cdot,t,\y) \Vert_{L^1} \d v\\
		&=:\mathcal{T}^{- \beta}_{1,1,L}+\mathcal{T}^{- \beta}_{1,1,U},
	\end{align*}
	where we assumed, without loss of generality, that $t-r<1$. We also recall that the notation $\nabla_{1} p_{\gF,\sigma}(r,\cdot,t,\y) $ above means that we consider the gradient with respect to the first $d$ variables associated with the ``$\cdot $''. For the upper part, using the $L^1-L^1$ Young convolution inequality (see \eqref{eq:young_r} below), we have
	\begin{align*}
		\mathcal{T}^{-\beta}_{1,1,U} & \lesssim \int_{t-r}^1 v^{\frac{\beta}{2}} \Vert \partial_v  g (v,\cdot) \Vert_{L^1}\Vert  \pn(0,\x,r,\cdot)\nabla_{1} p_{\gF,\sigma}(r,\cdot,t,\y) \Vert_{L^1} \d v.
	\end{align*} 
	Employing now $|\partial_v  g (v,\cdot) | \lesssim v^{-1}  g^{ {\bf d}}_{2}(v,\cdot)$ (see \eqref{BD_THERM_DEG_DER_TEMP}) and the fact that $ g^{ {\bf d}}_{2}(v,\cdot)$ is a probability density, together with the gradient bound \eqref{HK-gradient-back} 
	and the definition of the normalization term in \eqref{THE_NOR}, yields
	\begin{align}
		\mathcal{T}^{-\beta}_{1,1,U} & \lesssim h_{\x}^{\etaforward,n}(r) \int_{t-r}^1 v^{-1+\frac{\beta}{2}} (t-r)^{-\frac{1}{2}} \int _{\Rdd} p_{\gF,2 \lambda}(0,\x,r,\z) p_{\gF,\lambda}(r,\z,t,\y) \d \z\d v 
		\intertext{(by \eqref{HK-density_upper} and applying Chapman-Kolmogorov identity for $p_{\gF,2 \lambda} $)}
		& \lesssim h_{\x}^{\etaforward,n}(r) p_{\gF,2\lambda}(0,\x,t,\y)(t-r)^{\frac{-1+\beta}{2}}
		.\label{maj-upper-thermic}
	\end{align}
	Let us now turn to the lower part. Notice the following cancellation argument:
	\begin{align*}
\int_{\Rdd} \partial_v  g (v,\z-\w)  \pn(0,\x,r,\z)\nabla_{\z_1} p_{\gF,\sigma}(r,\z,t,\y) \d \w=0, \qquad \z \in \R^{2d}.
	\end{align*}
	This will allow us to take advantage of the H\"older regularity of $\pn(0,\x,r,\cdot)\nabla_1 p_{\gF,\sigma}(r,\cdot,t,\y)$. Namely, we can write
	\begin{equation}
		\mathcal{T}^{-\beta}_{1,1,L} =\int_0^{t-r} v^{\frac{\beta}{2}} \int_{\Rdd} \Big|\int_{\Rdd}  \partial_v  g (v,\z-\w)  I(\x;r,\w,\z;t,\y)  \d \w\Big| \d \z\d v,\label{cancellation}
	\end{equation}
	with 
	\begin{equation}
I(\x;r,\w,\z;t,\y): =  \pn(0,\x,r,\w)\nabla_{\w_1} p_{\gF,\sigma}(r,\w,t,\y) -\pn(0,\x,r,\z)\nabla_{\z_1} p_{\gF,\sigma}(r,\z,t,\y).
\end{equation}
	Next, we bound $I(\x;r,\w,\z;t,\y)$ by considering two different time regimes.

\underline{Diagonal case}: $|\z-\w|_{\bf d}\leq (t-r)^{\frac{1}{2}}$.
		\begin{align}
I(\x;r,\w,\z;t,\y)			
			& \leq \pn(0,\x,r,\w)|\nabla_{\w_1} p_{\gF,\sigma}(r,\w,t,\y) -\nabla_{\z_1} p_{\gF,\sigma}(r,\z,t,\y)|\\ & \quad  +|\pn(0,\x,r,\w)-\pn(0,\x,r,\z)||\nabla_{\z_1} p_{\gF,\sigma}(r,\z,t,\y)|\nonumber.
		\end{align}
By \eqref{HK-holder-gradient-back}, \eqref{HK-gradient-back}, and by the definition of $h_{\x}^{\etaforward,n}(r)$ in in \eqref{THE_NOR}, we have
		\begin{align}
		I(\x;r,\w,\z;t,\y)	& \lesssim h_{\x}^{\etaforward,n}(r)p_{\gF,{2 \lambda}}(0,\x,r,\w) \frac{|\z-\w|^{\etaforward}_{\bf d}}{(t-r)^{\frac{{\etaforward}}{2}}} \frac{1}{(t-r)^{\frac{1}{2}}}\big( p_{\gF,{\lambda}} (r,\z,t,\y)+p_{\gF,{\lambda}} (r,\w,t,\y)\big)\nonumber\\
			& \quad +h_{\x}^{\etaforward,n}(r) \big( p_{\gF,{2 \lambda}}(0,\x,r,\w)+p_{\gF,{2 \lambda}}(0,\x,r,\z)\big)  \frac{|\z-\w|^{\etaforward}_{\bf d}}{r^{\frac{\etaforward}{2}}}   \frac{1}{(t-r)^{\frac{1}{2}}} p_{\gF,{\lambda}}(r,\z,t,\y) 
			\intertext{(by \eqref{HK-density_upper} and employing $|\z-\w|_{\bf d} \leq (t-r)^{\frac{1}{2}}$)}
 			& \lesssim  h_{\x}^{\etaforward,n}(r) \frac{|\z-\w|^{\etaforward}_{\bf d}}{ (t-r)^{\frac{1}{2}}}\bigg[  \bigg(   \frac{1}{(t-r)^{\frac{\etaforward}{2}}} +    \frac{1}{r^{\frac{\etaforward}{2}}} \bigg) p_{\gF,{2 \lambda}}(0,\x,r,\w) p_{\gF,{2 \lambda}} (r,\w,t,\y) \\
	& \qquad\qquad\qquad \qquad\qquad+ \frac{1}{r^{\frac{\etaforward}{2}}}p_{\gF,{2 \lambda}}(0,\x,r,\z) p_{\gF,{2 \lambda}} (r,\z,t,\y)
			 \bigg] \label{pivot-diag}
			.
		\end{align}

\underline{Off-diagonal case}: $|\z-\w|_{\bf d}> (t-r)^{\frac{1}{2}}$. Employing triangular inequality and \eqref{HK-density_upper}, we obtain 
		\begin{equation}\label{pivot-off-diag}
				I(\x;r,\w,\z;t,\y)	 
			 \lesssim h_{\x}^{\etaforward,n}(r) \frac{|\z-\w|^{\etaforward}_{\bf d}}{(t-r)^{\frac{1+\etaforward}{2}}} 
			 \big( p_{\gF,{2 \lambda}}(0,\x,r,\w)p_{\gF,{2 \lambda}} (r,\w,t,\y)+p_{\gF,{2 \lambda}}(0,\x,r,\z)p_{\gF,{2 \lambda}} (r,\z,t,\y) \big). 
		\end{equation}
	By \eqref{pivot-diag} and \eqref{pivot-off-diag}, employing \eqref{spatial-moments-kolm-kern} (moments estimates on the gaussian kernel) and applying Chapman-Kolmogorov identity to $p_{\gF,{2 \lambda}} $ yields 
\begin{equation}
\int_{\Rdd} \int_{\Rdd} \big|  \partial_v  g (v,\z-\w)  I(\x;r,\w,\z;t,\y)   \big| \d \z \d \w  \lesssim h_{\x}^{\etaforward,n}(r) v^{\frac{\etaforward}{2}-1} \frac{1}{(t-r)^{\frac{1}{2}}}\bigg( \frac{1}{(t-r)^{\frac{\etaforward}{2}}}+\frac{1}{r^{\frac{\etaforward}{2}}}\bigg) p_{{2 \lambda}}(0,\x,t,\y).
\end{equation}	
Plugging this into \eqref{cancellation} proves
\begin{align}
\frac{\mathcal{T}^{-\beta}_{1,1,L}}{h_{\x}^{\etaforward,n}(r)} &  \lesssim \frac{1}{(t-r)^{\frac{1}{2}}}\bigg( \frac{1}{(t-r)^{\frac{\etaforward}{2}}}+\frac{1}{r^{\frac{\etaforward}{2}}}\bigg) p_{{2 \lambda}}(0,\x,t,\y)  \int_0^{{t-r}} v^{\frac{\beta + \etaforward}{2}-1} \d v 
\intertext{{(recall that we have assumed $\beta + \etaforward>0$) 
}
}
&  \lesssim \frac{1}{(t-r)^{\frac{1{-\beta -\etaforward}}{2}}}\bigg( \frac{1}{(t-r)^{\frac{\etaforward}{2}}}+\frac{1}{r^{\frac{\etaforward}{2}}}\bigg) p_{{2 \lambda}}(0,\x,t,\y),
\end{align}
which, together with \eqref{maj-upper-thermic}, proves
	\begin{equation}\label{thermic-part-bound}
		\mathcal{T}^{- \beta}_{1,1}[\pn(0,\x,r,\cdot)\nabla_{1} p_{\gF,\sigma}(r,\cdot,t,\y)]\lesssim h_{\x}^{\etaforward,n}(r) \frac{1}{(t-r)^{\frac{1{-\beta -\etaforward}}{2}}}\bigg( \frac{1}{(t-r)^{\frac{\etaforward}{2}}}+\frac{1}{r^{\frac{\etaforward}{2}}}\bigg) p_{{2 \lambda}}(0,\x,t,\y)
	\end{equation}
	for the thermic part. 
Similar computations yield
\begin{equation}
\Vert \pn(0,\x,r,\cdot)\nabla_{1} p_{\gF,\sigma}(r,\cdot,t,\y)\Vert_{L^1}\lesssim h_{\x}^{\etaforward,n}(r)p_{\gF,{2 \lambda}}(0,\x,t,\y),
\end{equation}
which proves \eqref{maj-convo-besov} for $\delta=0$. To obtain the case $\delta=1$, one simply needs to apply a gradient w.r.t $\x_1$ from the beginning of the proof. To prove \eqref{maj-convo-besov-2}, one first has to expand the Besov norm as above and then use \eqref{HK-holder-gradient-forward} on the difference $\nabla_{1} p_{\gF,\sigma}(r,\cdot,t,\y)-\nabla_{1} p_{\gF,\sigma}(r,\cdot,t,\y')$ appearing therein. Estimate \eqref{maj-convo-besov-2} then follows by reproducing the previous computations and by employing \eqref{maj-convo-besov} in the off-diagonal case $|\y-\y'|_d>(t-r)^{\frac 12} $.$\qed $
\begin{REM}[About the estimates in the non degenerate case] As we stated in the main body of the paper, the non-degenerate case could  be retrieved by integration of the degenerate variable when considering the kinetic model associated to an autonomous non-degenerate equation. Alternatively, the above computations could be reproduced almost \textit{mutatis-mutandis} directly for the non-degenerate SDE perturbed by a singular drift, i.e. \eqref{NON_DEG_MARG}, replacing $p_{\gF,\sigma}$ by $p_{F,\sigma}$ 
and exploiting the bounds in \cite{meno:pesc:zhan:21}, instead of those in Proposition \ref{prop-HK-proxy-sde}.
\end{REM}	
	
\section{Sensitivity of the proxy with respect to the forward spatial variable} \label{AUX_EST}	

In this section we prove the regularity estimates \eqref{HK-holder-gradient-forward}-\eqref{eq:ste_new}, with respect to the forward spatial variable, in Proposition \ref{prop-HK-proxy-sde}.

Assumptions {\bf [H-H\"or]}, {\bf [H-${\bf F}$]} and {\bf [H-$\sigma$]}, with $(\mathcal{O}, {\bf o}): = (\Rdd, {\bd})$, are in force throughout this section. Furthermore, we also assume that 
\begin{equation}\label{eq:new_assumpt_grad}
\|\nabla \gF\|_{L^\infty}<+\infty,
\end{equation}
without loss of generality. This indeed allows to exploit the existence of a \textit{usual} flow associated with the drift. The estimates we will obtain below will only depend on the parameters appearing in $\Theta_T $ and \textbf{not} on $\|\nabla \gF\|_{L^\infty}$. 
Eventually, the estimate without the assumption \eqref{eq:new_assumpt_grad} stems from Arzelà-Ascoli type compactness arguments (as in Section 4 of  \cite{chau:meno:pesc:zhan:23}). Namely, $\gF $ in this section must be thought of as a mollification of the initial $\gF$. For the sake of notational simplicity, we will still denote the coefficient by $\gF$ instead of $\gF_m=\gF * \phi_m $, for an appropriate mollifier $\phi_m $, as it should be.  

\subsection{Preliminaries}


We need to provide a specific Duhamel type representation for the density $p_{\gF,\sigma} $. which will allow to investigate the regularity of the density with respect to its spatial forward variable.

Fix $(\t,\bxi)\in [0,+\infty)\times \R^{2d}$ as \textit{freezing parameters} to be chosen later on define the corresponding dynamics:
\begin{equation}
\dot \btheta_{t,\t}(\bxi)=\gF(t,\btheta_{t,\t}(\bxi)), \qquad t\geq0, \  \btheta_{\t,\t}(\bxi)=\bxi.
\end{equation}
The associated linearized stochastic dynamics $(\wt \X_{t,s}^{(\tau,\bxi)})_{t\geq s} $ then writes:
\begin{equation}\label{FROZ}
\wt \X_{t,s}^{(\tau,\bxi)} (\x)=\x+\int^t_s \big[ \gF(r,\btheta_{r,\t}(\bxi))+ \bA(r,\btheta_{r,\t}(\bxi)) \big(\wt \X_{r,s}^{(\tau,\bxi)}(\x)-\btheta_{r,\t}(\bxi)\big) \big]\dif r +
\int^t_s \bsigma(r,\btheta_{r,\t}(\bxi)) \dif W_r, 
 \end{equation} 
where
\begin{equation}
\bA(r,\x)=\begin{pmatrix}0_{d\times d} & 0_{d\times d}\\
\nabla_{x_1}F_2(r,\x) & 0_{d\times d} \end{pmatrix},\quad
\bsigma(r,\x)=\begin{pmatrix}\sigma(r,\x) \\
0_{d\times d}  \end{pmatrix},
\qquad \x=(x_1,x_2)\in \R^{2d}.
\end{equation}
The resolvent  $({\gR}^{(\tau,\bxi)}_{t,s})_{t\geq s}$ associated with $\bA(r,\btheta_{r,\t}(\bxi))$  is explicitly given by
\begin{align}\label{Res00}
\gR^{(\tau,\bxi)}_{t,s}=\begin{pmatrix}\mI_{d\times d} & 0_{d\times d}\\
\int^t_s \nabla_{x_1}F_2(r,\btheta_{r,\t}(\bxi))\dif r & \mI_{d\times d} \end{pmatrix}.
\end{align}
Defining now
\begin{align}\label{RR2}
\bvtheta_{t,s}^{(\tau,\bxi)}(\x)& :={\gR}^{(\tau,\bxi)}_{t,s}\x+
\int^t_s {\gR}^{(\tau,\bxi)}_{t,r}\Big(\gF(r,\btheta_{r,\t}(\bxi))-\bA(r,\btheta_{r,\t}(\bxi))\btheta_{r,\t}(\bxi)\Big)\dif r, \\ 
\bTheta_r^{(\tau,\bxi)}& :=\bsigma(r,\btheta_{r,\t}(\bxi)), \label{SB5}
\end{align}
 the variation  of constants formula yields that $\wt  \X_{t,s}^{(\tau,\bxi)}(\x)$ is explicitly given by
 \begin{align}
\wt  \X_{t,s}^{(\tau,\bxi)}(\x) = \bvtheta_{t,s}^{(\tau,\bxi)}(\x)+ \int^t_s {\gR}^{(\tau,\bxi)}_{t,r}\bTheta_r^{(\tau,\bxi)} \dif W_r.\label{INTEGRATED}
 \end{align}
Consequently,  $\wt  \X_{t,s}^{(\tau,\bxi)}(\x)$
 is a Gaussian random variable, with density $\wt p^{(\tau,\bxi)}(s,\x,t,\cdot) $ given by
\begin{equation}\label{CORRESP}
\wt p^{(\tau,\bxi)}(s,\x,t,\y)=\frac{1}{(2\pi)^{d}\det(\K_{t,s}^{(\tau,\bxi)})^{\frac 12}}
\exp\left( -\frac12\Big\langle\big(\K_{t,s}^{(\tau,\bxi)}\big)^{-1} \big(\bvtheta_{t,s}^{(\tau,\bxi)}(\x)-\y\big), \bvtheta_{t,s}^{(\tau,\bxi)}(\x)-\y \Big\rangle \right),
\end{equation}
where
\begin{equation}\label{KK1}
\K_{t,s}^{(\tau,\bxi)}:=\int^t_s {\gR}^{(\tau,\bxi)}_{t,r}\bTheta_r^{(\tau,\bxi)}({\gR}^{(\tau,\bxi)}_{t,r}\bTheta_r^{(\tau,\bxi)})^* \dif r.
\end{equation}
Importantly, $\wt p^{(\tau,\bxi)}(s,\x,t,\y)$ satisfies
\begin{align}\label{Kolmogorov_frozen}
\wt \Kc^{(\tau,\bxi)}_{s,\x} p^{(\tau,\bxi)}(s,\x,t,\y)&: = \partial_s \wt p^{(\tau,\bxi)}(s,\x,t,\y)+\wt \Ac^{(\tau,\bxi)}_{s,\x}\wt p^{(\tau,\bxi)}(s,\x,t,\y)=0,\\
\wt p^{(\tau,\bxi)}(s,\x,t,\y)&\longrightarrow \delta_\y(\cdot)\; \text{weakly as}\;  s\uparrow t,
\end{align}
where,  for $a=\sigma\sigma^*/2$,
\begin{equation}\label{frozen_gen}
\wt \Ac^{(\tau,\bxi)}_{s,\x}=\tr\big(a(s,\btheta_{s,\tau}(\bxi))\cdot \nabla^2_{x_1}\big)+
\big\langle \big(\gF(s,\btheta_{s,\t}(\bxi))+ \bA(s,\btheta_{s,\t}(\bxi)\big)\big(\x-\btheta_{s,\t}(\bxi)\big),{\nabla}\big\rangle
\end{equation}
is the generator of the diffusion with frozen coefficients in \eqref{FROZ}.\\

We introduce the scale matrix
\begin{equation}\label{eq:scale_mat}
\T_u=\left(\begin{array}{cc}u^{\frac 12}\mathbb I_{d\times d}& 0_{d\times d}\\
0_{d\times d}& u^{\frac 32} \mathbb I_{d\times d}\end{array} \right), \qquad u\geq 0,
\end{equation}
which reflects the scales of the two $d$-dimensional components, and observe that the densities $g^{ {\bf d}}_\lambda$, $\lambda>0$, in \eqref{THE_DEF_GAUSS_DEG} can be written as
\begin{equation}\label{eq:g_lam_T}
g^{ {\bf d}}_\lambda(u,\z)=\frac{1}{(2\pi \lambda)^du^{2d}}\exp\left(- \lambda^{-1} |  \T_u^{-1} \z|^2 \right), \qquad u>0,\ \z\in \Rdd.
\end{equation}
Throughout the remainder of this section, we will drop the superscript ${\bf d}$ in $g^{ {\bf d}}_\lambda(u,\z)$ to ease the notation.

To proceed with the analysis, we need additional controls on the Gaussian densities with frozen coefficients. Importantly, we state the following scaling properties for the covariance matrix (we refer to \cite{chau:meno:pesc:zhan:23} for a proof).
\begin{lemma}[Scaling Lemma]\label{Le25}
There is a constant $\kappa\geq1$ depending only on $\Theta_T$ such that
\begin{align}\label{TT1}
|(\K_{t,s}^{(t,\y)})^{-1/2}\x|^2=\<(\K_{t,s}^{(t,\y)})^{-1}\x,\x\> & \asymp_{\kappa}|\T^{-1}_{t-s}\x|^2,\\
\label{TT0}
|\T^{-1}_{t-s}\gR_{t,s}^{(t,\y)}\x| & \asymp_{\kappa} |\T^{-1}_{t-s}\x|,
\end{align}
for all $0\leq s<t\leq T$ and $\x,\y \in\Rdd$.
\end{lemma}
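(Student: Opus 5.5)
The plan is to compute $\K^{(t,\y)}_{t,s}$ explicitly from the block structure of the resolvent \eqref{Res00} and the embedded diffusion $\bTheta_r$, and then to reduce both equivalences to a normalized matrix on the unit time interval by conjugating with $\T_{t-s}$. Writing $M_{t,r}:=\int_r^t \nabla_{x_1}F_2(u,\btheta_{u,t}(\y))\dif u$, \eqref{Res00} gives $\gR_{t,r}\bTheta_r=\begin{pmatrix}\sigma_r\\ M_{t,r}\sigma_r\end{pmatrix}$, where $\sigma_r:=\sigma(r,\btheta_{r,t}(\y))$. Hence
\begin{equation}
\K_{t,s}=\int_s^t\begin{pmatrix}a_r & a_rM_{t,r}^*\\ M_{t,r}a_r & M_{t,r}a_rM_{t,r}^*\end{pmatrix}\dif r,\qquad a_r:=\sigma_r\sigma_r^*.
\end{equation}
Set $h:=t-s$ and substitute $r=t-hu$. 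Define the normalized matrix $\widehat\K:=\T_h^{-1}\K_{t,s}\T_h^{-1}$. Then $\widehat\K=\int_0^1 \begin{pmatrix}\widehat a_u & \widehat a_u \widehat M_u^*\\ \widehat M_u\widehat a_u & \widehat M_u\widehat a_u\widehat M_u^*\end{pmatrix}\dif u$, where $\widehat M_u:=h^{-1}M_{t,t-hu}=u\,\bar E_u$ and $\bar E_u$ is the time average of $\nabla_{x_1}F_2$ over an interval of length $hu$. By {\bf [H-H\"or]} and the convexity and closedness of $\mathcal E$, $\bar E_u\in\mathcal E$. The claim \eqref{TT1} is then equivalent to $\kappa^{-1}\mathbb I\le\widehat\K\le\kappa\mathbb I$, uniformly in $h$, $\y$ and $t$.

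For the upper bound I use that $\mathcal E$ is bounded; this is implicit in $\Theta$, and it also follows from $[F_2]_{{\bf d},T,1+\beta+\nu}<\infty$, which bounds $\nabla_{x_1}F_2$. Together with the ellipticity bound $a_u\le\kappa\mathbb I$, this gives $|\widehat\K|\le C$ immediately.

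The lower bound is the key step. For $\zeta=(\zeta_1,\zeta_2)$ unit, write
\begin{equation}
\langle\widehat\K\zeta,\zeta\rangle=\int_0^1|\widehat\sigma_u^*(\zeta_1+u\bar E_u^*\zeta_2)|^2\dif u\ge\kappa^{-1}\int_0^1|\zeta_1+u\bar E_u^*\zeta_2|^2\dif u .
\end{equation}
I then argue by contradiction and compactness. If the infimum over admissible data were zero, I would take sequences $\zeta^k$ and $u\mapsto\bar E^k_u$, which take values in the compact set $\mathcal E$. Extracting a subsequence converging in $L^2(0,1)$ weakly (or pointwise at a.e. $u$ via the averages) gives a limit $\zeta$ with $\zeta_1+u\bar E^*_u\zeta_2=0$ for a.e. $u$. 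Letting $u\to0$ forces $\zeta_1=0$, and then $\bar E_u^*\zeta_2=0$ with $\bar E_u$ invertible forces $\zeta_2=0$, which contradicts $|\zeta|=1$.

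The main obstacle is making this compactness quantitative, because $\bar E_u$ varies with $u$ and only weak limits are available. I would try the following direct estimate first. Split $[0,1]$ into $[0,\epsilon]$ and $[1/2,1]$, and use a uniform lower bound $|\bar E^*\zeta_2|\ge c|\zeta_2|$, which holds because $\mathcal E\subset GL_d$ is compact. If $|\zeta_1|\ge \epsilon' |\zeta_2|$, the piece $[0,\epsilon]$ with $\epsilon$ small gives $|\zeta_1+u\bar E^*_u\zeta_2|\ge|\zeta_1|/2$. Otherwise the piece $[1/2,1]$ gives a lower bound of order $c|\zeta_2|/2$. This avoids the convexity subtlety entirely, and the resulting constant depends only on $\kappa$ and $\mathcal E$.

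Finally, \eqref{TT0} follows because $\T_h^{-1}\gR_{t,s}\T_h=\begin{pmatrix}\mathbb I&0\\ h^{-1}M_{t,s}&\mathbb I\end{pmatrix}$ and $h^{-1}M_{t,s}=\bar E_1$ is bounded. This matrix is therefore bounded with a bounded inverse (lower-triangular unipotent), which gives the two-sided equivalence.
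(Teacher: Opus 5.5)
Your proof is correct. The paper gives no argument for this lemma and only refers to \cite{chau:meno:pesc:zhan:23}. The mechanism surfaces only in the proof of Lemma \ref{Lem_sensitivity}, which writes $K_i=\T_{t-r}\hat K_i\T_{t-r}$ with the scaled resolvent \eqref{SCALED_RESOLVENT}. That $\hat K_i$ is exactly your $\widehat\K$ after the change of variable $u=1-v$, and the paper then invokes \eqref{TT1} precisely in the form $\hat K_i\asymp\mathbb I$, which is what you prove.

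Your skeleton is the natural one:
\begin{itemize}
\item conjugate by $\T_{t-s}$;
\item identify the lower-left block of the scaled resolvent as $u\bar E_u$, with $\bar E_u$ a time average of $\nabla_{x_1}F_2$ along the frozen flow, hence in $\mathcal E$ by convexity and closedness;
\item reduce \eqref{TT1} to two-sided bounds on the unit-time Gramian;
\item use the unipotent block structure for \eqref{TT0}.
\end{itemize}

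What you add is the explicit two-regime lower bound. You choose $\epsilon'$ of the order of the uniform lower bound $c$ on the singular values of admissible matrices, and then $\epsilon\lesssim\epsilon'/\|\nabla_{x_1}F_2\|_{L^\infty}$. This gives a constant explicit in $\kappa$, $\|\nabla_{x_1}F_2\|_{L^\infty}$ and $c$ alone. It is therefore visibly independent of the mollification of $\gF$ used in Appendix \ref{AUX_EST}, and needs no compactness. The trade-off is that this trick is tailored to the two-level chain.

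Your first, compactness-based sketch is the kind of argument that extends to longer chains, and it is also sound. The map $u\mapsto u\bar E_u$ is uniformly Lipschitz, so Arzel\`a--Ascoli gives uniform convergence along a subsequence. The limit is still of the form $u\bar E_u$ with $\bar E_u\in\mathcal E$, since this set is closed. No weak limits are needed.

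Two minor precisions.
\begin{itemize}
\item $\mathcal E$ itself need not be bounded; for example $\{A:\ A+A^*\ge 2c\,\mathbb I\}$ is closed, convex, contained in $GL_d$ and unbounded. What is compact is $\mathcal E\cap\{|A|\le\|\nabla_{x_1}F_2\|_{L^\infty}\}$. This set is still closed and convex, so the averages stay in it. The uniform singular-value bound also needs $\mathcal E$ to be closed in the space of all $d\times d$ matrices, not merely relatively closed in $GL_d$.
\item Your remark that the direct estimate avoids the convexity subtlety is not accurate. It still uses convexity, through $\bar E_u\in\mathcal E$, and that is exactly what prevents the averaged matrices from degenerating.
\end{itemize}
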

We also recall the identity
\begin{align}\label{LIN_NON_LIN_FLOW}
\bvtheta_{t,s}^{(t,\y)}(\x)-\y={\gR}^{(t,\y)}_{t,s}(\x-\btheta_{s,t}(\y)).
\end{align}


The lemma below was proved in  Section 2.1 of \cite{chau:meno:pesc:zhan:23} (Lemma 2.2) under even weaker assumptions than those considered in this section. 

\begin{lemma}[Equivalence of the forward and backward flows]\label{lemme:bilipflow_FIRST}
There is a constant $\kappa\geq1$, depending only on $\Theta_T$, such that
\begin{equation}
\label{EQ_EQUIV_FLOW}
\kappa^{-1}\big(|\T^{-1}_{t-s}(\x-\btheta_{v,t}(\y))|-1\big)\leq |\T^{-1}_{t-s}(\btheta_{t,v}(\x)-\y)|
\leq \kappa\big( |\T^{-1}_{t-s}(\x-\btheta_{v,t}(\y))|+ 1\big),
\end{equation}
for any $0\leq s\leq v<t\leq T $ and $\x,\y\in\Rdd$, with the scale matrix $\T_u $ defined in \eqref{eq:scale_mat}. In particular, for any $\lambda>0$, by \eqref{eq:g_lam_T} we have
\begin{equation}\label{eq:change_flow}
e^{-\kappa} g_{\kappa^{-1}\lambda}\big(t-s,\x-\btheta_{v,t}(\y)\big)\leq g_{\lambda}\big(t-s,\btheta_{t,v}(\x)-\y\big)  \leq  e^{\kappa}g_{\kappa\lambda}\big(t-s,\x-\btheta_{v,t}(\y)\big).
\end{equation}
\end{lemma}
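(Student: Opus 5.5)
The plan is to compare the flows through their integral equations and propagate the discrepancy with a Gronwall argument in the scaled variables $\T^{-1}_{t-s}$. By \eqref{eq:new_assumpt_grad} the flow $\btheta$ is a genuine bi-Lipschitz flow, with $\btheta_{v,t}=\btheta_{t,v}^{-1}$. Set $\z:=\btheta_{v,t}(\y)$. Then $\y=\btheta_{t,v}(\z)$, and the middle quantity in \eqref{EQ_EQUIV_FLOW} becomes $|\T^{-1}_{t-s}(\btheta_{t,v}(\x)-\btheta_{t,v}(\z))|$. The claim therefore reduces to a two-sided Lipschitz-type estimate for $\btheta_{t,v}$ in the anisotropic scale $\T_{t-s}$, up to an additive constant. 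The two inequalities are symmetric: applying the same estimate to the backward flow $\btheta_{v,t}$ handles the converse direction. So it suffices to prove
\begin{equation}
|\T^{-1}_{t-s}(\btheta_{t,v}(\x)-\btheta_{t,v}(\z))|\le \kappa\big(|\T^{-1}_{t-s}(\x-\z)|+1\big), \qquad 0\le t-v\le t-s.
\end{equation}

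Write $\delta_r:=\btheta_{r,v}(\x)-\btheta_{r,v}(\z)$, so that $\delta_r=\x-\z+\int_v^r(\gF(u,\btheta_{u,v}(\x))-\gF(u,\btheta_{u,v}(\z)))\d u$. I treat the two components separately, using only the constants in $\Theta_T$ and never $\|\nabla\gF\|_\infty$.

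For the first component, $F_1$ lies in $\sC^0_{\bf d}$, so by \eqref{eq:sub_linear_growth} we have $|F_1(u,\cdot)-F_1(u,\cdot')|\le C(1+|\cdot-\cdot'|_{\bf d})$.

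For the second component, $F_2\in\sC^{1+\beta+\nu}_{\bf d}$. Expand it with the Taylor estimate \eqref{Taylor}:
\begin{equation}
F_2(\cdot)-F_2(\cdot')=\nabla_{x_1}F_2(\cdot')(\cdot-\cdot')_1+O(|\cdot-\cdot'|_{\bf d}^{1+\beta+\nu}).
\end{equation}
The linear term has a bounded coefficient, since $\nabla_{x_1}F_2$ is bounded (it takes values in $\mathcal E$ and is Hölder).

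Now normalize by $\rho:=|\T^{-1}_{t-s}\delta_r|$, which is equivalent to $|(\delta_r)_1|(t-s)^{-1/2}+|(\delta_r)_2|(t-s)^{-3/2}$, and integrate over $[v,r]\subset[v,t]$, an interval of length at most $t-s$.
\begin{itemize}
\item For the first component we get $(t-s)^{-1/2}\int C(1+|\delta_u|_{\bf d})\d u\lesssim (t-s)^{1/2}+\int_v^r(t-s)^{-1}\rho_u\,(t-s)^{1/2}\d u$. Here $|\delta|_{\bf d}\lesssim (t-s)^{1/2}\rho$ when $\rho\ge1$, and $|\delta|_{\bf d}\lesssim (t-s)^{1/2}$ otherwise.
\item For the second component, the linear part gives $(t-s)^{-3/2}\int|(\delta_u)_1|\d u\lesssim (t-s)^{-1}\int\rho_u\d u$. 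The remainder gives $(t-s)^{-3/2}\int((t-s)^{1/2}(1+\rho_u))^{1+\beta+\nu}\d u$. Since $1+\beta+\nu\le 3$, this is at most $(t-s)^{(\beta+\nu-1)/2}\int(1+\rho_u)^{1+\beta+\nu}\d u$.
\end{itemize}

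The superlinear power $\rho^{1+\beta+\nu}$ in the $F_2$ remainder is the main obstacle, since it blocks a linear Gronwall argument. My first attempt is to use the sublinear growth of the $\sC^{(1+\beta+\nu)/3}$ regularity in $x_2$ together with the fact that Hölder continuity beyond unit distance only yields linear growth. Concretely, for $|\cdot-\cdot'|_{\bf d}\ge1$ I replace the Taylor remainder by the cruder bound $C|\cdot-\cdot'|$ coming from the boundedness of $\nabla_{x_1}F_2$ and of the $x_2$-increments. This should make the integrand at most linear in $\rho$, after absorbing the factor $(t-s)$ into the normalization, while the sub-unit regime contributes only a constant.

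With that in hand I get $\rho_r\le C(\rho_v+1)+C(t-s)^{-1}\int_v^r\rho_u\d u$. Gronwall over an interval of length at most $t-s$ then gives $\rho_t\le \kappa(\rho_v+1)$, which is the upper bound. The lower bound follows by running the same argument on the backward flow from $t$ to $v$. Finally, \eqref{eq:change_flow} follows from \eqref{EQ_EQUIV_FLOW} and \eqref{eq:g_lam_T} via $(a-1)^2\ge a^2/2-1$ and $(a+1)^2\le 2a^2+2$, with the additive constants producing the factors $e^{\pm\kappa}$.
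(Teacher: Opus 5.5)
Your architecture is the natural one; note that the paper gives no proof here and defers to Lemma 2.2 of \cite{chau:meno:pesc:zhan:23}. Setting $\z=\btheta_{v,t}(\y)$ reduces \eqref{EQ_EQUIV_FLOW} to a scaled Lipschitz bound, up to an additive constant, for $\btheta_{t,v}$ and for its inverse $\btheta_{v,t}$. Each of these would follow from a \emph{linear} Gronwall inequality for $\rho_r=|\T^{-1}_{t-s}\delta_r|$, provided its coefficients have integral over $[v,t]$ bounded uniformly in $t-s$.

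The gap is exactly at the step you flag as the main obstacle: your treatment does not produce such an inequality.
\begin{itemize}
\item In the regime $|\delta_u|_{\bf d}<1$ you only know $|(\delta_u)_1|<1$ and $|(\delta_u)_2|<1$. So $\rho_u$ may be as large as $(t-s)^{-3/2}$, and this regime does not ``contribute only a constant''.
\item Your remainder bound $(t-s)^{-3/2}\int_v^r((t-s)^{1/2}(1+\rho_u))^{1+\beta+\nu}\,\mathrm{d}u$ is superlinear there. Its prefactor is $(t-s)^{-1+\frac{\beta+\nu}{2}}$, not $(t-s)^{\frac{\beta+\nu-1}{2}}$. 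Linearizing it with $\rho_u\lesssim (t-s)^{-3/2}$ costs a coefficient of order $(t-s)^{-1-(\beta+\nu)}$, whose integral over $[v,t]$ blows up as $t-s\to0$.
\item Bounding the remainder by a constant instead gives $(t-s)^{-1/2}$, which is not uniform either.
\end{itemize}
Your crude bound covers only $|\delta_u|_{\bf d}\ge1$. The intermediate range $\rho_u\ge1$, $|\delta_u|_{\bf d}<1$ is therefore left uncontrolled, and Gronwall cannot be invoked as written.

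The repair is easy: a first-order flow estimate does not need the Taylor expansion at all. The seminorm $[F_2]_{{\bf d},T,1+\beta+\nu}$ contains $\|\nabla_{x_1}F_2\|_{L^\infty}$; this, not $\nabla_{x_1}F_2\in\mathcal E$, is what gives boundedness. It also gives $\frac{1+\beta+\nu}{3}$-H\"older control in $x_2$ on unit scales, with at most linear growth beyond as in \eqref{eq:sub_linear_growth}. Hence, globally,
\begin{equation*}
|F_2(u,\mathbf a)-F_2(u,\mathbf b)|\le C\Big(|\mathbf a_1-\mathbf b_1|+|\mathbf a_2-\mathbf b_2|^{\frac{1+\beta+\nu}{3}}+|\mathbf a_2-\mathbf b_2|\Big).
\end{equation*}
Multiply by $(t-s)^{-3/2}$ and use $|(\delta_u)_1|\le(t-s)^{1/2}\rho_u$, $|(\delta_u)_2|\le(t-s)^{3/2}\rho_u$ and $\rho^{\frac{1+\beta+\nu}{3}}\le1+\rho$. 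The three contributions are then at most $(t-s)^{-1}\rho_u$, $(t-s)^{-1+\frac{\beta+\nu}{2}}(1+\rho_u)$ and $\rho_u$. All are linear, with integrals over $[v,t]$ bounded by $C(\Theta_T)$ since $t-v\le t-s\le T$. Equivalently, you can keep \eqref{Taylor} only where $|\delta_u|_{\bf d}\le1$ and use $|\delta_u|_{\bf d}^{1+\beta+\nu}\le|(\delta_u)_1|+|(\delta_u)_2|^{1/3}$ there.

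Two smaller slips, both harmless:
\begin{itemize}
\item $F_1\in\sC^0_{\bf d}$ only gives $|F_1(u,\mathbf a)-F_1(u,\mathbf b)|\le C(1+|\mathbf a_1-\mathbf b_1|+|\mathbf a_2-\mathbf b_2|)$, Euclidean in the second block, not $C(1+|\mathbf a-\mathbf b|_{\bf d})$. After the factor $(t-s)^{-1/2}$ the extra term is only $(t-s)\rho_u$.
\item Passing to \eqref{eq:change_flow} through $(a\pm1)^2$ gives concentration constants of order $\kappa^2$ and prefactors of order $e^{\pm C\kappa^2/\lambda}$, so the constant in front depends on $\lambda$. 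This is fine because $\lambda$ is always a fixed constant depending on $\Theta_T$ where the lemma is used.
\end{itemize}
With these corrections, your Gronwall step, the symmetric argument for $\btheta_{v,t}$, and the Gaussian consequence all go through.
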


We introduce the following notation 
For a multi-index $j\in \N_0^d$, we set the multi-derivative operator
\begin{equation}
D_z^j : = \partial_{z_1}^{j_1} \partial_{z_2}^{j_2} \cdots \partial_{z_d}^{j_d} ,
\end{equation}
and, with a slight abuse of notation, the height of the multi-index
\begin{equation}
| j | : = j_1 + \cdots + j_d.
\end{equation}
Furthermore, for a multi-index ${\bf j}=(j_1,j_2)\in \N_0^d \times \N_0^d$, we set $D_\z^{\mathbf j}:=D_{z_1}^{j_1}D_{z_2}^{j_2}$.

The following proposition is a direct consequence of expression \eqref{CORRESP} and Lemma \ref{Le25}, together with the identity \eqref{LIN_NON_LIN_FLOW}. 
\begin{PROP}[A priori controls for the frozen Gaussian densities]\label{PROP_Proxy}
For any multi-index ${\bf j}=(j_1,j_2)\in \N_0^d \times \N_0^d$, there are constants
$\lambda_0, \lambda_{\bf j}, C_0, C_{\bf j}\geq 1$ depending only on $\Theta_T$ such that
 \begin{align}
C_0^{-1}g_{\lambda_0^{-1}}\big(t-s,{\bvtheta_{s,t}^{(\tau,\bxi)}(\x)-\y}\big)&\leq
\wt p^{(\tau,\bxi)}(s,\x,t,\y)\leq C_0g_{\lambda_0}\big(t-s,{\bvtheta_{s,t}^{(\tau,\bxi)}(\x)-\y}\big),\label{lower_proxy_bis}\\
|D_\y^{\mathbf j} \wt p^{(\tau,\bxi)}(s,\x,t,\y)| + |D_\x^{\mathbf j} \wt p^{(\tau,\bxi)}(s,\x,t,\y)|&\leq C_{\bf j}(t-s)^{-\frac{|j_1|+3|j_2|}{2}}g_{\lambda_{\bf j}}\big(t-s,{\bvtheta_{s,t}^{(\tau,\bxi)}(\x)-\y}\big), \label{upper_proxy_bis}
\end{align}
for any $0\leq s<t\leq T$, $\tau\in[0,T]$ and $\x,\y,\bxi\in\R^{2d}$.

In particular,  if for $\z\in \R^{2d} $, $|\y-\z|_{\bd}\le(t-s)^{\frac 12} $, it holds that:
 \begin{align}
C_0^{-1}g_{\lambda_0^{-1}}\big(t-s,{\bvtheta_{s,t}^{(\tau,\bxi)}(\x)-\y}\big)&\leq
\wt p^{(\tau,\bxi)}(s,\x,t,{\z})\leq C_0g_{\lambda_0}\big(t-s,{\bvtheta_{s,t}^{(\tau,\bxi)}(\x)-\y}\big),\label{lower_proxy_bis_CONVEXITY_DIAG}\\
|D_{{\z}}^{\mathbf j} \wt p^{(\tau,\bxi)}(s,\x,t,{\z})| + |D_\x^{\mathbf j} \wt p^{(\tau,\bxi)}(s,\x,t,{\z})|&\leq C_{\bf j}(t-s)^{-\frac{|j_1|+3|j_2|}{2}}g_{\lambda_{\bf j}}\big(t-s,{\bvtheta_{s,t}^{(\tau,\bxi)}(\x)-\y}\big), \label{upper_proxy_bis_CONVEXITY_DIAG}
\end{align}
and from \eqref{LIN_NON_LIN_FLOW}
 \begin{align}
C_0^{-1}g_{\lambda_0^{-1}}\big(t-s,{\x-\btheta_{t,s}(\y)}\big)\leq
\wt p^{(t,\y)}(s,\x,t,{\boldsymbol{\eta}})&\leq C_0g_{\lambda_0}\big(t-s,{\x-\btheta_{t,s}(\y)}\big),\label{lower_proxy_bis_CONVEXITY_DIAG_SPEC}\\
 |D_{{\boldsymbol{\eta}}}^{\mathbf j} \wt p^{(t,\y)}(s,\x,t,{\boldsymbol{\eta}})| + |D_\x^{\mathbf j} \wt p^{(t,\y)}(s,\x,t,{\boldsymbol{\eta}})&\leq C_{\bf j}(t-s)^{-\frac{|j_1|+3|j_2|}{2}}g_{\lambda_{\bf j}}\big(t-s,{\x-\btheta_{t,s}(\y)}\big).\label{upper_proxy_bis_CONVEXITY_DIAG_SPEC}
\end{align}
\end{PROP}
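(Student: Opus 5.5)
The density $\wt p^{(\tau,\bxi)}(s,\x,t,\cdot)$ is the explicit Gaussian \eqref{CORRESP}. The plan is to transfer every quantity appearing in it to the intrinsic scale $\T_{t-s}$ using Lemma \ref{Le25}, and then differentiate the exponential directly. The first step is to bound the prefactor. By \eqref{TT1}, $\K_{t,s}^{(\tau,\bxi)}$ is comparable, as a quadratic form, to $\T_{t-s}^2$. Strictly, Lemma \ref{Le25} is stated with freezing point $(t,\y)$, but its proof in \cite{chau:meno:pesc:zhan:23} only uses the uniform ellipticity of $\sigma$ and the fact that $\nabla_{x_1}F_2$ takes values in the closed convex set $\mathcal E\subset GL_d$. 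Both hold along any frozen curve $r\mapsto\btheta_{r,\tau}(\bxi)$, so the comparison holds uniformly in $(\tau,\bxi)$. Consequently $\det(\K_{t,s}^{(\tau,\bxi)})\asymp (t-s)^{4d}$, which gives the factor $(t-s)^{-2d}$ of $g_\lambda$. In the exponent we have $\langle \K^{-1}\w,\w\rangle\asymp|\T_{t-s}^{-1}\w|^2$ with $\w=\bvtheta_{t,s}^{(\tau,\bxi)}(\x)-\y$. This yields the two-sided bound \eqref{lower_proxy_bis} with $\lambda_0$ determined by $\kappa$.

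For the derivative bounds \eqref{upper_proxy_bis}, differentiating in $\y$ produces, by the chain rule, polynomials in the entries of $\K^{-1}\w$ and $\K^{-1}$ multiplying the density. We write $\K^{-1}=\T_{t-s}^{-1}M\T_{t-s}^{-1}$, where $M$ and $M^{-1}$ are uniformly bounded by \eqref{TT1}. Then each $\partial_{y_1}$ costs $(t-s)^{-1/2}|\T^{-1}_{t-s}\w|$ or $(t-s)^{-1}$ per pair, and each $\partial_{y_2}$ costs the corresponding power of $(t-s)^{-3/2}$. We absorb the monomials $|\T^{-1}_{t-s}\w|^k$ into the Gaussian by enlarging the concentration constant to some $\lambda_{\bf j}$, exactly as in \eqref{BD_THERM_DEG_DER_TEMP}.

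For the $\x$-derivatives, $\bvtheta$ is affine in $\x$ with linear part $\gR_{t,s}^{(\tau,\bxi)}$, so $D_\x$ acts through $(\gR_{t,s})^*$. By \eqref{TT0} (again uniform in the freezing point), $\T_{t-s}^{-1}\gR_{t,s}\T_{t-s}$ is bounded. This is the key algebraic point: the lower-left block of the resolvent is $\int\nabla_{x_1}F_2=O(t-s)$, which is exactly compensated by the scales $(t-s)^{1/2}$ versus $(t-s)^{3/2}$. Hence $\x$-derivatives obey the same scaling as the $\y$-derivatives.

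For the diagonal statements \eqref{lower_proxy_bis_CONVEXITY_DIAG}--\eqref{upper_proxy_bis_CONVEXITY_DIAG}, suppose $|\y-\z|_{\bd}\le(t-s)^{1/2}$. Then $|\T_{t-s}^{-1}(\y-\z)|\le C$, and the elementary inequality $|a+b|^2\ge \frac12|a|^2-|b|^2$ lets us replace $\z$ by $\y$ inside $g$, at the price of a multiplicative constant and a change of $\lambda$. Finally, \eqref{lower_proxy_bis_CONVEXITY_DIAG_SPEC}--\eqref{upper_proxy_bis_CONVEXITY_DIAG_SPEC} follow by inserting \eqref{LIN_NON_LIN_FLOW}, namely $\bvtheta_{t,s}^{(t,\y)}(\x)-\y=\gR^{(t,\y)}_{t,s}(\x-\btheta_{s,t}(\y))$, and using \eqref{TT0} to remove $\gR$ from the scaled norm. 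The diagonal argument then handles a general $\boldsymbol\eta$ near $\y$.

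The main obstacle is the uniformity of Lemma \ref{Le25} in arbitrary freezing parameters $(\tau,\bxi)$, rather than only $(t,\y)$. My plan there is to re-read the proof of the covariance comparison as a statement about any measurable path $r\mapsto(\sigma(r,\cdot),\nabla_{x_1}F_2(r,\cdot))$ evaluated along an arbitrary curve, for which only ellipticity and the convexity of $\mathcal E$ matter. The remaining steps are routine Gaussian calculus.
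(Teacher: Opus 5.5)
Your proposal is correct and follows the paper's route: the paper simply declares the proposition a direct consequence of the explicit Gaussian form \eqref{CORRESP}, the scaling Lemma \ref{Le25} and the identity \eqref{LIN_NON_LIN_FLOW}, and your argument is exactly the unpacking of that claim (covariance comparable to $\T_{t-s}^2$, boundedness of $\T_{t-s}^{-1}\gR_{t,s}\T_{t-s}$ for the $\x$-derivatives, absorption of polynomial factors, and a triangle-inequality step for the diagonal versions). You are also right that Lemma \ref{Le25} has to be used for arbitrary freezing parameters $(\tau,\bxi)$ rather than only $(t,\y)$, which the paper leaves implicit; and for the diagonal lower bound you need the companion inequality $|a+b|^2\le 2|a|^2+2|b|^2$ alongside the one you quote, which is immediate.
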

The last preliminary result is the following lemma, whose proof is deferred to Section \ref{Lem_sensitivity}.
\begin{lemma}[Sensitivity of the proxy w.r.t. the freezing parameters]\label{Lem_sensitivity}
There exist two positive constants, $\lambda, C$ depending only on $\Theta_T$, repsectively, such that
\begin{equation}\label{THE_CTR_SENSI_DIFF_PARAM_Y}
\big|\big(\wt{p}^{(t,\y)}-\wt{p}^{(t,\y')}\big)(r,\z;t,\y')
\leq C |\y-\y'|_{\bd}^{\nu+\beta} g_{\lambda}\big(t-r,\z-\btheta_{r,t}(\y')\big), \qquad r:=t-|\y-\y'|_{\bd}^2,
\end{equation}
for any $\z,\y,\y'\in\R^{2d}$ and $t\in [0,T]$ with $|\y-\y'|_{\bd}^2 \leq t$.
\end{lemma}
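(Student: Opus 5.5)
We compare two explicit Gaussian densities with the same backward point $(r,\z)$ and forward point $(t,\y')$. They differ only through the freezing point, $\y$ versus $\y'$. Set $h:=t-r=|\y-\y'|_{\bd}^2$ and write both densities through \eqref{CORRESP}. The difference splits into two parts: one from the covariance matrices $\K_{t,r}^{(t,\y)}$ versus $\K_{t,r}^{(t,\y')}$, and one from the means $\bvtheta_{t,r}^{(t,\y)}(\z)$ versus $\bvtheta_{t,r}^{(t,\y')}(\z)$. We interpolate linearly between the two sets of parameters, say $(\K^\mu,\bvtheta^\mu)$ with $\mu\in[0,1]$, and differentiate in $\mu$. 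The scaling Lemma \ref{Le25} gives uniform nondegeneracy along the interpolation, since the convex combination of two matrices comparable to $\T_h^2$ is again comparable to $\T_h^2$. The derivative is then bounded by a Gaussian $g_\lambda(h,\cdot)$ times the normalized discrepancies $\|\T_h^{-1}(\K^{(t,\y)}-\K^{(t,\y')})\T_h^{-1}\|$ and $|\T_h^{-1}(\bvtheta^{(t,\y)}_{t,r}(\z)-\bvtheta^{(t,\y')}_{t,r}(\z))|$. The target is therefore to show that both quantities are $\lesssim h^{(\nu+\beta)/2}$, modulo polynomial factors that the Gaussian absorbs.

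For the covariance, the integrand in \eqref{KK1} involves $\sigma(v,\btheta_{v,t}(\y))$ and the resolvent built from $\nabla_{x_1}F_2(v,\btheta_{v,t}(\y))$, for $v\in[r,t]$. The flows started at $\y$ and $\y'$ stay at distance $|\btheta_{v,t}(\y)-\btheta_{v,t}(\y')|_{\bd}\lesssim |\y-\y'|_{\bd}+h^{1/2}\lesssim h^{1/2}$ on an interval of length $h$. This follows from the flow controls of \cite{chau:meno:pesc:zhan:23} (Lemma \ref{lemme:bilipflow_FIRST}-type bounds), using that $\gF$ is Lipschitz-like at scale $h^{1/2}$ by {\bf [H-$\gF$]}. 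Hölder continuity of $\sigma$ of order $\nu+\beta$ then gives a relative error $h^{(\nu+\beta)/2}$ in the $(1,1)$ block. For the resolvent entries, $\nabla_{x_1}F_2\in\sC^{\beta+\nu}_{\bd}$, and after scaling by $\T_h^{-1}$ the resulting error in the off-diagonal blocks is again of order $h^{(\nu+\beta)/2}$.

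For the means, we use \eqref{LIN_NON_LIN_FLOW} for the parameter $\y'$, namely $\bvtheta^{(t,\y')}_{t,r}(\z)-\y'=\gR^{(t,\y')}_{t,r}(\z-\btheta_{r,t}(\y'))$. For the other parameter we write $\bvtheta^{(t,\y)}_{t,r}(\z)-\y'=\big(\bvtheta^{(t,\y)}_{t,r}(\z)-\y\big)+(\y-\y')$. The difference of the means is then $\y-\y'$ plus the difference of two linearized flows frozen along two nearby nonlinear trajectories. The term $\y-\y'$ scales to $|\T_h^{-1}(\y-\y')|\asymp 1$, which is not small. So the mean part cannot simply be bounded by $h^{(\nu+\beta)/2}$. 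Instead, we have to exploit the fact that the linearized flow frozen along $\btheta_{\cdot,t}(\y)$ reaches $\y$ at time $t$ exactly when started at $\btheta_{r,t}(\y)$. We then compare $\bvtheta^{(t,\y)}_{t,r}(\z)$ with $\gR^{(t,\y)}_{t,r}(\z-\btheta_{r,t}(\y))+\y$, which equals $\bvtheta^{(t,\y)}_{t,r}(\z)$ identically. This reduces the matter to $\gR^{(t,\y)}_{t,r}(\z-\btheta_{r,t}(\y))+\y$ versus $\gR^{(t,\y')}_{t,r}(\z-\btheta_{r,t}(\y'))+\y'$. Their difference is $(\gR^{(t,\y)}-\gR^{(t,\y')})(\z-\btheta_{r,t}(\y))$ plus the term $\y-\y'-\gR^{(t,\y')}_{t,r}(\btheta_{r,t}(\y)-\btheta_{r,t}(\y'))$, which is a Taylor remainder of the flow over a time $h$.

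The main obstacle is this Taylor remainder. It is $\int_r^t[\gF(v,\btheta_{v,t}(\y))-\gF(v,\btheta_{v,t}(\y'))-\bA(\cdot)(\btheta_{v,t}(\y)-\btheta_{v,t}(\y'))]\,dv$, propagated by the resolvent. Its first component involves only $F_1$, which is merely $\sC^0$, giving a bound of order $h\cdot(1+h^{1/2})$, which is $\ll h^{1/2}\cdot h^{(\nu+\beta)/2}$ after scaling. Its second component is controlled by \eqref{Taylor} for $F_2\in \sC^{1+\beta+\nu}_{\bd}$, giving $h\cdot h^{(1+\beta+\nu)/2}=h^{3/2}h^{(\nu+\beta)/2}$, which is exactly the scale $h^{3/2}$ of the second block. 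The resolvent difference term is handled by the Hölder continuity of $\nabla_{x_1}F_2$, which produces $h^{(\nu+\beta)/2}|\T_h^{-1}(\z-\btheta_{r,t}(\y))|$. The linear factor here is absorbed by the Gaussian after enlarging $\lambda$. Finally, we switch between $\z-\btheta_{r,t}(\y)$ and $\z-\btheta_{r,t}(\y')$ in the Gaussian argument, which costs only a bounded shift at scale $h$. Combining the covariance and mean estimates in the interpolation bound yields \eqref{THE_CTR_SENSI_DIFF_PARAM_Y}.
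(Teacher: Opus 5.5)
Your proposal is correct and follows essentially the paper's route. The paper likewise compares the scaled covariances $\hat K_1,\hat K_2$ using the H\"older continuity of $\sigma$ and $\nabla_{x_1}F_2$ along the two flows, which stay $|\y-\y'|_{\bd}$-close on $[r,t]$. It then bounds the mean discrepancy $\T^{-1}_{t-r}(\w_2-\w_1)$ by a resolvent-difference term times $|\T^{-1}_{t-r}(\z-\btheta_{r,t}(\y'))|$, plus a flow-linearization remainder controlled by the sublinear growth of $F_1$ and the Taylor expansion \eqref{Taylor} of $F_2$. The differences are cosmetic. You interpolate in $\mu$ instead of treating the determinant and exponential factors separately via $1-e^{-x}\le x$. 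You also write the remainder as a forward Duhamel integral for $\btheta_{\cdot,t}(\y)-\btheta_{\cdot,t}(\y')$ linearized around $\y'$, rather than through the ODE satisfied by $\bGamma^{(t,\y)}_{\cdot,t}-\bGamma^{(t,\y')}_{\cdot,t}$.
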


The starting point of our analysis is the following Duhamel type representation formula, which readily follows, in the current \textit{smooth-coefficients} setting, from \eqref{Kolmogorov_frozen}-\eqref{frozen_gen} and from the Kolmogorov equation satisfied by the density, i.e.
\begin{equation}
\Kc p_{\gF,\sigma}(\cdot,\cdot,t,\y) = 0,\qquad \text{on } [0,t) \times \R^{2d}, 
\end{equation}
with $\Kc$ as in \eqref{gen}.
For all $0 \leq r <t\le T$, $\tau\in[0,T]$ and $\z,\y,\bxi\in\R^{2d}$, we have:
\begin{equation}
p_{\gF,\sigma}(r,\z,t,\y)
=\wt{p}^{(\tau,\bxi)}(r,\w,t,\y)+\int^t_r\int_{\R^{2d}}p_{\gF,\sigma}(r,\z,v,\w)\big(\Ac_{v,\w}-\wt\Ac^{(\tau,\bxi)}_{v,\w}\big){\wt p}^{(\tau,\bxi)}(v,\w, t,\y)\dif\w \dif v.\label{D1}
\end{equation}
The expansion 
\eqref{D1} corresponds 
to the 
\textit{backward} Duhamel representation of the density for suitable freezing points. 

\subsection{Proof of estimates in the forward variable}
We first prove estimate \eqref{HK-holder-gradient-forward}. It is clear that it suffices to consider the case $|\y-\y'|_{\bf d}\le (t-s)^{\frac 12} $ (intrinsic diagonal regime), as \eqref{HK-holder-gradient-forward} readily stems from \eqref{HK-density_upper} and \eqref{HK-gradient-back} in the non-diagonal regime.
Following the approach adopted in \cite{chau:meno:pesc:zhan:23} and \cite{meno:pesc:zhan:21} for the backward variable (see as well \cite{chau:hono:meno:21} for a related approach in the setting of Schauder estimates), the idea is to exploit \eqref{D1} by choosing suitable time and spatial freezing points in order to compare $p_{\gF,\sigma}(r,\z,t,\y) $ and $p_{\gF,\sigma}(r,\z,t,\y')$. 


While globally we work in the diagonal regime, in the time-convolution in \eqref{D1} we distinguish two local regimes: a diagonal one, i.e. $|\y-\y'|_{\bd}^{2} \leq t-v$, where it seems \textit{reasonable} to expand the two densities to be compared around the same freezing point; and an off-diagonal one, i.e. $|\y-\y'|_{\bd}^{2} > t-v$, where the corresponding contributions cannot be compared and the sought sensitivity in $|\y-\y'|_{\bd}^{\etaforward} $ must come  form time integration. For this to be possible, the freezing points must be chosen accordingly.


From \eqref{D1} with $(\tau,\bxi)=(t,\y) $ we have
\begin{align}
p_{\gF,\sigma}(s,\x,t,\y)&= {\wt p}^{(t , \y)}(s,\x,t,\y)+\int^t_s\int_{\R^{2d}}p_{\gF,\sigma}(s,\x,v,\w)(\Ac_{v,\w}-\wt\Ac^{( t ,\y)}_{v,\w}){\wt p}^{(t ,\y)}(v,\w,t,\y)\dif\w \dif v,\\
p_{\gF,\sigma}(s,\x,t,\y')&= {\wt p}^{(t , \y)}(s,\x,t,\y')+\int^t_s\int_{\R^{2d}}p_{\gF,\sigma}(s,\x,v,\w)(\Ac_{v,\w}-\wt\Ac^{( t ,\y)}_{v,\w}){\wt p}^{(t ,\y)}(v,\w,t,\y')\dif\w \dif v.
\end{align}
Taking the difference and rearranging terms yields
\begin{equation}\label{DIFF_Y_YP}
p_{\gF,\sigma}(s,\x,t,\y) - p_{\gF,\sigma}(s,\x,t,\y') =  \Delta_1+\Delta_2+\Delta_3,
\end{equation}
with
\begin{align}
 \Delta_1 & = {\wt p}^{(t , \y)}(s,\x,t,\y) - {\wt p}^{(t , \y)}(s,\x,t,\y') ,\\
 \Delta_2& = \int^{t - |\y-\y'|_{\bd}^{2}}_s\int_{\R^{2d}}p_{\gF,\sigma}(s,\x,v,\w)\big(\Ac_{v,\w}-\wt\Ac^{( t ,\y)}_{v,\w}\big)\big({\wt p}^{(t ,\y)}(v,\w,t,\y)-{\wt p}^{(t ,\y)}(v,\w,t,\y')\big)\dif\w \dif v ,\\
 \Delta_3& =  \int_{t - |\y-\y'|_{\bd}^{2}}^t\int_{\R^{2d}}p_{\gF,\sigma}(s,\x,v,\w)\big(\Ac_{v,\w}-\wt\Ac^{( t ,\y)}_{v,\w}\big) {\wt p}^{(t ,\y)}(v,\w,t,\y)\dif\w \dif v\\
 &\quad - \int_{t - |\y-\y'|_{\bd}^{2}}^t\int_{\R^{2d}}p_{\gF,\sigma}(s,\x,v,\w)\big(\Ac_{v,\w}-\wt\Ac^{( t ,\y)}_{v,\w}\big){\wt p}^{(t ,\y)}(v,\w,t,\y')\dif\w \dif v.
 \end{align}
 
The terms $\Delta_1$, $\Delta_2$ and $\Delta_3$ correspond, respectively, to the global diagonal case, the local diagonal case and the local off-diagonal case. The second addend in $\Delta_3$ requires further manipulation because the spatial freezing point in the frozen operator is $\y$ while forward variable in the proxy density is $\y'$. Therefore, we write $\Delta_3 =\Delta_{3,1}  + \Delta_{3,2}$ with
  \begin{align}
 \Delta_{3,1}& =  \int_{t - |\y-\y'|_{\bd}^{2}}^t\int_{\R^{2d}}p_{\gF,\sigma}(s,\x,v,\w)\big(\Ac_{v,\w}-\wt\Ac^{( t ,\y)}_{v,\w}\big) {\wt p}^{(t ,\y)}(v,\w,t,\y)\dif\w \dif v\\
 &\quad - \int_{t - |\y-\y'|_{\bd}^{2}}^t\int_{\R^{2d}}p_{\gF,\sigma}(s,\x,v,\w)\big(\Ac_{v,\w}-\wt\Ac^{( t ,\y')}_{v,\w}\big){\wt p}^{(t ,\y')}(v,\w,t,\y')\dif\w \dif v,\\
 \Delta_{3,2}& = \int_{t - |\y-\y'|_{\bd}^{2}}^t\int_{\R^{2d}}p_{\gF,\sigma}(s,\x,v,\w)\big(\Ac_{v,\w}-\wt\Ac^{( t ,\y')}_{v,\w}\big){\wt p}^{(t ,\y')}(v,\w,t,\y')\dif\w \dif v \\
 & \quad  - \int_{t - |\y-\y'|_{\bd}^{2}}^t\int_{\R^{2d}}p_{\gF,\sigma}(s,\x,v,\w)\big(\Ac_{v,\w}-\wt\Ac^{( t ,\y)}_{v,\w}\big){\wt p}^{(t ,\y)}(v,\w,t,\y')\dif\w \dif v .
\end{align}

\paragraph{Analysis of the contributions in \eqref{DIFF_Y_YP}}
In the sequel, to ease the notation, we will denote indistinctly by $\lambda$ any positive constant that depends at most on $\Theta_T$. 

Concerning $\Delta_1$, we have
\begin{equation}
\Delta_1= \int_0^1 \frac{\d}{\d \rho}  {\wt p}^{(t , \y)}\big(s,\x,t,\y' + \rho (\y - \y')\big)    \d \rho 
 = \sum_{k=1,2} \int_0^1 \big\langle y_k - y_k' , \nabla_{z_k} {\wt p}^{(t , \y)}\big(s,\x,t,\z\big)  \big\rangle   \d \rho \Big|_{\z = \y' + \rho (\y - \y')}.
\end{equation}
and thus
that for the first term of the former expansion:
\begin{align}
|\Delta_1 | &\leq \sum_{k=1,2} \int_0^1 \big  |y_k - y_k'| \times \big| \nabla_{z_k} {\wt p}^{(t , \y)}\big(s,\x,t,\z\big)  \big|   \d \rho\, \Big|_{\z = \y' + \rho (\y - \y')}
\intertext{(by \eqref{upper_proxy_bis} in Proposition \ref{PROP_Proxy}, together with \eqref{eq:change_flow} in Lemma \ref{lemme:bilipflow_FIRST})}
&\lesssim \Bigg( \frac{|y_1 - y_1'|}{(t-s)^{\frac{1}{2}}} + \frac{|y_2 - y_2'|}{(t-s)^{\frac{3}{2}}} \Bigg) \int_0^1 g_{\lambda}\big(t-s,\btheta_{t,s}(\x)-(\y' + \rho (\y - \y'))\big) \d \rho\\
& \lesssim   \frac{|\y-\y'|_{\bd}^{\etaforward}}{(t-s)^{\frac{\etaforward}{2}}} \, g_{\lambda}\big(t-s,\btheta_{t,s}(\x)-\y\big), \label{eq:est_D_1}
\end{align}
where we employed $|\y-\y'|_{\bd}\le (t-s)^{\frac 12} $ for the last inequality.

For the term $\Delta_2$ a similar argument applies. Precisely, by exploiting the local diagonal regime for the current time integration variable, i.e. $|\y-\y'|_{\bd}\le (t-v)^{\frac 12} $, we proceed as we did above and obtain
\begin{align}
\big| \nabla^{\delta}_{\w_1} {\wt p}^{(t ,\y)}(v,\w,t,\y)- \nabla^{\delta}_{\w_1} {\wt p}^{(t ,\y)}(v,\w,t,\y') \big| & \lesssim  \frac{|\y-\y'|_{\bd}^{\etaforward}}{(t-v)^{\frac{\delta+\etaforward}{2}}} \, g_{\lambda}\big(t-v,\w-\btheta_{v,t}(\y)\big), \quad \delta=1,2, \\
\big| \nabla_{\w_2} {\wt p}^{(t ,\y)}(v,\w,t,\y)- \nabla_{\w_2} {\wt p}^{(t ,\y)}(v,\w,t,\y') \big| & \lesssim  \frac{|\y-\y'|_{\bd}^{\etaforward}}{(t-v)^{\frac{3+\etaforward}{2}}} \, g_{\lambda}\big(t-v,\w-\btheta_{v,t}(\y)\big).
\end{align}
Note that, this time, we put the flow on the variable $\y$ in order to leave $\w$ free to be integrated. By the estimates above, and by observing that
\begin{align}\label{eq:oper_diff}
\big(\Ac_{v,\w}-\wt\Ac^{( t ,\y)}_{v,\w}\big) &= {\rm Tr}\Big(\big(a(v,\w)-a(v,\btheta_{v,t}(\y))\big) \nabla^{2}_{\w_1}\Big) \\
& \quad + \big\langle \gF(v,\w)-\gF(v,\btheta_{v,t}(\y))- \bA(v,\btheta_{v,t}(\y))\big(\w-\btheta_{v,t}(\y)\big),\nabla_{\w} \big\rangle,
\end{align}
we obtain
\begin{align}
&\big| \big(\Ac_{v,\w}-\wt\Ac^{( t ,\y)}_{v,\w}\big) \big({\wt p}^{(t ,\y)}(v,\w,t,\y)-{\wt p}^{(t ,\y)}(v,\w,t,\y')\big) \big|\\
& \quad \lesssim  \frac{|\y-\y'|_{\bd}^{\etaforward}}{(t-v)^{\frac{\etaforward}{2}}} \Bigg[ \frac{|\w-\btheta_{v,t}(\y)|_{\bd}^{\nu+\beta}}{t-v}+\textcolor{black}{\frac{1+|\w-\btheta_{v,t}(\y)|_{\bf d}}{(t-v)^{\frac 12}}}+\frac{|\w-\btheta_{v,t}(\y)|_{\bf d}^{1+\nu+\beta}}{(t-v)^{\frac 32}}\Bigg] g_{\lambda}\big(t-v,\w-\btheta_{v,t}(\y)\big),
\end{align}
where we exploited the assumptions {\bf [H-${\bf F}$]} and {\bf [H-$\sigma$]}, including the fact that $\gF_1 
$ has linear growth. Now, the Gaussian upper bound of the density $p_{\gF,\sigma} $ (see \eqref{HK-density_upper} together with \eqref{eq:HK-density_upper_lower}) and the bound in \eqref{BD_THERM_DEG_DER_TEMP} yield
\begin{align}
| \Delta_2 |  &\lesssim |\y-\y'|_{\bd}^{\etaforward} \int_s^{t}\int_{\R^{2d}} g_{\lambda}(v - s,\btheta_{v,s}(\x)-\w)g_{\lambda}(t-v,\w-\btheta_{v,t}(\y)) (t-v)^{\frac{\nu+\beta}2-\frac{\etaforward}{2}-1}\d \w \d v
\intertext{(by the Chapman-Kolmogorov identity for $g_{\lambda}$)}
 &\lesssim|\y-\y'|_{\bd}^{\etaforward} \int_s^{t } g_{\lambda}(t-s,\btheta_{v,s}(\x)-\btheta_{v,t}(\y)) (t-v)^{\frac{\nu+\beta}2-\frac{\etaforward}{2}-1} \d v
 \intertext{(by \eqref{eq:change_flow} and by integrating in $\d v$)}
 &\lesssim |\y-\y'|_{\bd}^{\etaforward} (t-s)^{\frac{\nu+\beta - \etaforward}{2}} g_{\lambda}(t-s,\btheta_{t,s}(\x)-y).\label{CTR_DELTA_3}
\end{align}

Let us now turn to the term $\Delta_{3,1}$. 
We recall that the off-diagonal regime holds for the current time integration variable, i.e. $|\y-\y'|_{\mathbf d}^2> t-v $. Thus it suffices to write:
\begin{align}
|\Delta_{3,1}|\le& \int_{t-|\y-\y'|_{\mathbf d}^2}^t\int_{\R^{2d}}p_{\gF,\sigma}(s,\x,v,\w)\Big|(\cL_{v,\w}-\wt\cL^{(t,\y)}_{v,\w}){\wt p}^{(t,\y)}(v,\w, t,\y)\Big|\dif\w \dif v\\
&+\int_{t-|\y-\y'|_{\mathbf d}^2}^t\int_{\R^{2d}}p_{\gF,\sigma}(s,\x , v,\w)\Big|(\cL_{v,\w}-\wt\cL^{(t,\y')}_{v,\w}){\wt p}^{(t,\y')}(v,\w, t,\y')\Big|\dif\w \dif v. \label{CTR_DELTA_4_PREAL}
\end{align}
By the identity \eqref{eq:oper_diff} together with assumptions {\bf [H-${\bf F}$]} and {\bf [H-$\sigma$]}, and by combining \eqref{upper_proxy_bis} 
with \eqref{eq:change_flow}, 
we obtain, for $\mathfrak y\in \{\y,\y' \} $, 
\begin{align}
&\big| \big(\Ac_{v,\w}-\wt\Ac^{( t ,\mathfrak y)}_{v,\w}\big) {\wt p}^{(t ,\mathfrak y)}(v,\w,t,\mathfrak y) \big|\\
& \qquad \lesssim   \Bigg[ \frac{|\w-\btheta_{v,t}(\mathfrak y)|_{\bd}^{\nu+\beta}}{t-v}+\textcolor{black}{\frac{1+|\w-\btheta_{v,t}(\mathfrak y)|_{\bf d}}{(t-v)^{\frac 12}}}+\frac{|\w-\btheta_{v,t}(\mathfrak y)|_{\bf d}^{1+\nu+\beta}}{(t-v)^{\frac 32}}\Bigg] g_{\lambda}\big(t-v,\w-\btheta_{v,t}(\mathfrak y)\big)
\intertext{(by \eqref{BD_THERM_DEG_DER_TEMP})}
& \qquad \lesssim (t-v)^{\frac{\nu+\beta}2-1}g_{\lambda}(t-v,\w-\btheta_{v,t}(\fy)).
\end{align}

Plugging this estimate into \eqref{CTR_DELTA_4_PREAL} and proceeding as in \eqref{CTR_DELTA_3}, namely applying first the Gaussian upper bounds \eqref{HK-density_upper}-\eqref{eq:HK-density_upper_lower} and the Chapman-Kolmogorov identity, then \eqref{eq:change_flow} and finally integrating in $\d v$, yields
\begin{equation}
|\Delta_{3,1}|
\lesssim |\y-\y'|^{\nu+\beta} \big(g_{\lambda}(t-s,\btheta_{t,s}(\x)-\y)+g_{\lambda}(t-s,\btheta_{t,s}(\x)-\y')\big).
\label{CTR_DELTA_4}
\end{equation}

It remains to bound $\Delta_{3,2}$ which is associated with the change of freezing point. Applying Chapman-Kolmogorov identity and Fubini Theorem 
yields
\begin{align}
\Delta_{3,2} & =\int_{\R^{2d}} p_{\gF,\sigma}(s,\x,r,\z) \int_{r}^t\int_{\R^{2d}}p_{\gF,\sigma}(r,\z,v,\w)\big(\Ac_{v,\w}-\wt\Ac^{( t ,\y')}_{v,\w}\big){\wt p}^{(t ,\y')}(v,\w,t,\y')\dif\w \dif v \d \z \\
 & \quad  -\int_{\R^{2d}} p_{\gF,\sigma}(s,\x,r,\z)  \int_{r}^t\int_{\R^{2d}}p_{\gF,\sigma}(r,\z,v,\w)\big(\Ac_{v,\w}-\wt\Ac^{( t ,\y)}_{v,\w}\big){\wt p}^{(t ,\y)}(v,\w,t,\y')\dif\w \dif v \d \z 
 \intertext{(by Duhamel formula \eqref{D1})}
 & = \int_{\R^{2d}} p_{\gF,\sigma}(s,\x,r,\z) \big( {\wt p}^{(t ,\y)}(r,\z,t,\y') -{\wt p}^{(t ,\y')}(r,\z,t,\y')   \big) \dif \z, \label{eq:delta_32_trick}
\end{align}
where we set $r: = t - |\y-\y'|_{\bd}^{2}$. Note that we applied Fubini Theorem to swap the integral in $\d \w \d v$ with the one in $\d \z$. This was possible because the coefficients were regularized, and so derivatives w.r.t. $\w$ could be moved from ${\wt p}$ to $p_{\gF,\sigma}$ and to the coefficients of $\Ac$, thus avoiding non integrable singularities. Critically, the control of the final term in \eqref{eq:delta_32_trick} will only depend on the constants in $\Theta_T$, and not on the regularization constants.

Therefore, by applying first Lemma \ref{Lem_sensitivity}, then the Gaussian upper bounds \eqref{HK-density_upper}-\eqref{eq:HK-density_upper_lower} and the Chapman-Kolmogorov identity, and finally \eqref{eq:change_flow}, we obtain
\begin{equation}
|\Delta_{3,2}| \lesssim |\y-\y'|_{\bd}^{\nu+\beta} g_\lambda \big(t-s,\btheta_{t,s}(\x)-\y\big).\label{CTR_DELTA_2}
\end{equation}

Plugging the controls \eqref{CTR_DELTA_2}, \eqref{CTR_DELTA_4}, \eqref{CTR_DELTA_3}, \eqref{eq:est_D_1} into \eqref{DIFF_Y_YP} eventually yields 
\begin{equation}
| p_{\gF,\sigma}(s,\x,t,\y)- p_{\gF,\sigma}(s,\x,t,\y')| \lesssim  \frac{|\y-\y'|^{\etaforward}_{\bf d}}{(t-s)^{\frac{{\etaforward}}{2}}}   \Big( g_\lambda \big(t-s,\btheta_{t,s}(\x)-\y\big)+g_\lambda \big(t-s,\btheta_{t,s}(\x)-\y'\big)\Big),
\end{equation}
which, by the Gaussian lower bound in \eqref{eq:HK-density_upper_lower}, proves \eqref{HK-holder-gradient-forward} for $\delta=0 $. The case $\delta=1 $ is handled similarly from \eqref{DIFF_Y_YP}, by employing the upper bound on the gradient $\nabla_{x_1} p_{\gF,\sigma} $ provided by \eqref{HK-gradient-back}.

Eventually, the bounds \eqref{eq:ste_new}, \eqref{eq:ste_M_DEG_new} would be proved by proceeding along the same lines, namely by taking the former expansions   and different starting point $\x,\x' $ in \eqref{DIFF_Y_YP} (that would be differentiated w.r.t. the non-degenerate variable when $\delta=1 $ in \eqref{eq:ste_new}), exploiting the Hölder control \eqref{HK-holder-gradient-back} for the perturbative terms (associated with $\nabla_{x_1}^{{\delta}} \Delta_i (\x) - \nabla_{x'_1}^{{\delta}} \Delta_i (\x') $, $i=2,3$
) and the direct Gaussian controls of Proposition \ref{PROP_Proxy} for the main one (associated with $\nabla_{x_1}^{{\delta}} \Delta_1 (\x) - \nabla_{x'_1}^{{\delta}} \Delta_1 (\x')$).


\subsection{Proof of Lemma \ref{Lem_sensitivity}}
For  notational simplicity, we introduce
$$
K_1:=\K^{(t,\y)}_{t,r},\ K_2:=\K^{(t,\y')}_{t,r},\ \w_1:=\bvtheta_{t,r}^{(t,\y)}(\z)-\y',\ \w_2:=\bvtheta^{(t,\y')}_{t,r}(\z)-\y',
$$ 
and
$$
\cA:=|\T^{-1}_{t-r}(\x-\btheta_{r,t}(\y))|+1.
$$
Using the above notations and by definition \eqref{CORRESP}, we have
\begin{equation}
\big|\big(\wt{p}^{(t,\y')}-\wt{p}^{(t,\y)}\big)(r,\z,t,\y')\big|=(2\pi)^{-d}\Big|\det( K_2)^{-\frac 12}
\exp\Big( -\tfrac12\big| K_2^{-\frac12}\w_2\big|^2\Big)
-\det( K_1)^{-\frac 12}
\exp\Big( -\tfrac12\big| K_1^{-\frac12}\w_1\big|^2\Big)\Big|.
\end{equation}
In order to show Lemma \ref{Lem_sensitivity}, it is sufficient to establish that for some $\lambda>0$, which only depends on $\Theta_T$, 
we have
\begin{align}
\big|(\det  K_1)^{-\frac 12}-(\det  K_2)^{-\frac 12}\big|&\lesssim (t-r)^{-2d}|\y-\y'|_{\bd}^{\nu+\beta}
,\label{e12}\\
\big|\exp\big(-\tfrac{1}{2}| K_1^{-\frac12}\w_1|^2\big)-\exp\big(-\tfrac{1}{2}| K_2^{-\frac12}\w_2|\big)\big|
&\lesssim |\y-\y'|_{\bd}^{{\nu+\beta}}\exp\left(-\lambda\cA^2\right).\label{e13}
\end{align}

\paragraph{Proof of \eqref{e12}}
Observe now that, by \eqref{KK1},
%
%
%
%
\begin{align}\label{TT6}
K_i=\T_{t-r}\hat K_i \T_{t-r}, \qquad i=1,2,
\end{align}
where
\begin{align}
\hat K_1:=\int^1_0\hat\gR^{(t,\y)}_{1,v}\bTheta^{(t,\y)}_{{\eta(v)}}\big(\hat\gR^{(t,\y)}_{1,v}\bTheta^{(t,\y)}_{{\eta(v)}}\big)^*\dif v,&&
\hat K_2:=\int^1_0 \hat\gR^{(t,\y')}_{1,v}\bTheta^{(t,\y')}_{{\eta(v)}}\big(\hat\gR^{(t,\y')}_{1,v}\bTheta^{(t,\y')}_{{\eta(v)}}\big)^*\dif v,
\end{align}
with 
\begin{equation}
 \eta(v):=r+(t-r)v,
\end{equation}
and where
\begin{equation}\label{SCALED_RESOLVENT}
\hat\gR^{(t,\xi)}_{1,v} := \T^{-1}_{t-r} {\gR^{(t,\xi)}_{t,\eta (v)}} \T_{t-r} = \begin{pmatrix}\mI_{d\times d} & 0_{d\times d}\\
\int^1_v \nabla_{x_1}F_2\big(\eta(u),\btheta_{\eta(u),t}(\bxi)\big)\dif u & \mI_{d\times d} \end{pmatrix}  .
\end{equation}
Recalling 
{\bf [H-H\"or]}, {\bf [H-${\bf F}$]} and {\bf [H-$\sigma$]}, and owing to Lemma \ref{lemme:bilipflow_FIRST} and 
$r=t-|\y-\y'|_{\bd}^2$, we have, for any $v\in[0,1]$,
\begin{align}
\big|\bTheta^{(t,\y)}_{{\eta(v)}}-\bTheta^{(t,\y')}_{{\eta(v)}}\big| & \lesssim|\btheta_{\eta(v),t}(\y)-\btheta_{\eta(v),t}(\y')|_{\bf d}^{\nu+\beta}
\intertext{(by exploiting, in particular, \eqref{eq:sub_linear_growth})}
&\lesssim |\y-\y'|_{\bd}^{{\nu+\beta}} +(t-r)^{\frac {\nu+\beta} 2}\lesssim |\y-\y'|_{\bd}^{{\nu+\beta}}.
\end{align}
and, following the same arguments,
\begin{align}\label{DIFF_RESOLVENT}
|\hat\gR^{(t,\y)}_{{1,v}} - \gR^{(t,\y')}_{{1,v}}|\lesssim |\y-\y'|_{\bd}^{{\nu+\beta}}.
\end{align}
Hence, by \eqref{TT1}, we obtain
\begin{align}\label{TT9}
|\hat K_1-\hat K_2|\lesssim |\y-\y'|_{\bd}^{\nu+\beta},\ \ |\hat K_i|\lesssim 1,\ \ \det K_i\asymp 1,
\end{align}
and
\begin{align*}
|(\det  K_1)^{-\frac 12}-(\det  K_2)^{-\frac 12}|&\lesssim (t-{r})^{-2d}|(\det \hat K_1)^{-\frac 12}-(\det \hat K_2)^{-\frac 12}|\\
&\lesssim (t-{r})^{-2d}|\det \hat K_1-\det \hat K_2|\\
&\lesssim (t-{r})^{-2d}|\hat K_1-\hat K_2|\lesssim (t-r)^{-2d}|\y-\y'|_{\bd}^{{\nu+\beta}}.
\end{align*}
This proves \eqref{e12}. 

\paragraph{Proof of \eqref{e13}}
Without loss of generality, we may assume
$$
| K_1^{-\frac12}\w_1|\leq | K_2^{-\frac12}\w_2|.
$$
Then by $1-\exp(-x)\leq x$, we have
\begin{align*}
&\big|\exp\big(-\tfrac{1}{2}| K_1^{-\frac12}\w_1|^2\big)-\exp\big(-\tfrac{1}{2}| K_2^{-\frac12}\w_2|^{\textcolor{black}{2}}\big)\big|\leq 
\tfrac12(| K_2^{-\frac12}\w_2|^2-| K_1^{-\frac12}\w_1|^2)\exp\big(-\tfrac{1}{2}| K_1^{-\frac12}\w_1|^2\big).
\end{align*}
By \eqref{TT6} we have
\begin{align}
| K_2^{-\frac12}\w_2|^2-| K_1^{-\frac12}\w_1|^2 & = \<\hat K_2^{-1}\T^{-1}_{t-r}\w_2,\T^{-1}_{t-r}\w_2\>-\<\hat K_1^{-1}\T^{-1}_{t-r}\w_1,\T^{-1}_{t-r}\w_1\>\\
&\leq|\hat K_1^{-1}||\T^{-1}_{t-r}(\w_2-\w_1)|\big(|\T^{-1}_{t-r}\w_2|+|\T^{-1}_{t-r}\w_1|\big)+|\hat K_2^{-1}-\hat K_1^{-1}||\T^{-1}_{t-r}\w_2|^2.\\ \label{DIFF_SENSI_FW}
\end{align}
Now, \eqref{TT9}, \eqref{LIN_NON_LIN_FLOW}, and \eqref{SCALED_RESOLVENT} with $v=0$, yield
\begin{align}\label{CTR_W2}
|\T^{-1}_{t-r}\w_2| \lesssim | \hat\gR^{(t,\y')}_{1,0} \T_{t-r}^{-1} (\z-\btheta_{r,t}(\y'))| \lesssim |\T_{t-r}^{-1} (\z-\btheta_{r,t}(\y'))|. 
\end{align}
Assume now that
\begin{align}
\label{DIFF_W1W2}
|\T^{-1}_{t-r}(\w_2-\w_1)|\lesssim |\y-\y'|_\bd^{\nu+\beta} \Big( 1+ \big|\T_{t-r}^{-1}\big(\z-\btheta_{r,t}(\y')  \big) \big|\Big). 
\end{align}
Then, from \eqref{TT9}, \eqref{DIFF_W1W2} and \eqref{DIFF_SENSI_FW} we derive 
 \eqref{e13} using Lemma \eqref{lemme:bilipflow_FIRST}, 
 as well a convexity inequality for the exponential term, recalling also that $|\y-\y'|_\bd\le (t-s)^{\frac 12} $. 
 Therefore, to complete the proof we need to prove \eqref{DIFF_W1W2}. 

We can write
 \begin{align*}
\T_{t-r}^{-1}(\w_2-\w_1)=\T_{t-r}^{-1} \gR_{t,r}^{(t,\y')}\Big(\z+ m_{r,t}^{(t,\y')}-\gR_{r,t}^{(t,\y')}\y'  \Big)-\T_{t-r}^{-1} \gR_{t,r}^{(t,\y)}\Big(\z+ m_{r,t}^{(t,\y)} - \gR_{r,t}^{(t,\y)}\y'  \Big),
 \end{align*}
 where we set
 \begin{equation}
m_{r,t}^{(\tau,\bxi)} :=   \int^t_r {\gR}^{(\tau,\bxi)}_{r,v}\Big(\gF(v,\btheta_{v,\tau}(\bxi))-\bA\big(v,\btheta_{v,\tau}(\bxi)\big)\btheta_{v,\tau}(\bxi)\Big)\dif v    .
\end{equation}
%
 
  Employing the following identity (see the proof of \cite[Lemma 3.2]{chau:meno:pesc:zhan:23}):
\begin{equation}
\btheta_{r,t}(\y') = \gR_{r,t}^{(t,\y')}\y' -  m_{r,t}^{(t,\y')}  ,
  \end{equation}
and setting
\begin{equation}
\bGamma_{r,t}^{(\tau,\bxi)} :=   m_{r,t}^{(\tau,\bxi)}-\gR_{r,t}^{(\tau,\bxi)}\y' ,
\end{equation}
yields
  \begin{align}
|\T_{t-r}^{-1}(\w_2-\w_1)|
&\le \big| \big( \T_{t-r}^{-1} \gR_{t,r}^{(t,\y')}-\T_{t-r}^{-1} \gR_{t,r}^{(t,\y)} \big) [\z-\btheta_{r,t}(\y')  ] \big| +\big| \T_{t-r}^{-1} \gR_{t,r}^{(t,\y)}\big( \bGamma_{r,t}^{(t,\y)} - \bGamma_{r,t}^{(t,\y')} \big)\big| \\
&\leq \big| \hat \gR_{1,0}^{(t,\y')}- \hat \gR_{1,0}^{(t,\y)}\big|\, \big|\T_{t-r}^{-1}\big(\z-\btheta_{r,t}(\y')  \big)\big|+ \big| \T_{t-r}^{-1} \gR_{t,r}^{(t,\y)}\big(\bGamma_{r,t}^{(t,\y)}-\bGamma_{r,t}^{(t,\y')} \big)\big| \\
 & \lesssim |\y-\y'|_{\bd}^{\nu+\beta} \big| \T_{t-r}^{-1}\big( \z-\btheta_{r,t}(\y')  \big) \big|+  \big| \T_{t-r}^{-1} \big(\bGamma_{r,t}^{(t,\y)}-\bGamma_{r,t}^{(t,\y')} \big)\big| 
 \label{THE_DIFF_W}
 \end{align}
 using \eqref{DIFF_RESOLVENT}  and \eqref{SCALED_RESOLVENT} for the last inequality. To investigate the difference $\big(\bGamma_{r,t}^{(t,\y)}-\bGamma_{r,t}^{(t,\y')} \big)$ we go back to the corresponding differential dynamics, namely
 \begin{equation}
\bGamma_{r,t}^{(\tau,\bxi)} = \y'-\int_{r}^t \Big[\gF(v,\btheta_{v,\tau}( \bxi))+\bA(v,\btheta_{v,\tau}( \bxi))\big(\bGamma_{v,t}^{(\tau,\bxi)}- \btheta_{v,\tau}( \bxi)\big)\Big] \d v.
\end{equation} 
In particular, observe again that $\bGamma_{v,t}^{(t,\y')}=\btheta_{v,t}(\y'),\ v\in [r,t] $. Thus
\begin{align}
\bGamma_{r,t}^{(t,\y)}-\bGamma_{r,t}^{(t,\y')}
& =
\int_{r}^t \Big(\gF\big(v,\btheta_{v,t}(\y')\big)-\gF\big(v,\btheta_{v,t}(\y)\big)-\bA\big(v,\btheta_{v,\tau}( \y)\big)\big(\btheta_{v,\tau}(\y')-\btheta_{v,\tau}(\y)\big)\\
&\qquad\qquad+\bA\big(v,\btheta_{v,\tau}( \y)\big)\big(\bGamma_{v,t}^{(t,\y')}-\bGamma_{v,t}^{(t,\y)}\big) \Big) \d v. 
\end{align}
This yields
  \begin{align}
\Big|  \Big( \T_{t-r}^{-1}\big(\bGamma_{r,t}^{(t,\y)}-\bGamma_{r,t}^{(t,\y')} \big)\Big)_1 \Big|&\leq (t-r)^{-\frac 12}\int_{r}^t \big| \gF_1\big(v,\btheta_{v,t}({\y'})\big)-\gF_1\big(v,\btheta_{v,t}({\y})\big) \big| \d v\\
 & \lesssim (t-r)^{-\frac 12}\int_{r}^t 
  (1+|\btheta_{v,t}({\y'})-\btheta_{v,t}({\y})|_\bd )\d v\\
  &\lesssim (t-r)^{\frac 12}+ (t-r)^{-\frac{1}{2}} \int_{r}^ t \big( (t-v)^{\frac 12}+|\y-\y'|_\bd \big)\d v \lesssim |\y-\y'|_\bd,\label{CTR_DIFF_1}
 \end{align}
  where we used $|\y-\y'|_\bd =  (t-r)^{\frac 12}$ in the last inequality, {as well Lemma \ref{lemme:bilipflow_FIRST} for the second last inequality. On the other hand,
\begin{align}
\Big|\Big( \T_{t-r}^{-1}\big(\bGamma_{r,t}^{(t,\y)}-\bGamma_{r,t}^{(t,\y')} \big)\Big)_2\Big|
&\lesssim (t-r)^{-\frac 32}\int_{r}^t \Big( |\btheta_{v,t}({\y'})-\btheta_{v,t}({\y})|_\bd^{1+{\nu+\beta}}+
\big|\big(\bGamma_{v,t}^{(t,\y)}-\bGamma_{v,t}^{(t,\y')}\big)_1\big| \Big)\d v
\intertext{(proceeding as in \eqref{CTR_DIFF_1})}
&\lesssim (t-r)^{-\frac 32}\int_{r}^t \Big( \big((t-v)^{\frac 12}+|\y-\y'|_\bd\big)^{1+{\nu+\beta}}+
 (t-v)^{\frac{1}{2}} |\y-\y'|_\bd \Big)\d v
 \intertext{(as $|\y-\y'|_\bd =  (t-r)^{\frac 12}$)}
 & \lesssim |\y-\y'|_{\bd}^{\nu+\beta},
\end{align}
which, together with \eqref{CTR_DIFF_1} and \eqref{THE_DIFF_W}, proves \eqref{DIFF_W1W2}. \qed


\section{Equivalence between the thermic and the Littlewood-Paley characterization in the kinetic anisotropic setting}
\label{EQUIV_CAR_ANIS_BESOV_SPACES}
We recall here the class of anisotropic Besov spaces $\bB^{\beta,q}_{\bbp;\bba}$, defined via Littlewood-Paley decomposition as in the classical reference \cite[Chapter 5]{triebel06}, and considered as well in \cite{hao:rock:zhan:26}. For simplicity, we keep the notations considered in the latter reference and, when it is useful, make the connection with the notations used in the main body of the article.

For $z=(x,v)$ and $z'=(x',v')$ in $\R^{2d}$,  the anisotropic distance 
\begin{equation}\label{AnisoDist}
	|z-z'|_\bba:=|x- x'|^{1/(1+\alpha)}+|v-v'|, \qquad \alpha\in (1,2],
\end{equation}
is introduced. For $\alpha=2 $ it does actually correspond to $|\cdot|_{\mathbf d} $ defined in \eqref{HOMO_METRIC}, with the association $\x = (\x_1, \x_2) = (v , x )$.


In order to consider a Littlewood-Paley type decomposition, we introduce the related metric balls. Namely,
for $r>0$ and $z\in\R^{2d}$,  the ball centred at $z$ and with radius $r$ with respect to $|\cdot |_\bba$ is defined 
as follows:
$$
B^\bba_r(z):=\{z'\in\R^{2d}:|z'-z|_\bba\leq r\},\ \ B^\bba_r:=B^\bba_r(0).
$$
Let $\chi^\bba_0$ be  a symmetric $C^{\infty}$-function  on $\R^{2d}$ with
$$
\chi^\bba_0(\xi)=1\ \mathrm{for}\ \xi\in B^\bba_1\ \mathrm{and}\ \chi^\bba_0(\xi)=0\ \mathrm{for}\ \xi\notin B^\bba_2.
$$
In order to consider the corresponding dyadic partition, define:
$$
\phi^\bba_j(\xi):=
\left\{
\begin{aligned}
	&\chi^\bba_0(2^{-j\bba}\xi)-\chi^\bba_0(2^{-(j-1)\bba}\xi),\ \ &j\geq 1,\\
	&\chi^\bba_0(\xi),\ \ &j=0,
\end{aligned}
\right.
$$
where for $s\in\R$ and $\xi=(\xi_1,\xi_2)$, we define the dilation
$$
2^{s\bba }\xi:=(2^{(1+\alpha)s}\xi_1, 
2^{s}\xi_2).
$$
Note that
\begin{align}\label{Cx8}
	{\rm supp}(\phi^\bba_j)\subset {B^\bba_{2^{j+1}} \setminus B^\bba_{2^{j-1}}}
	=:C_{2^j}^{\bba},\ j\geq 1,&& {\rm supp}(\phi^\bba_0)\subset B^\bba_2 =: C_{1}^{\bba},
\end{align}
and
\begin{align}\label{AA13}
	\sum_{j\geq 0}\phi^\bba_j(\xi)=1,\ \ \forall\xi\in\R^{2d}.
\end{align}

As in Section \ref{SEC_DEF_BESOV_ANIS}, let $\cS$ be the space of all Schwartz functions on $\R^{2d}$, and $\cS'$ be its dual space (tempered distributions). 
For given $j\geq 0$, the  dyadic anisotropic block operator  $\mathcal{R}^\bba_j$ is defined on $\cS'$ by
\begin{align}\label{Ph0}
	\mathcal{R}^\bba_jf(z):=(\phi^\bba_j\hat{f})\check{\ }(z)=\check{\phi}^\bba_j*f(z), \qquad {f\in \cS'},
\end{align}
where the convolution is understood in the distributional sense, {and where the signs $\hat{{}}$ and $\check{{}}$ denote the Fourier transform and anti-transform, respectively.} Note that, by scaling, we have
\begin{align}\label{SX4}
	\check{\phi}^\bba_j(z)=2^{(j-1)(2+\alpha)d}\, \check{\phi}^\bba_1\big(2^{(j-1)\bba}z\big),\quad j\geq 1.
\end{align}
Now we introduce the following anisotropic Besov spaces.
\begin{definition}[Anisotropic Besov spaces through Littlewood-Paley]\label{bs}
	Let $\vartheta\in\R$, $q\in[1,\infty]$ and $\bbp=(p_x,p_v)\in[1,\infty]^2$. The  anisotropic Besov space is defined by
	\begin{equation}
	\mathbb{B}^{\vartheta,q}_{\bbp;\bba}:=\Big\{f\in \cS'\textcolor{black}{(\R^{2d})}\ : \  \|f\|_{\mathbb{B}^{\vartheta,q}_{\bbp;\bba}}
	<\infty\Big\},
\end{equation}
with 
\begin{equation}
\|f\|_{\mathbb{B}^{\vartheta,q}_{\bbp;\bba}}
	:= \begin{cases}
	\Big(\sum_{j\geq0}\big(2^{ j\vartheta}\|\cR^\bba_{j}f\|_{\bbp}\big)^q\Big)^{1/q}, & q\in [1,\infty), \\
	\sup_{j\geq0} \big\{ 2^{ j\vartheta}\|\cR^\bba_{j}f\|_{\bbp} \big\}, & q = \infty,
	\end{cases}
\end{equation}
and	where  $\mL^{\bbp}=L^{p_v}(\R^d;L^{p_x}(\R^d))=\{f:\R^{2d}\rightarrow\R \ \text{Borel measurable}:\,\|f\|_{\mL^\bbp}<\infty\}$ for \textcolor{black}{$L^p(\mathbb R^d)$ denoting the Lebesgue space on $\mathbb R^d$ and}
\begin{equation}\label{LP1}
	\|f\|_{\mL^\bbp}:=\|f\|_{\bbp}:=
\begin{cases}	
	\left(\int_{\R^d}\|f(\cdot,v)\|_{p_x}^{p_v} d  v\right)^{1/p_v}, & p_v \in [1,\infty), \\
	\sup_{v\in \Rd} \big\{ \|f(\cdot,v)\|_{p_x}^{p_v}  \big\}, & p_v = \infty .
	\end{cases}
\end{equation}
Finally, for $p \in [1, \infty]$, we set
\begin{equation}
\|f\|_{\mathbb B_{p;\bba}^{\vartheta,q}}:=\|f\|_{\mathbb B_{(p,p);\bba}^{\vartheta,q}}  = \begin{cases}
	\Big(\sum_{j\geq0}\big(2^{ j\vartheta}\|\cR^\bba_{j}f\|_{L^p(\Rdd)}\big)^q\Big)^{1/q}, & q\in [1,\infty) ,\\
	\sup_{j\geq0} \big\{ 2^{ j\vartheta}\|\cR^\bba_{j}f\|_{L^p(\Rdd)} \big\}, & q = \infty.
	\end{cases}
\end{equation}		
\end{definition}	

We will actually prove that, when $p_x=p_v=p$ and $\alpha=2 $, the norms associated with Definition  \ref{bs} and the former Definition \ref{DEF_BESOV_THERMIC_ANISO} are equivalent. We proceed somehow as in the usual isotropic case, see e.g. Lemarié-Rieusset \cite{lema:02} or Triebel \cite{trie:83}. 

\begin{lemma}[Equivalence of the norms in the anisotropic case]
Let $\vartheta \in \R$ and $p,q\in [1,+\infty]$. The Littlewood-Paley norm $\|\cdot\|_{\mathbb{B}^{\vartheta,q}_{p;\bba}} $ of Definition \ref{bs}, with $\alpha = 2$, and the thermic norm $\|\cdot\|_{\mathcal{H}_{p,q,\bf d}^{\vartheta}}$ of Definition \ref{DEF_BESOV_THERMIC_ANISO} are equivalent.
\end{lemma}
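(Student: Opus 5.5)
We follow the classical isotropic route (Triebel, Lemarié-Rieusset), adapted to the dilation $2^{s\bba}$ and the anisotropic heat kernel $g(v,\cdot)=g^{\bf d}_1(v,\cdot)$. The key observation is that the Fourier transform of $g(v,\cdot)$ is $\hat g(v,\xi)=\exp(-c(v|\xi_1|^2+v^3|\xi_2|^2))$. Hence $\partial_v^n\hat g(v,\xi)=P_n(v,\xi)\hat g(v,\xi)$ with $P_n$ a polynomial homogeneous of degree $n$ under the scaling $v\mapsto \lambda^{-2}v$, $\xi\mapsto \lambda^{\bba}\xi$. In other words, $v^n\partial_v^n\hat g(v,\xi)=\Psi_n(v^{1/2}\cdot_{\bba}\xi)$, where $\Psi_n$ is a fixed Schwartz function vanishing to order $2n$ at the origin and $v^{1/2}\cdot_\bba\xi:=(v^{1/2}\xi_1,v^{3/2}\xi_2)$. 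Here the frequency variable $\xi_1$ is dual to $\x_1$ (scale $1$) and $\xi_2$ to $\x_2$ (scale $3$). We therefore fix the association so that the Littlewood-Paley annuli $C^\bba_{2^j}$ correspond to $|\xi_1|\sim 2^j$, $|\xi_2|\sim 2^{3j}$, and $v\sim 2^{-2j}$ is the matching thermic scale.

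\textbf{Step 1: thermic $\lesssim$ Littlewood-Paley.} Write $f=\sum_j\cR_j^\bba f$ and estimate $v^n\partial_v^n g(v,\cdot)*\cR^\bba_j f$. Let $\tilde\phi_j$ be a slightly enlarged cutoff equal to $1$ on ${\rm supp}\,\phi^\bba_j$, and consider the multiplier $\Psi_n(v^{1/2}\cdot_\bba\xi)\tilde\phi_j(\xi)$. By the scaling identity \eqref{SX4}, its $L^1$ kernel norm is scale invariant after rescaling by $2^{j\bba}$. We then obtain two bounds:
\begin{itemize}
\item for $v2^{2j}\le1$, the bound $\lesssim (v2^{2j})^n$, from the vanishing of $\Psi_n$ at $0$;
\item for $v2^{2j}\ge 1$, the bound $\lesssim (v2^{2j})^{-N}$, from the Gaussian decay.
\end{itemize}
Young's inequality in $L^p$ then gives
\[
v^{n-\vartheta/2}\|\partial_v^n g(v)*f\|_{L^p}\lesssim \sum_j \min\big((v2^{2j})^{n},(v2^{2j})^{-N}\big)(v2^{2j})^{-\vartheta/2}\,2^{j\vartheta}\|\cR_j^\bba f\|_{L^p}.
\]
Since $n>\vartheta/2$, a discrete Young/Schur argument in $\ell^q$ applies once $\int_0^1\frac{dv}{v}$ is discretised into dyadic pieces $v\in[2^{-2k-2},2^{-2k}]$. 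The low-frequency term $\|g(1)*f\|_{L^p}$ is bounded by $\sum_j\|g(1)*\cR_j^\bba f\|_{L^p}$, which is controlled in the same way.

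\textbf{Step 2: Littlewood-Paley $\lesssim$ thermic.} This is the step we expect to be the main obstacle. The plan is to reconstruct $\cR^\bba_j f$ from heat data via the Calderón-type identity
\[
\hat f=\hat g(1)\hat f\sum_{k<n}\frac{(-1)^k}{k!}\,\partial_v^k\hat g(1)\cdots \;+\;\frac{(-1)^n}{(n-1)!}\int_0^1 v^{n-1}\partial_v^n\big(\hat g(v)\big)\,\hat f\,dv,
\]
that is, a Taylor expansion of $v\mapsto \hat g(v)\hat f$ at $v=1$ down to $v=0$. The $v$-derivatives of $g(v)$ at $v=1$ are convolution operators with integrable kernels applied to $g(1)*f$; since $\hat g(1/2)\ne0$ they can be handled via the semigroup property $g(v)=g(v/2)*g(v/2)$, which exists for the Kolmogorov kernel up to the anisotropic scaling. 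Applying $\cR^\bba_j$ and using the semigroup property to split $\partial_v^n g(v)=2^n\,\partial_v^n g(v/2)*g(v/2)$, we get
\[
\|\cR_j^\bba f\|_{L^p}\lesssim \int_0^1 v^{n-1}\|\cR^\bba_j g(v/2)\|_{L^1}\,\|\partial_v^n g(v/2)*f\|_{L^p}\,dv.
\]
Here $\|\cR^\bba_j g(v/2)\|_{L^1}\lesssim (v2^{2j})^{-N}$ for $v2^{2j}\ge1$ and is $\lesssim1$ otherwise, again by \eqref{SX4} and Gaussian decay of $\hat g$ on the annulus $C^\bba_{2^j}$. Multiplying by $2^{j\vartheta}$ and applying Hardy's inequality in $\ell^q$ (respectively a $\sup$ bound for $q=\infty$), together with $\int_{2^{-2j}}^1 v^{n-1}v^{-n+\vartheta/2}(v2^{2j})^{-N}dv$-type summations, yields the claim. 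The $j=0$ block is bounded using $\|g(1)*f\|_{L^p}$ and the boundary Taylor terms.

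The delicate points are twofold. First, one must check the semigroup property and the multiplier estimates with the mixed scaling; $\hat g$ factorises with $v$-power weights $1$ and $3$, which is exactly compatible with the dilation $2^{s\bba}$ for $\alpha=2$. Second, the non-isotropic balls are not Euclidean balls, so the Bernstein-type bounds $\|\check\phi^\bba_j*\cdot\|$ must be derived from \eqref{SX4} rather than quoted. The case $p<1$ is excluded in the statement, so Young's inequality suffices throughout.
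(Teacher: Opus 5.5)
\textbf{Step 1 is fine; Step 2 has a genuine gap for $\vartheta\le 0$.}

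Your Step~1 is essentially the paper's argument for $\|f\|_{\mathcal{H}^{\vartheta}_{p,q,\bf d}}\lesssim\|f\|_{\mathbb{B}^{\vartheta,q}_{p;\bba}}$. You use frequency-localized heat multipliers with $L^1$-kernel bounds of order $(v2^{2j})^{n}$ when $v2^{2j}\le 1$ and Gaussian decay otherwise, then a H\"older/Schur summation in $j$. That part is fine.

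Step~2, however, only works for $\vartheta>0$. On the range $v\le 2^{-2j}$ you have nothing better than $\|\mathcal{R}^{\bba}_j g(v/2,\cdot)\|_{L^1}\lesssim 1$. So the Taylor weight $v^{n-1}$ meets the thermic singularity $\|\partial_v^n g(v,\cdot)*f\|_{L^p}\lesssim v^{-n+\frac{\vartheta}{2}}\,\mathcal{T}^{\vartheta}_{p,\infty,\bf d}[f]$, and for $q=\infty$ you are left with
\[
2^{j\vartheta}\int_0^{2^{-2j}} v^{n-1}\,v^{-n+\frac{\vartheta}{2}}\,\d v
=2^{j\vartheta}\int_0^{2^{-2j}} v^{\frac{\vartheta}{2}-1}\,\d v ,
\]
which diverges when $\vartheta\le 0$. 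Likewise, the Hardy inequality $\sum_j\big(2^{j\vartheta}\int_0^{2^{-2j}}v^{\frac{\vartheta}{2}}F(v)\frac{\d v}{v}\big)^q\lesssim\int_0^1F(v)^q\frac{\d v}{v}$ that you invoke holds only for $\vartheta>0$. The exponent $\frac{\vartheta}{2}-1$ does not depend on $n$, so raising the Taylor order does not help. This is precisely the range the paper needs: the drift lives in $\mathcal{C}^{\beta}_{\bf d}$ with $\beta<0$, and the semigroup is applied to distributions in $\mathcal{C}^{\eta}_{\bf d}$ with $\eta<0$.

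What is missing is a reproducing formula whose kernel on the $j$-th annulus vanishes fast enough as $v\to 0$. The paper gets it from $1=\Gamma(2-\frac{\vartheta}{2})^{-1}\int_0^\infty t^{1-\frac{\vartheta}{2}}e^{-t}\,\d t$ with $t=v|\bxi_1|^2+v^3|\bxi_2|^2$, which yields
\[
\|\mathcal{R}^{\bba}_j f\|_{L^p}\lesssim\int_0^{\infty}(v2^{2j})^{1-\frac{\vartheta}{2}}e^{-cv2^{2j}}\,\|\partial_v g(v,\cdot)*f\|_{L^p}\,\d v .
\]
The extra factor $(v2^{2j})^{1-\frac{\vartheta}{2}}$ makes $2^{j\vartheta}\int_0^1(v2^{2j})^{1-\frac{\vartheta}{2}}e^{-cv2^{2j}}v^{\frac{\vartheta}{2}-1}\,\d v$ bounded uniformly in $j$ for every $\vartheta<2$.

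A simpler repair, with $n=1$, is to divide at the matched scale $v_j=2^{-2j}$. On $C^{\bba}_{2^j}$ the symbol $v_j\partial_v\hat g(v_j,\cdot)=-c\,(v_j|\bxi_1|^2+3v_j^3|\bxi_2|^2)\,\hat g(v_j,\cdot)$ is bounded away from zero. By anisotropic scaling (as in \eqref{SX4}), the multiplier $\phi^{\bba}_j/(v_j\partial_v\hat g(v_j,\cdot))$ then has an $L^1$ kernel bounded uniformly in $j$. Hence $\|\mathcal{R}^{\bba}_jf\|_{L^p}\lesssim v_j\|\partial_v g(v_j,\cdot)*f\|_{L^p}$. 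When $q<\infty$, average over $v\in[2^{-2j-2},2^{-2j}]$; for $j=0$, use $\phi^{\bba}_0/\hat g(1,\cdot)$.

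\textbf{A secondary, repairable issue.} The ``semigroup property'' $g(v,\cdot)=g(v/2,\cdot)*g(v/2,\cdot)$ you rely on is false for the product kernel $g^{\bf d}_1$. The $\x_2$-variances do not add, since $2(v/2)^3\neq v^3$; this is not the Kolmogorov kernel. You must instead split $g(v,\cdot)$ into two Gaussians with half the covariance at the same time $v$, and compare with the thermic data through an additional frequency-localized multiplier. The paper's displayed identity with $g(v/2,\cdot)*\partial_vg(v/2,\cdot)$ contains the same slip, but it is not the identity you state.
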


\begin{proof}
For brevity, we only prove the equivalence for the case $\vartheta<2$, which means that we can set $n=1$ in Definition \ref{DEF_BESOV_THERMIC_ANISO}. This covers all the regularity indices that are considered in the main body of the paper. The general proof is a straightforward modification.

We will first dominate the thermic norm of Definition \ref{DEF_BESOV_THERMIC_ANISO} by the Littlewood-Paley norm of Definition \ref{bs}. Namely, we establish that there exists a positive $C=C(p,q,\vartheta)$ such that
\begin{align}\label{HEAT_DOM_BY_LP}
\|f\|_{\mathcal{H}_{p,q,\bf d}^{\vartheta}}\le C\|f\|_{\mathbb B_{p;\mathbf a}^{\vartheta,q}}.
\end{align}

To this end, write for $j\ge 1$:
\begin{align}\label{SWAP_DYADIC_BLOCK}
\|\mathcal R_j \partial_v^{n}g(v,\cdot)* f\|_{L^p}=\| \partial_v^{n}g(v,\cdot)* \mathcal R_jf\|_{L^p}, \qquad v\in[0,1].
\end{align}
Observe now that for an arbitrary function $u$ such that $\widehat u $ has support on $C_{2^j}^\bba$, for all $\x\in \R^{2d} $ :
\begin{align*}
g(v,\cdot)* u(\x)&=\mathcal F^{-1}\Bigg(\bxi\mapsto \varphi_{2^j}(\bxi) \exp\Big(-v|\bxi_1|^2-v^3|\bxi_2|^2\Big) \widehat u(\bxi)\Bigg)(\x)\\
&=\Big(\frac{1}{(2\pi)^{2d}}\int_{\R^{2d}}\exp(i\langle \cdot, \bxi\rangle)\exp\Big(-v |\bxi_1|^2-v^3|\bxi_2|^2\Big)\varphi_{2^j}(\bxi) d\bxi\Big)* u(\x)=:g_j(v,\cdot)* u(\x),
\end{align*}
where $\varphi_{2^j}\in \mathcal C^{\infty}_0(B^\bba_{2^{j+1}}\backslash \{0\}) $ such that: 
\begin{itemize}
\item $\varphi_{2^j}=1 $ on $ C_{2^j}^\bba$ and $\varphi_{2^j}=0$ on $C_{2^{j-2}}^\bba$, for $\ j\ge 2$;
\item there exists $C>0$ such that, for any multi-index $\alpha$ with $|\alpha|\le d $, $\|\partial^\alpha \varphi_{2^j}\|\le C $ for any $\ j\ge 2$.
\end{itemize}
In particular, from Young's inequality:	
\begin{equation}\label{eq:young_r}
\|g(v,\cdot)* u(\x)\|_{L^p}\le 	\|g_j(v,\cdot)\|_{L^1}\|u\|_{L^p}.
\end{equation}
We will thus focus on:
\begin{align*}
g_j(v,\x)&=\prod_{{k}=1}^2\frac{1}{(1+\frac{|\x_{{k}}|^2}{v^{2{k}-1}})^{d}}\frac{1}{(2\pi)^{2d}}\int_{\R^{2d}}\prod_{{k}=1}^2(1+\frac{|\x_{{k}}|^2}{v^{2{k}-1}})^{d}\exp(i\langle \x, \bxi\rangle)\exp\Big(-v |\bxi_1|^2-v^3|\bxi_2|^2\Big)\varphi_{2^j}(\bxi) d\bxi\\
&=\prod_{{k}=1}^2\frac{1}{(1+\frac{|\x_{{k}}|^2}{v^{2{k}-1}})^{d}}\frac{1}{(2\pi)^{2d}}\int_{\R^{2d}} \exp\Big(-v |\bxi_1|^2-v^3|\bxi_2|^2\Big)\varphi_{2^j}(\bxi)\prod_{{k}=1}^2(I-\frac{\Delta_{\bxi_{{k}}}}{v^{2{k}-1}})^{d}\exp(i\langle \x, \bxi\rangle) d\bxi
\intertext{{(integrating by parts)}}
&=\prod_{{k}=1}^2\frac{1}{(1+\frac{|\x_{{k}}|^2}{v^{2{k}-1}})^{d}}\frac{1}{(2\pi)^{2d}}\int_{\R^{2d}}\exp(i\langle \x, \bxi\rangle)\prod_{{k}=1}^2(I-\frac{\Delta_{\bxi_{k}}}{v^{2{k}-1}})^{d}\Big(\exp\Big(-v |\bxi_1|^2-v^3|\bxi_2|^2\Big)\varphi_{2^j}(\bxi)\Big) d\bxi.
\end{align*}
Hence, from  the support of $\varphi_{2^j}$, we derive:
\begin{align*}
|g_j(v,{\x})|&\le C\prod_{{k}=1}^2\frac{1}{(1+\frac{|\x_{{k}}|^2}{v^{2{k}-1}})^{d}}\int_{{\rm supp}(\varphi_{2^j})} (1+v)^d(1+v^3)^{d}\exp\left(- c\big(v 2^{2(j-1)}+v^3 2^{6(j-1)}\big)\right)  d\bxi\\
&\le C \prod_{{k}=1}^2\frac{1}{(1+\frac{|\x_{{k}}|^2}{v^{2{k}-1}})^{d}} 2^{4(j-1)d}(1+v)^d(1+v^3)^{d}\exp\left(- {c}\big(v 2^{2(j-1)}+v^3 2^{6(j-1)}\big)\right)\\
&\le C \prod_{{k}=1}^2 \frac{1}{v^{\frac {2{k}-1}{2}d}}\frac{1}{(1+\frac{|\x_{{k}}|^2}{v^{2{k}-1}})^{d}} \exp\left(- c\big(v 2^{2(j-1)}+v^3 2^{6(j-1)}\big)\right),
\end{align*}
and eventually, by integrating in $\x$,
\begin{align*}
\|g_j(v,\cdot)\|_{L^1}\le C \exp\left(- c(v 2^{2(j-1)}+v^3 2^{6(j-1)})\right).
\end{align*}
Observe now that $ g(v,\x)={g_{\bf e}(v,\x_1)g_{\bf e}(v^3,\x_2)} $, where $g_{\bf e}(v,z) $ denotes the density of the standard $d $-dimensional  Brownian motion at time $v$ and point $z\in \R^d$, which solves the heat equation $\partial_v g_{\bf e}(v,z)=\frac 12 \Delta g_{\bf e}(v,z) $. We then derive 
\begin{align*}
\partial_v g(v,\x)&=\partial_v g_{\bf e}(v,\x_1)g_{\bf e}(v^3,\x_2)+g_{\bf e}(v,\x_1)\partial_v g_{\bf e}(v^3,\x_2)\\
&=\frac 12 \Delta_{\x_1} g_{\bf e}(v,\x_1)g_{\bf e}(v^3,\x_2)+g_{\bf e}(v,\x_1)\frac 32 v^2 \Delta_{\x_2}g_{\bf e}(v^3,\x^2).
\end{align*}
Hence, similarly to the previous computations, we derive
for an arbitrary function $u$ such that $\widehat u $ has support on $C_{2^j}^\bba$, for all ${\x}\in \R^{2d}$:
\begin{align*}
&\partial_v g(v,\cdot)* u(\x)\\
&= \mathcal F^{-1}\Bigg(\bxi\mapsto - \varphi_{2^j}(\bxi) \Big(\frac 12|\bxi_1|^2+\frac 32 v^2 |\bxi_2|^2\Big)\exp\Big(-v|\bxi_1|^2-v^3|\bxi_2|^2\Big) \widehat u(\bxi)\Bigg)(\x)\\
&= \Big(-\frac{1}{(2\pi)^{2d}}\int_{\R^{2d}}\Big(\frac 12|\bxi_1|^2+\frac 32 v^2 |\bxi_2|^2\Big)\exp(i\langle \cdot, \bxi\rangle)\exp\Big(-v |\bxi_1|^2-v^3|\bxi_2|^2\Big)\varphi_{2^j}(\bxi) d\bxi\Big)* u(\x)\\
&=:\partial_v g_j(v,\cdot)* u(\x).
\end{align*}
Reproducing the same previous arguments, one then derives:

\begin{equation}
\|\partial_v g_j(v,\cdot)\|_{L^1}\leq C(2^{2(j-1)}+v^2 2^{6(j-1)}) \exp\left(- c(v 2^{2(j-1)}+v^3 2^{6(j-1)})\right)\le C2^{2(j-1)} \exp\left(- cv 2^{2(j-1)}\right).\label{PARTIAL_TIME_LOC_FREQ}
\end{equation}
Observe that \eqref{PARTIAL_TIME_LOC_FREQ} holds true also for $j=0$ (initial localization on the centered metric ball).

For any $f\in \cS'$, we have
\begin{equation}
\| \partial_vg(v,\cdot)* f\|_{L^p}\le \sum_{j\ge 0}\|\mathcal R_j \partial_v g(v,\cdot)* f\|_{L^p}\le 
\sum_{j\ge 0}\| \partial_v g(v,\cdot)* \mathcal R_jf\|_{L^p}\le C\sum_{j\ge 0}2^{2j} \exp\left(- cv 2^{2j}\right)\|\mathcal R_j f\|_{L^p},\label{PRIMA_DI_HOLDER}
\end{equation}
using \eqref{SWAP_DYADIC_BLOCK}, and  \eqref{eq:young_r}-\eqref{PARTIAL_TIME_LOC_FREQ} for the last inequality. Now, consider first the case $q<+\infty $. From the Hölder inequality and Definition \ref{bs}, we obtain:
\begin{align*}
\| \partial_vg(v,\cdot)* f\|_{L^p}\le C \|f\|_{\mathbb B_{p;\mathbf a}^{\vartheta,q}} \sum_{j\ge 0} 2^{2j(1-\frac \vartheta 2)}\exp\left(- cv 2^{2j}\right)c_{q,j},
\end{align*}
where $(c_{q,j})_{j\in \N} \in \ell^q(\N)$ is such that $\|c_q\|_{\ell^q}=1$ and, by duality, 
$$\|j\in \N\mapsto  2^{2j(1-\frac \vartheta 2)}\exp\left(- cv 2^{2j}\right)\|_{\ell^{q'}(\N)}= \sum_{j\ge 0} 2^{2j(1-\frac \vartheta 2)}\exp\left(- cv 2^{2j}\right)c_{q,j}.$$
Write then:
\begin{align}
\int_0^1 \frac {\d v}v  v^{(1-\frac \vartheta2)q}\|\partial_v g(v,\cdot)* f\|_{L^p}^q 
& \leq C\|f\|_{\mathbb B_{p;\mathbf a}^{\vartheta,q}}^q
\int_0^1 \frac {\d v}v \Big( \sum_{j\ge 0} (v2^{2j})^{1-\frac \vartheta2}\exp\left(- v 2^{2j}\right)c_{q,j}\Big)^q\notag\\
 & \leq C\|f\|_{\mathbb B_{p;\mathbf a}^{\vartheta,q}}^q
\int_0^1 \frac {\d v}v  I(v)^{q-1} \sum_{j\ge 0} (v2^{2j})^{1-\frac \vartheta2}\exp\left(- v 2^{2j}\right)c_{q,j}^q
.\label{INTER_STEP_1}
\end{align}
where
\begin{equation}
I(v):= \sum_{j\ge 0} (v2^{2j})^{1-\frac \vartheta2}\exp\left(- v 2^{2j}\right)\le v^{1-\frac\vartheta2}\exp(-v)+\sum_{j\ge 1}\int_{[2^j,2^{j+1}]} (vz^2)^{1-\frac \vartheta2}\exp\left(- v \frac{z^2}4\right)\frac{dz}{2^{j+1}-2^j}.
\end{equation}
Since 
\begin{equation}
\frac{1}{2^{j+1}-2^j}\le \frac{C}{2^{j+1}} \iff 2^{j+1}\le C(2^{j+1}-2^j)\iff 2 \leq 2C - C  
 \iff C\ge 2 ,
\end{equation}
we have
\begin{align}
I(v)&\leq v^{1-\frac\vartheta2}\exp(-v)+ {2} \int_{2}^{\infty} (vz^2)^{1-\frac \vartheta2}\exp\left(- v \frac{z^2}4\right)\frac{dz}{z}
\intertext{(setting $\tilde z=v^{\frac 12}z$)}
&= v^{1-\frac\vartheta2}\exp(-v)+2 \int_{2v^{\frac 12}}^{\infty} \tilde z^{2 (1-\frac \vartheta2 )}\exp\left(-  \frac{\tilde z^2}4\right)\frac{d\tilde z}{\tilde z}\\
&\le v^{1-\frac\vartheta2}\exp(-v)+2\int_{0}^{\infty} {\tilde z}^{1- \vartheta}\exp\left(-  \frac{\tilde z^2}4\right)d\tilde z \leq C_{\vartheta}, \label{BOUND_WIEGHTED_SUM}
\end{align}
with $C_{\vartheta}>0$ that is independent of $v>0$.
Hence, \eqref{INTER_STEP_1} and changing variables, $\tilde v=v2^{2j}$, yields:
\begin{equation}
\int_0^1 \frac {\d v}v v^{(1-\frac \vartheta2)q}\|\partial_v g(v,\cdot)* f\|_{L^p}^q \leq  C\|f\|_{\mathbb B_{p;\mathbf a}^{\vartheta,q}}^q\sum_{j\ge 0}c_{q,j}^q\int_0^{2^{2j}} \tilde v^{1-\frac \vartheta 2}\exp(-c \tilde v) \d \tilde v\le C\|f\|_{\mathbb B_{p;\mathbf a}^{\vartheta,q}}^q \Gamma\Big(1-\frac \vartheta2\Big).
\end{equation}
By proceeding as above, let us now bound:
\begin{align}
\|g(1,\cdot)* f\|_{L^p}&\leq \sum_{j\ge 0}\|g(1,\cdot)* \mathcal R_j f\|_{L^p}\le \|\mathcal R_0 f\|_{L^p}+\sum_{j\ge 1}\exp(-c2^{2j}) \|\mathcal R_j f\|_{L^p}\\
&\le \|\mathcal R_0 f\|_{L^p}+\Big(\sum_{j\ge 1} 2^{-j\vartheta\frac{q}{q-1}}\exp\big(-\frac{q}{q-1}c2^{2j}\big)\Big)^{\frac {q-1}{q}}\|f\|_{\mathbb B_{p;\mathbf a}^{\vartheta,q}}\le C \|f\|_{\mathbb B_{p;\mathbf a}^{\vartheta,q}}.
\end{align}

We thus have  proved \eqref{HEAT_DOM_BY_LP} for $q<+\infty $. For the case $q=+\infty $, we restart from \eqref{PRIMA_DI_HOLDER} and write:
\begin{align*}
\|f\|_{\mathcal{H}_{p,\infty,\bf d}^{\vartheta}} & =\|g(1,\cdot)* f\|_{L^p}+\sup_{v\in (0,1]}v^{1-\frac \vartheta2}\| \partial_vg(v,\cdot)* f\|_{L^p}\\
&\leq \|\mathcal R_0 f\|_{L^p}+\Big(\sum_{j\ge 1} 2^{-j\vartheta}\exp(-c2^{2j})\Big)\Big(\sup_{j\in \N} \big\{ 2^{j\vartheta} \|\mathcal R_j f\|_{L^p} \big\} \Big)\\
&\quad +C\Big(\sup_{j\in \N} \big\{ 2^{j\vartheta} \|\mathcal R_j f\|_{L^p} \big\} \Big)  \sup_{v\in (0,1]}\sum_{j\ge 0} (v2^{2j})^{1-\frac \vartheta 2}\exp\left(- v 2^{2j}\right)
\le  C\|f\|_{\mathbb B_{p;\mathbf a}^{\vartheta,\infty}},
\end{align*}
using the Definition \ref{bs} and \eqref{BOUND_WIEGHTED_SUM} for the last inequality. Hence, \eqref{HEAT_DOM_BY_LP} still holds for $q=+\infty $.

\vspace{5pt}

Let us now prove the converse, namely 

By exploiting
\begin{equation}
\frac{1}{\Gamma(2-\frac \vartheta 2)}\int_{0}^{+\infty} t^{1-\frac \vartheta 2}\exp(-t) \d t =1,
\end{equation}
and changing variables, $ t=v |\bxi_1|^2+v^3|\bxi_2|^2 $, one has
\begin{align}
1 
&=\frac{1}{\Gamma(2-\frac \vartheta 2)}\int_{0}^{+\infty} (v |\bxi_1|^2+v^3|\bxi_2|^2)^{1-\frac \vartheta 2}\exp\Big(-(v|\bxi_1|^2+v^3|\bxi_2|^2)\Big) (|\bxi_1|^2+3v^2|\bxi_2|^2)\d v\\
&=\frac{1}{\Gamma(2-\frac \vartheta 2)}\int_{0}^{+\infty}v^{1-\frac \vartheta2}(|\bxi_1|^2+v^2|\bxi_2|^2)^{1-\frac \vartheta 2}\exp\Big(-(v|\bxi_1|^2+v^3|\bxi_2|^2)\Big) (|\bxi_1|^2+3v^2|\bxi_2|^2)\d v .
\end{align}
Therefore,
\begin{align}
{\mathcal R_j f}(\x)
&=\frac{1}{\Gamma(2-\frac \vartheta 2)}\int_{0}^{+\infty}v^{1-\frac \vartheta2}\big(-(\Delta_{\x_1}+v^2\Delta_{\x_2})\big)^{1-\frac \vartheta 2}g(v/2,\cdot)* \partial_v g(v/2,\cdot)* {\mathcal R_j f}(\x)\d v,
\end{align}
and, employing the $L^1-L^p $ Young inequality and the arguments that led to \eqref{PARTIAL_TIME_LOC_FREQ}, we obtain
\begin{align}
\|{\mathcal R_j f}\|_{L^p}&\le C\int_0^{+\infty} v^{1-\frac \vartheta 2}(2^{2j}+v^22^{6j})^{1-\frac{\vartheta}2}\exp(-c (v2^{2j}+v^32^{6j}))\|\partial_v g(v/2,\cdot)*{\mathcal R_j f}\|_{L^p}\d v\\
&\le C\int_0^{+\infty} v^{1-\frac \vartheta 2}2^{2j(1-\frac{\vartheta}2)}\exp(-c v2^{2j}) \|\partial_v g(v,\cdot)*{\mathcal R_j f}\|_{L^p}\d v.
\end{align}
As
\begin{equation}
 \|\partial_v g(v,\cdot)*{\mathcal R_j f}\|_{L^p} =  \| \mathcal R_j  \partial_v g(v,\cdot)* f\|_{L^p} \leq C \|  \partial_v g(v,\cdot)* f\|_{L^p}, 
\end{equation}
we finally have
\begin{equation}
\|{\mathcal R_j f}\|_{L^p} \leq C\int_0^{+\infty} v^{1-\frac \vartheta 2}2^{2j(1-\frac{\vartheta}2)}\exp(-c v2^{2j})\|\partial_v g(v,\cdot)* f\|_{L^p}\d v.
\end{equation}

We now again consider separately the cases $q=\infty $ and $q<+\infty $. For the first one write:
\begin{align}
\|f\|_{\mathbb B_{p;\mathbf a}^{\vartheta,\infty}}=&\sup_{j\ge 0}\Big( 2^{j\vartheta}\|{\mathcal R_j f}\|_{L^p}\Big)\\
&\le C\sup_{j\ge 0}\Big(\sup_{v\in (0,1]} \Big\{ v^{1-\frac \vartheta 2}\|\partial_v g(v,\cdot)* f\|_{L^p} \Big\} \int_0^{1} 2^{2j}\exp(-cv2^{2j})\d v \Big) \\
&\qquad\qquad\quad+\int_1^{+\infty}2^{2j}\exp(-cv2^{2j})\underbrace{\|g(c v,\cdot )* f\|_{L^p}}_{\leq \|g(1,\cdot )* f\|_{L^p}}  \d v\Big)\\
 &\le C\Big(\sup_{v\in (0,1]}v^{1-\frac \vartheta 2}\|\partial_v g(v,\cdot)* f\|_{L^p}+\frac{\exp(-c)}c \|g(1,\cdot )* f\|_{L^p}  \d v\Big)
 \leq C\|f\|_{\mathcal H_{p,\infty,\bf d}^{\vartheta}}
.
\end{align}
The case $q<+\infty $ is analogous. We leave the details to the reader, for the sake of brevity.
\end{proof}

\end{document}